\documentclass[reqno,a4paper]{amsart}
\usepackage[utf8]{inputenc}
\usepackage[shortlabels]{enumitem}
\setitemize{itemsep=0.3em}
\setenumerate{itemsep=0.3em}
\usepackage[english]{babel}
\usepackage{xcolor}
\usepackage[T1]{fontenc}
\usepackage[normalem]{ulem}
\usepackage{caption}

\makeatletter

\def\subsubsection{\@startsection{subsubsection}{3}%
  \z@{.5\linespacing\@plus.7\linespacing}{-.5em}%
  {\normalfont\bfseries}}

\def\paragraph{\@startsection{paragraph}{4}%
  \z@{3pt}{-\fontdimen2\font}%
  {\normalfont\bfseries}}
  
\makeatother

\usepackage{amssymb}
\usepackage{amsfonts}
\usepackage[tikz]{bclogo}
\usetikzlibrary{arrows, arrows.meta, positioning, backgrounds, calc, fit}
\usepackage{fullpage}
\usepackage{lmodern}
\usepackage{textcomp}
\usepackage{mathtools, bm}
\usepackage{amssymb, bm}
\usepackage{amsmath}
\usepackage[unq]{unique}
\usepackage{comment}
\usepackage[breaklinks]{hyperref}

\usepackage[numbers,sort&compress]{natbib}
\usepackage{graphicx} 

\usepackage{amsthm}
\usepackage{tikz}
\usepackage{caption}
\usepackage{subcaption}
\usepackage{varioref}
\usepackage{hyperref}
\usepackage{cleveref}
\usepackage{thm-restate}

\newtheorem{theorem}{Theorem}[section]
\newtheorem{thm}{Theorem}
\newtheorem{cor}[thm]{Corollary}
\newtheorem{lemma}[theorem]{Lemma}
\newtheorem{proposition}[theorem]{Proposition}
\newtheorem{corollary}[theorem]{Corollary}

\newtheorem{claim}[theorem]{Claim}

\theoremstyle{definition}
\newtheorem{remark}[theorem]{Remark}
\newtheorem{conjecture}[theorem]{Conjecture}
\newtheorem*{conjecture*}{Conjecture}

\newtheorem{definition}[theorem]{Definition}
\newtheorem{algorithm}[theorem]{Algorithm}

\newcommand{\eps}{\varepsilon}
\newcommand{\imax}{\Gamma}
\newcommand{\e}{\mathrm{e}}

\newcommand{\cA}{\mathcal{A}}
\newcommand{\cB}{\mathcal{B}}

\newcommand{\cE}{\mathcal{E}}
\newcommand{\cF}{\mathcal{F}}
\newcommand{\cG}{\mathcal{G}}

\newcommand{\cL}{\mathcal{L}}

\newcommand{\cP}{\mathcal{P}}

\newcommand{\cT}{\mathcal{T}}

\newcommand{\bP}{\mathbb{P}}
\newcommand{\bE}{\mathbb{E}}

\title{Universality in random graphs via optimal linking systems: trees and beyond}
\date{\today}

\author[Cohen Antonir]{Asaf Cohen Antonir}

\address{School of Mathematical Sciences, Tel Aviv University, Tel Aviv 6997801, Israel}
\email{asafc1@tauex.tau.ac.il}

\author[Lichev]{Lyuben Lichev}

\address{Institute of Statistics and Mathematical Methods in Economics, TU Wien, A-1040 Vienna, Austria}
\email{lyuben.lichev@tuwien.ac.at}

\author[Zhukovskii]{Maksim Zhukovskii}

\address{School of Computer Science, University of Sheffield, UK}
\email{m.zhukovskii@sheffield.ac.uk}

\begin{document}

\begin{abstract}
We develop a framework for proving universality results in sparse random graphs. As a first application, we show that there exists an absolute constant $C>1$ such that, with high probability, for every fixed constant $\Delta$, the binomial random graph $G(n,C\ln n/n)$ contains every $n$-vertex tree with maximum degree at most $\Delta$.
This answers a question of Montgomery ({\it Advances in Mathematics}, 2019). We also determine, for every $p$ satisfying $C\ln n/n\leq p=n^{-1+o(1)}$, the minimum girth $\ell$ (up to an absolute multiplicative constant) for which with high probability $G(n,p)$ contains all cycle factors of girth at least $\Omega(\ell)$. In particular, with high probability $G(n,C\ln n/n)$ contains all cycle factors of girth at least $100\ln n/\ln\ln n$, which is optimal up to a constant factor. This extends an earlier result of Ferber, Kronenberg, and Luh ({\it Transaction of the American Mathematical Society}, 2019) and significantly improves a corollary of a deep result of Kahn, Lubetzky, and Wormald ({\it Communications on Pure and Applied Mathematics}, 2017). 
One of the key ingredients in the proofs is establishing the optimal depth of linking systems in sparse random graphs.
\end{abstract}

\maketitle

\section{Introduction}

A graph $G$ is said to be {\it  universal for a family of graphs $\mathcal{F}$} (or simply {\it $\mathcal{F}$-universal}) if it contains an isomorphic copy of every graph in $\mathcal{F}$ as a subgraph. 
Constructions of sparse universal graphs arise naturally in the study of VLSI (Very-Large-Scale Integration) circuit design. 
These graphs, together with related notions such as universal sets and induced universality, have found numerous applications in mathematics, computer science, and engineering~\cite{Alon-implicit,ABS,ABZZ,ADK,chips,BDSZZ,circuits,KNR}.
 
In response to the mentioned developments, universal graphs have attracted considerable attention in both probabilistic~\cite{ADILS25,BHKMPP19,DKRR,FKL19,FerberNenadov,FNP,JKS13,KimLee,Mon19} and deterministic (typically pseudorandom)~\cite{Alon-survey,AlonCapalbo,AKS07,BCPS10,BCLR,CapalboR,Capalbo,ChungG,ChungG2,ChungG3,FP87,HMMP-S25,KimKim,JKS13} settings. This theoretical interest is also motivated by the challenging nature of universality. 
In particular, while the property of a random graph to contain a copy of a fixed graph has been a central theme in probabilistic combinatorics for decades, understanding universality for large families of graphs $\cF$ is much more demanding and known results are usually suboptimal. 
In particular, a union bound over all elements in $\cF$ often fails catastrophically and fine understanding of structural properties which are common for all elements in the family $\cF$ is crucial.

In this paper, we develop a framework for studying universality in random graphs and show that it leads to essentially optimal results for several graph families. In particular, we answer a question of Montgomery~\cite{Mon19} concerning tree universality and settle a longstanding gap in the theory of cycle-factor universality. 
One of the key ingredients of our proof is a construction of optimal-depth linking systems, which was a central missing ingredient for obtaining tight universality results in sparse random graphs. While linking systems have played a central role in numerous combinatorial embedding problems~\cite{DMMPS24,FKL19,HMMP-S25,Montgomery-old}, previously known techniques did not allow them to be exploited efficiently in random graphs at densities just above the connectivity threshold.

The remainder of the introduction is organised as follows. Section~\ref{sc:intro-trees} surveys previous work on tree universality in random graphs, while Section~\ref{sc:intro-cycles} reviews results on cycle factors in random graphs. In Section~\ref{sc:intro-linking}, we discuss linking systems and their applications to graph embedding. Our main results are stated in Section~\ref{sc:intro-results}, and Section~\ref{sc:intro-proofs} provides a brief outline of the proofs and the main new ideas behind them. Finally, we conclude the introduction with a collection of open problems and conjectures in Section~\ref{sc:intro-open}.

\subsection{Trees universality.}
\label{sc:intro-trees}

For a positive integer $\Delta$, define $\mathcal{T}_{n,\Delta}$ to be the set of all $n$-vertex trees with maximum degree at most $\Delta$. In their foundational work, Friedman and Pippenger~\cite{FP87} proved that there exists a $\mathcal{T}_{n,\Delta}$-universal graph on at most $Cn$ edges, where the constant $C=C(\Delta)$ depends only on $\Delta$ and not on $n$ (and that actually all graphs with good enough expansion properties and large enough vertex sets are such).
Note that the number of vertices in universal graphs considered in~\cite{FP87} is larger than $n$.
The result of Friedman and Pippenger was later refined in a series of works (see, e.g.,~\cite{BCPS10,BCLR,DK08,DKN22,Hax01}). 
In particular, Bhatt, Chung, Leighton, and Rosenberg~\cite{BCLR} constructed a $\mathcal{T}_{n,\Delta}$-universal graph {\it on $n$ vertices} with $O(n)$ edges (where the absolute constant depends on $\Delta$, as above). The number of edges is clearly optimal, up to a constant factor.

Clearly, for any constant $C$ and large $n$, a typical graph on $Cn$ edges is not $\mathcal{T}_{n,\Delta}$-universal. In particular, the random graph  $G(n,C/n)$\footnote{In the binomial random graph $G(n,p)$ edges between every pair of vertices from $\{1,\ldots,n\}$ appear independently with probability $p=p(n)$.} contains isolated vertices whp\footnote{With high probability, that is, with probability tending to 1 as $n\to\infty$.}. Nevertheless, the Friedman--Pippenger theorem was used by Alon, Krivelevich and Sudakov~\cite{AKS07} to prove that $G(n,C/n)$ is universal for slightly smaller trees: for every $\varepsilon>0$, there exist $C>0$ such that whp $G(n,C/n)$ in $\mathcal{T}_{(1-\varepsilon)n,\Delta}$-universal. 
 This raises the natural question: at which density $p$ does the random graph become $\mathcal{T}_{n,\Delta}$-universal? 

On the one hand, universality cannot occur below the connectivity threshold $p=\ln n/n$ established in the celebrated paper of Erd\H{o}s and R\'{e}nyi~\cite{ER}. Nevertheless, as $p$ gets slightly larger, namely when $pn=\ln n+\ln\ln n+\omega(1)$, the random graph whp contains the unique member of $\mathcal{T}_{n,2}$ --- a Hamilton path~\cite{Ham1,Ham2,Ham3}. 
About thirty year ago, Kahn formulated the natural conjecture (which was stated in print later in~\cite{KLW16}) that, for every fixed $\Delta$, there exists a constant $C$ such that, for any tree $T\in\mathcal{T}_{n,\Delta}$, the random graph $G(n, C\ln n/n)$ contains a copy of $T$ whp. Kahn, Lubetzky, and Wormald \cite{KLW16} verified this conjecture for a particular family of trees known as combs. The conjecture also follows readily for trees with $\Theta(n)$ leaves from the results of Alon, Krivelevich, and Sudakov, as was observed in~\cite{AKS07,Kri10}. In fact, for such trees, Hefetz, Krivelevich, and Szab\'o~\cite{HKS12} showed that the threshold $p=\ln n/n$ is sharp. 
The same paper established an analogous result for a somewhat opposite case --- bounded degree trees containing a bare path of length $\Theta(n)$. A subsequent work by Glebov, Johannsen, and Krivelevich achieved even a hitting time result for such trees (see the dissertation~\cite{GJK13}, although a paper with this result has never been published). Earlier, Krivelevich~\cite{Kri10} proved that every tree $T$ on $n$ vertices with maximum degree at most $\Delta=\Delta(n)$ appears in $G(n,p)$ whp whenever $np \ge C\max\{\Delta\ln n,n^{\eps}\}$ for sufficiently large constant $C>0$ and any fixed $\eps>0$. 
Although this bound is optimal when $\Delta\in [n^{\eps},n/\ln n]$, it provides only rather crude estimates of the threshold density for trees with smaller maximum degree.

To the best of our knowledge, the first result on the $\mathcal{T}_{n,\Delta}$-universality of $G(n,p)$ was obtained as a consequence of a more general theorem of Dellamonica, Kohayakawa, R\"{o}dl, and Ruci\'{n}ski~\cite{DKRRSODA}, 
which established universality for the family of {\it all}\, graphs with maximum degree at most $\Delta$. When specialised to $\mathcal{T}_{n,\Delta}$, however, their result yields rather suboptimal bounds. 
A few years later, Johannsen, Krivelevich, and Samotij~\cite{JKS13} significantly improved these bounds by focusing specifically on the family $\mathcal{T}_{n,\Delta}$. 
They proved that whp $G(n,p)$ is $\mathcal{T}_{n,\Delta}$-universal when $\Delta=\Delta(n)\geq\ln n$ and $p \ge C \Delta n^{-1/3} \ln n$ for large enough constant $C>0$. 
In particular, this immediately implies that whp $G(n,C(\ln n)^2n^{-1/3})$ is $\mathcal{T}_{n,\Delta}$-universal for every constant $\Delta$. 
 Subsequently, Ferber, Nenadov, and Peter~\cite{FNP} showed that, for every $\Delta=\Delta(n)\ge 2$ and $p=\omega(\Delta^{12}n^{-1/2}(\ln n)^3)$, the random graph $G(n, p)$ is $\mathcal{T}_{n,\Delta}$-universal whp. In particular, for constant $\Delta$, this improved upon the result of Johannsen, Krivelevich, and Samotij, although a substantial gap to the conjectured threshold $p=\Theta(\ln n/n)$ still remained.
Finally, Montgomery~\cite{Mon19} proved that, for every constant $\Delta\ge 2$, there exists a constant $C=C(\Delta)$ such that $G(n, C\ln n/n)$ is $\mathcal{T}_{n,\Delta}$-universal whp.
This established a strong form of the conjecture of Kahn and represents a landmark result in the area. It subsequently inspired further research on universality for bounded-degree trees in a variety of settings, including randomly perturbed graphs~\cite{BHKMPP19}, pseudo-random graphs~\cite{HMMP-S25}, and random geometric graphs~\cite{ADILS25}.

A major open problem highlighted by Montgomery~\cite{Mon19} was to determine whether the dependence of the constant $C$ on $\Delta$ in the bound on the probability threshold can be eliminated. 
This dependence is far from being a mere artefact of the proof. 
Instead, it reflects a fundamental difficulty arising from the need to bridge the gap between trees with many leaves and trees with many long bare paths --- a dichotomy that emerges naturally in the argument.

\subsection{Universality of cycle factors.}
\label{sc:intro-cycles}

A {\it cycle factor} in a graph $G$ is a spanning subgraph of $G$ that is isomorphic to a union of edge-disjoint cycles. In particular, a $C_{\ell}$-factor is a disjoint union of $\ell$-cycles.
The study of threshold phenomena for cycle factors in $G(n,p)$ originated with a series of papers on Hamilton cycles~\cite{Ham1,Ham2,Ham3,Korshunov,Posa}.
In this line of research, the threshold for Hamiltonicity
$$
p_{\rm Ham}=\frac{\ln n+\ln\ln n+O(1)}{n}
$$ 
was identified very precisely and  a corresponding hitting-time result was proved. In particular, before the first Hamilton cycle appears in the random graph process, whp there are no cycle factors at all. 
In contrast, determining the threshold for a triangle factor proved to be a much more difficult problem, requiring several decades to achieve the same level of precision as in the case of Hamilton cycles, thus answering a question of Erd\H{o}s and Spencer~\cite[§5.4]{hitting-question}. After a series of works locating the threshold~\cite{Alon-Yuster,Heckel,JKV,Kahn-Shamir,Krivelevich-factors,Rucinski-factors}, Heckel, Kaufmann, M\"{u}ller, and Pasch~\cite{HKMP24} showed that the hitting time for the emergence of a triangle factor coincides with the hitting time at which every vertex lies in a triangle, with the corresponding density being around $p_{\rm tri} = (2(\ln n)/n^2)^{1/3}$.
We note that~\cite{HKMP24} 
makes a substantial use of a work of Kahn~\cite{K22} on the hitting time of perfect matchings in the random $k$-uniform hypergraph process, also known as \emph{Shamir's problem}.

For the intermediate values $p_{\rm Ham}<p<p_{\rm tri}$, the cycle factors in $G(n,p)$ are not so well understood. 
It is natural to expect that the minimum $\ell$ such that $G(n,p)$ has a $C_{\ell}$-factor decreases as $p$ increases; note that this is the case when, for some fixed $\eps>0$, $p\ge n^{-1+\eps}$ by the main result of~\cite{FKL19}.  In particular, from the union bound, it is immediately clear that $G(n,\Theta(\ln n/n))$ does not have a $C_{\ell}$-factor for $\ell<(1-\varepsilon)\frac{\ln n}{\ln\ln n}$. 
On the other hand, as a corollary of a deep result on the minimum $k=k(n)$ such that the random $3$-regular graph contains a $k$-cycle factor, Kahn, Lubetzky, and Wormald~\cite{KLW17} proved that whp $G(n,(2+\varepsilon)p_{\rm Ham})$ has a $C_{\ell}$-factor where $\ell\ge c\log_2 n$, with $c\approx 4.82$, which is a $(\ln\ln n)$-factor far from the trivial lower bound.   In the dense case, namely when $p=n^{-1+\Theta(1)}$, the minimum such girth is $\ell=O(1)$. 
In particular, from the recent result of Burghart, Heckel, Kaufmann, M\"{u}ller, and Pasch~\cite{BHKMP} establishing sharp thresholds for $F$-factors for all strictly 1-balanced graphs $F$ and thus confirming the 30-years old conjecture of Ruci\'{n}ski~\cite{Rucinski-factors}, it follows that $p= (2(\ln n)/n^{\ell-1})^{1/\ell}$ is a sharp threshold for containing a $C_{\ell}$-factor.

As in the case of trees, universality results for cycle factors are considerably more difficult to obtain. 
In this direction, 
for every fixed integer $\ell\ge 3$,
Ferber, Kronenberg, and Luh~\cite{FKL19} established a coarse threshold 
$p=\Theta\big((\ln n)^{1/\ell}/n^{1-1/\ell}\big)$ for universality for all cycle factors of girth at least $\ell$, thereby improving an earlier suboptimal result of Kim and Lee~\cite{KimLee}. Their proof relies heavily on the known thresholds for $C_{\ell}$-factors, making it challenging to extend the result to growing values of $\ell$ and, consequently, to the regime $p=n^{-1+o(1)}$. 

We note that, for the family $\mathcal{C}_{n,\ell}$ of all cycle factors of girth at least $\ell$, $\mathcal{C}_{n,\ell}$-universality is equivalent to universality for the family $\mathcal{H}_{n,2,\ell}$ of all $n$-vertex graphs with maximum degree 2 and girth at least $\ell$ (as $\mathcal{C}_{n,\ell}$ is the set of maximal elements of $\mathcal{H}_{n,2,\ell}$). 
In particular, it follows from~\cite{FKL19} that $p=\Theta\big(\sqrt[3]{\ln n/n^2}\big)$ is the threshold for $\mathcal{H}_{n,2}$-universality, where $\mathcal{H}_{n,\Delta}$ is the family of all $n$-vertex graphs with maximum degree at most $\Delta$ (so that $\mathcal{H}_{n,2}=\cup_{\ell\geq 3}\mathcal{H}_{n,2,\ell}$). 
For larger values of $\Delta$, however, much less is known. To the best of our knowledge, the best currently available upper bound on the threshold for $\mathcal{H}_{n,\Delta}$-universality is $O\big((\ln n/n)^{1/\Delta}\big)$, proved in~\cite{DKRR}. This bound remains very far from the conjectured threshold,
which is believed to match the threshold for the appearance of a $K_{\Delta+1}$-factor~\cite{FKL19}.
We further remark that, for all $\Delta\ge 1$, a uniform upper bound (in terms of $n$ and $\Delta$) on the threshold for the emergence of any fixed $n$-vertex graph $H$ of maximum degree at most $\Delta$ appears as~\cite[Theorem~7.2]{FraKahNarPar2021}.

\subsection{Linking systems.}
\label{sc:intro-linking}

Fix integers $k,m\ge 1$. Let $\mathbf{a}=(a_1,\ldots,a_m)$ and $\mathbf{b}=(b_1,\ldots,b_m)$ be two disjoint tuples of distinct vertices. 

\begin{definition}[Linking systems]
A graph $G$ on $(k+1)m$ vertices is an \emph{$(\mathbf{a},\mathbf{b};k)$-linking system} if $G$ is a union of forests $F_{\sigma}$, one for each permutation $\sigma\in S_m$, where each forest $F_{\sigma}$ consists of vertex-disjoint paths of length $k$ connecting $a_i$ with $b_{\sigma(i)}$, for every $i$.
Here, $k$ is called the \emph{depth} of the linking system.

With some abuse of terminology, $G$ is called a linking system if it contains a spanning linking system.
\end{definition}
In other words, a linking system provides, for every possible matching between $\mathbf{a}$ and $\mathbf{b}$, a collection of disjoint $k$-paths realising that matching. Linking systems have proved to be a powerful tool for embedding large spanning structures in random graphs and expanders. In particular, Montgomery~\cite{Montgomery-old} proved that $G(n,\Omega((\ln n)^4/(n\ln\ln n)))$ contains a spanning linking system of depth $k=\Theta((\ln n)^2)$ whp, and used this result to prove that $G(n,\Omega((\ln n)^5/n))$ is $\mathcal{T}_{\Delta,n}$-universal whp. The existence of a linking system in~\cite{Montgomery-old} is derived entirely from expansion properties of $G(n,p)$ and the embedding procedure in the host graph relies on a version of the rollback method~\cite{DKN22,Hax01}, which requires fairly strong expansion properties that are not achievable in $G(n,\Theta(\ln n/n))$. More recently,~\cite{DMMPS24}~and~\cite{HMMP-S25} employed linking systems of depth $k=\Omega((\ln n)^3)$ to establish Hamiltonicity and $\mathcal{T}_{\Delta,n}$-universality, respectively, for expanders satisfying substantially weaker expansion conditions than those required in~\cite{Montgomery-old}. We emphasise that these works rely on the partially explicit constructions of linking systems based on the sorting networks of Ajtai, Koml\'{o}s, and Szemer\'{e}di~\cite{AKS}, as explained in more detail below.

A sorting network is a parallel algorithm for sorting $n$ numbers using a prescribed sequence of comparison operations~\cite{AKS,Knuth}. The depth of a sorting network is the number of parallel rounds of comparisons it performs. It is well known that every sorting network has depth $\Omega(\ln n)$, while the Ajtai--Koml\'{o}s--Szemer\'{e}di (AKS) sorting network is asymptotically optimal, having depth $O(\ln n)$. Such a sorting network naturally gives rise to an AKS graph, which serves as the underlying framework for the linking system constructions in~\cite{DMMPS24,HMMP-S25}.  To obtain embeddable linking systems,~\cite{DMMPS24,HMMP-S25} worked with subdivisions of the AKS graph where every edge is subdivided $\Omega((\ln n)^2)$ times.
Crucially, because the depth of the AKS sorting network is already asymptotically optimal, this approach {\it cannot} yield linking systems of depth $o(\ln n)$. 

Therefore, existing purely combinatorial techniques --- that do not rely on randomness of the random graph --- fall short of proving the existence of a spanning linking system of presumably optimal depth $\Theta(\ln n/\ln\ln n)$ in $G(n,p)$, just above the connectivity threshold $p=\ln n/n$. This remains one of the main obstacles to obtaining tight tree-universality results in this regime.

\subsection{Main results}
\label{sc:intro-results}

Everywhere in this paper, we assume that the random graph $G\sim G(n,p)$ is sampled on the vertex set $[n]:=\{1,\ldots,n\}$.

Our first result answers positively the question of Montgomery~\cite{Mon19} about the existence of a universal constant factor $C$ in the threshold probability for the trees universality. Recall $\cT:=\cT_{n,\Delta}$ denotes the family of all trees on $n$ vertices with maximum degree at most $\Delta$.

\begin{thm}\label{thm:univ_trees}
There exists a constant $C > 1$ such that, for every constant $\Delta\ge 2$, whp $G(n,C\ln n/n)$ is $\cT$-universal. 
\end{thm}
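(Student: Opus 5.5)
The plan is to follow Montgomery's dichotomy but remove the $\Delta$-dependence with one spanning linking system of optimal depth. Every $T\in\cT$ either has at least $\delta n$ leaves, or contains at least $\delta n/k$ vertex-disjoint bare paths of length $k$, where $k=\Theta(\ln n/\ln\ln n)$ and $\delta>0$ is absolute. This follows from the standard counting lemma: a tree with $L$ leaves has at most $2L$ vertices of degree $\neq 2$, so deleting them leaves at most $4L$ bare paths covering the remaining vertices. Crucially, $\delta$ can be taken independent of $\Delta$. The constant $C$ will be fixed in advance, and we expose $G(n,C\ln n/n)$ in a bounded number of independent rounds $G_1\cup G_2\cup G_3$, each of density $C'\ln n/n$.

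\textbf{Step 1: linking system of depth $k$.} First I would prove the key auxiliary result. Take a random set $V_0$ of $\beta n$ vertices (with $\beta$ small and absolute) and disjoint tuples $\mathbf a,\mathbf b\subset V_0$ of length $m=\Theta(n/k)$. Then whp $G_1[V_0]$ contains an $(\mathbf a,\mathbf b;k)$-linking system spanning $V_0$.

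The approach for this step:
\begin{itemize}
\item Fix an explicit ``template'' linking system $L$ of depth $k$ and maximum degree $O(1)$. For instance, take a bounded-degree sorting-type network on $m$ wires with $O(\ln m)$ layers. Subdivide it so that every $\mathbf a$–$\mathbf b$ route has length exactly $k$.
\item The AKS obstruction from Section~\ref{sc:intro-linking} is avoided by letting the paths \emph{not} be routed through a fixed sorting network. Instead, use $O(1)$-depth ``switching gadgets'' at each of $k$ levels. The required universality over permutations is then obtained from the expansion of $G_1$ between consecutive levels, which have size $\Theta(n/k)$ each.
\item Concretely, realize each level-to-level connection as a random bipartite graph of degree $\Theta(\ln n)$. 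Then show via Hall's theorem and a Pósa-type rotation argument that any permutation routing can be completed by vertex-disjoint $k$-paths. Path lengths are corrected with absorbing gadgets.
\end{itemize}
The randomness matters because $\ln n/\ln\ln n$ levels of $\ln n$-expansion give the $n$-fold reachability needed. The union bound over all $m!$ permutations is avoided by proving a deterministic statement from pseudorandom properties that hold whp.

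\textbf{Step 2: the many-bare-paths case.} Choose $\delta n/k$ disjoint bare $k$-paths of $T$. Remove their interiors to get a forest $F$ whose pieces have at most $(1-\beta)n$ vertices in total.
\begin{itemize}
\item Embed $F$ into $V\setminus V_0$ using $G_2$. By Montgomery's or Alon–Krivelevich–Sudakov/Friedman–Pippenger-type almost-spanning embedding in expanders, this only needs bounded degree in the sense of $\Delta$ being fixed. Extendability-based embedding costs only $\ln n$ density for almost spanning trees with constant $\Delta$, so no dependence of $C$ on $\Delta$ enters beyond $n$ large.
\item The path endpoints land on a set $A$, and the remaining uncovered vertices outside $V_0$ are absorbed in the next step.
\item Connect $A$ to $\mathbf a,\mathbf b$ with matchings in $G_3$.
\item Use the linking system to realize the permutation prescribed by $T$, via Definition of linking systems. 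Since the system spans $V_0$, this finishes the embedding.
\end{itemize}
Leftover vertices outside $V_0$ are handled by reserving them in advance as extra template vertices of $V_0$.

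\textbf{Step 3: the many-leaves case.} Remove $\delta n$ leaves, chosen so that their parents have at most $\Delta$ removed leaves each. Embed the rest by the almost-spanning result. Then attach the leaves via a $\Delta$-star matching (generalized Hall) in $G_3$ between the parent images and the leftover set. This is the Hefetz–Krivelevich–Szabó argument and needs density only $C\ln n/n$ with $C$ absolute, since the stars have bounded size.

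The main obstacle is Step~1: proving that linking systems of depth $\Theta(\ln n/\ln\ln n)$ exist spanningly at density $\Theta(\ln n/n)$. The rest is an adaptation of known embedding technology.
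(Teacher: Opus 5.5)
\textbf{The case split is false, and the missing case is the heart of the theorem.} Your own counting does not give the dichotomy with $\delta$ absolute. A tree with $L$ leaves has at most $4L$ maximal bare paths covering at least $n-2L$ vertices. Chopping these into $k$-paths loses up to one path per maximal bare path, so it yields at least $(n-2L)/(k+1)-4L$ disjoint bare $k$-paths. This is positive only for $L\lesssim n/(5k)$. The correct dichotomy is Lemma~\ref{lem:separate}: either $\Omega(n/k)$ leaves or $\Omega(n/k)$ bare $k$-paths.

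Concretely, take a path and attach a pendant leaf at every $\lfloor k/2\rfloor$-th vertex. This comb has $\Theta(n\ln\ln n/\ln n)=o(n)$ leaves and no bare path of length $k$, so it lies in neither of your cases. Shrinking $k$ does not help. In a linking system of depth $h$, every $b_j$ is within distance $h$ of $a_1$, so $m\le (O(\ln n))^h$. Hence $h=\Omega(\ln n/\ln\ln n)$ as soon as the system carries $n^{1-o(1)}$ vertices.

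Trees with between $\Theta(\alpha)$ and $o(n)$ leaves are exactly the ones that forced Montgomery's $C$ to depend on $\Delta$. The almost-spanning embedding leaves $\eps n$ vertices unused. Even rollback leaves a buffer of order $\Delta t\asymp \Delta n\ln\ln n/(C\ln n)$, which $o(\Delta\alpha)$ leaves cannot absorb. Handling this regime is the bulk of the paper: the family $\cT_2$ in Section~\ref{sec:manyleaves}. The main ingredients there are:
\begin{itemize}
\item iterated leaf-cutting into $\imax=\Delta^3$ layers of size $\Omega(\alpha)$, which multiplies the absorption capacity;
\item root-shifting so that layer sizes take only boundedly many values, which keeps the union bound affordable at $p=C\ln n/n$;
\item layer-by-layer many-to-one $(f,A_1,B_1)$-matchings (Lemma~\ref{lem:match});
\item when leaf-cutting stalls, interlacing all of this with the linking-system embedding.
\end{itemize}
Your proposal has no substitute for any of these.

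\textbf{Gaps inside your two cases.} Several steps treat tree-dependent sets as if they were fixed: ``connect $A$ to $\mathbf a,\mathbf b$ with matchings in $G_3$'', ``attach the leaves by a star matching in $G_3$'', and ``reserve the leftover vertices in advance''. For a single tree, the failure probability of a fresh $G_3$ is only polynomially small, since some leftover vertex has no neighbour among the relevant images with probability about $n^{1-\Theta(C)}$. That cannot be union-bounded over exponentially many trees; Hefetz--Krivelevich--Szab\'o is a single-tree result. You need absorbing structures fixed before $T$ is seen. Examples are the LLL-built match-maker system of Lemma~\ref{lm:paths}, into whose parts the tree-dependent leftover $W$ is distributed, or matchmaker pairs $U_L,U_P$ covered via rollback in the leaf case.

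\textbf{Step 1 fails as sketched.} Between consecutive levels of size $m=\Theta(n/k)$, the bipartite degrees are $mp=\Theta(\ln\ln n)$, not $\Theta(\ln n)$. So whp about $m(\ln n)^{-\Theta(C)}\to\infty$ vertices have no neighbour in the next level, and no level-structured spanning linking system exists. The paper instead reduces to single path systems via exchangeability and Markov's inequality (Lemma~\ref{lem:combaining path systems}). It then proves those with the fractional expectation-threshold theorem, applied to completions of random prefix systems, which supply the missing spread.
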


\begin{remark}
\label{rk:constant-factors}
We have not attempted to optimise the constant factor $C$.
We also note that the proof readily extends Theorem~\ref{thm:univ_trees} to the regime $\Delta\le (\ln\ln n)^{1/5}$ while keeping the edge probability $p=C\ln n/n$ unchanged (though we believe that pushing our methods to the limit should improve this bound to $\Delta=(\ln\ln n)^{1-o(1)}$).
\end{remark}

One of the main components in the proof of Theorem~\ref{thm:univ_trees} is the embedding of linking systems of optimal depth in $G(n,C\ln n/n)$.
 Below, we state the respective result for all densities $C \ln n/n\leq p=n^{-1+o(1)}$.

\begin{thm}\label{thm:linking_system}
Let $C$ be a large enough absolute constant. Consider $\psi=\psi(n)\in\left[1, \frac{\ln n}{4\ln\ln n}\right]$ and an integer 
\[k\ge \frac{24\ln n}{\psi \ln\ln n} \qquad \text{such that}\qquad m = \frac{n}{k+1}\in \mathbb N.\]
Fix disjoint $m$-tuples of different vertices $\mathbf{a},\mathbf{b}\in [n]^m$. 
Then, whp the random graph $G\sim G(n,C(\ln n)^{\psi}/n)$ contains an $(\mathbf{a},\mathbf{b};k)$-linking system.
\end{thm}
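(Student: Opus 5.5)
We plan to build a \emph{template} graph $H$ on $(k+1)m=n$ vertices which is deterministically an $(\mathbf{a},\mathbf{b};k)$-linking system and has maximum degree bounded by an absolute constant. We then show that whp $G\sim G(n,p)$ contains a copy of $H$ with the vertices of $\mathbf{a}$ and $\mathbf{b}$ fixed.

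\textbf{The template.} The template is a layered graph. It has layers $L_0=\mathbf{a},L_1,\dots,L_k=\mathbf{b}$, each of size $m$, and edges only between consecutive layers. In the first half, $L_0\to L_{k/2}$, the structure is a union of random-like bounded-degree bipartite expanders. It is chosen so that, for every injective assignment of targets, vertex-disjoint paths routing $a_i$ to a prescribed set exist. The second half is built symmetrically from $\mathbf{b}$. In the middle, the two halves are glued by a perfect-matching-rich bipartite expander.

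The heart of the template is a permutation-routing lemma for layered expanders: in a layered graph of depth $O(\log_d m)$ with $d$-regular expanding bipartite graphs between layers, every permutation can be routed by vertex-disjoint paths. This is an analogue of Bene\v{s}/Waksman networks, except that the depth is $\Theta(\log m/\log d)$ rather than $\Theta(\log m)$. The depth requirement
\[
k\ge \frac{24\ln n}{\psi\ln\ln n}
\]
tells us to take layer degree $d\approx(\ln n)^{\psi/c}$, so that $\log_d n=O(\ln n/(\psi\ln\ln n))$. A constant degree would not suffice, so $H$ must have degree roughly $(\ln n)^{\Theta(\psi)}$. This is affordable because the typical degree in $G$ is $pn=C(\ln n)^\psi$. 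We will prove the routing by a Hall-type argument: perfect matchings between consecutive layers are chosen sequentially, with expansion guaranteeing that every partial path system extends.

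\textbf{The embedding, done directly.} Rather than embedding $H$ as a fixed graph, we will let the layers of the template be defined by $G$ itself. Use multi-round exposure, splitting $G$ into $k$ independent copies $G_1,\dots,G_k$, each of density $p/k$ up to constants. Set $L_0=\mathbf{a}$ and choose a random equipartition $V_1,\dots,V_{k-1}$ of the remaining vertices outside $\mathbf{b}$. Let layer $i$ be $V_i$, with edges between $V_{i-1}$ and $V_i$ taken from $G_i$.

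Between consecutive parts of size $m=n/(k+1)$, the graph $G_i[V_{i-1},V_i]$ is a random bipartite graph with average degree
\[
\frac{pm}{k}\approx \frac{C(\ln n)^\psi}{k^2}.
\]
We will check that this is still $\gg\ln n$ in the relevant range of $\psi$. If it fails for small $\psi$, we instead use $G$ itself between consecutive layers, with no splitting, since the layer pairs are disjoint and hence independent. With this choice the average degree is $pm\approx C(\ln n)^{\psi}\cdot\psi\ln\ln n/(24\ln n)$. This is sufficient for expansion of sets of size up to $m/d$ and for Hall's condition, except for low-degree vertices.

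We then need to prove that the resulting layered random graph routes every permutation. The plan is to grow balls: from each $a_i$, the reachable sets expand by a factor $\approx d$ per layer. We split the path into a forward half from $\mathbf{a}$ and a backward half from $\mathbf{b}$. Each half reaches linearly sized sets after $\approx\log_d m\le k/2-O(1)$ layers. The middle layers are then connected by a disjoint-paths argument that uses Hall's theorem together with the fact that whp every pair of linear-size sets in consecutive layers spans an edge.

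\textbf{Main obstacle.} The key difficulty is \emph{simultaneity}. We need vertex-disjoint paths for all $m!$ permutations, and the paths must cover every vertex, since $G$ spans $(k+1)m$ vertices. A union bound over permutations is hopeless. The routing property must therefore follow from deterministic pseudorandom properties of the layered graph: expansion of small sets, edges between large sets, and minimum degree. The routing argument then has to be purely combinatorial given these properties, in the spirit of Montgomery's approach, but using layers of degree $d$ to achieve depth $O(\log_d n)$ instead of the $O(\log^2 n)$ that comes from sorting networks.

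The plan is to prove a deterministic lemma: if each bipartite layer graph is a $(d,\text{linear})$-expander with suitable joint connectivity, then any perfect routing exists. Spanning-ness will come from requiring perfect matchings, i.e.\ Hall's condition, at every layer, inside the subgraph left available by the other paths. I expect the delicate part to be handling vertices of atypically low degree near $p=C\ln n/n$ with $\psi=1$. For these we would reserve a small absorbing set of vertices and place the low-degree vertices first.
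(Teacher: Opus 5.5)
The layered graph you actually build (layers $V_1,\dots,V_{k-1}$ of size $m=n/(k+1)$, chosen independently of $G$, with each path advancing one layer per step) lacks the properties your routing lemma needs, in exactly the regime the theorem is about.

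\textbf{Densities.} Your lemma needs layer degree $d\approx(\ln n)^{\psi/c}$. The degree between consecutive layers is instead $pm=C(\ln n)^{\psi}/(k+1)$. For $\psi=1$ and $k\approx 24\ln n/\ln\ln n$ this is only $\Theta(C\ln\ln n)$, so your claim that it suffices for Hall's condition fails. Several consequences follow.
\begin{itemize}
\item Since $\log_{pm} m\approx \ln n/\ln\ln\ln n\gg k$, your balls reach only $n^{o(1)}$ vertices after $k/2$ layers.
\item The expected number of layered $k$-paths joining a fixed $a_i$ to a fixed $b_j$ is $m^{k-1}p^k=(pm)^k/m=n^{-1+o(1)}$. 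So even for one fixed $\sigma$, whp all but $n^{o(1)}$ of the pairs $(a_i,b_{\sigma(i)})$ cannot be joined at all.
\item A vertex has no neighbour in a given adjacent layer with probability about $(\ln n)^{-C/24}$, and $pm\ll\ln m$ lies below the perfect-matching threshold of $G[V_{i-1},V_i]$. Your ``low-degree vertices'' therefore number $n(\ln n)^{-\Theta(C)}$, far too many to absorb as an afterthought.
\end{itemize}
Choosing layers adaptively does not help: in this regime, whp any two disjoint $m$-sets span only $O(Cm\ln\ln n)$ edges, so no layering achieves layer degree $(\ln n)^{\Omega(1)}$. A fixed template $H$ fails too. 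A linking system needs $e(H)\ge\log_2(m!)\gg n$ edges here, and then $n!\,p^{e(H)}\to 0$. For $\psi=1$ and $k\ge C\ln n$ one even has $pm<1$.

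\textbf{Routing.} Even with good layers, your routing step does not realise a prescribed permutation. Hall's theorem in the middle gives some perfect matching, not the one joining the end of the path from $a_i$ to the end of the path from $b_{\sigma(i)}$. Likewise, extending partial path systems by expansion gives no control over which permutation arises.

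\textbf{Missing idea.} No deterministic routing is needed. The paper's Lemma~\ref{lem:combaining path systems} splits $[n]$ into $S_1,S_2$ overlapping in a middle tuple $\mathbf{c}$.
\begin{itemize}
\item By exchangeability, each fixed relabelling $\sigma(\mathbf{c})$ is realised by a path system in $G[S_1]$ with probability $1-o(1)$.
\item By Markov's inequality, whp the set $\Sigma_1$ of realised $\sigma$ has size $(1-o(1))m!$; likewise for $\Sigma_2$ in $G[S_2]$.
\item For any target $\sigma$, the sets $\{\pi^{-1}\sigma:\pi\in\Sigma_1\}$ and $\Sigma_2$ both have more than $m!/2$ elements, so they intersect.
\end{itemize}
This reduces the theorem to a single spanning, unlayered path system with a fixed pairing, where each step sees degree about $pn$ rather than $pm$.

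The paper obtains that path system from the fractional expectation-threshold theorem (Theorem~\ref{thm:fracKK}). For $\psi\le 2$ the uniform measure on such systems is not $O(1/n)$-spread, so it proceeds as follows.
\begin{itemize}
\item Expose $G_1$ and build $(1-o(1))(\ln n)^{\psi dm}$ prefix systems of length $d\approx \ln n/(\psi\ln\ln n)$ from $\mathbf{a}$.
\item Show that any two vertices are joined by at most $\exp((\psi\ln\ln n)^5)$ $d$-paths in $G_1$.
\item Deduce that the induced measure on completions is $(87/n)$-spread, so a sprinkled $G_2$ contains a completion.
\end{itemize}
Your proposal has no substitute for either the permutation trick or this spread argument.
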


We note that the depth of a linking system in this theorem is tight up to a constant factor. 
Indeed, by the union bound, it can be easily checked that whp $G(n,C(\ln n)^{\psi}/n)$ does not contain a disjoint union of paths of length $\frac{\ln n}{(1+\eps)\psi\ln\ln n}$  linking the $i$-th element of $\mathbf{a}$ with the $i$-th element of $\mathbf{b}$ for every $i\in[m]$.

The regime $\psi=1$ is both the most challenging and the most relevant case of Theorem~\ref{thm:linking_system}, as it is an important part of the proof of Theorem~\ref{thm:univ_trees}. For convenience and clarity, we state this special case separately.

\begin{cor}\label{cor:linking_system}
Let $k \geq  \frac{24\ln n}{\ln \ln n}$ and $m \coloneq \frac{n}{k+1}$ be integers. Fix disjoint $m$-tuples of different vertices $\mathbf{a},\mathbf{b}\in [n]^m$. 
Then, there is a constant $C>0$ such that whp the random graph $G\sim G(n,C\ln n/n)$ contains an $(\mathbf{a},\mathbf{b};k)$-linking system.
\end{cor}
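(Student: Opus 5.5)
This is the special case $\psi=1$ of Theorem~\ref{thm:linking_system}, so the plan is to specialise that theorem.

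With $\psi=1$ the constraint $\psi\in\bigl[1,\frac{\ln n}{4\ln\ln n}\bigr]$ holds trivially for large $n$. The depth condition $k\ge \frac{24\ln n}{\psi\ln\ln n}$ becomes exactly the hypothesis $k\ge \frac{24\ln n}{\ln\ln n}$. The divisibility condition $m=\frac{n}{k+1}\in\mathbb N$ is built into the corollary's assumption that $m$ is an integer. The density $C(\ln n)^{\psi}/n$ becomes $C\ln n/n$.

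Accordingly, take $C$ to be the large absolute constant provided by Theorem~\ref{thm:linking_system}; it does not depend on $k$, $\mathbf a$ or $\mathbf b$. Fix the disjoint $m$-tuples $\mathbf{a},\mathbf{b}$. Applying Theorem~\ref{thm:linking_system} with $\psi\equiv 1$ then shows that whp $G(n,C\ln n/n)$ contains an $(\mathbf a,\mathbf b;k)$-linking system, which is the claim.

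The only point needing care is the order of quantifiers. The corollary places the constant $C$ after $k$, $\mathbf a$ and $\mathbf b$, which is weaker than the uniform statement. The theorem provides a single absolute $C$, so the corollary follows a fortiori. There is no genuine obstacle here: the depth of the argument lies entirely in Theorem~\ref{thm:linking_system}, which we are allowed to assume.
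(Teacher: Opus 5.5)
Your proposal is correct and follows the paper's route exactly: the paper presents Corollary~\ref{cor:linking_system} as the special case $\psi\equiv 1$ of Theorem~\ref{thm:linking_system} and gives no separate argument. Your checks are all that is required: that $\psi\equiv1$ lies in the admissible range for large $n$, that the depth and divisibility conditions carry over, and that the theorem's single absolute constant $C$ covers the weaker quantifier order of the corollary.
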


Again, we did not try to optimise the constant in the above corollary, but the current proof strategy allows to take $C<2000$.

Although Corollary~\ref{cor:linking_system} is a key ingredient in the proof of Theorem~\ref{thm:univ_trees}, by itself it only enables us to handle trees containing a collection of disjoint bare paths of length $\Omega(\ln n/\ln\ln n)$ which covers a constant fraction of the vertices. The complementary case requires a delicate analysis and several new ideas; see the proof outline below for further details. 

In fact, Corollary~\ref{cor:linking_system} has applications beyond tree universality. 
In particular, it yields a more general universality theorem for other graph families; see Theorem~\ref{thm:univ_adapted} in Section~\ref{sec:long bare paths}. 
As a special case, we~obtain the following theorem, which extends the result of Ferber, Kronenberg, and Luh~\cite{FKL19} discussed in Section~\ref{sc:intro-cycles} to sparse random graphs. Recall $\mathcal{C}:=\mathcal{C}_{n,\ell}$ is the family of all cycle factors on $[n]$ of girth at least $\ell$.

\begin{thm}\label{thm:main_cycles}
Let $K$ be a large enough constant.
There exists a constant $C>1$ satisfying that, for every $\psi=\psi(n)\in\left[1, \frac{\ln n}{4\ln\ln n}\right]$ and $\ell=\ell(n)\ge \frac{{K}\ln n}{\psi \ln\ln n}$, whp $G\sim G(n,C(\ln n)^{\psi}/n)$ is $\mathcal{C}$-universal.
\end{thm}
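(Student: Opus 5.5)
Given a cycle factor $H\in\mathcal{C}$, with all cycles of length at least $\ell$, I would derive Theorem~\ref{thm:main_cycles} from Theorem~\ref{thm:linking_system} by a reservoir-and-linking argument that treats all of $\mathcal{C}$ simultaneously. The key point is that the randomness is used only to produce one linking system on a fixed vertex set. After that, the embedding of every $H$ is deterministic, so no union bound over $\mathcal{C}$ is needed.

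\textbf{Setup.} Set $k$ to be an integer of order $\frac{24\ln n}{\psi\ln\ln n}$, say between $\frac{24\ln n}{\psi\ln\ln n}$ and twice that. Choose $K$ large enough that every cycle of $H$ has length at least $4(k+1)$. Fix disjoint tuples $\mathbf{a},\mathbf{b}$ of length $m=n/(k+1)$; rounding issues can be handled by adjusting $k$ slightly or by using a sub-range of $n$. Apply Theorem~\ref{thm:linking_system} on the whole vertex set $[n]$. Whp $G$ contains an $(\mathbf{a},\mathbf{b};k)$-linking system, and it is spanning since it has exactly $(k+1)m=n$ vertices. So for every $\sigma\in S_m$ there are vertex-disjoint $k$-paths from $a_i$ to $b_{\sigma(i)}$ covering $[n]$.

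\textbf{Closing the cycles.} Here I also need edges between the $b$'s and the $a$'s. I would first use a sprinkling round: expose a separate copy $G'\sim G(n,C(\ln n)^\psi/n)$. The target is to decompose $H$ into $m$ segments, each a path with $k+1$ vertices, so that every cycle of length $L$ is cut into segments. This works exactly when $(k+1)\mid L$, which is false in general. So I would allow two depths, $k$ and $k+1$, or more simply use a linking system for only most of the vertices and absorb the remainder.

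\textbf{Main obstacle: divisibility and the $b$-to-$a$ edges.} This is where I expect the real difficulty. Requiring the edges $b_{\sigma(i)}a_{j}$ of $G'$ to realise an arbitrary successor structure is essentially asking $G'[\mathbf{b},\mathbf{a}]$ to contain every permutation pattern. That is false at this density. The fix I would try first is to put the connecting edges inside the linking system itself. Choose $\mathbf{b}_i$ and $\mathbf{a}_{i'}$ to be adjacent in $G$ via a fixed perfect matching $M$: plant $\mathbf{a}$ as one side of a perfect matching of $G$ restricted to a chosen set, which exists whp, and take $b_i$ to be the matched partner of $a_i$. Then the path from $a_i$ to $b_{\sigma(i)}$, followed by the edge $b_{\sigma(i)}a_{\sigma(i)}$, gives a cyclic concatenation following the cycle structure of $\sigma$. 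This produces a cycle factor whose cycles have lengths $(k+1)\cdot(\text{cycle lengths of }\sigma)$, and every permutation is available.

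To reach arbitrary lengths $L$, not just multiples of $k+1$, I would use linking systems of depth $k$ and $k+1$ on two disjoint parts of the vertex set. Since $\gcd(k+1,k+2)=1$ and $L\ge 4(k+1)^2$ after enlarging $K$, each $L$ can be written as $\alpha(k+1)+\beta(k+2)$. The remaining task is to choose the part sizes compatibly for all $H$ at once. I would handle this by letting the two parts be flexible: take a linking system of each depth on a region together with a small absorbing set. Alternatively, apply the linking-system theorem in the slightly more general form of Theorem~\ref{thm:univ_adapted}, in which paths of depths $k$ and $k+1$ are mixed. That mixing is exactly the step I expect to require the most care.
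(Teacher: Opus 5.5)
The gap is in the step that handles arbitrary cycle lengths, and the two-depth repair you propose fails in the stated range of $\ell$.

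\textbf{What works.} A spanning depth-$k$ linking system plus closing edges $b_ja_j$ gives exactly the cycle factors whose cycle lengths are all multiples of $k+1$. This part is salvageable with one correction. Choose the $b_i$ as distinct neighbours of the $a_i$ in $[n]\setminus\mathbf{a}$ in an independent round, and then apply Theorem~\ref{thm:linking_system} conditionally, since that theorem concerns fixed tuples. A perfect matching between two prescribed sets of size $m=n/(k+1)$ would fail whp for $\psi=1$, because the expected degree across is only about $C\ln\ln n/24$.

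\textbf{Why two depths fail.} Segments of lengths $k+1$ and $k+2$ tile a cycle of length $L$ only if $s(k+1)\le L\le s(k+2)$ for some integer $s$. So every $L=s(k+1)+r$ with $s<r\le k$ is missed; the Frobenius number of $k+1,k+2$ is $k^2+k-1$. The theorem only guarantees $L\ge K\ln n/(\psi\ln\ln n)$. For $\psi=1$ this is $\Theta(k)$ with $k\to\infty$, so no constant $K$ gives $L\ge 4(k+1)^2$. Concretely, take a constant $s\ge K$, $r=\lfloor k/2\rfloor$ and $n$ large: a cycle of length $s(k+1)+r$ is admissible but cannot be tiled.

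\textbf{Why more depths fail too.} A cycle of length $\Theta(Kk)$ uses only $O(K)$ segments. Covering every length in a window of size $k+1$ therefore needs a growing number of distinct depths. That forces linking systems on vertex sets of size $o(n)$, where $p=C\ln n/n$ is too sparse for Theorem~\ref{thm:linking_system} or Lemma~\ref{cor:linking-systems} to apply. Separately, with disjoint linking systems the number of segments of each depth and the cross-part closing edges are fixed before $H$ is seen; you leave this constraint unresolved.

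\textbf{A misreading.} Theorem~\ref{thm:univ_adapted} is not a mixed-depth linking theorem. It is the universality statement for $\mathcal{T}_1^*$, which already covers $\mathcal{C}$.

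\textbf{The missing idea.} You need a mechanism that absorbs arbitrary cycle lengths. The paper gets it by never tiling $H$ with linking-system segments, and reads the statement off Theorem~\ref{thm:univ_adapted}. A cycle factor of girth at least $K\ln n/(\psi\ln\ln n)$ contains $\alpha=n\psi\ln\ln n/(K^3\ln n)$ vertex-disjoint bare paths of a common length $\Theta(K\ln n/(\psi\ln\ln n))$. Deleting their interiors leaves a linear forest. This forest is embedded into $G_1[V_1]$ by the almost-spanning universality of Theorem~\ref{thm:AKS07}, which does not care about cycle lengths, leaving a buffer $W$ of $n/K^3$ vertices. The $\alpha$ endpoint pairs in $X_1,X_2$ are then joined by paths of one prescribed length. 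These paths run through chains of perfect matchings between matchmaker sets of size $\alpha$ that partition $V_2\cup W$ (Lemma~\ref{lm:paths}, built with the local lemma). The chains are routed through a depth-$2k$ linking system between $A_1$ and $A_2$. That linking system lives only on $V_3$, of size $3(2k+1)\alpha=O(n/K^3)$, and comes from Lemma~\ref{cor:linking-systems} in an independent round. So the linking system only acts as a switchboard realising an arbitrary pairing of $X_1$ with $X_2$. All connecting paths have equal length, so divisibility never enters.
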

We have not attempted to optimise the constant factor $K$, although the current proof strategy could be implemented with some $K<100$.
The bound on the girth in this theorem is tight up to a constant factor. Indeed, by the union bound, whp $G(n,C(\ln n)^{\psi}/n)$ does not contain a $C_{\ell}$-factor for all integers $\ell\leq \frac{\ln n}{(1+\varepsilon)\psi\ln\ln n}$.

The general Theorem~\ref{thm:univ_adapted} has other interesting universality consequences. For example, let $\mathcal{M}_{n,\Delta}$ be the family of all graphs on $[n]$ of maximum degree at most $\Delta$ obtained by subdividing every edge of some graph by at least $24\ln n/\ln\ln n$ vertices.
Then, whp $G(n,C\ln n/n)$ is $\mathcal{M}_{n,\Delta}$-universal, where the constant $C$ does not depend on $\Delta$. As a consequence, we recover, in a stronger form and for random graphs of density $p\geq C\ln n/ n$, a recent result of Krivelevich and Nenadov~\cite{KriNen}. 
Their result, for the particular case of random graphs at such densities, claims that whp every graph with at most $\eps n \ln\ln n/\ln n$ vertices and at most $\eps n \ln\ln n/\ln n$ edges appears in $G(n,p)$ as a minor.
Our result yields considerably more: for every such graph, $G(n,p)$ contains a \emph{spanning subdivision} of it, and analogous minor-universality of $G(n,p)$ for larger graphs follows from our results at higher densities.
Nevertheless, we note that~\cite{KriNen} proves this minor-universality result for all (vertex) expanders at all large enough densities. 
 
\begin{remark}
The proof of Theorem~\ref{thm:univ_trees} extends almost verbatim to yield the following common generalisation of Theorems~\ref{thm:univ_trees}~and~\ref{thm:main_cycles}. 
Call a connected graph {\it complex} if it contains at least two cycles. Let $\mathcal{S}_{n,\Delta}$ denote the family of all $n$-vertex graphs of maximum degree at most $\Delta$ which contain no complex subgraph and no cycle of length less than $\frac{{100}\ln n}{\ln\ln n}$. 
Then there exists $C>0$ such that for any $\Delta$, whp $G(n,C\ln n/n)$ is $\mathcal{S}_{n,\Delta}$-universal. Nevertheless, we give proofs for the special cases of $\mathcal{T}_{n,\Delta}$ and $\mathcal{C}_{n,\ell}$. This is done both for historical reasons and to keep the exposition as clear and transparent as possible.
\end{remark}

\subsection{Proof strategy}
\label{sc:intro-proofs}

We start by presenting the main ideas in the proof of Theorem~\ref{thm:univ_trees}: we first split the family of trees into two families, where the first family consists of trees with many long bare paths. Then we present the proof strategy for the complement family. Next, we explain how to reduce universality for trees with many long bare paths as well as $\mathcal{C}$-universality in Theorem~\ref{thm:main_cycles} to the existence of a linking system
given by Theorem~\ref{thm:linking_system}. Finally we describe the proof strategy of Theorem~\ref{thm:linking_system}.
  A more detailed overview is presented in Section~\ref{sc:overview}.

Write $G\sim G(n,C\ln n/n)$ for large absolute constants $C$ and $\alpha=\Theta(n\ln\ln n/\ln n)$. We recall that a path in a graph $G$ is called \emph{bare} if each of its internal vertices has degree 2 in $G$.
We consider separately a subfamily $\mathcal{T}_1\subset \mathcal{T}_{n,\Delta}$ consisting of all trees with $\Theta(\alpha)$ vertex-disjoint bare paths of length $\Theta(n/\alpha)$ (so that these bare paths cover a small but constant proportion of all vertices). We stress that trees in the complement family $\mathcal{T}\setminus\mathcal{T}_1$ have $\Omega(\alpha)$ leaves.

\paragraph{Trees with $\Omega(\alpha)$  leaves.} We start from trees which do not belong to $\mathcal{T}_1$. The case when the number of leaves is at least $\exp(\Delta^5)\alpha$ is relatively easy\footnote{The choice of the bound $\exp(\Delta^5)\alpha$ is somehow arbitrary, we believe that  $C\Delta\alpha$ suffices, for large enough constant $C$. }, and the proof applies a combination of existing ideas; we explain the strategy in this regime first.
A key tool for embedding such trees is the rollback technique developed by Dragani\'c, Krivelevich, and Nenadov~\cite{DKN22} on the basis of the foundational work of Friedman and Pippenger~\cite{FP87}, see also~\cite{DK08,Hax01}.
This technique was subsequently applied by Montgomery~\cite{Mon19}. Following Montgomery~\cite{Mon19}, we also make use of a version of the notion of matchmaking sets, originally introduced by Glebov, Johannsen, and Krivelevich~\cite{GJK13,GJK-unpublished}. 

In essence, for $T\notin\mathcal{T}_1$, we first find a pair of matchmaking sets $U_L$ and $U_P$, each of size $|U_L|=|U_P|=\Omega(\Delta\alpha)$, which are used to embed certain leaves of $T$ and their parents. The matchmaking property of these sets enables us to `absorb'    small sets of vertices while avoiding a set $\tilde U$ of size $o(\alpha)$ consisting of bad vertices. More precisely, for any two sets $S_L,S_P\subseteq [n]\setminus(\tilde U\cup U_L\cup U_P)$ of equal size, provided this size differs from $|U_L|=|U_P|$ by at most a constant factor (say, a factor of $9$), there exists a perfect matching in $G$ between $U_L\cup S_L$ and $U_P\cup S_P$.

Let $L$ be a subset of the set of leaves of $T$ of size $10|U_L|$ say (we need these leaves to be well-separated, see details in Section~\ref{sc:overview}) and let $P$ be the set of their parents in $T$. Since the number of leaves is sufficiently large, a carefully executed rollback procedure, applied over several rounds, enables us to embed the entire tree $T\setminus L$ into $G\setminus U_L$, leaving at most $10|U_L|$ pairwise well-separated leaves of $T$ unembedded. At the same time, we ensure that the sets $U_P$ and $\tilde U$ are completely covered and that $U_P$ lies entirely within the image of $P$. It then remains to find a perfect matching between the set of 
 not yet occupied vertices in $G$, which contains $U_L$, and the image of $P$, which is a superset of $U_P$. This is possible precisely because $U_L$ and $U_P$ form a pair of matchmaking sets.

The embedding of trees $T\notin\cT_1$ with fewer than $\exp(\Delta^5)\alpha$ leaves is considerably more challenging, constitutes the most technical part of this work, and requires several new ideas. 
The reason why this regime is more delicate is that a constant number of sets with the matchmaking property cannot absorb the buffer necessary for the rollback embedding.
To handle this case, we introduce a vertex-layering $L_1,\ldots,L_{\Gamma+1}$ of part of the tree where every vertex has a single neighbour on an upper layer.
The purpose of this construction is to increase the absorption capacity: namely, we identify disjoint sets $U_{L_1},\ldots,U_{L_{\imax}}\subseteq[n]$ of suitable size, one for each layer instead of just $U_L$ as in the previous case, which will be covered by the images of $L_1,\ldots,L_{\imax}$, respectively.

This construction is related to many technical problems. 
For example, while the natural way to construct the layers $L_1,\ldots,L_{\imax+1}$ is via leaf-cutting, a universality statement would require to treat all $n^{\imax+o(1)}$ choices of layer sizes simultaneously.
We cannot achieve this with a union bound since a single vertex is isolated with probability $\approx n^{-C}\gg n^{-\Gamma+O(1)}$.
To resolve this problem, we shift some of the trees in the layering to reduce the number of possible choices of layers to a constant.
Another problem is the fact that the degrees of the parents of a given layer may vary between $1$ and $\Delta-1$, so constructing perfect matchings is not enough.
We overcome this difficulty by developing more flexible tools for constructing many-to-one matchings with certain localisation properties.

Furthermore, the leaf-cutting procedure may fail, that is, it may terminate before reaching round $\imax$. 
In this case, we obtain a tree $T'$ with few leaves but many long bare paths, and hence $T'$ essentially belongs to the class $\mathcal{T}_1$ (although it has fewer than $n$ vertices). The embedding strategy then combines elements of the approaches used for trees with many long bare paths and for trees with many leaves when the leaf-cutting procedure succeeds. A further complication arises from the fact that the forests generated during the leaf-cutting process may extend from long bare paths of $T'$ that are intended to be embedded via a linking system. Since the linking system provides no control over the specific roles played by the vertices involved, it is not possible to prescribe in advance which vertices will serve as attachment points for the layered forests. 
To overcome this difficulty, we carefully interlace the two approaches.
Namely, we split the long paths in two groups: some are embedded via the linking system (see next part), and others cover a set $U_P$ as above by parents of the last successfully constructed layer.

\paragraph{Trees with many long bare paths and cycle factors.} 
In order to reduce $\mathcal{T}_1$-universality to the existence of a linking system, assume $G=G_1\cup G_2\cup G_3$ where all $G_i\sim G(n,p')$ are sampled independently with $p'>p/3$. A na\"{i}ve strategy would be as follows.

Partition $[n]$ into disjoint sets $V_1,V_2,V_3$, each of size $\Theta(n)$. Find two disjoint sets $A_1,A_2\subseteq V_3$, each of size $\alpha$, and partition $V_2$ into sets $U_1,\ldots,U_N$ of size slightly smaller than $\alpha$ in such a way that all sets $A_1$, $A_2$, and $U_i$ are matchmaking in $G_2$ whp. This can be achieved in a relatively straightforward manner using the celebrated Lov\'{a}sz Local Lemma. Since $C$ is sufficiently large, Corollary~\ref{cor:linking_system} implies that whp $G_3[V_3]$ forms a linking system of depth $24\ln n/\ln\ln n$ linking the sets $A_1$ and $A_2$. 
Now, fix a tree $T\in\mathcal{T}_1$. Remove from $T$ a collection of $\alpha$ bare paths of length $\Theta(n/\alpha)$, and let $F$ denote the resulting forest. Embed $F$ into $G_1[V_1]$, leaving at least $\varepsilon n$ vertices of $V_1$ unused. 
Such an embedding exists whp by the almost-spanning universality 
  result of Alon, Krivelevich, and Sudakov~\cite{AKS07}. To complete the embedding of $T$, 
it then remains to connect corresponding vertices from two disjoint sets $X_1,X_2\subseteq V_1$ of size $\alpha$ by pairwise vertex-disjoint paths of length $\Theta(n/\alpha)$.

If the sets $X_1$ and $X_2$ possessed absorption properties comparable to those of $A_1$, $A_2$, and the sets $U_i$, the task would be straightforward. Indeed, one could distribute the remaining vertices of $V_1$ among the sets $U_i$ in an arbitrary balanced way and then obtain perfect matchings between the consecutive pairs $(X_1,U_1)$, $(U_1,U_2),$ $\ldots$, $(U_{N/2},A_1)$, and
$(A_2,U_{N/2+1})$, $\ldots$, $(U_N,X_2)$. These matchings would naturally yield the required collection of disjoint paths.
The difficulty is that we have essentially no control over the locations of $X_1$ and $X_2$ inside $V_1$. In particular, we cannot even guarantee that every vertex of $U_1$ has a neighbour in $X_1$. To overcome this obstacle, we augment $X_1$ and $X_2$ with additional matchmaking sets selected in advance from $V_2$, alongside the sets $U_i$.

The reduction of $\mathcal{C}_{n,\ell}$-universality for $\ell=\frac{{100}\ln n}{\ln\ln n}$ is verbatim since, after deleting some long bare paths from any cycle-factor of girth at least $\ell$, 
 we get a linear forest $F$ which can be similarly embedded into $G_1[V_1]$. 
For all $\psi>1$, we just need to apply Theorem~\ref{thm:linking_system} instead of Corollary~\ref{cor:linking_system}.

\paragraph{Linking systems.}  Let $G\sim G(n,p)$. Our starting point is the following observation, which is a simple consequence of Markov's inequality. Let $\mathbf{a}=(a_1,\ldots,a_m)$, $\mathbf{b}=(b_1,\ldots,b_m)$, and $\mathbf{c}=(c_1,\ldots,c_m)$ be pairwise disjoint tuples of different vertices, and suppose that $V_1$, $V_2$, and $\mathbf{b}$ form a partition of $[n]$, with $|V_1|=|V_2|$, $\mathbf{a}\subseteq V_1$, and $\mathbf{b}\subseteq V_2$. Assume that whp there exists a spanning linear forest in $G[V_1\cup \mathbf{b}]$ consisting of $m$ paths such that, for every $i\in[m]$, the $i$th path has length $k$ and endpoints $a_i$ and $b_i$. Similarly, assume that whp there exists a spanning linear forest in $G[V_2\cup \mathbf{b}]$ consisting of $m$ paths such that, for every $i\in[m]$, the $i$th path has length $k$ and endpoints $b_i$ and $c_i$. Then, whp $G(n,p)$ is an $(\mathbf{a},\mathbf{c};2k)$-linking system.
Thus, the problem reduces to embedding a rooted linear forest.

When $\psi$ is large, this can be achieved via a relatively straightforward application of the fractional expectation-threshold theorem of Frankston, Kahn, Narayanan, and Park~\cite{FraKahNarPar2021}. However, for $\psi$ close to $1$, the family of all rooted linear forests fails to satisfy the spread conditions required for the application of this theorem.
To overcome this difficulty, we show that whp the collection of prefixes of the desired rooted linear forests is sufficiently rich to contain a large subfamily whose complements satisfy the necessary spread conditions.
This allows us to apply the expectation-threshold theorem to the family of complements of these prefixes.
 We believe that this new spread-improvement technique may be useful in other settings as well and is therefore of independent interest.

\subsection{Further challenges.}
\label{sc:intro-open}

Our results leave several natural questions open. Perhaps the most immediate one concerns the location of a sharp threshold (whose existence is ensured by~\cite{Fri99}) for bounded-degree tree universality. We show that, for large enough $C$ and every $\Delta$, whp $G(n,C/n)$ is $\mathcal{T}_{n,\Delta}$-universal. 
We~believe that $p=\ln n/n$ is a sharp threshold for this property. We also note that the possibility of this sharp threshold was already suggested by Montgomery~\cite{Mon19}.

\begin{conjecture}
\label{cj:trees}
For every $\varepsilon>0$ and every integer $\Delta\geq 2$, whp $G(n,(1+\varepsilon)\ln n/n)$ is $\mathcal{T}_{n,\Delta}$-universal.
\end{conjecture}
\noindent
We believe that our framework can be applied in the setting of (suitably dense) randomly perturbed graphs to provide a corresponding sharp threshold result but leave this for future work.

A stronger question concerns the hitting time for $\cT_{n,\Delta}$-universality. Let $\tau$ denote the hitting time at which the random graph process becomes $\mathcal{T}_{n,\Delta}$-universal. 
Glebov, Johannsen, and Krivelevich~\cite{GJK13,GJK-unpublished} conjectured that whp $\tau$ coincides with the hitting time for containing a Hamilton path $\tau_{\rm Ham}$ (or equivalently for having at most two vertices of degree at most 1). Even the simpler question whether, for every $T\in\mathcal{T}_{\Delta,n}$, whp $G_{\tau_{\rm Ham}}$ contains an isomorphic copy of $T$, remains open for trees with a small number of leaves and without a bare path of linear length.
Another interesting direction is to investigate threshold for $\mathcal{T}_{\Delta,n}$-universality for $\Delta=\Delta(n)$. According to Remark~\ref{rk:constant-factors}, our proof gives the answer for $\Delta\leq(\ln\ln n)^{1/5}$, while larger values remain open. 

Another direction concerns cycle factors. 
Determining  thresholds for the existence of $C_{\ell}$-factors for {\it all} $\ell=\omega(1)$  remains an open question. 
Recalling the family $\mathcal{C}_{n,\ell}$ of $n$-vertex cycle factors of girth at least $\ell$,
Theorem~\ref{thm:main_cycles} gives the answer for $\ell\geq\frac{{100}\ln n}{\ln\ln n}$ by exhibiting $\mathcal{C}_{n,\ell}$-universality. 
We believe that actually $\ln n/n$ is a sharp threshold and that the constant factor in the lower bound on $\ell$ can be reduced to 1. 

\begin{conjecture}
\label{cj:cycles}
For every $\varepsilon>0$ and $\ell\geq(1+\varepsilon)\frac{\ln n}{\ln\ln n}$, whp $G(n,(1+\varepsilon)\ln n/n)$ is $\mathcal{C}_{n,\ell}$-universal.
\end{conjecture}

Similarly, as a next step in this direction, we find it particularly interesting to understand the corresponding hitting times. Let $\tau_{\ell}$ denote the hitting time for the appearance of a cycle factor with girth at least $\ell$. Is it true that, for sufficiently large constants $C$ and $\ell=C\ln n/\ln\ln n$, the hitting time $\tau_{\ell}$ coincides with the hitting time for Hamiltonicity whp? Does this remain true when $\ell=(1+\varepsilon)\ln n/\ln\ln n$ for an arbitrary fixed $\varepsilon>0$? Such a result would provide a striking strengthening of the classical hitting-time theorem for Hamilton cycles.

Finally, our work studies linking systems, and the bounds we obtain on their depth are only optimal up to a constant factor. 
It would be very valuable to determine the correct relationship between the edge probability and the attainable depth of linking systems. We believe that establishing the following conjecture might be an intermediate step in resolving Conjecture~\ref{cj:trees} and Conjecture~\ref{cj:cycles}. 

\begin{conjecture}
Fix $\varepsilon>0$, and let $k \geq  (1+\varepsilon)\frac{\ln n}{\ln \ln n}$ and $m \coloneq \frac{n}{k+1}$ be integers.
Fix disjoint $m$-tuples of different vertices $\mathbf{a},\mathbf{b}\in [n]^m$. 
Then, whp the random graph $G\sim G(n,(1+\varepsilon)\ln n/n)$ contains an $(\mathbf{a},\mathbf{b};k)$-linking system.
\end{conjecture}
More generally, obtaining tight bounds on the maximal depth of linking systems in sparse random graphs appears to be a fundamental problem, whose resolution could have further applications to embedding spanning structures in random graphs.

\subsection{Organisation.} In Section~\ref{sc:pre}, we introduce some notation and auxiliary results.
In Section~\ref{sec:linking system}, we prove Theorem~\ref{thm:linking_system}.
A detailed overview of the proof of Theorem~\ref{thm:univ_trees} is presented in Section~\ref{sc:overview}. Then, Section~\ref{sec:long bare paths} is dedicated to the proof of a general result which implies $\mathcal{C}$-universality and universality for the family $\mathcal{T}_1$ comprising trees with many long bare paths. Finally, Section~\ref{sec:manyleaves} proves $\mathcal{T}\setminus\cT_1$-universality, thus jointly with the general result in Section~\ref{sec:long bare paths} proving Theorem~\ref{thm:univ_trees}.

\section{Preliminaries}
\label{sc:pre}

\subsection{Notation}
We denote by $\mathbb N$ the set of all positive integers and write $[n]\coloneqq \{1,\ldots, n\}$, which will always serve as the set of vertices of $G(n,p)$. We also write $A\sqcup B$ to denote the union of disjoint sets $A$ and $B$ (as opposed to $A\cup B$ where the disjointness assumption is dropped).
Many of our results are asymptotic; we thus use standard asymptotic notation. By default, hidden constants are absolute and independent of the maximum degree parameter $\Delta$.  We often ignore rounding errors and assume divisibility conditions when needed. 
 Since in such cases full precise arguments proceed verbatim and require only routine modifications, we prefer to omit such details. 
 
For an event $A$, we write $A^c$ for its complement. 
Also, for a random variable $X$ and a distribution $\cL$, we denote by $X\sim \cL$ the fact that $X$ is distributed according to $\cL$.

For a graph $G$, we write $V(G)$ and $E(G)$ for its vertex and edge sets, respectively. 
We say that $|V(G)|$ is the \emph{order} of $G$ while $|E(G)|$ is the \emph{size} of $G$.
For a set $A\subseteq V(G)$, we denote by \[N_G(A)\coloneqq \{v\in V(G)\setminus A: \exists u\in A \text{ such that }\{u,v\}\in E(G)\}\] 
the external neighbourhood of $A$ in $G$. 
When the graph $G$ is clear from the context, we omit the dependency on $G$. In addition, when $A=\{u\}$, we simply write $N(u)$ instead of $N(\{u\})$. 
Furthermore, for disjoint sets $A,B\subseteq V(G)$, we write $G[A]$ the subgraph of $G$ induced by $A$ and $G[A,B]$ for the bipartite subgraph of $G$ with parts $A,B$ containing all the edges in $G$ between $A$ and $B$.
We often abuse notation and treat vertex tuples as sets: for example, for a vertex tuple $\mathbf{a}$ and a vertex set $U$, we write $\mathbf{a}\subseteq U$.

A path with endpoints $u$ and $v$ is called a \emph{$uv$-path}, and a vertex-disjoint union of paths is called a \emph{path system}.
As usual, the length $k$ of a path is the number of edges it contains; a path of length $k$ is also called a \emph{$k$-path}, and a path system where all paths have length $k$ is called a \emph{$k$-path system}.

\subsection{Probabilistic lemmas}
In this subsection, we collect several well-known lemmas used throughout the paper. Recall that, for positive integers $m,n\le N$, the hypergeometric distribution with parameters $(m,n,N)$ is the distribution of the intersection of a random $n$-subset of $[N]$ with the set $[m]$. We start with the version of Chernoff's bound from \cite[Theorems~2.1 and~2.10]{JLR00}.

\begin{lemma}[Chernoff's bound \cite{JLR00}]\label{lem:Chernoff}
For any binomial or hypergeometric random variable $X$ and $t\geq 0$,
    \[
    \mathbb{P}(X\geq \mathbb E[X]+t )\leq \exp \left(-\frac{t^2}{2(\mathbb E[X] +t/3)}\right)\quad \text{and}\quad
    \mathbb{P}(X\leq \mathbb E[X]-t )\leq \exp \left(-\frac{t^2}{2\mathbb E[X]}\right).
    \]
\end{lemma}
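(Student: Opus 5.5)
We would derive both inequalities by the standard exponential-moment (Cramér--Chernoff) method, treating the binomial case first and then transferring to the hypergeometric case.

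For $X\sim\mathrm{Bin}(N,q)$ with $\mu=\mathbb E[X]=Nq$ and any $\lambda>0$, Markov's inequality applied to $e^{\lambda X}$ gives
\[
\mathbb P(X\ge \mu+t)\le e^{-\lambda(\mu+t)}\,\mathbb E e^{\lambda X}=e^{-\lambda(\mu+t)}(1+q(e^\lambda-1))^N\le \exp\bigl(\mu(e^\lambda-1)-\lambda(\mu+t)\bigr),
\]
using $1+x\le e^x$. Taking $\lambda=\ln(1+t/\mu)$ yields $\mathbb P(X\ge\mu+t)\le\exp(-\mu\varphi(t/\mu))$ with $\varphi(x)=(1+x)\ln(1+x)-x$. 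Symmetrically, applying the same argument to $e^{-\lambda X}$ gives $\mathbb P(X\le\mu-t)\le\exp(-\mu\varphi(-t/\mu))$.

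It then remains to establish two elementary inequalities:
\[
\varphi(x)\ge \frac{x^2}{2(1+x/3)}\quad (x\ge0), \qquad \varphi(-x)\ge \frac{x^2}{2}\quad (0\le x\le 1).
\]
Both follow by comparing derivatives at $0$. For the second, set $g(x)=\varphi(-x)-x^2/2$; then $g(0)=g'(0)=0$ and $g''(x)=\frac{1}{1-x}-1\ge0$, so $g\ge0$. For the first, set $h(x)=\varphi(x)-\frac{x^2}{2(1+x/3)}$; again $h(0)=h'(0)=0$, and one checks $h''\ge0$, which reduces to $(1+x/3)^3\ge 1+x$. This last inequality holds by expanding the cube. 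This step is the only real computation and we expect it to be the main (mild) obstacle.

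For the hypergeometric case, we would invoke Hoeffding's classical observation that, if $X$ is hypergeometric and $Y$ is binomial with the same mean, then $\mathbb E f(X)\le \mathbb E f(Y)$ for every convex $f$; this can be proved by coupling sampling without replacement as a conditional expectation of sampling with replacement. Applying it with $f(x)=e^{\pm\lambda x}$ bounds the moment generating function of $X$ by that of $Y$. The argument above then applies verbatim, since it used only this MGF bound and $\mu$.
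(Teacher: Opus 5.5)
Your proposal is correct and is essentially the standard proof behind the cited result (Janson--{\L}uczak--Ruci\'nski, Theorems~2.1 and~2.10), which the paper quotes without proof. That argument consists of the Cram\'er--Chernoff bound $\exp(-\mu\varphi(\pm t/\mu))$, the two elementary inequalities for $\varphi$ (your reduction of $h''\ge 0$ to $(1+x/3)^3\ge 1+x$ is right), Hoeffding's convex-order comparison to pass from the binomial to the hypergeometric case, and the trivial case $t>\mu$ in the lower tail, where $X\ge 0$ makes the probability zero.
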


In the sequel, we will require the asymmetric version of the Lov\'asz Local Lemma (LLL) appearing as \cite[Lemma~5.1.1]{AS2000}. To introduce the lemma, we recall the notion of a (directed) dependency graph for a family of events. 
For a collection of events $\cA =\{A_1,\ldots, A_n\}$ in some probability space, a {\it dependency graph} $D$ of $\cA$ is a (directed) graph on $[n]$ such that, for every $i\in [n]$, the event $A_i$ is mutually independent of all events $A_j$ such that $j\neq i$ and $(i,j)\notin E(D)$.

\begin{lemma}[LLL \cite{AS2000}]\label{lem:LLL}
Fix a family of events $\cA=\{A_1,\ldots ,A_k\}$ with dependency graph $D$. If for some real numbers $x_1,\ldots,x_n\in [0,1)$ and for every $i\in [n]$, we have
    $\mathbb{P}(A_i)\leq x_i\cdot \prod_{(i,j)\in E(D)}(1-x_j)$,
    then
    \[
        \mathbb{P}(A_1^{c}\cap \ldots \cap A_n^c) \geq \prod_{i=1}^{n}(1-x_i).
    \]
\end{lemma}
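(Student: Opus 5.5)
The plan is the standard inductive argument of Erd\H{o}s and Lov\'asz in the form given in~\cite{AS2000}. Throughout, write $B_S\coloneqq\bigcap_{j\in S}A_j^c$ for $S\subseteq[n]$, with $B_\emptyset$ the whole space. Everything reduces to the following claim: for every $S\subsetneq[n]$ and every $i\notin S$ with $\mathbb P(B_S)>0$,
\[
\mathbb P(A_i\mid B_S)\le x_i .
\]
Given the claim, the lemma follows from the chain rule. We have $\mathbb P(B_{[i]})=\prod_{i'=1}^{n}\bigl(1-\mathbb P(A_{i'}\mid B_{[i'-1]})\bigr)$ evaluated up to $i$, and the case $i=n$ gives
\[
\mathbb P(A_1^c\cap\cdots\cap A_n^c)=\prod_{i=1}^{n}\bigl(1-\mathbb P(A_i\mid B_{[i-1]})\bigr)\ge\prod_{i=1}^n(1-x_i).
\]
Positivity of each conditioning event is checked along the way by induction, since each factor is at least $1-x_i>0$.

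The claim is proved by induction on $|S|$. For $S=\emptyset$ it is the hypothesis, since $\mathbb P(A_i)\le x_i\prod_{(i,j)\in E(D)}(1-x_j)\le x_i$. For general $S$, split $S=S_1\sqcup S_2$, where $S_1=\{j\in S:(i,j)\in E(D)\}$ and $S_2=S\setminus S_1$. If $S_1=\emptyset$, then $A_i$ is mutually independent of $\{A_j:j\in S_2\}$, so $\mathbb P(A_i\mid B_S)=\mathbb P(A_i)\le x_i$. Otherwise write
\[
\mathbb P(A_i\mid B_S)=\frac{\mathbb P(A_i\cap B_{S_1}\mid B_{S_2})}{\mathbb P(B_{S_1}\mid B_{S_2})}.
\]
The numerator is at most $\mathbb P(A_i\mid B_{S_2})=\mathbb P(A_i)\le x_i\prod_{(i,j)\in E(D)}(1-x_j)$, again by mutual independence of $A_i$ from the events indexed by $S_2$.

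For the denominator, list $S_1=\{j_1,\ldots,j_r\}$ and expand it by the chain rule as $\prod_{t=1}^r\bigl(1-\mathbb P(A_{j_t}\mid B_{S_2\cup\{j_1,\ldots,j_{t-1}\}})\bigr)$. Each conditioning set has size strictly less than $|S|$ and does not contain $j_t$, so the induction hypothesis bounds each factor below by $1-x_{j_t}$. Hence the denominator is at least $\prod_{j\in S_1}(1-x_j)\ge\prod_{(i,j)\in E(D)}(1-x_j)$, and dividing gives $\mathbb P(A_i\mid B_S)\le x_i$.

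No step is a real obstacle. The only points needing care are the bookkeeping of positivity of the conditioning events, which is handled simultaneously in the induction, and the precise use of \emph{mutual} independence (rather than pairwise independence) in the numerator bound.
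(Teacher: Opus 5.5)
Your argument is correct and is exactly the standard Erd\H{o}s--Lov\'asz induction from \cite[Lemma~5.1.1]{AS2000}, which the paper cites rather than reproving. The split of $S$ by out-neighbours of $i$ in $D$, the use of mutual independence for the numerator, and the inductive lower bound on the denominator (whose conditioning events all contain $B_S$ and so have positive probability) are all in order. The only slip is cosmetic: in your chain-rule line for $\mathbb P(B_{[i]})$ the product should run over $i'\le i$, not up to $n$.
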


\subsection{Matchings in graphs}
Throughout the paper, we use several convenient lemmas for finding matchings in well-expanding bipartite graphs. 
 The following is a version of Hall's theorem (implied by the more general Lemma 3.32 in \cite{Mon19}) which requires verifying Hall's condition for small sets only as long as an additional expansion condition is satisfied.

\begin{lemma}\label{lem:match-easy}
Fix a bipartite graph $G$ with equal parts $A$ and $B$ and a positive integer $t$ such that:
    \begin{enumerate}[label={\emph{(\roman*)}}]
        \item For all sets $U$ of size at most $t$ and such that $U\subseteq A$ or $U\subseteq B$, we have $|N(U)| \ge |U|$.
        \item\label{item:matching2} For all sets $U\subseteq A$ and $W\subseteq B$ of size $t$, we have $e(U,W)\geq 1$.
    \end{enumerate}
    Then, $G$ contains a perfect matching from $A$ to $B$.
\end{lemma}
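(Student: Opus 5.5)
We prove Lemma~\ref{lem:match-easy} by verifying Hall's condition for every subset of $A$, splitting by size. Write $N:=|A|=|B|$. By Hall's theorem it suffices to show $|N(U)|\ge|U|$ for every $U\subseteq A$. There are three ranges of $|U|$, and the two hypotheses cover them as follows.

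\emph{Small sets.} If $|U|\le t$, then $|N(U)|\ge|U|$ holds directly by hypothesis (i).

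\emph{Middle sets.} Suppose $t\le|U|\le N-t$. Assume for contradiction that $|N(U)|<|U|$. Then $W':=B\setminus N(U)$ satisfies $|W'|>N-|U|\ge t$. Choose any $t$-subset $U_0\subseteq U$ and any $t$-subset $W_0\subseteq W'$. Since $W'$ contains no neighbour of $U$, we get $e(U_0,W_0)=0$. This contradicts hypothesis (ii). In fact this argument proves more: for every $U\subseteq A$ with $|U|\ge t$ we have $|N(U)|\ge N-t+1$, since otherwise $B\setminus N(U)$ would contain $t$ vertices with no edges to $U$.

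\emph{Large sets.} Suppose $|U|>N-t$. Here we use the symmetric half of hypothesis (i), namely the condition for subsets of $B$. Assume $|N(U)|<|U|$ and set $W:=B\setminus N(U)$, which is nonempty since $|W|=N-|N(U)|>N-|U|\ge 0$. Every neighbour of $W$ lies in $A\setminus U$, because no vertex of $U$ is adjacent to $W$. Hence
\[
|N(W)|\le |A\setminus U| = N-|U| < N-|N(U)| = |W|.
\]
We now check that this violates a hypothesis in either case for $|W|$.
\begin{itemize}
\item If $|W|\le t$, the inequality $|N(W)|<|W|$ contradicts hypothesis (i) applied to $W\subseteq B$.
\item If $|W|\ge t$, then $W$ and $U$ are both large enough to pick $t$-subsets $U_0\subseteq U$ and $W_0\subseteq W$ with $e(U_0,W_0)=0$. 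This contradicts hypothesis (ii). Note that $|U|>N-t\ge t$ whenever $N\ge 2t$, so the $t$-subset $U_0$ exists.
\end{itemize}

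\emph{Degenerate case.} If $N<2t$, hypothesis (i) already covers every $U\subseteq A$ with $|U|\le t$. Any $U$ with $|U|>t$ falls into the large-set argument, because $|U|>t>N-t$, and that argument does not need $N\ge 2t$ in the first bullet. In the second bullet, $|U|>t$ gives the $t$-subset $U_0$ directly.

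Combining the three ranges, Hall's condition holds for all $U\subseteq A$, and $G$ has a perfect matching from $A$ to $B$. The only delicate step is the large-set range, which requires passing to the complement side $B$. This is exactly why hypothesis (i) is assumed for subsets of both $A$ and $B$.
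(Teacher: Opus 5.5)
Your proof is correct. It is the standard verification of Hall's condition, splitting $|U|$ into three ranges: small sets use (i) on $A$; medium sets use (ii) applied to $U$ and $B\setminus N(U)$; large sets pass to $W=B\setminus N(U)$ and then use either (i) on $B$ or (ii).

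The paper does not prove this lemma directly; it cites Montgomery's Lemma~3.32. However, its own proof of the generalisation Lemma~\ref{lem:match}, which reduces to this statement with $A_1=B_1=\varnothing$ and $f\equiv 1$, runs through exactly the same three ranges.
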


We remark the following simple consequence of Lemma~\ref{lem:match-easy}. Although the statement of this corollary is fairly well-known, we give a short proof for the sake of completeness.
\begin{corollary}\label{cor:Hall}
Fix a bipartite graph $G$ with parts $A,B$ such that $|A|=|B|=n$. Assume that, for every set $U\subseteq A$ or $U\subseteq B$ of size $|U|\le \lceil n/2\rceil$, we have $|N(U)|\ge |U|$. Then, $G$ contains a perfect matching. 
\end{corollary}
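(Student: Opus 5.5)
We derive Corollary~\ref{cor:Hall} from Lemma~\ref{lem:match-easy} by taking $t=\lceil n/2\rceil$. Condition~(i) of Lemma~\ref{lem:match-easy} is then exactly our hypothesis, since it concerns only sets of size at most $\lceil n/2\rceil$ contained in $A$ or in $B$. So the whole argument reduces to checking condition~\ref{item:matching2}: for all $U\subseteq A$ and $W\subseteq B$ with $|U|=|W|=t$, we need $e(U,W)\ge 1$.

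We prove~\ref{item:matching2} by contradiction. Suppose $U\subseteq A$ and $W\subseteq B$ have size $t$ and $e(U,W)=0$. Then $N(U)\subseteq B\setminus W$, so
\[|N(U)|\le n-t = n-\lceil n/2\rceil = \lfloor n/2\rfloor .\]
On the other hand, $|U|=t\le\lceil n/2\rceil$, so the hypothesis gives $|N(U)|\ge |U|=\lceil n/2\rceil$. Combining the two bounds yields $\lceil n/2\rceil\le\lfloor n/2\rfloor$, which forces $n$ to be even. In that case $|N(U)|=n/2$, so $N(U)=B\setminus W$ exactly, and this is not yet a contradiction.

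The main (minor) obstacle is this even case, where the two bounds can be equal. It needs a short extra step, which I would do symmetrically from the other side. Take any $U'\subseteq U$ with $|U'|=n/2$ (here $U'=U$). Then $W$ is a set of size $n/2$ in $B$ with no neighbours in $U$, so $N(W)\subseteq A\setminus U$, giving $|N(W)|\le n/2$; the hypothesis again allows equality. To break the tie, I would use a slightly larger $t$: set $t=\lfloor n/2\rfloor+1$, which is permitted as long as condition~(i) can still be checked. Condition~(i) for $|U|=t$ follows from the hypothesis applied to a subset of size $\lceil n/2\rceil$ only if $n$ is odd, so this does not immediately work.

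Instead, for even $n$ I would argue directly with Hall's theorem. Let $X\subseteq A$ be a set with $|N(X)|<|X|$. If $|X|\le n/2$, this contradicts the hypothesis. Otherwise $|X|>n/2$. Set $Y=B\setminus N(X)$; then $|Y|=n-|N(X)|>n-|X|\ge 0$, and $N(Y)\subseteq A\setminus X$, so $|N(Y)|\le n-|X|<|Y|$. Moreover $|Y|\ge n-|X|+1$, and we can pass to a subset $Y'\subseteq Y$ of size $\min(|Y|,n/2)$, which still satisfies $N(Y')\subseteq A\setminus X$. If $|Y|\le n/2$, then $Y$ itself violates the hypothesis. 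If $|Y|>n/2$, then $|N(Y')|\le n-|X|<n/2=|Y'|$, again a violation.

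This standard complement argument handles every $n$ uniformly and, on reflection, is the cleanest route: it gives Hall's condition for all of $A$ and hence a perfect matching. So I would present this direct Hall argument, with the reduction to Lemma~\ref{lem:match-easy} mentioned only as motivation.
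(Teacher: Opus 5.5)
Your final argument is correct, but it takes a different route from the paper. The paper stays with Lemma~\ref{lem:match-easy}. For odd $n$ it checks condition (ii) with $t=\lceil n/2\rceil$ exactly as in your second paragraph. For even $n$ it notes that if (ii) fails for some $U\subseteq A$, $W\subseteq B$ of size $n/2$, then $N(U)=B\setminus W$ and $N(W)=A\setminus U$. It then applies Hall's theorem separately to $G[U,N(U)]$ and $G[W,N(W)]$, where every relevant set has size at most $n/2$, and combines the two matchings. You instead bypass Lemma~\ref{lem:match-easy} and verify Hall's condition for every $X\subseteq A$ by the complement trick: $Y=B\setminus N(X)$ satisfies $N(Y)\subseteq A\setminus X$. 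Your version is self-contained, uses only Hall's theorem, and needs no parity split. The paper's version presents the corollary as a consequence of the lemma just stated, though it still has to invoke Hall directly in the even case.

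Some clean-ups. To make your argument uniform in $n$, write $\lceil n/2\rceil$ instead of $n/2$ throughout. Then $|X|>\lceil n/2\rceil$ gives $n-|X|<\lfloor n/2\rfloor\le\lceil n/2\rceil$, and every step goes through. Your subcase $|Y|>n/2$ is in fact vacuous. Applying the hypothesis to a subset of $X$ of size $\lceil n/2\rceil$ gives $|N(X)|\ge\lceil n/2\rceil$, hence $|Y|\le\lfloor n/2\rfloor$, so $Y$ itself always violates the hypothesis. Finally, drop the exploratory third paragraph (the attempt with $t=\lfloor n/2\rfloor+1$), since it plays no role in the final proof.
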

\begin{proof}
If $n$ is odd, the corollary follows from Lemma~\ref{lem:match-easy} with $t=\lceil n/2\rceil$: indeed, for every set $U$ of size $t$ contained in one of the parts, say $A$, we have $|B\setminus N(U)|\le n-t < t$, thus ensuring assumption \emph{\ref{item:matching2}} in Lemma~\ref{lem:match-easy}.
If $n$ is even, then either Lemma~\ref{lem:match-easy} applies or there are sets $U\subseteq A$ and $W\subseteq B$ of size $t$ with $e(U,W)=0$.
In this case, $N(U)=B\setminus W$ and $N(W)=A\setminus U$, and the existence of a perfect matching in $G$ follows from Hall's theorem applied separately for $G[U,N(U)]$ and for $G[W,N(W)]$.
\end{proof}

We also need a more robust version of Hall's theorem providing a sufficient condition for the existence of a many-to-one matching.

\begin{definition}
\label{def:f-matching}
For disjoint sets $A$ and $B$ and a function $f:A\to \mathbb N$, we call an \emph{$f$-matching} from $A$ to $B$ a set of stars $(S_v)_{v\in A}$ where each $S_v$ has exactly $f(v)$ leaves and all of them are in $B$.
\end{definition}
For a set $U\subseteq A$, we further write $f(U)$ for the sum of $f(v)$ over $v\in U$. 
The following theorem is the required extension of Hall's criterion to $f$-matchings, appearing as Theorem~3.31 in~\cite{Mon19} (see also~\cite{Bol98}).

\begin{theorem}\label{thm:3.31}
Fix a bipartite graph with parts $A,B$, and a function $f:A\to \mathbb N$. Suppose that, for every subset $U\subseteq A$, we have $|N(U)| \ge f(U)$. 
Then, there is an $f$-matching from $A$ to $B$.
\end{theorem}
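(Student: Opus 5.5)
The plan is to reduce Theorem~\ref{thm:3.31} to the classical form of Hall's theorem, which guarantees a matching saturating one side of a bipartite graph. The reduction is a blow-up of $A$ in which each vertex $v\in A$ is replaced by $f(v)$ copies.

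First, I construct an auxiliary bipartite graph $H$ with parts $A'$ and $B$. Here $A'=\{(v,j): v\in A,\ 1\le j\le f(v)\}$, so that $|A'|=f(A)$. A copy $(v,j)$ is joined to $w\in B$ precisely when $vw$ is an edge of the original graph. For a set $S\subseteq A'$, write $\pi(S)\subseteq A$ for the set of vertices $v$ having at least one copy in $S$. By construction $N_H(S)=N(\pi(S))$. Since each $v\in\pi(S)$ contributes at most $f(v)$ copies to $S$, we also have $|S|\le f(\pi(S))$.

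Second, I verify Hall's condition in $H$. For any $S\subseteq A'$, the hypothesis applied to $U=\pi(S)$ gives
\[
|N_H(S)|=|N(\pi(S))|\ge f(\pi(S))\ge |S|.
\]
Hall's theorem therefore yields a matching $M$ in $H$ that saturates $A'$.

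Third, I translate $M$ back to the original graph. For each $v\in A$, let $S_v$ be the star centred at $v$ whose leaves are the $M$-partners of the copies $(v,1),\ldots,(v,f(v))$. Because $M$ is a matching, these $f(v)$ partners are distinct, so $S_v$ has exactly $f(v)$ leaves, all lying in $B$. Leaves of different stars are also distinct, so the stars $S_v$ are pairwise vertex-disjoint on the $B$ side. Every edge used is an edge of the original graph, so $(S_v)_{v\in A}$ is the required $f$-matching.

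There is no genuine obstacle here. The only point requiring care is the inequality $|S|\le f(\pi(S))$, which is exactly what lets the hypothesis, stated only for subsets of $A$, control arbitrary subsets of copies. One should also note that $f$ takes values in $\mathbb N$, so every vertex of $A$ has at least one copy and the blow-up is well defined.
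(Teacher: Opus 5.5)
Your proof is correct: the blow-up of each $v\in A$ into $f(v)$ copies, the observations $N_H(S)=N(\pi(S))$ and $|S|\le f(\pi(S))$, and the translation of an $A'$-saturating matching back into vertex-disjoint stars are all sound. The paper does not prove this statement itself but quotes it from \cite{Mon19} (see also \cite{Bol98}), and your vertex-duplication reduction to Hall's theorem is the standard argument for it.
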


Similarly to Lemma \ref{lem:match-easy}, if the bipartite graph satisfies an additional expansion condition, verifying the Hall-type condition in Theorem \ref{thm:3.31} is sufficient for small sets on both sides (see also \cite[Lemma~3.32]{Mon19}). 
In our context, we will use a straightforward generalisation of this lemma, which uses the following definition.

\begin{definition}\label{def:f-match}
    For a bipartite graph $(A,B)$, integers $1\leq \eta\leq t$, sets $A_1\subseteq A$ with $|A_1|\le \eta$ and $B_1\subseteq B$, and a function $f:A\to \mathbb{N}$ satisfying $f(A) = |B|$, we say that $(A,B)$ is an $(f,A_1,B_1,\eta,t)$-expander if each of the following holds: 
\begin{enumerate}[(i)]
    \item \label{item:extended-matching1}for every $U\subseteq A_1$, we have $|N(U)\cap B_1|\ge f(U)$,
    \item \label{item:extended-matching2} for every $U\subseteq A\setminus A_1$ with $|U|\le \eta$, we have $|N(U)\setminus B_1|\ge f(U)$,
    \item \label{item:extended-matching3} for every $U\subseteq A$ with $|U|\in [\eta+1,t]$, we have $|N(U)|\ge f(U\cup A_1)$,
    \item \label{item:extended-matching4} for every $U\subseteq B$ with $|U|\le t$, we have $|N(U)\setminus A_1| \ge |U|$,
    \item \label{item:extended-matching5} for all $A'\subseteq A$ and $B'\subseteq B$ with $|A'|=|B'|=t$, there is an edge between $A'$ and $B'$.
\end{enumerate}
We call an $f$-matching from $A$ to $B$ an \emph{$(f,A_1,B_1)$-matching}, if the neighbourhood of $A_1$ lies entirely inside $B_1$.
\end{definition}

The next lemma is the promised generalisation of the Hall-type matching Lemma~3.32 in~\cite{Mon19}, with the original result concerning the case $A_1=B_1=\varnothing$.

\begin{lemma}\label{lem:match}
If $(A,B)$ is an $(f,A_1,B_1,\eta,t)$-expander, then
there is an $(f,A_1,B_1)$-matching from~$A$~to~$B$.
\end{lemma}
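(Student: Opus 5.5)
We reduce the statement to Theorem~\ref{thm:3.31}. First we split the problem into two independent parts, by analogy with the way Lemma~3.32 in~\cite{Mon19} reduces to Hall's criterion. Let $f_1$ be the restriction of $f$ to $A_1$. By condition~\ref{item:extended-matching1} and Theorem~\ref{thm:3.31}, applied to the bipartite graph $G[A_1,B_1]$, there is an $f_1$-matching from $A_1$ into $B_1$. However, we cannot fix this matching in advance, since the vertices of $B_1$ it leaves uncovered must still be usable by $A\setminus A_1$. Instead, we work in an auxiliary bipartite graph $H$. It has parts $A$ and $B$, and consists of all edges of $G$ except those between $A_1$ and $B\setminus B_1$. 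An $f$-matching of $H$ is then exactly an $(f,A_1,B_1)$-matching of $G$. By Theorem~\ref{thm:3.31}, it suffices to prove $|N_H(U)|\ge f(U)$ for every $U\subseteq A$.

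We verify this Hall-type condition according to the size of $U$. Write $U=U_1\sqcup U_2$ with $U_1\subseteq A_1$ and $U_2\subseteq A\setminus A_1$.

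\emph{Case $|U_2|\le \eta$.} We argue directly from~\ref{item:extended-matching1} and~\ref{item:extended-matching2}. Condition~\ref{item:extended-matching1} gives $|N_H(U_1)|=|N(U_1)\cap B_1|\ge f(U_1)$. Condition~\ref{item:extended-matching2} gives $|N(U_2)\setminus B_1|\ge f(U_2)$. These two neighbourhoods are disjoint, since the first lies in $B_1$ and the second in $B\setminus B_1$. Hence $|N_H(U)|\ge f(U)$.

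\emph{Case $\eta<|U_2|$ and $|U|\le t$.} Condition~\ref{item:extended-matching3} gives $|N_H(U)|\ge |N(U_2)|\ge f(U_2\cup A_1)\ge f(U)$. The first inequality holds because the edges of $U_2$ are unaffected in passing to $H$. If $|U_2|>t$, we replace $U_2$ by a subset of size exactly $t$; this is used in the remaining case below.

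\emph{Case $|U|>t$.} Here the reasoning is the standard complement argument, and this is where~\ref{item:extended-matching4} and~\ref{item:extended-matching5} enter. Let $W=B\setminus N_H(U)$ and suppose $|N_H(U)|<f(U)$. Then $|W|>|B|-f(U)=f(A\setminus U)\ge |A\setminus U|$, using $f(A)=|B|$ and $f\ge1$. If $|U_2|\ge t$, then a $t$-subset of $U_2$ has no $G$-edges to $W$. So by~\ref{item:extended-matching5} we get $|W|<t$. Now consider $N_G(W)\setminus A_1$. Every vertex in this set lies outside $U$, because $W$ has no $H$-neighbours in $U$ and the $G$-edges from $W$ to $A\setminus A_1$ all survive in $H$. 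Condition~\ref{item:extended-matching4} then gives $|A\setminus U|\ge |N(W)\setminus A_1|\ge |W|$, contradicting $|W|>|A\setminus U|$.

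The remaining subcase is $|U_2|<t$ but $|U|>t$. This is possible only because $|A_1|\le\eta$. In this case $U_2$ is large enough, namely of size more than $t-\eta$, or it falls into one of the earlier cases. The main obstacle is the careful bookkeeping at these boundary sizes, that is, making sure that every $U$ falls into some case where one of~\ref{item:extended-matching1}--\ref{item:extended-matching5} applies. I expect this to need $\eta\le t$ together with a slightly adjusted split: when $|U_2|\in(\eta,t]$, apply~\ref{item:extended-matching3} to $U_2$ itself rather than to $U$.
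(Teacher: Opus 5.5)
Your argument is correct, but it is organised differently from the paper's, and the loose end you flag is not an obstacle.

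\textbf{Closing the loose end.} Neither of your first two cases uses any bound on $|U|$. The first needs only $|U_2|\le\eta$. The second needs only $\eta<|U_2|\le t$: then $|N_H(U)|\ge|N(U_2)|\ge f(U_2\cup A_1)\ge f(U)$, since $U\subseteq U_2\cup A_1$. So split according to $|U_2|\le\eta$, $\eta<|U_2|\le t$, or $|U_2|>t$. This is exhaustive. In the last range your complement argument works as written: a $t$-subset of $U_2$ has no $G$-edges to $W$, so $|W|<t$ by~\ref{item:extended-matching5}, and then~\ref{item:extended-matching4} gives the contradiction. The subcase $|U|>t$, $|U_2|<t$ therefore needs nothing new, and the observation $|U_2|>t-\eta$ plays no role.

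\textbf{Comparison with the paper.} The paper first fixes an arbitrary $f$-matching of $A_1$ into $B_1$, using~\ref{item:extended-matching1} and Theorem~\ref{thm:3.31}; call its image $B_2$. It then checks Hall's condition only for $U\subseteq A\setminus A_1$ in the residual graph between $A\setminus A_1$ and $B\setminus B_2$. There,~\ref{item:extended-matching3} gives $|N(U)\setminus B_2|\ge f(U\cup A_1)-|B_2|=f(U)$.

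So your remark that the $A_1$-matching cannot be fixed in advance is mistaken. The term $f(U\cup A_1)$ in~\ref{item:extended-matching3} is exactly what pays for an arbitrary $B_2$ of size $f(A_1)$, and the leftover vertices $B_1\setminus B_2$ stay available to $A\setminus A_1$.

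What each route buys:
\begin{itemize}
\item The paper's two-stage route never mixes $A_1$ with $A\setminus A_1$ inside $U$, so each case is a single clean inequality.
\item Your route applies Theorem~\ref{thm:3.31} once, to the pruned graph $H$, with no intermediate choice. The price is handling mixed sets $U_1\sqcup U_2$, which is why the split must be on $|U_2|$ rather than $|U|$.
\item Your route also makes transparent that~\ref{item:extended-matching1} and~\ref{item:extended-matching2} combine through the disjointness of $B_1$ and $B\setminus B_1$.
\item Your route treats all large sets with one complement argument, so it does not need the paper's extra split of the large case according to whether $f(U)\le f(A\setminus A_1)-t$.
\end{itemize}
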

\begin{proof}
First of all, by property~\ref{item:extended-matching1} and Theorem~\ref{thm:3.31}, an $f$-matching from $A_1$ to $B_1$ exists.
Fix one such $f$-matching and denote by $B_2$ the set of all vertices in $B_1$ occupied by the matching.
To complete the constructed star forest to an $(f,A_1,B_1)$-matching from $A$ to $B$ we show that, for every set $U\subseteq A\setminus A_1$, 
\begin{equation}\label{eq:Hall}
|N(U)\setminus B_2|\ge f(U),  
\end{equation}
which suffices to conclude by Theorem~\ref{thm:3.31} applied to $(A\setminus A_1,B\setminus B_2)$. 

First,~\eqref{eq:Hall} holds for all sets $U\subseteq A\setminus A_1$ of size $|U|\le \eta$ by property~\ref{item:extended-matching2}.
Second, by property~\ref{item:extended-matching3}, if $|U|\in [\eta+1,t]$, then
\[|N(U)\setminus B_2| = |N(U)|-|N(U)\cap B_2| \ge f(U\cup A_1)-|B_2|=f(U)+f(A_1)-|B_2| = f(U),\]
where we used that $U\cap A_1=\varnothing$ and $f(A_1)=|B_2|$.
Third, for every set $U\subseteq A\setminus A_1$ of size $|U| > t$ and such that $f(U)\le f(A\setminus A_1)-t$, by property~\ref{item:extended-matching5}, we have
\[|N(U)|\ge |B\setminus B_2|-t = f(A\setminus A_1)-t\ge f(U).\]
Finally, assume that~\eqref{eq:Hall} fails for some set $U\subseteq A\setminus A_1$ where $|U| > t$ and $f(U) > f(A\setminus A_1)-t$.
Note that the set $B\setminus (B_2\cup N(U))$ has size at most $t$ by property~\ref{item:extended-matching5} and its neighbourhood in $A\setminus A_1$ is disjoint from $U$. By combining the latter observations with property~\ref{item:extended-matching4} applied to $B\setminus (B_2\cup N(U))$, we have
\begin{align*}
|A\setminus A_1| = |U|+|A\setminus (A_1\cup U)|
&\ge |U|+|B\setminus (B_2\cup N(U))|\\ 
&= |U| + |B\setminus B_2|-|N(U)\setminus B_2| > |U|+f(A\setminus A_1)-f(U)\ge |A\setminus A_1|, 
\end{align*}
where the last inequality uses that $\min f\ge 1$ and thus 
\[f(A\setminus A_1) - f(U) = f(A\setminus (A_1\cup U)) \ge |A\setminus (A_1\cup U)| = |A\setminus A_1|-|U|.\]
This shows a contradiction, confirms~\eqref{eq:Hall}, and ensures the promised $(f,A_1,B_1)$-matching from $A$ to $B$. 
\end{proof}

\subsection{\texorpdfstring{Universality for trees on $(1-\eps)n$ vertices}{Universality for trees on (1-eps)n vertices}}

Next, we formally state the result of Alon, Krivelevich and Sudakov~\cite{AKS07} mentioned in the introduction.

\begin{theorem}[\cite{AKS07}]\label{thm:AKS07}
For every $\eps > 0$ and every integer $\Delta\ge 2$, there exists $C=C(\eps,\Delta)>0$ such that, with high probability, the random graph $G(n,C/n)$ contains copies of all trees on at most $(1-\eps)n$ vertices and maximum degree at most $\Delta$.
\end{theorem}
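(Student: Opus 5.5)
This is the cited theorem of Alon, Krivelevich and Sudakov, so the paper only needs to invoke it. Should a self-contained argument be wanted, I would follow the Friedman--Pippenger route and split it into a deterministic embedding lemma plus a probabilistic expansion statement.

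\emph{Deterministic step.} I would use the Friedman--Pippenger theorem in the following form. Let $H$ be a nonempty graph such that every $X\subseteq V(H)$ with $|X|\le 2s-2$ satisfies $|N_H(X)|\ge (\Delta+1)|X|$. Then $H$ contains every tree on $s$ vertices with maximum degree at most $\Delta$. The proof embeds the tree vertex by vertex and maintains the invariant that the partial image is ``good''. The invariant is a Hall-type surplus condition: every small set of vertices keeps at least $\Delta$ times its size in free neighbours, after accounting for the children of each embedded vertex that are still to be embedded. The key point is that a good embedding can always be extended by one leaf while remaining good. This follows from a careful extremal-set (critical set) argument using the expansion hypothesis, and it is the genuinely nontrivial part of the deterministic step.

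\emph{Probabilistic step.} It then suffices to show that, whp, $G(n,C/n)$ contains a subgraph $H$ on at least $(1-\varepsilon)n$ vertices satisfying the expansion condition with $s=(1-\varepsilon)n$. This is impossible as stated, because sets of linear size cannot expand by a factor $\Delta+1$ inside an $n$-vertex graph. So I would use the standard fix and apply the lemma to a bipartite-type or auxiliary structure. The cleaner approach, which is AKS's, is to embed with a reserve. Take a subgraph $H$ of minimum degree at least $c\Delta$ such that small sets ($|X|\le \delta n$) expand by a factor $\Delta+1$; this is obtained whp by deleting low-degree vertices and small non-expanding sets. Then, for medium sets, use the property that any two disjoint sets of size $\delta n$ span an edge, which holds whp by a union bound when $C=C(\varepsilon,\Delta)$ is large. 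Haxell's strengthening of Friedman--Pippenger requires exactly this mix: expansion for small sets together with an edge between any two large sets. It yields every tree of maximum degree at most $\Delta$ on up to $|V(H)|-\Theta(\delta n)$ vertices.

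\emph{Checking $G(n,C/n)$.} I would verify three properties whp. First, any two disjoint sets of size $\delta n$ span an edge, by a union bound: $\binom{n}{\delta n}^2 e^{-C\delta^2 n}=o(1)$ for $C\gg \delta^{-2}\ln(1/\delta)$. Second, at most $\varepsilon n/2$ vertices have degree below $C/10$, by Chernoff's bound (Lemma~\ref{lem:Chernoff}) and Markov's inequality. Third, after iteratively removing these low-degree vertices together with small sets whose neighbourhood is too small, small sets in the remaining graph expand by a factor $\Delta+1$, again by a union bound over small sets. I expect the main obstacle to be this pruning: showing that the removal stays below $\varepsilon n/2$ vertices and that the resulting core genuinely satisfies Haxell's conditions. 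Combining the core with the deterministic step then gives every tree of maximum degree at most $\Delta$ on at most $(1-\varepsilon)n$ vertices.
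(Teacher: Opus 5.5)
Your proposal is correct and matches the paper, which gives no proof of this statement and simply invokes \cite{AKS07} as a black box. Your optional self-contained sketch is also a sound standard route: a pruned core with small-set expansion, plus joinedness at scale $\delta n$, fed into Haxell's extension of Friedman--Pippenger. Two details need adjusting. First, take $\delta$ of order $\varepsilon/\Delta$, so that the $O(\Delta\delta n)$ loss in Haxell's condition fits inside $\varepsilon n$. Second, the pruning you flag as the main obstacle is routine: the union $Y$ of all removed sets itself satisfies $|N_G(Y)|<(\Delta+1)|Y|$, and a union bound excludes such sets of linear size whp.
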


\subsection{Rollback}

A key embedding tool used in the paper is the rollback technique. 
 The exact version of the technique we require appears as \cite[Lemma~4.2]{Mon19} stated below. Before introducing it, we need a few definitions. 

\begin{definition}
\label{def:joined}
A graph $H$ is said to be \emph{$t$-joined} if there is an edge in $H$ between every two disjoint subsets of size $t$.
\end{definition}

\begin{definition}
Fix integers $t\geq 1$ and $d\geq 3$, and graphs $S\subseteq G$. We say that $S$ is \emph{$(d,t)$-extendable} (in $G$) if $S$ has maximum degree at most $d$ and, for all sets $X\subseteq V(G)$ with $|X|\leq 2t$,
\begin{equation}\label{eq:X}
|N_G(X)\setminus V(S)|\geq (d-1)|X|-\sum_{x\in X\cap V(S)}(|N_S(x)|-1).
\end{equation}
\end{definition}

Note that, in our proofs, we often show the stronger lower bound 
\[|N_G(X)\setminus V(S)|\geq d|X|,\]
which clearly implies~\eqref{eq:X}.
For a vertex set $X$, we denote by $I(X)$ the graph with vertex set $X$ and no edges.

\begin{definition}
\label{def:separated}
For an integer $k\ge 1$, we say that $X$ is a \emph{$k$-separated} set in $G$ if every pair of vertices in $X$ are at distance at least $k$ in $G$.
\end{definition}

The following proposition appears as \cite[Lemma~4.2]{Mon19}.

\begin{proposition}\label{embedparentsfinal}
Fix an integer $\Delta\ge 2$, a sufficiently large integer $n$, and integers $d\geq\ln n/\ln\ln n$ and $t\in[n]$. 
In addition, fix a $t$-joined graph $H$ on $n$ vertices containing a vertex set $X$ and a vertex $v\in V(H)\setminus X$ so that $|X|\geq n/(\ln n)^{2}$ and $I(X\cup\{v\})$ is a $(d,t)$-extendable graph in $H$.

Further, fix a $w$-rooted tree $T$ with order $|V(T)|\leq |V(H)|-|X|-10dt-\ln n$ and maximum degree $\Delta(T)\leq\Delta$. 
Also, fix a $16$-separated vertex set $U\subseteq V(T)$ with $|U|\geq 9|X|$. Then, $H$ contains a $v$-rooted copy $W$ of $T$ such that $W$ is $(d,t)$-extendable in $H$ and $X$ is contained in the copy of $U$.
\end{proposition}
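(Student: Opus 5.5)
This is \cite[Lemma~4.2]{Mon19}, and in the paper one can simply cite it. Below is how I would reprove it: build $W$ by the Friedman--Pippenger/Haxell tree-extension method with rollback, as in~\cite{DKN22,Hax01}.

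\textbf{Invariant and basic tools.} Throughout, keep a current partial embedding $S$ of a subtree of $T$ containing the root $w$ (mapped to $v$). Let $X_0\subseteq X$ be the vertices of $X$ not yet used as images of vertices of $U$. The invariant is that $S\cup I(X_0)$ is $(d,t)$-extendable in $H$. It holds initially because $I(X\cup\{v\})$ is extendable. Two standard tools drive the procedure.
\begin{itemize}
\item[(a)] \emph{Extension.} If $S'$ is $(d,t)$-extendable, $H$ is $t$-joined, $|V(S')|\le n-10dt$ and $s\in V(S')$ has $\deg_{S'}(s)<d$, then $s$ has a neighbour $y\notin V(S')$ such that $S'+sy$ is again extendable. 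I would prove this by taking a maximal set $Z$ of size at most $2t$ violating~\eqref{eq:X} strictly, and showing via $t$-joinedness that a valid $y$ exists outside $N(Z)$.
\item[(b)] \emph{Rollback.} Deleting a leaf of $S'$ preserves extendability.
\end{itemize}
The hypothesis $|V(T)|\le n-|X|-10dt-\ln n$ keeps the size condition in (a) valid at every step, even after counting the still-reserved set $X_0$.

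\textbf{Embedding and covering $X$.} Embed $T$ in breadth-first order from $w$, growing $S$ one leaf at a time by (a). Vertices of $U$ are handled specially. When a vertex $u\in U$ with parent $p$ comes up, and $p$ is embedded at $y$, I first test whether $y$ has a neighbour $x\in X_0$ for which $S+yx$, with $x$ removed from $X_0$, is still extendable. If so, map $u\mapsto x$; this is a \emph{success}. Otherwise embed $u$ normally by (a). The $16$-separation of $U$ makes the balls of radius $7$ around distinct $u\in U$ disjoint in $T$, so the steps near different $U$-vertices do not interfere. This locality is what allows local re-embedding.

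\textbf{Forcing enough successes.} For each $x\in X_0$, extendability applied to $\{x\}$ gives at least $d-1$ free neighbours of $x$. When a success fails to happen, roll back the last few (at most $7$) embedded vertices on the branch towards $u$ and re-extend that branch into $N(X_0)$. The key estimate is a counting argument: with at least $9|X|$ vertices of $U$ and extendability lower-bounding $|N(X')\setminus V(S)|$ for all $X'\subseteq X_0$ with $|X'|\le 2t$, at most $8|X|$ attempts can fail before all of $X$ is covered. Otherwise the set of uncovered $X$-vertices, together with the failed branches, would contradict~\eqref{eq:X} or $t$-joinedness. The condition $|X|\ge n/(\ln n)^2$ together with $d\ge \ln n/\ln\ln n$ is what makes $X_0$ reachable within the bounded radius given by the separation.

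\textbf{Main obstacle.} I expect the hardest step to be this last one: proving that the rollback-and-retarget step succeeds often enough while extendability is preserved. Within distance $16$ one cannot route a long connecting path, since $t$ may be as large as $n$. So the argument must exploit, simultaneously for all remaining $X_0$, the expansion guaranteed by the extendability of $I(X_0)$, rather than connecting to each $x$ individually.
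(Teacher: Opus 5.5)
Citing \cite[Lemma~4.2]{Mon19} is exactly what the paper does. It gives no proof of this proposition, so your first sentence suffices here. The reproof you sketch does not stand on its own, however, and the gap is where you expect it.

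First, your success test is automatic. Adding an edge between two vertices already in $V(S\cup I(X_0))$ leaves the left-hand side of \eqref{eq:X} unchanged and can only decrease the right-hand side. So a success at $u$ occurs exactly when the image of the parent of $u$ lies in $N(X_0)$. Everything therefore depends on steering the images of the ancestors of $u$ towards $X_0$, and tools (a) and (b) give no such steering. Extension only guarantees that \emph{some} free neighbour of the current vertex keeps \eqref{eq:X} valid. The inadmissible ones (free neighbours of sets that are tight in \eqref{eq:X}) may include all of its neighbours in $N(X_0)$. So ``roll back and re-extend the branch into $N(X_0)$'' is not an operation you have, and re-extending can reproduce the same failure. (Also, in breadth-first order the ancestors of $u$ are not among the last embedded vertices, so a leaf-deletion rollback must remove their whole already-embedded subtrees, not at most $7$ vertices.)

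Second, the claim that at most $8|X|$ attempts fail does not follow from $t$-joinedness and \eqref{eq:X}. Let $F$ be the set of images at which attempts failed and $X_{\mathrm{end}}$ the set of vertices of $X$ never covered. All you know is that there are no edges between $F$ and $X_{\mathrm{end}}$, or, with a lookahead of depth $j\le 7$, no admissible short paths. Joinedness then only says that one side, after expanding, has fewer than $t$ vertices. Since $F$ is large, this bounds $|X_{\mathrm{end}}|$ by roughly $t/d^{j}$, but it never forces $X_{\mathrm{end}}=\varnothing$.

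Consider an uncovered set $Y$ with $d^{8}|Y|\ll t$; recall that $t$ may be of order $n/d$. Condition \eqref{eq:X} only gives $Y$ many free neighbours somewhere, and $t$-joinedness needs two sets of size $t$. Neither forces any vertex of $Y$ within distance $8$ of the remaining $U$-branches. Indeed, the hypotheses allow a vertex of $X$ at the bottom of a tree-like pocket of depth $10$ and size $d^{O(1)}\ll t$, attached to the rest of $H$ only through its leaves. No rerouting confined to radius-$7$ balls around $U$-vertices reaches such a vertex unless the greedy embedding has already entered the pocket.

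The assumption $|X|\ge n/(\ln n)^2$ gives bounded-radius reachability only while $X_0$ is still large, not for the last few vertices. Covering all of $X$ therefore needs an ingredient beyond local rerouting, such as longer-range connections or a global assignment argument. Your sketch does not supply one, so as a self-contained proof it is incomplete.
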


\subsection{Spread measures}
Given a set $X$, we denote by $2^{X}$ the family of all subsets of $X$. 
Fix a set $X$ of size $N$ and a non-empty family $\cF\subseteq 2^{X}$. 
 For a set $S\subseteq X$, its \emph{up-set} is defined as $\langle S\rangle\coloneqq \{A\cup S:A\subseteq X\}$; we also write $\langle \cG\rangle \coloneqq \bigcup_{S\in \cG}\langle S\rangle$ for a family $\cG\subseteq 2^{X}$. The following is a key notion in the study of thresholds.

\begin{definition}[$q$-spread measures]
For $q\in (0,1)$, a probability measure $\nu$ on $(\cF,2^{\cF})$ is {\it $q$-spread} if, for every $S \subseteq X$,
\[\nu(\cF\cap \langle S\rangle) \leq q^{|S|}.\]
\end{definition}

A key component in Section \ref{sec:linking system} is the fractional expectation-threshold theorem due to Frankston, Kahn, Narayanan, and Park \cite{FraKahNarPar2021}. 
Denote by $X_p$ a random subset of $X$ where each element is retained with probability $p$ independently of the others. 
The following quantitative version of the expectation-threshold theorem is directly implied by~\cite[Theorem~3]{Bel22} (with $\eps=1/\ell$).

\begin{theorem}\label{thm:fracKK}
Denote by $\ell=\ell(\cF)$ the maximal size of a minimal element of a family $\cF$ which admits a $q$-spread measure, and set $p > 96q\ln\ell$. 
Then, with probability at least $1-1/\ell$, we have $X_p\in\langle \cF \rangle$.
\end{theorem}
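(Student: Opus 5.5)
The plan is not to reprove the fractional expectation-threshold theorem from scratch but to obtain Theorem~\ref{thm:fracKK} as a direct specialisation of the quantitative form proved by Bell~\cite[Theorem~3]{Bel22}. As I recall it, that result says the following. If $\cF\subseteq 2^X$ admits a $q$-spread measure and every minimal element of $\cF$ has size at most $\ell$, then for every $\eps\in(0,1)$ and every $p\ge 48\,q\ln(\ell/\eps)$ we have $\bP(X_p\in\langle\cF\rangle)\ge 1-\eps$. The constant $48$ here is my recollection of Bell's explicit constant and should be checked against~\cite{Bel22}. So the first step is to state that theorem precisely, in the present notation, with its explicit constant.

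\textbf{Reduction to minimal elements.} Both the spread hypothesis and the event $X_p\in\langle\cF\rangle$ only involve the up-set. Replacing $\cF$ by its family of minimal elements therefore changes nothing, except that a $q$-spread measure on $\cF$ has to be pushed forward to a measure on the minimal elements. To do this, map each $F\in\cF$ to a fixed minimal element $F'\subseteq F$. The push-forward measure $\nu'$ is still $q$-spread, because $F'\supseteq S$ implies $F\supseteq S$, so
\[
\nu'(\langle S\rangle)\le\nu(\cF\cap\langle S\rangle)\le q^{|S|}.
\]
After this reduction, $\ell$ is exactly the uniform bound on set sizes required in Bell's theorem.

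\textbf{Choice of parameters.} I then apply Bell's theorem with $\eps=1/\ell$. This gives
\[
\ln(\ell/\eps)=\ln(\ell^2)=2\ln\ell,
\]
so the threshold $48\,q\ln(\ell/\eps)$ becomes $96\,q\ln\ell$. Hence any $p>96q\ln\ell$ yields $\bP(X_p\in\langle\cF\rangle)\ge1-1/\ell$, which is the statement. The only other point is the degenerate case $\ell=1$: there $1-1/\ell=0$, so the claim is trivial.

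\textbf{Main obstacle.} The step I expect to need the most care is simply checking the form and constant of~\cite[Theorem~3]{Bel22}, since the statement is only as good as that citation. In particular, I need to confirm whether Bell uses $\log_2$ or $\ln$, and whether the hypothesis is $p\ge Cq\ln(\ell/\eps)$ or something like $Cq(\ln\ell+\ln(1/\eps))$. If the logarithm base or the constant differs, the factor $96$ has to be adjusted accordingly. Since we never optimise constants, a larger absolute constant would be harmless for all later applications.

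If a self-contained argument were wanted instead, I would follow the iterative scheme of Frankston--Kahn--Narayanan--Park~\cite{FraKahNarPar2021}. One exposes $X_p$ in about $\ln\ell$ rounds, each of density $\Theta(q)$. In each round one uses the spread condition to show that, for a typical $F$ drawn from the spread measure, the minimal "fragment" of $F$ not covered by the revealed set shrinks by a constant factor, except with small probability. Once all fragments have bounded size, a final round covers them. The error probabilities sum to at most $1/\ell$ once the constant is large enough.
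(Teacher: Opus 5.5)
Your proposal is correct and matches the paper, which gives no argument beyond citing \cite[Theorem~3]{Bel22} with $\eps=1/\ell$; your recollection of Bell's threshold as $48q\ln(\ell/\eps)$ is consistent with the constant $96q\ln\ell$ in the statement. Your push-forward reduction to the minimal elements of $\cF$ preserves both $q$-spreadness and $\langle\cF\rangle$, and it correctly fills in a detail the paper leaves implicit.
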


Theorem~\ref{thm:fracKK}, together with the expectation-threshold theorem of Park and Pham~\cite{PP24}, constitutes an important tool in probabilistic combinatorics. Both results, and the fragmentation technique developed in their proofs, have proved useful for establishing threshold functions for a broad range of graph properties (see, e.g.,~\cite{DKP,DiazPerson,Keevash,KNP,Pham,VZ,Z}). 
For more details, we refer the interested reader to the survey~\cite{Per2025}. In our applications of Theorem~\ref{thm:fracKK}, $X$ is the set of edges of $K_n$; in particular, $X_p$ is exactly the random graph $G(n,p)$.

\section{Linking systems via path systems}\label{sec:linking system}

Our goal in this section is to
prove Theorem~\ref{thm:linking_system}. In other words, we 
show the existence of linking systems in $G(n,p)$ 
whose height matches up to a constant the lower bound coming from a direct first moment argument.

As mentioned in Section~\ref{sc:intro-proofs}, we reduce the problem to the existence of a path system defined below.
\begin{definition}
For two disjoint vertex $m$-tuples $\mathbf{a}=(a_i)_{i=1}^{m}, \mathbf{b}=(b_i)_{i=1}^{m}$ and an integer $k$, we call a $k$-path system $F$ an \emph{$(\mathbf{a},\mathbf{b};k)$-path system} if $F$ has $m$ connected components, and each connected component of $F$ is a path of length $k$ with endpoints $a_i,b_i$, for some $i\in[m]$.
\end{definition}

The rest of the proof is organised as follows. In Section~\ref{sc:linking-reduction}, we explain the reduction from the existence of spanning linking systems to the existence of spanning path systems. Section~\ref{sc:proof-path-systems} establishes that the random graph has the desired spanning path system whp, thus completing the proof of Theorem~\ref{thm:linking_system}.

\subsection{Proof of Theorem~\ref{thm:linking_system}: reduction to path systems}\label{sc:linking-reduction}

The following easy lemma reduces the problem to showing the existence of certain path systems whp.

\begin{lemma}\label{lem:combaining path systems}
Consider integers $n$, $k=k(n)\geq 3$, and $m=m(n)$ such that $n=(k+1)m$. Denote
\[
    n_1 \coloneqq \left\lceil \frac{k+2}{2}\right\rceil \cdot m\quad \text{and} \quad n_2 \coloneqq \left\lceil \frac{k+1}{2}\right\rceil \cdot m,
\]
and define sets $S_1 := [n_1]$, $S_2:=[n]\setminus [n_1-m]$.
Fix disjoint $m$-tuples of vertices $\mathbf{a}\subseteq S_1\setminus S_2$, $\mathbf{b}\subseteq S_2\setminus S_1$ and $\mathbf{c}=S_1\cap S_2$. 
 Let $G$ be an exchangeable\footnote{That is, for any permutation $\sigma\in S_n$ acting on $[n]$, the image of $G$ under $\sigma$ and $G$ itself are identically distributed} random graph with vertex set $[n]$ such that:
\begin{enumerate}[label=\normalfont{(S\arabic*)}]
    \item\label{item:S1} whp $G[S_1]$ contains an $\left(\mathbf{a},\mathbf{c};\big\lceil \frac{k+2}{2}\big\rceil-1\right) $-path system;
	\item\label{item:S2} whp $G[S_2]$ contains a $\left(\mathbf{c},\mathbf{b};\big\lceil \frac{k+1}{2}\big\rceil-1\right)$-path system.
\end{enumerate}
Then, whp $G$ is an $(\mathbf{a},\mathbf{b};k)$-linking system.
\end{lemma}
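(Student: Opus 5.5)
The plan is to combine exchangeability with Markov's inequality and a union bound over permutations done "in expectation". First check the sizes. We have $|S_1| = n_1 = \lceil\frac{k+2}{2}\rceil m$, and a path system with $m$ paths of length $\lceil\frac{k+2}{2}\rceil-1$ has exactly this many vertices, so it is spanning in $G[S_1]$. Similarly, $|S_2| = n-n_1+m = (k+1)m-\lceil\frac{k+2}{2}\rceil m+m = \lceil\frac{k+1}{2}\rceil m$, using $\lceil\frac{k+2}{2}\rceil+\lceil\frac{k+1}{2}\rceil = k+2$. So the second path system is also spanning in $G[S_2]$. Concatenating the two at $\mathbf{c}$ gives paths of length $\lceil\frac{k+2}{2}\rceil+\lceil\frac{k+1}{2}\rceil-2 = k$, and these are vertex-disjoint because $S_1\cap S_2=\mathbf{c}$. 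Consequently, if for some ordering $\mathbf{c}$ the path system in $S_1$ links $a_i$ to $c_{\tau(i)}$ and the one in $S_2$ links $c_{\tau(i)}$ to $b_{\sigma(i)}$, we obtain the forest $F_\sigma$.

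For each $\sigma\in S_m$, let $E_\sigma$ be the event that $G$ contains a spanning $(\mathbf{a},\sigma(\mathbf{b});k)$-path system, where $\sigma(\mathbf{b})=(b_{\sigma(i)})_i$. The aim is to show $\bP(\bigcup_\sigma E_\sigma^c)=o(1)$. A plain union bound over $m!$ permutations is hopeless, so instead I would use exchangeability to make the intermediate matching random. Choose a uniformly random permutation $\pi$ of $[n]$ fixing $\mathbf{a}$ and $\mathbf{b}$ pointwise and preserving $S_1\setminus S_2$, $\mathbf{c}$ and $S_2\setminus S_1$ setwise. Then $\pi(G)$ has the same law as $G$. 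Write $A$ for the event in~\ref{item:S1} and $B_\sigma$ for the event that $G[S_2]$ contains a $(\mathbf{c}',\sigma(\mathbf{b}))$-path system, where $\mathbf{c}'$ is the ordering of $\mathbf{c}$ induced by the $S_1$-system. Conditioned on $G[S_1]$ and on the $S_1$-system, the relabelling of $\mathbf{c}$ is uniformly random and independent of $G[S_2\setminus S_1]$ up to the relabelling.

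The cleaner way to carry this out is the following. Consider the random variable $Y$ equal to the fraction of $\sigma\in S_m$ for which $E_\sigma$ fails. By exchangeability, $\bP(E_\sigma^c)$ is the same for all $\sigma$: apply the permutation of $\mathbf{b}$ that maps $\mathbf{b}$ to $\sigma(\mathbf{b})$. This value is at most $\bP(A^c)+\bP(\text{\ref{item:S2} fails})=o(1)$ for $\sigma=\mathrm{id}$, after relabelling $\mathbf{c}$ via the $S_1$-system. Hence $\bE Y=o(1)$, and by Markov's inequality $Y\le\delta$ whp for some $\delta=o(1)$.

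This alone is not all permutations. To reach every $\sigma$, I would use the group structure: every $\sigma$ can be written as a composition through the middle layer. On the whp event that both~\ref{item:S1} holds for a $1-\delta$ fraction of orderings of $\mathbf{c}$ and~\ref{item:S2} holds for a $1-\delta$ fraction, apply Markov to the fractions of orderings $\tau$ of $\mathbf{c}$ for which each half-system exists. For a fixed $\sigma$, the set of good $\tau$ for the first half and the set of $\tau$ whose composition with $\sigma$ is good for the second half each have density above $1/2$. They therefore intersect, and the resulting $\tau$ yields $F_\sigma$.

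The main obstacle is exactly this step: reducing "for all $\sigma$" to statements about a $(1-o(1))$-fraction of orderings of $\mathbf{c}$, via exchangeability of $G[S_1]$ and $G[S_2]$ under permutations of $\mathbf{c}$ combined with Markov's inequality, and then intersecting two dense subsets of $S_m$.
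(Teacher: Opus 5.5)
Your proposal is correct, and its final paragraph is essentially the paper's proof. Exchangeability makes the events in (S1) and (S2) equally likely after any reordering of the middle tuple $\mathbf{c}$; Markov's inequality then shows that whp both sets of good orderings have density $1-o(1)$ in $S_m$. On that event, for every $\sigma$ the good set for the first half and the $\sigma$-translated good set for the second half each have density above $1/2$, so they intersect. This holds deterministically for all $\sigma$ simultaneously.

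The detour in your second and third paragraphs is not needed and can be dropped. That includes the random relabelling $\pi$, the conditional-independence claim (which is not justified for a general exchangeable graph), and the variable $Y$.
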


\begin{proof}
For each $\mathbf{x}\in \{\mathbf{a},\mathbf{b},\mathbf{c}\}$ and a permutation $\sigma\in S_m$ acting on $[m]$,
write $\sigma(\mathbf{x}) = (x_{\sigma(1)},\ldots,x_{\sigma(m)})$.
Define $\Sigma_1$ as the (random) family of permutations $\sigma\in S_m$ such that $G[S_1]$ contains a 
$(\mathbf{a},\sigma(\mathbf{c});\left\lceil \frac{k+2}{2}\right\rceil-1)$-path system. Similarly, $\Sigma_2$ consists of $\sigma\in S_m$ such that $G[S_2]$ contains a $(\mathbf{c},\sigma(\mathbf{b});\left\lceil \frac{k+1}{2}\right\rceil-1)$-path system.

Since $G$ is exchangeable, the event described in~\ref{item:S1} holds with the same probability $1-o(1)$ for any permutation $\sigma\in S_m$ and $\sigma(\mathbf{c})$ in place of $\mathbf{c}$, and the same is true for~\ref{item:S2}.
Thus, for both $i\in \{1,2\}$, Markov's inequality applied for $|S_m\setminus \Sigma_i|$ implies that whp $|\Sigma_i|=(1-o(1))m!$. 

To conclude, fix any $\sigma\in S_m$. Then, whp the set $\Sigma_2$ contains a permutation $\pi^{-1}\circ\sigma$ for some $\pi\in\Sigma_1$. 
Now, extending an $\left(\mathbf{a},\pi(\mathbf{c});\left\lceil \frac{k+2}{2}\right\rceil-1\right) $-path system by a $\left(\mathbf{c},(\pi^{-1}\circ\sigma)(\mathbf{b});\left\lceil \frac{k+1}{2}\right\rceil-1\right) $-path system gives an $\left(\mathbf{a},\sigma(\mathbf{b});k\right) $-path system. As this holds for every $\sigma\in S_m$, this ensures the desired linking system.
\end{proof}

Given Lemma \ref{lem:combaining path systems}, from now on, we will be only interested in path systems.
Throughout this section, we will make use of the following set of constants:
\begin{equation}
    C_0= 10000,\ 
    C_1 = 8,\ C_2= 8700,\
    \eps = 10^{-100} ,\ D=\ln C_1 +2\eps,\  C^* =2(1+\eps).
\label{eq:constants}
\end{equation}
In addition, to state the results of this section, fix 
a function $\psi=\psi(n) \in \big[1, \frac{\ln n}{4\ln\ln n}\big]$ and set
\[
    k^*=k^*(n,\psi)\coloneqq \lceil C^*\ln n/(\psi\ln \ln n)\rceil.
\] 
Lastly, fix an integer $k\geq k^*$. For ease of presentation, we further assume that $m\coloneqq n/(k+1)$ is an integer, or equivalently that $k+1$ divides $n$. 

\begin{theorem}\label{prop:path-systems-new}
Consider $p=C_0(\ln n)^{\psi}/n$ and disjoint vertex $m$-tuples $\mathbf{a},\mathbf{c}\subseteq [n]$. Then,
\[
    \bP\left(\text{$G(n,p)$ contains an $(\mathbf{a},\mathbf{c};k)$-path system}\right) = 1-o(1/(\ln\ln n)^2).
\]
\end{theorem}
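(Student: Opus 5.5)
We plan to apply the spread machinery (Theorem~\ref{thm:fracKK}) with the ground set $X=E(K_n)$. The target family $\cF$ is the set of all $(\mathbf{a},\mathbf{c};k)$-path systems on $[n]$. The natural measure $\nu$ is uniform on $\cF$: order the $n-2m$ internal vertices uniformly at random, cut them into $m$ blocks of $k-1$, and attach the $i$th block between $a_i$ and $c_i$. For a set $S$ of edges, a first-moment computation gives $\nu(\langle S\rangle)\le (O(1)/n)^{|S|}\cdot(\text{correction})$, where the correction counts components of $S$ that touch the roots $\mathbf{a}\cup\mathbf{c}$. The bad case is a component of $S$ that is a path from $a_i$ to $c_i$ through a fixed sequence of $j\le k$ internal vertices. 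Such an $S$ has $|S|=j+1$ edges but only $j$ vertex choices, so $\nu(\langle S\rangle)\approx n^{-j}\approx (1/n)^{|S|}\cdot n$. In particular, when $|S|=k$ and the whole root-to-root path is prescribed, the loss is a factor $n$. Spread $q$ therefore fails unless $q^{k}\ge n^{-k+1}$, i.e.\ $q\gtrsim n^{-1+1/k}$. For $k\approx \ln n/(\psi\ln\ln n)$ this gives $q\approx (\ln n)^{\psi}/n$ only up to a constant in the exponent. That is exactly the borderline, and it works cleanly only when $\psi$ is large; for $\psi$ near $1$, $\ln\ell$ already costs a $\ln n$ factor. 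We therefore need the "spread improvement" step described in the introduction.

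Concretely, we split $p=p_1+p_2$ and expose $G_1\sim G(n,p_1)$ first. In $G_1$ we grow, from each root $a_i$ and each $c_i$, disjoint short prefix paths of length $r\approx \varepsilon k$. We take many such prefix families, using a greedy/expansion argument: in $G(n,\Theta((\ln n)^\psi/n))$, vertex-disjoint trees of depth $r$ around the $2m$ roots exist, each with about $((\ln n)^{\psi})^{r}$ leaves. A random choice of branch then gives a rich distribution over prefix pairs $(P_i,Q_i)$ with endpoints $a_i',c_i'$. The new measure $\nu'$ first samples the prefixes from this rich family and then a uniform path system connecting the $a_i'$ to the $c_i'$ through the remaining vertices, with lengths $k-2r$. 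Only the completion edges need to appear in $G_2$. Averaging over the prefix choice, a prescribed path between $a_i'$ and $c_i'$ is hit with probability boosted by the number of choices for the endpoints, roughly $((\ln n)^{\psi})^{-2r}$ per root. This kills the factor-$n$ loss once $(\ln n)^{2\psi r}\gtrsim n^{1/k}\cdot$const, which holds for our constant $C^*$. Equivalently, we check that $\nu'$, restricted to complement edges, is $q$-spread with $q=O((\ln n)^{\psi}/(n\ln n))$, or at least $q\ln \ell\le p_2/96$. The family has $\ell\le n$, so $\ln\ell\le\ln n$; this fits after an extra gain is obtained from the boosted root-component bound. The bound also needs care for components of $S$ touching one root only and for components not touching roots. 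Those contribute at most $(C/n)^{|S|}$ by the standard count, since the remaining vertices are nearly uniform.

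The formal application then conditions on $G_1$ being good, which holds with probability $1-o((\ln\ln n)^{-2})$ via Chernoff and a union bound over roots. We apply Theorem~\ref{thm:fracKK} to $G_2$ with the family of completions, obtaining failure probability $1/\ell$ plus the $G_1$ failure, which is within the target. The main obstacle is the spread verification for $\nu'$. We must show that the prefix endpoint distribution is sufficiently uniform: no vertex may serve as $a_i'$ with probability much more than $(\ln n)^{-\psi r}$, and prefixes of different roots must stay essentially independent and disjoint. Getting this uniformity from the random tree structure in $G_1$ is the delicate part. We would first try to build BFS trees with bounded branching (trimming to exactly $\lfloor(\ln n)^\psi/4\rfloor$ children) and choose a uniformly random leaf, so that endpoint probabilities are exactly $\prod$(branching)$^{-1}$.
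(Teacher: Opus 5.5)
Your high-level plan matches the paper's: expose $G_1$ first, use it to build a rich family of prefixes, and apply Theorem~\ref{thm:fracKK} in $G_2$ to the family of completions, keeping the plain uniform measure for large $\psi$. However, the mechanism you propose for the prefixes cannot work, and the prefix length is miscalibrated.

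\emph{Disjoint trees do not exist.} There are $2m=2n/(k+1)$ roots. For $k$ near $k^*$ and $\psi=1$, one has $2m\cdot\lfloor(\ln n)^{\psi}/4\rfloor\asymp n\ln\ln n\gg n$. So the roots cannot even have pairwise disjoint depth-one neighbourhoods of that size, let alone vertex-disjoint trees with $(\ln n)^{\psi r}$ leaves. Hence the ``uniformly random leaf of a trimmed tree'' device, which is what gives you exact endpoint probabilities $b^{-r}$, is unavailable. Prefixes must be chosen from heavily overlapping neighbourhoods, one after another, avoiding previously used vertices. Then the probability that a prefix from $a_i$ ends at $x$ is bounded only by $b^{-r}$ times the number of $r$-paths from $a_i$ to $x$ in $G_1$, and nothing in your proposal controls that number.

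\emph{The length is wrong.} A full completion path with prescribed endpoints costs a factor $n$ relative to $q^{|S|}$. For $k$ near $k^*$, the slack $c^{k}$ available from a constant $c$ in $q=c/n$ is only $n^{o(1)}$. So the endpoint-randomisation gain must be $n^{1-o(1)}$. That means the total prefix length must be essentially $\ln n/(\psi\ln\ln n)$, a constant fraction of $k$, not $\eps k$. Your condition $(\ln n)^{2\psi r}\gtrsim n^{1/k}$ compares a per-path gain with a per-edge loss.

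\emph{What the paper does instead.} It grows prefixes from $\mathbf a$ only, of length $d$ with $(\ln n)^{\psi d}\approx n\exp(-D\ln n/(\psi\ln\ln n))$, sequentially via Algorithm~\ref{alg:creating the initial segments}.
\begin{itemize}
\item Each prefix system is encoded by a list $L\in[(\ln n)^{\psi}]^{dm}$ of neighbour ranks. Distinct successful lists give distinct systems, and $|\cP|\ge(1-o(1))(\ln n)^{\psi dm}$ (Lemma~\ref{lem:algo not rejecting}).
\item The key input is Lemma~\ref{lem:few paths}: whp every pair of vertices is joined by at most $\rho=\exp((\psi\ln\ln n)^5)$ paths of length $d$ in $G_1$. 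This is proved by the $t$-tuple-of-paths argument, which is why $(C_1(\ln n)^{\psi})^{d}$ must sit slightly below $n$.
\item Consequently, prescribing the endpoints of $r$ prefixes leaves at most $\rho^r$ choices for them. This pins down $dr$ list entries, giving at most $\rho^r(\ln n)^{\psi d(m-r)}$ compatible prefix systems (Claim~\ref{claim:spread2}).
\item The residual loss $e^{O(\ln n/(\psi\ln\ln n))}$ per full path is absorbed because each full path has $k-d\ge\ln n/(\psi\ln\ln n)$ edges. This is where $C^*=2(1+\eps)$ comes from.
\end{itemize}
A two-sided variant like yours could in principle be repaired the same way, via sequential construction plus a bound on the number of short paths between pairs of vertices. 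Without such a path-counting bound, however, your spread verification cannot be carried out.
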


We note that a path system guaranteed by the theorem is spanning since $m(k+1)=n$. 
In fact, Theorem~\ref{thm:linking_system} follows rather directly by combining Lemma~\ref{lem:combaining path systems} and Theorem~\ref{prop:path-systems-new} when the function $\psi$ is `smooth' enough (e.g.\ slowly varying). The general case involves several technicalities. Moreover, to apply Theorem~\ref{thm:linking_system} directly in the sequel, we are required to control the ratio $\psi(\delta n)/\psi(n)$ for some constant $\delta>0$. Since $\psi$ is arbitrary, this is not possible in general. We therefore require the following generalisation of Theorem~\ref{thm:linking_system}, that applies to $G(\delta n,p(n))$. We outsource its proof to Appendix~\ref{sec:proofthm2} and note that Theorem~\ref{thm:linking_system} follows by taking $\delta=1$.

\begin{restatable}{lemma}{Lemmathm}
\label{cor:linking-systems}
Fix $\delta\in (0,1]$, set
$$
p=p(n)\coloneqq \frac{2C_0}{\delta}\frac{(\ln n)^{\psi(n)}}{n},
$$
and consider integers $k\geq 4\e k^*(n,\psi)$ and $m=\delta n/(2k+1)$.
Fix disjoint $m$-tuples with different elements $\mathbf{a},\mathbf{b}\subseteq [\delta n]$. 
Then, whp $G(\delta n,p)$ contains an $(\mathbf{a},\mathbf{b};2k)$-linking system.
\end{restatable}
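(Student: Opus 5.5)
We will deduce the lemma from Lemma~\ref{lem:combaining path systems} applied on the vertex set $[\delta n]$ with depth $2k$, combined with Theorem~\ref{prop:path-systems-new} applied to $N:=\delta n$ in place of $n$. Set $N=\delta n$, so that $m=N/(2k+1)$ and $N=(2k+1)m$. Lemma~\ref{lem:combaining path systems} (with $2k$ in place of $k$) needs two conditions. Condition~\ref{item:S1} asks for an $(\mathbf{a},\mathbf{c};k)$-path system in $G[S_1]$ with $|S_1|=(k+1)m$. Condition~\ref{item:S2} asks for a $(\mathbf{c},\mathbf{b};k)$-path system in $G[S_2]$ with $|S_2|=(k+1)m$; here $\lceil (2k+2)/2\rceil-1=\lceil (2k+1)/2\rceil-1=k$. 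Since $G(N,p)$ is exchangeable and $G[S_i]\sim G(n',p)$ with $n':=(k+1)m=\frac{k+1}{2k+1}\delta n\ge \delta n/2$, it suffices to show the following. Whp $G(n',p)$ contains an $(\mathbf{x},\mathbf{y};k)$-path system for arbitrary disjoint $m$-tuples, where $m=n'/(k+1)$. The sets $\mathbf{a},\mathbf{b}$ can be placed as required in the lemma by relabelling, again using exchangeability.

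To invoke Theorem~\ref{prop:path-systems-new} at size $n'$, we need a function $\psi'$ defined at $n'$ with the following three properties.
\begin{itemize}
\item[(a)] $\psi'\in[1,\ln n'/(4\ln\ln n')]$.
\item[(b)] $p\ge C_0(\ln n')^{\psi'}/n'$.
\item[(c)] $k\ge k^*(n',\psi')=\lceil C^*\ln n'/(\psi'\ln\ln n')\rceil$.
\end{itemize}
The natural choice is $\psi':=\psi(n)\ln\ln n/\ln\ln n'$ (capped at $\ln n'/(4\ln\ln n')$, and floored at $1$), so that $(\ln n')^{\psi'}=(\ln n)^{\psi(n)}$.

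For (b): $C_0(\ln n')^{\psi'}/n'\le 2C_0(\ln n)^{\psi}/(\delta n)=p$, since $n'\ge \delta n/2$. This is exactly why the factor $2/\delta$ appears in the definition of $p$. Monotonicity in $p$ then lets us use the theorem at its exact density.

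For (c): $\ln n'=\ln n-O(1)$, so $\ln n'/(\psi'\ln\ln n')=\ln n'/(\psi\ln\ln n)\le \ln n/(\psi\ln\ln n)$, and thus $k^*(n',\psi')\le k^*(n,\psi)\le k$. The slack factor $4\e$ in the hypothesis is there to absorb the capping issues discussed next.

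The cases where $\psi'$ leaves the admissible range need care. If $\psi'>\ln n'/(4\ln\ln n')$ we cap it; capping only decreases $(\ln n')^{\psi'}$, so (b) still holds. For (c), the capped value gives $k^*(n',\psi')\approx 4C^*$, which is $O(1)$ and still below $k$ because $k\ge 4\e k^*(n,\psi)$ and $k^*(n,\psi)\ge C^*\cdot 4$. Since $\psi\ge1$ and $\ln\ln n\ge\ln\ln n'$, we always have $\psi'\ge1$, so no flooring issue arises.

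Finally, Theorem~\ref{prop:path-systems-new} gives failure probability $o(1)$ for each of \ref{item:S1} and \ref{item:S2}, and Lemma~\ref{lem:combaining path systems} then yields the $(\mathbf{a},\mathbf{b};2k)$-linking system whp.

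The main obstacle I expect is the bookkeeping in the second step. Theorem~\ref{prop:path-systems-new} is stated for the density $C_0(\ln n)^{\psi(n)}/n$ with $\psi$ evaluated at the same $n$, whereas $\psi$ here is an arbitrary function. The $\psi'$ reparametrisation is designed precisely to avoid ever needing to compare $\psi(n')$ with $\psi(n)$. I would also double-check the rounding: whether $m(k+1)$ equals $|S_i|$ exactly, as required for the path systems to be spanning.
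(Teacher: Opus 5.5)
Your proof is correct. It differs from the paper's in how it handles the fact that $\psi$ is an arbitrary function, so Theorem~\ref{prop:path-systems-new} cannot simply be invoked with $\psi(n')$.

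The paper discretises. It fixes the functions $\psi_i(x)=\max\{1,\ln x/(4\e^{i}\ln\ln x)\}$ and chooses the index $i$ with $\psi(n)\in[\psi_i(n),\e\psi_i(n))$. It then applies the theorem at size $n_1=(k+1)m$ with $\psi_*=\psi_i$ via Corollary~\ref{cor:many-functions}, a union bound over at most $\ln\ln n$ indices; the error term $o(1/(\ln\ln n)^2)$ in the theorem is tailored to this. The comparisons $\psi(n)\le\e\psi_*(n)$ and $\psi_*(n)\le 2\psi_*(n_1)$, plus a factor $2$ absorbing the ceiling in $k^*$, are exactly what the hypothesis $k\ge 4\e k^*(n,\psi)$ pays for.

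Your reparametrisation $\psi'=\psi(n)\ln\ln n/\ln\ln n'$ makes $(\ln n')^{\psi'}=(\ln n)^{\psi(n)}$ exact. So the density condition uses only $n'\ge\delta n/2$, and the depth condition costs nothing: $k^*(n',\psi')\le k^*(n,\psi)$ without capping, and $k^*(n',\psi')=\lceil 4C^*\rceil\le k^*(n,\psi)$ with capping. Your route is therefore shorter and bypasses Corollary~\ref{cor:many-functions}. It in fact proves the lemma under the weaker assumption $k\ge k^*(n,\psi)$. Your guess that the $4\e$ is there to absorb the capping is therefore off; it is the cost of the paper's discretisation.

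One point you should state explicitly; the paper's corollary relies on it just as tacitly. You apply the theorem at size $n'$ with a value $\psi'$ chosen depending on $n$, and $n\mapsto n'$ need not be injective, so you must justify this. Either observe that the error bounds in the proof of Theorem~\ref{prop:path-systems-new} are uniform over admissible $\psi$ and $k$. Or argue by contradiction along a subsequence on which $n'$ is strictly increasing, so that $\psi'$ and $k$ become genuine sequences indexed by $n'$.
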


To conclude this subsection, we use the first moment method to show that Theorem~\ref{prop:path-systems-new} is optimal and derive an analogous corollary for linking systems. 
The next lemma implies the order of magnitude of the threshold for the existence of a spanning $(\mathbf{a},\mathbf{c};k)$-path system for unboundedly many values of $k$; in particular,  for $k=\ln n/\ln\ln n$, the threshold is $\Theta(\ln n/n)$.\footnote{The lower bound here follows from the existence of an isolated vertex in $G(n,(1-\varepsilon)\ln n/n)$ whp.}

\begin{lemma}\label{lem:optimal-depth}
Consider $p=(\ln n)^{\psi}/2n$, $k_* = \ln n/\psi \ln\ln n$ and an integer $m_*\coloneqq n/(k_*+1)$.
Let $\mathbf{a},\mathbf{c}$ be disjoint $m_*$-tuples of different vertices from $[n]$. Then, whp $G(n,p)$ contains no~$(\mathbf{a},\mathbf{c};k_*)$-path system.
\end{lemma}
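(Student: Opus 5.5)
We use the first moment method: bound the expected number $X$ of $(\mathbf{a},\mathbf{c};k_*)$-path systems in $G(n,p)$ and show $\bE[X]=o(1)$. Markov's inequality then gives $\bP(X\ge 1)=o(1)$.

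\textbf{Counting.} Since $m_*(k_*+1)=n$, such a path system is spanning. Each of its $m_*$ paths has its two endpoints fixed ($a_i$ and $c_i$) and exactly $k_*-1$ internal vertices. The internal vertices of all paths together form the $n-2m_*$ vertices outside $\mathbf{a}\cup\mathbf{c}$. Hence the number of candidate path systems equals the number of ways to distribute these vertices, in order, into the $m_*$ paths. This is at most $(n-2m_*)!\le n!$, since every ordering of the remaining vertices, cut into consecutive blocks of length $k_*-1$, determines the system. Each path system has exactly $m_*k_*=n-m_*$ edges. Therefore
\[
\bE[X]\le n!\,p^{\,n-m_*}\le n^n e^{-n}\sqrt{2\pi n}\cdot e\cdot\Big(\frac{(\ln n)^{\psi}}{2n}\Big)^{n-m_*}.
\]

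\textbf{Estimate.} Taking logarithms, $\ln \bE[X]$ is at most
\[
n\ln n - n + O(\ln n) + (n-m_*)\big(\psi\ln\ln n-\ln 2-\ln n\big).
\]
This equals
\[
m_*\ln n - n + (n-m_*)\psi\ln\ln n - (n-m_*)\ln 2 + O(\ln n).
\]
Now use $m_*=n/(k_*+1)\le n/k_* = n\psi\ln\ln n/\ln n$, which gives $m_*\ln n\le n\psi\ln\ln n$. The positive terms are therefore at most $n\psi\ln\ln n + (n-m_*)\psi\ln\ln n$.

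This is too large. At this precision, $n!\,p^{n}$ contributes $(\ln n)^{\psi n}/2^n$, which does not vanish. The counting must therefore be refined: the $m_*$ missing factors of $p$ gain only $n^{m_*}$, while I spent a full $n!$ on orderings. The key point is that $(n-2m_*)!\,p^{n-m_*}$ should be compared with $(np)^{n}\cdot n^{m_*-\ldots}$. More precisely, $(n-2m_*)!\approx (n/e)^{n-2m_*}$, so
\[
\bE[X]\le (n/e)^{n-2m_*}p^{n-m_*} = (np/e)^{n-2m_*}\,p^{m_*}.
\]
Here $(np/e)^{n}=\exp\!\big(n(\psi\ln\ln n-\ln 2-1)\big)$. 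On the other hand,
\[
p^{m_*}=\exp\!\big(-m_*(\ln n-\psi\ln\ln n+\ln 2)\big)\approx\exp(-n\psi\ln\ln n),
\]
using $m_*\ln n\approx n\psi\ln\ln n$. The two $\psi n\ln\ln n$ terms cancel at leading order, leaving $\exp(-n(\ln 2+1)+o(n))\to 0$.

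\textbf{Main obstacle.} The delicate point is exactly this cancellation, since both terms are of order $n\psi\ln\ln n$. The correction terms must be tracked carefully: the $-2m_*$ in the exponent of $np/e$, the $\psi m_*\ln\ln n$ inside $p^{m_*}$, and the error from $k_*+1$ versus $k_*$. The term $-2m_*\ln(np/e)$ only helps. The leftover $m_*\psi\ln\ln n$ is $O(n(\psi\ln\ln n)^2/\ln n)$, which is $o(n)$ provided $\psi\ln\ln n=o(\sqrt{\ln n})$. For larger $\psi$, up to $\psi\le \ln n/(4\ln\ln n)$, this term must be compared directly with the gain $n(1+\ln 2)$ together with the gain from $-2m_*$ times $\ln(np/e)\approx \psi\ln\ln n$. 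That gain gives $-2m_*\psi\ln\ln n$, which dominates $+m_*\psi\ln\ln n$. Hence $\ln\bE[X]\le -n(1+\ln 2)-m_*\psi\ln\ln n+o(n)\to-\infty$ in all regimes.
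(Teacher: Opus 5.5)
Your strategy matches the paper's: there are exactly $(n-2m_*)!$ candidate $(\mathbf{a},\mathbf{c};k_*)$-path systems, each with $n-m_*=m_*k_*$ edges, and Markov's inequality finishes. The gap is in the final estimate, where a bookkeeping error makes your last displayed bound false.

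In the large-$\psi$ paragraph you count only one positive correction $+m_*\psi\ln\ln n$, but there are two. One comes from the $\psi\ln\ln n$ inside $\ln p$. The other comes from $m_*\ln n=(n-m_*)\psi\ln\ln n=n\psi\ln\ln n-m_*\psi\ln\ln n$; this is your ``$k_*+1$ versus $k_*$'' term, which you list but never use. So the gain $-2m_*\psi\ln\ln n$ from the exponent $n-2m_*$ does not dominate these corrections; it cancels them exactly. You also replaced $\ln(np/\e)$ by $\psi\ln\ln n$, which discards a term $+2m_*(1+\ln 2)$ that is not $o(n)$ when $m_*=\Theta(n)$.

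Done exactly, your bound reads $\ln\bE[X]\le -(n-m_*)\ln 2-(n-2m_*)+O(\ln n)$. Hence the inequality $\ln\bE[X]\le -n(1+\ln 2)-m_*\psi\ln\ln n+o(n)$ that you state is false for large $\psi$. For $\psi=\ln n/(4\ln\ln n)$, so that $k_*=4$ and $m_*=n/5$, the true value of $\ln\bE[X]$ is about $-1.46n$, whereas your bound asserts $-\Theta(n\ln n)$. Your small-$\psi$ regime is fine, since there the missed terms are $o(n)$.

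The conclusion survives, and nothing delicate is happening. Since $n-m_*=m_*k_*$ and $k_*\psi\ln\ln n=\ln n$, the identity $(n-m_*)\psi\ln\ln n=m_*\ln n$ holds exactly; this is precisely why $k_*$ is defined as it is. So even the crude bound $(n-2m_*)!\le n^{n-2m_*}$ suffices:
\[
\bE[X]\le n^{n-2m_*}\cdot\frac{(\ln n)^{\psi(n-m_*)}}{2^{n-m_*}\,n^{n-m_*}}=2^{-(n-m_*)}\exp\bigl((n-m_*)\psi\ln\ln n-m_*\ln n\bigr)=2^{-(n-m_*)}\le 2^{-n/2}.
\]
This is the paper's proof. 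The decay comes entirely from the factor $1/2$ in $p$, so neither Stirling's formula nor a case split on the size of $\psi$ is needed.
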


\begin{proof}
    Let $X$ be the number $(\mathbf{a},\mathbf{c};k_*)$-path systems in $G(n,p)$. We have
    \begin{align*}
        \bE[X] &= (n-2m_*)!\cdot p^{n-m_*} \leq 2^{-n/2}\cdot (\ln n)^{\psi(n-m_*)}/n^{m_*} = 2^{-n/2}\cdot \exp \left((n-m_*)\psi \ln\ln n - m_*\ln n\right).
    \end{align*}
    By Markov's inequality $\bP(X\geq 1)\le \bE[X]$, which concludes the proof upon using the above bound and noting that
    \[
        (n-m_*)\psi \ln\ln n - m_*\ln n = 0.\qedhere
    \]
\end{proof}

It remains to prove Theorem~\ref{prop:path-systems-new}. 

\subsection{Proof of Theorem \ref{prop:path-systems-new}.}
\label{sc:proof-path-systems}
The first natural way to attack Theorem \ref{prop:path-systems-new} is to apply Theorem~\ref{thm:fracKK} with $\cF$ being the family of all $(\mathbf{a},\mathbf{c};k)$-path systems. 
This approach does not work in the crucial case $\psi =1$ used in our proof of Theorem \ref{thm:univ_trees}.
In this case, the expectation threshold is $(\ln n)^{\Theta(1)}/n$ and so the direct application of Theorem~\ref{thm:fracKK} gives a suboptimal result. Moreover, the fragmentation technique fails as well since the above family does not satisfy the required spread bounds: fragments consisting of full $a_ic_i$-paths belong to an unsuitably large number of $(\mathbf{a},\mathbf{c};k)$-path systems, so that in order to improve the size of a typical fragment by a constant factor, one need to sprinkle with probability $(\ln n)^{\Omega(1)}/n$.

To overcome this issue, we prepare the ground by initially revealing $G_1\sim G(n,p_1)$ with $p_1=C_1(\ln n)^\psi/n$. Then, for suitably chosen $d=\Theta(k)$ --- 
defined in~\eqref{eq:d} --- we find a large family $\cP$ of $d$-path systems in $G_1$ which start at $\mathbf{a}$, remain disjoint from $\mathbf{c}$ and are typically well-spread in the sense that every set of paths is contained in not too many elements of $\cP$. Paths in this family play the role of possible prefixes of the desired full path system.
After conditioning on $\cP$ with such properties, we define a family $\cF$ of $(k-d)$-path systems ending at $\mathbf{c}$ such that, for every $F\in \cF$, the graph $P\cup F$ is an $(\mathbf{a},\mathbf{c};k)$-path system for at least one $P\in \cP$, and $F,P$ are edge-disjoint.
This allows us to overcome the issue mentioned in the previous paragraph: indeed, the paths in the family $\cF$ are now rooted only at $\mathbf{c}$ while their other endpoints remain relatively flexible.
 Of course, this flexibility might turn out to be insufficient to ensure that the family of path systems $\cF$ has comparable spread properties to the family $\cF^{all}$ of all $(k-d)$-path systems starting at $\mathbf{c}$.
The randomness inherent in $G_1$ --- and, hence, in $\cF$ --- is sufficient to guarantee that the `counting measure' on $\cF$ exhibits the required spread properties, which are only marginally weaker than those enjoyed by the uniform measure on $\cF^{all}$.

As we mentioned in the outline above, we need to distinguish between small and large values of~$\psi$. Therefore, in Section~\ref{sec:small}, we prove Theorem \ref{prop:path-systems-new} under the assumption that $\psi \leq 2$. 
Then, in Section~\ref{sec:large}, we give the proof of the theorem under the assumption that $\psi > 2$, which is considerably simpler. We note the choice of the threshold value 2 is somewhat arbitrary. Nevertheless, very large $\psi$ must be treated separately, since for technical reasons the argument from Section~\ref{sec:small} does not extend verbatim to $\psi=(\ln n)^{1-o(1)}$.

\subsubsection{Small values of $\psi$}\label{sec:small}

Recall constants defined in~\eqref{eq:constants}. In particular, here we will make use of constants $D,C_1,C_2.$
In this subsection, we assume that $\psi\leq 2$ and set $d=d(n)\in \mathbb N$ satisfying
\begin{equation}\label{eq:d}
(\ln n)^{d\psi} \in \bigg[\frac{n}{(\ln n)^{\psi}}\exp\left(-\frac{D\ln n}{\psi\ln\ln n}\right), n\exp\left(-\frac{D\ln n}{\psi\ln\ln n}\right)\bigg) \quad \text{so that}\quad d=(1-o(1))\frac{\ln n}{\psi\ln\ln n}.
\end{equation}

The following algorithm aims to construct a large family of $d$-path systems with good spreadness properties and such that the paths in each system start from $\mathbf{a}$.

\begin{algorithm}\label{alg:creating the initial segments}
Fix a graph $G$ with vertex set $[n]$ and disjoint $m$-tuples of vertices $\mathbf{a}=(a_1,\ldots ,a_{m})$ and $\mathbf{c}$.
 In addition, fix a list $L$ of $dm$ (not necessarily different) integers from $[(\ln n)^{\psi}]$.
Initially, at step $i=0$, set $u_0:=a_1$, $\mathcal{P}_0=\varnothing$, and $U_0=\{v: v\in \mathbf{c}\}$. At every step $i$, we refer to $u_i,\mathcal{P}_i,U_i$ as the current vertex, the path-system, and the set of unavailable vertices, respectively.
  At every step $i\geq 1$,
    \begin{enumerate}[label={(\bfseries A\arabic*)}]
        \item\label{item:algo1} If $d+1$ does not divide $i$, continue to \ref{item:algo2}. Else, if $d+1$ divides $i$ and $i/(d+1)<m$, set $u_{i}:= a_{i/(d+1)+1}$ and repeat \ref{item:algo1} with $i\to i+1$. 
        Else, terminate the process and output $\mathcal{P}:=\mathcal{P}_i$.
        \item\label{item:algo2} Look for the $L[i-\lfloor i/(d+1)\rfloor]$-th smallest neighbour $v$ of $u_i$ in $[n]\setminus U_i$. 
        Set $\mathcal{P}_{i+1}:=\mathcal{P}_i\cup \{u_i,v\}$, $U_{i+1}:= U_i\cup \{u_i\}$, $u_{i+1}:=v$ and go to \ref{item:algo1} with $i\to i+1$.
        If $v$ is not defined, terminate the process and~reject.
    \end{enumerate}
\end{algorithm}

Upon successful termination of the algorithm, denote its output by $\mathcal{P}(G,L)$: this is a $d$-path system consisting of $m$ paths starting from $a_1,\ldots,a_m$ and vertex-disjoint from $\textbf{c}$.
Further note that, for two lists $L,L'$ where each of $\mathcal{P}(G,L)$ and $\mathcal{P}(G,L')$ is well defined, we have $\mathcal{P}(G,L)\neq \mathcal{P}(G,L')$: indeed, by considering the smallest $j$ such that $L[j]\neq L'[j]$, the algorithm constructs identical path systems before reaching $L[j], L'[j]$ but extends these path systems differently upon processing $L[j], L'[j]$.

Let $G_1\sim G(n,p_1)$ with $p_1={C_1(\ln n)^{\psi}}/{n}$. Fix disjoint $m$-tuples $\textbf{a},\textbf{c}$ in $[n]$.
The next lemma shows that whp Algorithm \ref{alg:creating the initial segments} terminates successfully for almost all lists $L$. 
The collection of path systems produced by successful runs of the algorithm for different lists will constitute the family $\cP$ described in the proof outline.

\begin{lemma}\label{lem:algo not rejecting}
With probability $1-o(n^{-1/18})$, there are $(1-o(1))(\ln n)^{d{\psi}m}$ lists $L$ such that Algorithm~\ref{alg:creating the initial segments} terminates successfully on the input $(G_1,\textbf{a},\textbf{c},L)$. 
\end{lemma}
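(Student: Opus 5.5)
The plan is a first-moment count of bad lists, using the fact that for each fixed list $L$ Algorithm~\ref{alg:creating the initial segments} is an exploration process that only looks at fresh edges. Call a list $L$ \emph{bad} if the algorithm rejects on $(G_1,\mathbf{a},\mathbf{c},L)$. Let $B$ be the number of bad lists. There are $(\ln n)^{d\psi m}$ lists in total. I will show that, for every fixed $L$,
\[
\bP(L \text{ is bad})\le n^{-1/8+o(1)}.
\]
This gives $\bE[B]\le n^{-1/8+o(1)}(\ln n)^{d\psi m}$. Markov's inequality then gives $\bP\bigl(B\ge n^{-1/16}(\ln n)^{d\psi m}\bigr)\le n^{-1/16+o(1)}=o(n^{-1/18})$. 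Outside this event, $(1-o(1))(\ln n)^{d\psi m}$ lists terminate successfully, which is the claim.

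Fix $L$. The algorithm rejects only at a step $i$ in \ref{item:algo2} where the current vertex $u_i$ has fewer than $(\ln n)^\psi$ neighbours in $[n]\setminus U_i$.

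\textbf{Step 1: independence.} At step $i$, the only edges examined so far are edges incident to $u_0,\dots,u_{i-1}$. Each such vertex is either in $U_i$ or is a starting vertex $a_j$ that has already been processed. Hence the edges from $u_i$ to $[n]\setminus U_i$ have not been revealed. This requires $u_i\notin\{u_0,\dots,u_{i-1}\}$, which holds because visited vertices are placed in $U$. Consequently, conditionally on the history, $|N(u_i)\setminus U_i|\sim \mathrm{Bin}(n-|U_i|-1,p_1)$.

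A caveat: one should also make sure the walk does not step onto a not-yet-processed root $a_j$. I would handle this by treating $\mathbf{a}$ as unavailable, i.e.\ adding the $a_j$ to $U_0$ as well. This costs only $m$ further vertices and does not affect the count below.

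\textbf{Step 2: counting unavailable vertices.} At any time $|U_i|\le m(d+2)$. Since $d=(1-o(1))\ln n/(\psi\ln\ln n)$ and $k+1\ge C^*\ln n/(\psi\ln\ln n)$ with $C^*=2(1+\eps)$, we get
\[
|U_i|\le \frac{(d+2)n}{k+1}\le \Bigl(\tfrac{1}{2(1+\eps)}+o(1)\Bigr)n.
\]
Thus $n-|U_i|\ge (1/2-o(1))n$. With $C_1=8$, the mean of the binomial is therefore $\mu\ge (4-o(1))(\ln n)^\psi$.

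\textbf{Step 3: Chernoff and a union bound.} By the lower-tail bound in Lemma~\ref{lem:Chernoff} with $t=\mu-(\ln n)^\psi\approx 3(\ln n)^\psi$,
\[
\bP\bigl(\mathrm{Bin}<(\ln n)^\psi\bigr)\le \exp\bigl(-t^2/(2\mu)\bigr)\le \exp\bigl(-(9/8-o(1))(\ln n)^\psi\bigr)\le n^{-9/8+o(1)},
\]
using $\psi\ge 1$. A union bound over the at most $dm\le n$ steps gives $\bP(L\text{ is bad})\le n^{-1/8+o(1)}$, as required.

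\textbf{Main obstacle.} The delicate point is the constant bookkeeping in Step~2. The per-step failure probability must beat $n^{-1-1/9}$, and this needs both the available set to have size about $n/2$ and $C_1=8$. That in turn relies on $d\approx k^*/C^*$, i.e.\ on the specific choice of $C^*$ in~\eqref{eq:constants} and of $d$ in~\eqref{eq:d}. If the margin turns out to be tighter, the fallback is to enlarge $C_1$ rather than change the structure of the argument.
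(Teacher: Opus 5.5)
Your proof is correct and follows the paper's argument essentially step for step. For a fixed list you use fresh edges at each step, at least about $n/2$ available vertices, and Chernoff with $C_1=8$ to get roughly $n^{-9/8}$ per step, then a union bound over at most $n$ steps; Markov's inequality on the number of rejected lists finishes, with your fixed threshold $n^{-1/16}$ in place of the paper's $\sqrt{\eps_n}$. Your caveat about adding $\mathbf{a}$ to $U_0$ is a genuine point that the paper's proof passes over: as literally written, a walk could enter an unprocessed root $a_j$, whose edges would then no longer be fresh when it later serves as a start vertex. The paper's bound $|U_i|\le (d+2)m$ already leaves room for this fix.
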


\begin{proof}
We show that there is a sequence $\eps_n = o(n^{-1/9})$ such that, for every list $L$, with probability at most $\eps_n$, Algorithm \ref{alg:creating the initial segments} rejects the input $(G_1,\textbf{a},\textbf{c},L)$.
To see that this suffices to conclude, denote by $X$ the number of lists rejected by Algorithm \ref{alg:creating the initial segments} and note that Markov's inequality implies
    \[
        \bP\left(X\geq  \sqrt{\eps_n}\cdot  (\ln n)^{d\psi m}\right) \leq \frac{\bE[X]}{\sqrt{\eps_n} \cdot (\ln n)^{d\psi m}}\leq \sqrt{\eps_n} = o(n^{-1/18}). 
    \]

    Fix a list $L$. Algorithm \ref{alg:creating the initial segments} rejects the input if and only if at some iteration the current vertex $u_i$ has less than $(\ln n)^{\psi}$ neighbours in $[n]\setminus U_i$. 
    To show that this is unlikely, we analyse Algorithm \ref{alg:creating the initial segments} by revealing the edges between the current vertex $u_i$ and the set $[n]\setminus U_i$ in each iteration: note that these edges were not revealed at any previous iteration since all previously processed vertices are in $U_i$.
    Further, using that
    \[
        \left|[n]\setminus U_i\right |\geq n-(d+2)m\geq (1+\eps/10)n/2,
    \]
    the probability of rejection at a fixed iteration is at most $\bP(\mathrm{Bin}(
    (1+\eps/10)n/2, p_1) \leq (\ln n)^{\psi}).$
    As $C_1/2=4$, by Chernoff's bound and a union bound, the probability of rejection at some iteration is at most
      \begin{align*}
        n\cdot \bP(\mathrm{Bin}((1+\eps/10)n/2, p_1) \leq (\ln n)^{\psi}) & = n\cdot \bP(\mathrm{Bin}((1+\eps/10)n/2, p_1)\leq np_1/8) \\
        & \leq n\cdot o\big(\e^{-9(\ln n)^{\psi}/8}\big)  = o(n^{-1/9}).
    \end{align*}
    This completes the proof.
\end{proof}

In order to prove that the constructed family of path systems has sufficient spread properties, we show an upper bound on the number of $d$-paths between any pair of vertices in $G_1\sim G(n,p_1)$. 

\begin{lemma}\label{lem:few paths}
For vertices $u\neq v$ in $[n]$, denote by $P_{u,v}$ the number of $d$-paths between $u$ and $v$ in $G_1$. Then, with probability $1-o(n^{-1/9})$,
    \begin{equation}\label{eq:multiplicity bound}
        \max_{u\neq v} P_{u,v} \leq \exp((\psi\ln\ln n)^5)\eqqcolon \rho.
    \end{equation}
\end{lemma}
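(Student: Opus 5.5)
The plan is to bound the expected number of $d$-paths between two vertices, and then bound the probability of having many of them by a high-moment argument that counts families of paths through the structure of their union. First, by \eqref{eq:d},
\[
\bE[P_{u,v}] \le n^{d-1}p_1^d = \frac{(C_1(\ln n)^{\psi})^d}{n} \le C_1^d\exp\Big(-\frac{D\ln n}{\psi\ln\ln n}\Big).
\]
Since $D=\ln C_1+2\eps$ and $d\le \ln n/(\psi\ln\ln n)$, this is at most $\exp(-2\eps\ln n/(\psi\ln\ln n))=o(1)$. So the mean is tiny, and we need an upper-tail bound strong enough to survive a union bound over $n^2$ pairs. The target is a failure probability of $o(n^{-2-1/9})$ per pair.

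For this, fix $u,v$ and let $s=\lceil(\psi\ln\ln n)^4\rceil$, say. If $P_{u,v}>\rho$, then there are many $uv$-paths, and we extract a structured witness from them. Among $\rho$ distinct $d$-paths between $u$ and $v$, the union $H$ of any $r$ of them is a connected graph containing $u,v$ with at most $rd$ edges. The key quantity is the excess $e(H)-v(H)+1$, where $v(H)$ counts vertices.

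In $G(n,p_1)$, a fixed graph $H$ with $u,v$ prescribed has expected number of copies about $n^{v(H)-2}p_1^{e(H)}$. This equals $n^{-(e(H)-v(H)+2)}(C_1(\ln n)^\psi)^{e(H)}$. Each extra unit of excess costs a factor $n$, while each edge gains only a $(\ln n)^{O(\psi)}$ factor. Concretely I would argue as follows. Take $r=s$ paths and split into cases.

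\emph{Case 1: the union of some $s$ paths has excess at least $3$.} Then the union of at most $3$ or $4$ of these paths already contains a connected subgraph containing $u$ and $v$ with excess at least $3$. This holds because adding one path to a union increases the excess by at least one unless the path is already contained in the union. Such a subgraph has at most $4d$ edges. Its expected count is at most
\[
\sum_{e\le 4d}(\text{number of shapes})\cdot n^{-3-1}\big(C_1(\ln n)^\psi\big)^{e}.
\]
The factor $(\ln n)^{4d\psi}\le n^{4}$ here is too much. So I would refine the choice: add paths one at a time and stop at the first time the excess reaches $2$. Even more carefully, I would account for the fact that each of the $d$-paths individually has expectation $\ll 1$. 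The count of a union of $j$ paths with excess $j-1$ then behaves like $\bE[P_{u,v}]\cdot(\text{polylog})^{\text{branch lengths}}$, where the new segments are shorter than $d$. The number of shapes is $d^{O(j)}$.

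\emph{Case 2: all unions of $s$ paths have small excess $\le 2$.} Then the paths live in a graph with bounded cyclomatic number and at most $sd$ edges. The number of $uv$-paths in a graph of excess $\le 2$ is $O(1)$, which contradicts having $\rho\gg 1$ paths. So overall, $P_{u,v}>\rho$ forces a connected witness containing $u,v$ whose excess is forced to grow with the number of paths.

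Cleanest route: I would prove that whp every connected subgraph with at most $K d$ edges ($K$ a large constant) has excess at most $K'$, for suitable constants. The first moment for the excess-$t$ case is $n^{1-t}(C_1(\ln n)^\psi)^{e}d^{O(t)}$ summed over shapes. For $e\le Kd$, the factor $(C_1(\ln n)^\psi)^{Kd}\le n^{K(1+o(1))}$, so $t\ge K+3$ suffices to get $o(n^{-2})$. Then, given that all small subgraphs have bounded excess $t_0=O(1)$, I would count $d$-paths from $u$ to $v$ by a BFS/branching argument. The path count is bounded by how often the walk can choose among branches. In a subgraph of excess $\le t_0$, the number of $uv$-paths of length $d$ is at most $2^{O(t_0)}\cdot d^{O(t_0)}$. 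This must be combined with the fact that the union of all $d$-paths of $uv$ may be large. I would handle that by greedily taking paths that each add excess, which gives a contradiction after $t_0+1$ paths unless the paths are confined. The result is a bound $(d)^{O(1)}\le\rho$.

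The main obstacle is making the excess threshold compatible with polylog factors. The edge gain $(\ln n)^{\psi e}$ with $e$ up to $Kd$ can swamp $n^{-t}$ unless $t$ scales with $K$, and then the resulting path bound grows like $d^{O(K)}$. The generous $\rho=\exp((\psi\ln\ln n)^5)$ is presumably exactly what absorbs this: even $d^{(\psi\ln\ln n)^3}$ is fine. So if needed I would let the excess threshold be $(\psi\ln\ln n)^3$ rather than constant.
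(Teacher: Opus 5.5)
Your architecture matches the paper's. You greedily extract $uv$-paths, each not contained in the current union, so each adds at least one unit of excess; this is the paper's family $\mathcal{B}$, where $P_i$ is edge-disjoint from $P_1\cup\dots\cup P_{i-1}$ with both ends in it. You rule such sequences out by a first moment, and otherwise count $uv$-paths inside a low-excess union. However, the quantitative core is missing, and the route you call cleanest gives no contradiction. A union of $j$ greedy $d$-paths has up to $jd$ edges but excess only about $j-1$, while $(np_1)^d=n^{1-o(1)}$. So a first moment can only exclude excess larger than roughly $e/d+O(1)$ in a connected subgraph with $e$ edges. Any lemma of the form ``every connected subgraph with at most $Kd$ edges has excess at most $K'$'' therefore has $K'\ge K$. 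Such a lemma is fully consistent with $t_0+1$ greedy paths (at most $(t_0+1)d$ edges, excess at least $t_0$). This holds for constant $t_0$, and equally for $t_0=(\psi\ln\ln n)^3$ as long as the lemma is used in this form. So your closing hedge does not repair the argument, and the obstacle is not that polylogarithmic factors swamp $n^{-t}$.

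The missing idea is to exploit the precise margin built into \eqref{eq:d} and $D=\ln C_1+2\eps$. Let $t$ be the number of greedy paths. A new segment of length $d_i\le d$ with both ends in the current union contributes to a segment-by-segment union bound a factor at most $(dt)^2n^{d_i-1}p_1^{d_i}\le (dt)^2C_1^d(\ln n)^{d\psi}/n\le\exp\bigl(-(2\eps-o(1))\ln n/(\psi\ln\ln n)\bigr)$. That is only $n^{-\Theta(1/(\psi\ln\ln n))}$. So each new segment costs a factor comparable to $\bE[P_{u,v}]$ itself, and the gain does not come from segments being shorter than $d$; they may have length close to $d$. Hence you need $t=\Theta(\psi\ln\ln n/\eps)$ greedy paths; the paper takes $t=\lfloor(3/\eps)\psi\ln\ln n\rfloor$, giving $o(n^{-2.5})$ per pair. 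The count must also be organised segment by segment as above. Counting general connected subgraphs via vertex sets and spanning trees loses a factor $\e^{v}$, i.e.\ $n^{(1-o(1))/(\psi\ln\ln n)}$ per $d$ vertices, which already exceeds the margin since $2\eps<1$. If instead the greedy process stops before $t$ steps, all $uv$ $d$-paths lie in a union of fewer than $t$ segments. Such a union contains at most $2^{O(t^4)}$ $uv$-paths by the paper's pivot/bare-path count ($2^{O(t)}$ via the kernel also works). This bound, not $d^{O(K)}$, is what $\rho=\exp((\psi\ln\ln n)^5)$ absorbs.
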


\begin{proof}
We show that, for fixed $u\neq v$, with probability $1-o(n^{-2.5})$, $P_{u,v} \leq \rho$; the statement then follows from the union bound.
Fix $u\neq v$, set 
\begin{equation}
t\coloneqq \left\lfloor (3/\eps)\psi\ln\ln n\right \rfloor,
\label{eq:t-def}
\end{equation}
and define $\cB$ to be the collection of all $t$-tuples of paths $(P_1,\ldots,P_t)$ such that 
\begin{itemize}
\item $P_1$ is a $d$-path between $u$ and $v$,
\item for every $i\in\{2,\ldots,t\}$, $P_i$ has at most $d$ edges, it is edge-disjoint with $P_1\cup \ldots\cup P_{i-1}$ but has its terminal vertices in the latter graph.
\end{itemize}
The following two claims complete the proof of the lemma.
    
    \begin{claim}\label{claim:typically no bad tuple}
        With probability $1-o(n^{-2.5})$,  there is no $(P_1,\ldots ,P_t)\in \cB$ such that $P_1\cup\ldots\cup P_t\subseteq G_1$.
    \end{claim}

    \begin{claim}\label{claim:no bad tuple few paths}
        Suppose that $Q_1,\ldots ,Q_{\rho}\subseteq E(K_n)$ are distinct $d$-paths between $u$ and $v$. Then, there is a sequence $(P_1,\ldots ,P_t)\in \cB$ such that $P_1\cup\ldots\cup P_t\subseteq Q_1\cup\ldots\cup Q_\rho$. 
    \end{claim}

The statement of the lemma is now immediate. \end{proof}

    \begin{proof}[Proof of Claim \ref{claim:typically no bad tuple}]
        For every sequence $\mathbf{d} = (d_1,\ldots,d_t)\in [d]^{t}$, we have
        \begin{equation}\label{eq:the existence of a sequence of paths}
            \bP\left(\exists(P_1,\ldots,P_t)\in \cB \text{ such that }\forall i\in [t],\; |P_i|=d_i \text{ and }\bigcup_{i=1}^{t}P_i\subseteq G_1\right) \leq \prod_{i=1}^{t}(dt)^2 n^{d_i-1} p_1^{d_i},
        \end{equation}
        where $(dt)^2$ dominates the number of choices of endpoints of $P_i$ given $P_1,\ldots,P_{i-1}$, and $n^{d_i-1}$ bounds from above the number of choices of internal vertices of $P_i$. 
        Then, using~\eqref{eq:the existence of a sequence of paths} and summing up over all possible choices of $\textbf{d}$, we get
        \begin{equation}\label{eq:boundsP_i}
        \bP\left(\exists(P_1,\ldots ,P_{t})\in \cB \text{ such that }\bigcup_{i=1}^{t}P_i\subseteq G_1\right) \leq \sum_{\mathbf{d}\in [d]^{t}} \prod_{i=1}^{t}\frac{(dt)^2 (n p_1)^{d_i}}{n}\le d^t\cdot \bigg(\frac{(dt)^2 (np_1)^d}{n}\bigg)^t.
        \end{equation}
        Since $d(dt)^2 (np_1)^d \le d^5 C_1^d (\ln n)^{d\psi}$, recalling~\eqref{eq:constants},~\eqref{eq:d},~and~\eqref{eq:t-def}, we obtain that~\eqref{eq:boundsP_i} is at most
        \[\left(\frac{d^{5}C_1^{d}(\ln n)^{d\psi}}{n}\right)^{t}= \left(d^{5} C_1^{d}\exp\left(-\frac{D\ln n}{\psi\ln\ln n}\right)\right)^{t}\leq \exp\left(-(D-\ln C_1-\eps)\cdot t\cdot \frac{\ln n}{\psi\ln\ln n}\right)= o(n^{-2.5}),\]
        where the last equality used that $D-\ln C_1-\eps =\eps$. 
        \end{proof}

    \begin{proof}[Proof of Claim \ref{claim:no bad tuple few paths}]
    Fix distinct $d$-paths $Q_1,\ldots ,Q_{\rho}\subseteq E(K_n)$ as described. Call a vertex in $Q\coloneqq \bigcup_{i=1}^{\rho} Q_i$ a \emph{pivot} if it has degree at least $3$ or belongs to $\{u,v\}$. 
    A path in $Q$ is called \emph{bare} if its  
    internal vertices are all non-pivots. 
    Set $\hat{Q}_1\coloneqq Q_1$ and, for all $i\in [\rho-1]$, set $\hat{Q}_{i+1}\coloneqq Q_{i+1}\setminus \bigcup_{j=1}^{i} E(Q_j)$; note that $\hat{Q}_1,\ldots \hat{Q}_{\rho}$ are path systems, with some of them possibly containing no edges. For each $i\in [\rho]$, let $P^{i}_1,\ldots,P^{i}_{s_i}$ be the maximal bare paths in $\hat{Q}_i$.

    Next, we list the paths $P^{i}_j$ in the lexicographic order, namely
        \[
        (P_1,\ldots,P_{\ell}):=(P^{1}_1,\ldots,P^{1}_{s_1},P^{2}_{1},\ldots,P^2_{s_2},\ldots\ldots,P^{\rho-1}_1,\ldots,P^{\rho-1}_{s_{\rho-1}},P^{\rho}_{1},\ldots ,P^{\rho}_{s_{\rho}}),
        \]     
        where $\ell = s_1+\ldots+s_{\rho}$. 
        Note that the intersection of the path $P_i$, $i\geq 2$, with the union of the previous paths is equal to the endpoints of $P_i$, and thus it only remains to show that $\ell\ge t$.
        To this end, note that, since every pivot is an endpoint of $P_i$ for some $i$, and each $P_i$ contains at least $2$ pivots, there are at most $2\ell$ pivots in $Q$ 
        and its maximum degree is at most $2\ell$. 
        As a result, $Q$ contains at most $(2\ell)^{4}$ maximal bare paths: indeed, there are at most $(2\ell)^{2}$ choices for pairs of pivots and at most $(2\ell)^{2}$ for their neighbours of degree $2$. 
        As each path between pivots can be uniquely decomposed into a collection of maximal bare paths, the number of paths between $u$ and $v$ in $Q$ is at most $2^{16\ell^{4}}$. Thus, recalling that $Q_1,\ldots,Q_{\rho}\subset Q$ are $\rho$ different paths between $u$ and $v$, we get that $2^{16\ell^{4}}\geq\rho$. This implies that $\ell\ge t$ for all large enough $n$, as desired.
    \end{proof}

We are now ready to define $\cF$.
Recall the disjoint $m$-tuples  $\mathbf{a}$ and $\mathbf{c}$, the function $p_1\coloneqq C_1(\ln n)^{\psi}/n$, and define $\cP$ as the (random) collection of all path systems produced by Algorithm~\ref{alg:creating the initial segments} applied with $G_1,\mathbf{a},\mathbf{c},$ and all successful lists in $[(\ln n)^{\psi}]^{dm}$. For $P\in\mathcal{P}$ and a $(k-d)$-path system $F\subseteq E(K_n)$, we call $F$ a {\it $P$-completion}, if $F,P$ are edge-disjoint and $F\cup P$ is an $(\mathbf{a},\mathbf{c};k)$-path system. Then, $\cF$ is defined as the (random) family of all $(k-d)$-path systems $F\subseteq E(K_n)$ for which there is $P\in \cP$ such that $F$ is a $P$-completion. 
We stress that, if a graph $G_2$ contains some $F\in \cF$, then $G_1\cup G_2$ contains some $(\mathbf{a},\mathbf{c};k)$-path system.

Let $p = C_0(\ln n)^{\psi}/n$ and let $p_2$ be such that $(1-p_1)(1-p_2)=1-p$; note that, for large enough $n$, this implies $p_2\geq C_2\ln n/n$. Since the probability bounds in Lemmas~\ref{lem:algo not rejecting} and~\ref{lem:few paths} are polynomial in $n$, 
to prove Theorem \ref{prop:path-systems-new}, it suffices to show that with probability $1-o(1/(\ln\ln n)^2)$ the graph $G_2\sim G(n,p_2)$ contains some path system in $\cF$. Moreover, we may (and do) assume in what follows that $G_1$ satisfies the assertions of these lemmas deterministically.

We construct a $O(1/n)$-spread measure on $\cF$ and apply Theorem \ref{thm:fracKK}. 
More precisely, define a probability measure $\nu$ on $(\mathcal{F},2^{\mathcal{F}})$: for every $F\in \cF$, let 
\[
    \nu(F) \coloneqq \frac{|\{P\in \cP:F\cup P\text{ is an $(\mathbf{a},\mathbf{c};k)$-path system}\}|}{|\cP| \cdot (n-(d+2)m)!}.
\]
 Note that $\nu$ is indeed a probability measure: by double counting,
\begin{align*}
    \nu (\cF)= \sum_{F\in \cF}\nu(F) 
    &= \sum_{F\in \cF} \sum_{P\in \cP} \frac{\mathbf{1}[F\cup P\text{ is an  $(\mathbf{a},\mathbf{c};k)$-path system}]}{|\cP| \cdot (n-(d+2)m)!}\\
    &= \sum_{P\in \cP} \frac{|\{F\in \cF:F\cup P\text{ is an $(\mathbf{a},\mathbf{c};k)$-path system}\}|}{|\cP| \cdot (n-(d+2)m)!} = \sum_{P\in \cP} \frac{1}{|\cP|} = 1.
\end{align*}

To establish the claimed spreadness property of $\nu$, we require the inequality $1-x\le \e^{-x}$ for $x\ge 0$ and the following version of Stirling's formula due to Robbins~\cite{Robbins}: 
$$
 \sqrt{2\pi n} (n/\e)^n\le n! \le \e^{1/(12n)}\sqrt{2\pi n} (n/\e)^n\quad \text{for all $n\in \mathbb N$}.
$$ 
In particular, we get that, for every large $n$ and $s\in [0,n-1]$,
\begin{equation}\label{lem:factorial_bounds}
\frac{(n-s)!}{n!}\le \sqrt{\frac{n-s}{n}} \e^{1/(12(n-s))} \frac{((n-s)/\e)^{n-s}}{(n/\e)^n}\le \e^{-s/(2n)+1/(12(n-s))}\frac{(n-s)^{n-s}}{n^n \e^{-s}}\le \bigg(\frac{\e}{n}\bigg)^{s}.
\end{equation}

\begin{lemma}\label{lem:nu is spread}
    The probability measure $\nu$ is $q$-spread for $q=87/n$.
\end{lemma}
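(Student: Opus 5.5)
Fix a set $S$ of edges of $K_n$ (so $S$ is a candidate fragment of a $(k-d)$-path system ending at $\mathbf{c}$). We must bound
\[
\nu(\cF\cap\langle S\rangle)=\frac{1}{|\cP|\,(n-(d+2)m)!}\sum_{P\in\cP}\bigl|\{F\supseteq S: F \text{ is a } P\text{-completion}\}\bigr|
\]
by $(87/n)^{|S|}$. We may assume $S$ is itself a linear forest whose components are subpaths of the prospective completion paths. For fixed $P$, a $P$-completion is determined by two choices. First, a bijection from the endpoints of $P$ to $\mathbf{c}$, which is forced by the labels $a_i\mapsto c_i$. Second, an ordering of the remaining $n-(d+2)m$ free vertices into the $m$ internal segments of length $k-d-1$. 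Hence every $P$ has exactly $(n-(d+2)m)!$ completions, matching the normalisation.

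The plan is to split the components of $S$ into two kinds. \emph{Free} components avoid the endpoints of the path systems in $\cP$, i.e.\ every vertex of the component is an interior vertex of the completion. \emph{Anchored} components contain an edge incident to the last vertex $u$ of some path of $P$.

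For free components, fixing the positions of the vertices of $S$ in the ordering costs the usual factor. Each edge of $S$ either pins a vertex next to an already determined one or, for the first edge of a component, leaves $O(m)$ choices of slot. Counting completions containing $S$ should give at most
\[
(n-(d+2)m-|V(S)\setminus \mathbf{c}|)!\cdot (O(k))^{\#\text{comp}},
\]
and \eqref{lem:factorial_bounds} turns this into a factor of roughly $(\e/n')^{|S|}$ with $n'\ge n/2$. This gives a $(2\e/n)^{|S|}$-type bound independently of $P$, so averaging over $P$ is harmless. The components ending in $\mathbf{c}$ are even more constrained, since their positions are fixed.

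The anchored edges are the crux. An edge $\{u,x\}$ with $u$ the terminal vertex of a prefix path forces $P$ to have $u$ as the endpoint of its $i$-th path for the appropriate $i$. Here I use the structure of $\cP$. The number of $P\in\cP$ whose $i$-th path ends at a given $u$ is at most the number of $d$-paths in $G_1$ from $a_i$ to $u$, which is at most $\rho$ by Lemma~\ref{lem:few paths}, times the number of ways to choose the other $m-1$ paths. Compared with $|\cP|=(1-o(1))(\ln n)^{d\psi m}$ (Lemma~\ref{lem:algo not rejecting}), fixing one endpoint should cost a factor of about
\[
\frac{\rho}{(\ln n)^{d\psi}}\le \frac{\rho\,(\ln n)^{\psi}}{n}\exp\!\Bigl(\frac{D\ln n}{\psi\ln\ln n}\Bigr)
\]
by \eqref{eq:d}. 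This is only $n^{-1+o(1)}$, not $O(1/n)$. The resolution I would pursue is that fixing $r$ endpoints via anchored edges still leaves the free-vertex ordering with $|S|$ pinned vertices. The extra $(\ln n)^{\psi}e^{D k^*\!/\ldots}$ loss must then be absorbed by the gain from $k\ge k^*$ (with $C^*=2(1+\eps)$), which makes $d$ at most about half of $k$. Under that constraint, the anchored vertex $x$ and the next $k-d-1$ positions of that path are restricted in a way worth an extra factor. Alternatively, I would work with the algorithm's tree structure: $P$ is generated by independent uniform label choices, so for each path the endpoint distribution is bounded by $\rho\,(\ln n)^{-d\psi}$.

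To bound the number of $P$ with $r$ prescribed endpoints at $|\cP|\cdot(\rho(\ln n)^{-d\psi})^{r}$, I would use the product structure of the lists: the count factorises over the $m$ paths, up to the dependence through $U_i$, which only shrinks counts. Assembling the free and anchored factors, I would check the constant $87$ against $2\e$ and the slack from $C_0$. The main obstacle is exactly this anchored-edge accounting: showing that the per-endpoint concentration from Lemma~\ref{lem:few paths}, combined with the counting of completions, yields a clean $O(1/n)$ per edge rather than $n^{-1+o(1)}$.
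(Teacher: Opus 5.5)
There is a genuine gap at exactly the step you call the crux, and it comes from misplacing where the loss occurs. Fix $P\in\cP$ and write $n'=n-(d+2)m$.

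\emph{Non-full anchored components cost nothing.} A component of $S$ with $s_j$ edges that starts at an endpoint of $P$ but does not reach $\mathbf{c}$ pins $s_j$ free vertices, namely the first $s_j$ slots of its segment. So counting completions already gives the full factor $(\e/n')^{s_j}$, and such components need no charge to $P$.

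\emph{Only full components lose a factor.} These are the paths of length exactly $k-d$ joining a prefix endpoint to some $c_i$. Both ends are non-free, so they pin only $k-d-1$ free vertices and one factor of $n$ is genuinely missing. Your claim that the ordering still has $|S|$ pinned vertices fails exactly here. The right bound is $|\cF_P\cap\langle S\rangle|\le 2^{s}(n'-s+r)!$, with $r$ the number of full components.

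Your free-component count $(n'-|V(S)\setminus\mathbf{c}|)!\,(O(k))^{\#\mathrm{comp}}$ is also off. The first vertex of a free block has $\Theta(n)$ admissible slots, so each free component lowers the factorial by its number of edges, not vertices, at the cost of a factor $2$ for orientation.

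Moreover, ``anchored'' is not a property of $S$ alone, since it depends on $P$. Only for a full component ending at $c_i$ is the index $i$ forced, and that is what makes your $\rho$-count valid. By Lemma~\ref{lem:few paths} and injectivity of $L\mapsto\mathcal{P}(G_1,L)$, at most $\rho^{r}(\ln n)^{d\psi(m-r)}$ systems $P\in\cP$ are compatible with $S_{\rm full}$. The reason is that the prescribed prefixes determine $dr$ list entries once the others are fixed.

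Once the defect is confined to full components, the $n^{o(1)}$ mismatch closes with no extra positional restriction. By \eqref{eq:d} and $\psi\le 2$, the net factor per full component is $\frac{\rho}{(\ln n)^{d\psi}}\cdot\frac{n'}{\e}\le\exp\bigl(\frac{(D+\eps)\ln n}{\psi\ln\ln n}\bigr)$. Such a component has $k-d\ge\frac{\ln n}{\psi\ln\ln n}$ edges, which is where $C^*=2(1+\eps)$ is used. Hence this is at most $\e^{D+\eps}$ per edge of that component. Combining this with $2\e/n'\le 4\e/n$ per edge and $|\cP|\ge(\ln n)^{d\psi m}/(1+\eps)$ from Lemma~\ref{lem:algo not rejecting} gives $(1+\eps)\bigl(4\e\cdot\e^{D+\eps}/n\bigr)^{s}\le(87/n)^{s}$.

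Your stated target, a clean $O(1/n)$ per anchored edge from the endpoint concentration, is neither what Lemma~\ref{lem:few paths} provides nor what is needed. If the $n^{o(1)}$ loss really were attached to every anchored edge, a one-edge component could not absorb it. As written, the proposal leaves this accounting open, so it does not prove the lemma.
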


\begin{proof}
    Fix a non-empty path system $S\subseteq E(K_n)$ with $s\geq 1$ edges and observe that, by definition, 
    \[
        \nu \left (\cF \cap\left \langle S \right\rangle \right) = \frac{|\{(P,F)\in \cP\times \cF : S\subseteq F\text{ and }F\text{ is a $P$-completion}\}|}{|\cP|\cdot (n-(d+2)m)!}.
    \]
    If $S$ is contained in no element of $\cF$, there is nothing to prove; thus, we assume $S\subset F$ for some $F\in\cF$. 
    Call a path in $S$ \emph{full} if it has length $k-d$: note that every such path is maximal in every path system in $\cF$ containing $S$.
      Denote by $S_{\rm full}\subseteq S$ the subgraph consisting of all full paths in $S$ and denote their number by $r$.
    Further, for every $P\in \cP$, denote by $\cF_P$ the collection of all $P$-completions. Then,
\begin{equation}
\label{eq:nu-S->P}
        \nu \left (\cF \cap\left \langle S \right\rangle \right) = \frac{\sum_{P\in \cP} |\cF_P\cap \left\langle S\right\rangle|}{|\cP|\cdot (n-(d+2)m)!}.
\end{equation}
    We claim the following:
    \begin{claim}\label{claim:spread1}
        For every $P\in \cP$, we have $|\cF_P\cap \left\langle S\right\rangle|\leq 2^{s}(n-(d+2)m-s+r)!$.
    \end{claim}
    \begin{claim}\label{claim:spread2}
    There are at most $\left(\ln n\right)^{d(m-r)\psi}\cdot \e^{ r \left(\psi\ln\ln n\right)^{5}}$ path systems $P\in \cP$ that have a $P$-completion~$F$ such that $S_{\rm full}\subset F$.
    \end{claim}
    Before proving these claims, we complete the proof of Lemma~\ref{lem:nu is spread}. 
    By Lemma \ref{lem:algo not rejecting}, 
      for large enough~$n$ we have $|\cP|\geq \frac{1}{1+\eps}(\ln n)^{d\psi m}$.
    Then,~\eqref{eq:nu-S->P} together with Claims \ref{claim:spread1} and \ref{claim:spread2} imply that 
    \[
        \nu \left  (\cF \cap\left \langle S \right\rangle \right) \leq (1+\eps )\cdot \left(\frac{\exp \left(\left(\psi\ln\ln n\right)^{5}\right)}{\left(\ln n\right)^{d\psi}}\right)^{r}\cdot \frac{2^{s}(n-(d+2)m-s+r)!}{(n-(d+2)m)!}.
    \]
    Recalling~\eqref{eq:d} and the definition of $k$, we get that $n-(d+2)m>n/2$. Then, applying the bound $\psi\leq 2$ and~\eqref{lem:factorial_bounds}, 
    for large enough $n$, we have
    \begin{align*}
        \nu \left (\cF \cap\left \langle S \right\rangle \right) &\leq (1+\eps) \left( \frac{(\ln n)^{\psi}\exp \left(\frac{D\ln n}{\psi\ln\ln n}+(\psi\ln\ln n)^{5}\right)}{n}\right)^{r} 2^{s}\left(\frac{\e} 
        {n-(d+2)m}\right)^{s-r}\\
        & \leq (1+\eps ) \left(
        \exp\left({\frac{(D+\eps)\ln n}{\psi\ln\ln n}}\right)\right)^{r} \left(\frac{2\e} 
        {n-(d+2)m}\right)^{s}\leq (1+\eps ) \exp\left({\frac{r(D+\eps)\ln n}{\psi\ln\ln n}}\right) \left(\frac{4\e}{n}\right)^{s}.
    \end{align*}
    Since $r$ is the number of full paths in $S$ and since $k-d=(1+\varepsilon+o(1))\ln n/(\psi\ln\ln n)$, we have $\frac{r\ln n}{\psi\ln\ln n}\leq r (k-d)\leq s$, implying the desired inequality
    \[
        \nu \left (\cF \cap\left \langle S \right\rangle \right) \leq (1+\eps )\left(\frac{4\e\cdot \e^{D+\eps}}{n}\right)^{s} \leq \left(\frac{87}{n}\right)^{s}.
    \]
    This concludes the proof of Lemma~\ref{lem:nu is spread}.
    \end{proof}
    We now turn to the proofs of Claims \ref{claim:spread1} and \ref{claim:spread2}.
    \begin{proof}[Proof of Claim \ref{claim:spread1}]
      Fix $P\in\mathcal{P}$ such that $S$ belongs to at least one $P$-completion. Otherwise, the statement is immediate since $|\mathcal{F}_P\cap\langle S\rangle|=0.$ Let $\Sigma_0$ be the set of all vertices from $[n]$ that do not belong to $P\cup S\cup\mathbf{c}$. Let $\Sigma_1$ be the set of all maximal paths of $S$ that touch neither $P$ nor $\mathbf{c}$. Every $F\in\mathcal{F}_P\cap\langle S\rangle$ is uniquely determined by an ordering of $\Sigma_0\cup\Sigma_1$ and an orientation of every path in $\Sigma_1$. Recall that the number of vertices that do not belong to $P\cup\mathbf{c}$ equals $n-(d+2)m$. Therefore, $|\Sigma_0\cup\Sigma_1|=n-(d+2)m-s+r$, and so the total number of orderings of $\Sigma_0\cup\Sigma_1$ is exactly $(n-(d+2)m-s+r)!$. The number of orientations of the paths in $\Sigma_1$ equals $2^{|\Sigma_1|}\leq 2^s$. This completes the proof.       
    \end{proof}
    \begin{proof}[Proof of Claim \ref{claim:spread2}]
        By definition, there are $\mathbf{b}=(b_{i_1},\ldots ,b_{i_r})$ and $\mathbf{c}^*=(c_{i_1},\ldots ,c_{i_r})\subset\mathbf{c}$ 
        such that $S_{\rm full}$ is a $(\mathbf{b},\mathbf{c}^*;k-d)$-path system. Let $\mathbf{a^*} = (a_{i_1},\ldots ,a_{i_r})$ and note that, if $P\in \cP$ is such that some $P$-completion contains $S_{\rm full}$, then $P$ contains an $(\mathbf{a}^*,\mathbf{b};d)$-path system. Therefore, it suffices to bound the number of $P\in \cP$ that contain an $(\mathbf{a}^*,\mathbf{b};d)$-path system.

        By the assertion of Lemma~\ref{lem:few paths}, there are at most $\exp((\psi\ln\ln n)^{5})$ paths of length $d$ between any pair of vertices in $G_1$. 
        Hence, the number of $(\mathbf{a}^*,\mathbf{b};d)$-path systems in $G_1$ is bounded from above by $\exp(r(\psi\ln\ln n)^{5})$. 
        To conclude the proof, it remains to observe that, for every $(\mathbf{a}^*,\mathbf{b};d)$-path system $W$, there are at most $(\ln n)^{d (m-r)\psi}$ path systems $P\in \cP$ containing $W$. Indeed, every path system $P\in \cP$ is uniquely defined by a list $L\in[(\ln n)^{\psi}]^{dm}$, while the path system $W$ fixes some $(\ln n)^{dr\psi}$ entries in the list defining the system $P$ containing $W$. Therefore, we are left with at most $(\ln n)^{d(m-r)\psi}$ choices, completing the proof.
\end{proof}
         
\begin{proof}[Proof of Theorem \ref{prop:path-systems-new} for $\psi\leq 2$]
    Note that $\ell(\cF)\leq n$, which in turn implies
    \[
        \frac{C_2  \ln \ell(\cF)}{n} \leq\frac{C_2\ln n}{n} < p_2.
    \]
    Thus, by applying Theorem \ref{thm:fracKK} and Lemma \ref{lem:nu is spread} together, we find that with probability $1-o(1/(\ln\ln n)^2)$ the graph $G_2$ contains a member of $\cF$.
    As mentioned before the proof of Lemma~\ref{lem:nu is spread}, this suffices to conclude the proof of Theorem \ref{prop:path-systems-new} under the assumption that $\psi \leq 2$.  
\end{proof}

\subsubsection{Large values of $\psi$}\label{sec:large}

Throughout this subsection, we assume that $\psi> 2$.
Our set $\cF$ in this subsection is simpler and consists of all $(\mathbf{a},\mathbf{c};k)$-path systems in $K_n$. We begin by showing that the uniform measure on the family $\langle\cF\rangle$ is $O\left((\ln n)^{\psi-1}/n\right)$-spread, which is very similar to the proof of Claim \ref{claim:spread1}. 

\begin{lemma}
    The uniform measure on $\cF$ is $(4\e(\ln n)^{\psi-1}/n)$-spread.
\end{lemma}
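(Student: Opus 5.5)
The plan is to mirror the counting argument of Claim~\ref{claim:spread1}, now with no prefix family $\cP$. The total number of $(\mathbf{a},\mathbf{c};k)$-path systems in $K_n$ is $|\cF| = (n-2m)!$. To see this, order the $n-2m$ internal vertices and cut the ordering into consecutive blocks of $k-1$ vertices. The $i$th block becomes the interior of the path from $a_i$ to $c_i$.

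Now fix a path system $S\subseteq E(K_n)$ with $s\ge 1$ edges that lies in at least one member of $\cF$ (otherwise there is nothing to prove). Let $r$ be the number of full paths of $S$, that is, components of length $k$ joining some $a_i$ to $c_i$. Every $F\in\cF\cap\langle S\rangle$ is then determined by three pieces of data:
\begin{itemize}
\item which components of $S$ hang from which $a_i$ or $c_i$ (this is forced by $S$, since the endpoints in $\mathbf{a}\cup\mathbf{c}$ are fixed);
\item an ordering of the set consisting of the free vertices not covered by $S\cup\mathbf{a}\cup\mathbf{c}$ together with the components of $S$ not touching $\mathbf{a}\cup\mathbf{c}$, each contracted to a single symbol;
\item an orientation of each such free component.
\end{itemize}
A component with $j$ edges covers $j+1$ vertices but becomes one symbol, so it reduces the count by $j$. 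A component attached to an endpoint in $\mathbf{a}\cup\mathbf{c}$ absorbs exactly $j$ non-root vertices. A full path uses $k-1$ internal vertices but has $k$ edges. So the number of symbols is at most $n-2m-s+r$, and the number of orientations is at most $2^s$. Hence
\[|\cF\cap\langle S\rangle|\le 2^s(n-2m-s+r)!.\]

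Next divide by $(n-2m)!$ and apply~\eqref{lem:factorial_bounds}, using $n-2m\ge n/2$, which holds because $k\ge 3$. This gives
\[\nu(\cF\cap\langle S\rangle)\le 2^s\Big(\frac{2\e}{n}\Big)^{s-r}=\Big(\frac{4\e}{n}\Big)^{s}\Big(\frac{n}{2\e}\Big)^{r}.\]

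The main step is absorbing the factor $(n/(2\e))^r$ coming from full paths. Since each full path has $k$ edges, $s\ge rk$, and it suffices to show $n^{1/k}\le (\ln n)^{\psi-1}$, up to harmless constants. We have $k\ge k^*\ge C^*\ln n/(\psi\ln\ln n)$, so
\[n^{1/k}\le \exp\Big(\frac{\psi\ln\ln n}{C^*}\Big)=(\ln n)^{\psi/C^*}.\]
Because $C^*=2(1+\eps)$ and $\psi>2$, we get $\psi/C^*\le \psi-1$, with room to spare once $\psi>2$. Therefore $n^{r}\le (\ln n)^{(\psi-1)rk}\le (\ln n)^{(\psi-1)s}$, and the total bound becomes $(4\e(\ln n)^{\psi-1}/n)^s$. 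This is exactly where the assumption $\psi>2$ enters: for $\psi\le 2$ the exponent comparison fails, which is why that regime needed the prefix construction.
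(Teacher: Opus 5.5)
Your proof is correct and follows the paper's argument essentially step for step. You use the same count $|\cF\cap\langle S\rangle|\le 2^s(n-2m-s+r)!$ borrowed from Claim~\ref{claim:spread1}, the factorial bound \eqref{lem:factorial_bounds} with $n-2m\ge n/2$, and the absorption of $n^r$ via $rk\le s$. Your explicit check $n^{1/k}\le(\ln n)^{\psi/C^*}\le(\ln n)^{\psi-1}$ spells out what the paper leaves as ``by the assumption $\psi>2$''. One small correction to your closing remark: this exponent comparison already holds for $\psi\ge C^*/(C^*-1)$, which is slightly below $2$, so it fails only for $\psi$ near $1$, not for all $\psi\le 2$.
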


\begin{proof}
     Fix $S\subseteq E(K_n)$ with $s$ edges. If $S\not\subseteq F$ for any $F\in \cF$, then there is nothing to prove, so assume otherwise. 
     Denote the uniform measure on $\cF$ by $\nu$. Then,
    \[
        \nu \left (\cF \cap\left \langle S \right\rangle \right) =  \frac{|\cF \cap\left \langle S \right\rangle|}{(n-2m)!}=\frac{|\{F\in \cF : S\subseteq F\}|}{(n-2m)!}.
    \]
    Since $S\subseteq F$ for some $F\in \cF$, $S$ is a path system. Let $r$ be the number of all $k$-paths in $S$. 

    As in Claim \ref{claim:spread1}, we have $|\cF\cap \left\langle S\right\rangle|\leq 2^{s}(n-2m-s+r)!$. Then, by \eqref{lem:factorial_bounds}, we have
    \[
    \nu \left (\cF \cap\left \langle S \right\rangle \right) \leq \frac{2^{s}(n-2m-s+r)!}{(n-2m)!} \leq 2^{s}\cdot \left(\frac{\e}{n-2m}\right)^{s-r}\leq  \left(\frac{4\e}{n}\right)^{s}\cdot n^{r} ,
    \]
    where the last inequality holds as $2m \leq  n/2$ by the upper bound assumption on $\psi$. Since $r$ is the number of $k$-paths in $S$, we have $r k\leq s$, implying the desired inequality:
    \[
        \nu \left (\cF \cap\left \langle S \right\rangle \right) \leq \left(\frac{4\e\cdot  n^{1/k}}{n}\right)^{s} \leq \left(\frac{4\e\cdot  (\ln n)^{\psi-1}}{n}\right)^{s} ,
    \]
    where the last inequality holds by the assumption that $\psi>2$.
\end{proof}

Finally, by Theorem \ref{thm:fracKK}, we find that,  with probability $1-o(1/(\ln\ln n)^2)$, the graph $G(n,p_0)$ contains an $(\mathbf{a},\mathbf{c};k)$-path system, thus concluding the proof of Theorem \ref{prop:path-systems-new} when $\psi >2$.

\section{A roadmap for trees universality}
\label{sc:overview}

We now outline the proof of Theorem~\ref{thm:univ_trees}. 
 Recall that the asymptotic notations $O$, $\Omega$, $\Theta$ used below only hide absolute constants.
Fix a large absolute constant $K$ and consider the function 
\begin{equation}\label{eq:alpha}
\alpha = \alpha(n) := \frac{n\ln\ln n}{K^3\ln n}.
\end{equation}
A path in a graph $G$ is called \emph{bare} if every internal vertex has degree 2 in $G$. We divide the family $\mathcal{T}$ of trees on $n$ vertices and maximum degree at most $\Delta$ in three subfamilies:
\begin{itemize}
    \item the family $\cT_1\subset\mathcal{T}$ consists of the trees that have $K\alpha$ vertex-disjoint bare paths of length $n/(K^2\alpha)$,
    \item the family $\cT_2$ consists of the trees in $\cT\setminus \cT_1$ with at most $\exp(\Delta^5) \alpha$ leaves, and $\cT_3 =  \cT\setminus (\cT_1\cup \cT_2)$.
\end{itemize}
The reason behind this case distinction is that showing $\cT_1$-universality and $\cT_2\cup \cT_3$-universality of the random graph $G(n,C\ln n/n)$ requires different embedding strategies.
In the next few paragraphs, we explain each of these in some detail.

\subsection{\texorpdfstring{Universality for $\cT_1$: perfect matchings and linking systems}{Universality for T1: perfect matchings and linking systems}}\label{sec:1.2.1}

Fix a tree $T$ in $\cT_1$ and write $G\sim G(n,C\ln n/n)$ for the host graph. Our first step here is to edge-decompose $T$ into two subforests: a forest $(P_i)_{i=1}^\alpha$ consisting of $\alpha$ vertex-disjoint paths in $T$, each with $\ell = \Theta(Kk)$ edges where $k = \big\lceil \frac{24\ln n}{\ln\ln n}\big\rceil$, and its complement $F=T\setminus E(P_{1}\cup \ldots \cup P_\alpha )$.
We also partition the vertex set of the random graph in three parts defined independently of the choice of $T$: $V_1$ of size $|V(F)|+\eps n$ for some small $\eps>0$, $V_3$ of size $3(2k+1)\alpha$ and $V_2=V\setminus (V_1\cup V_3)$.

We start by exposing the graph $G[V_1]$ and embedding $F$ in $G[V_1]$: such an embedding typically exists for all forests $F$ by the universality result of Alon, Krivelevich and Sudakov~\cite{AKS07} mentioned in the introduction (see Theorem~\ref{thm:AKS07}).
This embedding of $F$ fixes the images of the starting vertices $X_1$ and the ending vertices $X_2$ of the paths in $P$, and leaves a set $W$ of $\eps n$ vertices unused.
As a next step, we expose all edges of $G$ outside $G[V_3]$ and show that, with high probability, the Lov\'asz Local Lemma can be applied to produce subsets $U_1',\ldots,U_{N-6}'$ (partitioning $V_2\cup W$) and $U_{N-5}',\ldots,U_{N}'$ (contained in $V_3$), all of size $\alpha$ and grouped into triplets together with $X_1,X_2$, such that all pairs of sets connected by thin solid lines in Figure~\ref{fig:matchings} induce a perfect matching (see Claim~\ref{cl:matchings}). 
We then find a linking system in $G[V_3]$ which links the sets $A_1=U'_{N-5}\sqcup U'_{N-4}\sqcup U'_{N-3}$ and $A_2=U'_{N-2}\sqcup U'_{N-1}\sqcup U'_N$ 
 using Theorem~\ref{prop:path-systems-new}. This further allows to embed the paths $P_1,\ldots,P_{\alpha}$ as follows: for every set $Y\in \{X_1, X_2, U_1', U_2', U_{N-7}', U_{N-6}'\}$, denote by $\hat Y$ the set obtained by following the matching from $Y$ to $A_1\cup A_2$ from left to right in Figure~\ref{fig:matchings}. 
Then, embedding $P_1,\ldots,P_{\alpha}$ from $X_1$ to $X_2$ is obtained by following the chain 
\[X_1 \equiv\equiv \hat X_1 \overset{\textrm{LS}}{--\;} \hat U_{N-7}' \equiv\equiv U_{N-7}' \equiv U_1' \equiv\equiv \hat U_1' \overset{\textrm{LS}}{--\;} \hat U_{N-6}' \equiv\equiv U_{N-6}' \equiv U_2' \equiv\equiv \hat U_2' \overset{\textrm{LS}}{--\;} \hat X_2 \equiv\equiv X_2,\]
where $\equiv$ stands for using a single matching between two sets, $\equiv\equiv$ stands for using a sequence of matching between the left and the right part of Figure~\ref{fig:matchings}, and the remaining transitions use the $A_1-A_2$ linking system in $G[V_3]$.

\begin{figure}
\centering
\definecolor{qqqqff}{rgb}{0,0,1}
\definecolor{zzttqq}{rgb}{0.6,0.2,0}
\begin{tikzpicture}[line cap=round,line join=round,x=1cm,y=1cm]
\clip(-7.5,-2.1) rectangle (10,5.5);
\fill[line width=0.5pt,color=zzttqq,fill=zzttqq,fill opacity=0.10000000149011612] (-7.4,5) -- (-7.4,-2) -- (-6,-2) -- (-6,5) -- cycle;
\fill[line width=0.5pt,color=zzttqq,fill=zzttqq,fill opacity=0.10000000149011612] (-5,5) -- (-5,-2) -- (4,-2) -- (4,5) -- cycle;
\fill[line width=0.5pt,color=zzttqq,fill=zzttqq,fill opacity=0.10000000149011612] (5,5) -- (5,-2) -- (8,-2) -- (8,5) -- cycle;
\fill[line width=0.5pt,color=qqqqff,fill=qqqqff,fill opacity=0.1] (-7,-1) -- (-7,-2) -- (-6,-2) -- (-4,-1) -- (-3,0) -- (-3,1) -- (-4,1) -- (-5,0.0061147787968263145) -- cycle;
\fill[line width=0.5pt,color=qqqqff,fill=qqqqff,fill opacity=0.1] (-7,5) -- (-7,4) -- (-5,3.015286946992065) -- (-4,2) -- (-3,2) -- (-3,3) -- (-4,4) -- (-6,5) -- cycle;
\fill[line width=0.5pt,color=qqqqff,fill=qqqqff,fill opacity=0.1] (-2.5,5) -- (-2.5,2) -- (-1.5,2) -- (-1.5,5) -- cycle;
\fill[line width=0.5pt,color=qqqqff,fill=qqqqff,fill opacity=0.1] (-1,5) -- (-1,2) -- (0,2) -- (0,5) -- cycle;
\fill[line width=0.5pt,color=qqqqff,fill=qqqqff,fill opacity=0.1] (3,5) -- (3,2) -- (4,2) -- (4,5) -- cycle;
\fill[line width=0.5pt,color=qqqqff,fill=qqqqff,fill opacity=0.1] (5,3) -- (7,5) -- (8,5) -- (8,4) -- (6,2) -- (5,2) -- cycle;
\fill[line width=0.5pt,color=qqqqff,fill=qqqqff,fill opacity=0.1] (-2.5,1) -- (-1.5,1) -- (-1.5,-2) -- (-2.5,-2) -- cycle;
\fill[line width=0.5pt,color=qqqqff,fill=qqqqff,fill opacity=0.1] (-1,1) -- (0,1) -- (0,-2) -- (-1,-2) -- cycle;
\fill[line width=0.5pt,color=qqqqff,fill=qqqqff,fill opacity=0.1] (3,1) -- (4,1) -- (4,-2) -- (3,-2) -- cycle;
\fill[line width=0.5pt,color=qqqqff,fill=qqqqff,fill opacity=0.1] (5,1) -- (6,1) -- (8,-1) -- (8,-2) -- (7,-2) -- (5,0) -- cycle;
\draw [line width=0.5pt,color=zzttqq] (-7.4,5)-- (-7.4,-2);
\draw [line width=0.5pt,color=zzttqq] (-7.4,-2)-- (-6,-2);
\draw [line width=0.5pt,color=zzttqq] (-6,-2)-- (-6,5);
\draw [line width=0.5pt,color=zzttqq] (-6,5)-- (-7.4,5);
\draw [line width=0.5pt,color=zzttqq] (-5,5)-- (-5,-2);
\draw [line width=0.5pt,color=zzttqq] (-5,-2)-- (4,-2);
\draw [line width=0.5pt,color=zzttqq] (4,-2)-- (4,5);
\draw [line width=0.5pt,color=zzttqq] (4,5)-- (-5,5);
\draw [line width=0.5pt,color=zzttqq] (5,5)-- (5,-2);
\draw [line width=0.5pt,color=zzttqq] (5,-2)-- (8,-2);
\draw [line width=0.5pt,color=zzttqq] (8,-2)-- (8,5);
\draw [line width=0.5pt,color=zzttqq] (8,5)-- (5,5);
\draw [line width=0.5pt] (-6.5,4.5) circle (0.4cm);
\draw [line width=0.5pt] (-6.5,-1.5) circle (0.4cm);
\draw [line width=0.5pt] (-4.5,-0.5) circle (0.4cm);
\draw [line width=0.5pt] (-4.5,3.5) circle (0.4cm);
\draw [line width=0.5pt] (-3.5,2.5) circle (0.4cm);
\draw [line width=0.5pt] (-3.5,0.5) circle (0.4cm);
\draw [line width=0.5pt] (-2,2.5) circle (0.4cm);
\draw [line width=0.5pt] (-2,3.5) circle (0.4cm);
\draw [line width=0.5pt] (-2,4.5) circle (0.4cm);
\draw [line width=0.5pt] (-0.5,4.5) circle (0.4cm);
\draw [line width=0.5pt] (-0.5,3.5) circle (0.4cm);
\draw [line width=0.5pt] (-0.5,2.5) circle (0.4cm);
\draw [line width=0.5pt] (3.5,4.5) circle (0.4cm);
\draw [line width=0.5pt] (3.5,3.5) circle (0.4cm);
\draw [line width=0.5pt] (3.5,2.5) circle (0.4cm);
\draw [line width=0.5pt] (-2,-1.5) circle (0.4cm);
\draw [line width=0.5pt] (-2,-0.5) circle (0.4cm);
\draw [line width=0.5pt] (-2,0.5) circle (0.4cm);
\draw [line width=0.5pt] (-0.5,0.5) circle (0.4cm);
\draw [line width=0.5pt] (-0.5,-0.5) circle (0.4cm);
\draw [line width=0.5pt] (-0.5,-1.5) circle (0.4cm);
\draw [line width=0.5pt] (3.5,0.5) circle (0.4cm);
\draw [line width=0.5pt] (3.5,-0.5) circle (0.4cm);
\draw [line width=0.5pt] (3.5,-1.5) circle (0.4cm);
\draw [line width=0.5pt,color=qqqqff] (-7,-1)-- (-7,-2);
\draw [line width=0.5pt,color=qqqqff] (-7,-2)-- (-6,-2);
\draw [line width=0.5pt,color=qqqqff] (-6,-2)-- (-4,-1);
\draw [line width=0.5pt,color=qqqqff] (-4,-1)-- (-3,0);
\draw [line width=0.5pt,color=qqqqff] (-3,0)-- (-3,1);
\draw [line width=0.5pt,color=qqqqff] (-3,1)-- (-4,1);
\draw [line width=0.5pt,color=qqqqff] (-4,1)-- (-5,0.0061147787968263145);
\draw [line width=0.5pt,color=qqqqff] (-5,0.0061147787968263145)-- (-7,-1);
\draw [line width=0.5pt,color=qqqqff] (-7,5)-- (-7,4);
\draw [line width=0.5pt,color=qqqqff] (-7,4)-- (-5,3.015286946992065);
\draw [line width=0.5pt,color=qqqqff] (-5,3.015286946992065)-- (-4,2);
\draw [line width=0.5pt,color=qqqqff] (-4,2)-- (-3,2);
\draw [line width=0.5pt,color=qqqqff] (-3,2)-- (-3,3);
\draw [line width=0.5pt,color=qqqqff] (-3,3)-- (-4,4);
\draw [line width=0.5pt,color=qqqqff] (-4,4)-- (-6,5);
\draw [line width=0.5pt,color=qqqqff] (-6,5)-- (-7,5);
\draw [line width=0.5pt] (5.5,2.5) circle (0.4cm);
\draw [line width=0.5pt] (6.5,3.5) circle (0.4cm);
\draw [line width=0.5pt] (7.5,4.5) circle (0.4cm);
\draw [line width=0.5pt] (5.5,0.5) circle (0.4cm);
\draw [line width=0.5pt] (6.5,-0.5) circle (0.4cm);
\draw [line width=0.5pt] (7.5,-1.5) circle (0.4cm);
\draw [line width=0.5pt,color=qqqqff] (-2.5,5)-- (-2.5,2);
\draw [line width=0.5pt,color=qqqqff] (-2.5,2)-- (-1.5,2);
\draw [line width=0.5pt,color=qqqqff] (-1.5,2)-- (-1.5,5);
\draw [line width=0.5pt,color=qqqqff] (-1.5,5)-- (-2.5,5);
\draw [line width=0.5pt,color=qqqqff] (-1,5)-- (-1,2);
\draw [line width=0.5pt,color=qqqqff] (-1,2)-- (0,2);
\draw [line width=0.5pt,color=qqqqff] (0,2)-- (0,5);
\draw [line width=0.5pt,color=qqqqff] (0,5)-- (-1,5);
\draw [line width=0.5pt,color=qqqqff] (3,5)-- (3,2);
\draw [line width=0.5pt,color=qqqqff] (3,2)-- (4,2);
\draw [line width=0.5pt,color=qqqqff] (4,2)-- (4,5);
\draw [line width=0.5pt,color=qqqqff] (4,5)-- (3,5);
\draw [line width=0.5pt,color=qqqqff] (5,3)-- (7,5);
\draw [line width=0.5pt,color=qqqqff] (7,5)-- (8,5);
\draw [line width=0.5pt,color=qqqqff] (8,5)-- (8,4);
\draw [line width=0.5pt,color=qqqqff] (8,4)-- (6,2);
\draw [line width=0.5pt,color=qqqqff] (6,2)-- (5,2);
\draw [line width=0.5pt,color=qqqqff] (5,2)-- (5,3);
\draw [line width=0.5pt,color=qqqqff] (-2.5,1)-- (-1.5,1);
\draw [line width=0.5pt,color=qqqqff] (-1.5,1)-- (-1.5,-2);
\draw [line width=0.5pt,color=qqqqff] (-1.5,-2)-- (-2.5,-2);
\draw [line width=0.5pt,color=qqqqff] (-2.5,-2)-- (-2.5,1);
\draw [line width=0.5pt,color=qqqqff] (-1,1)-- (0,1);
\draw [line width=0.5pt,color=qqqqff] (0,1)-- (0,-2);
\draw [line width=0.5pt,color=qqqqff] (0,-2)-- (-1,-2);
\draw [line width=0.5pt,color=qqqqff] (-1,-2)-- (-1,1);
\draw [line width=0.5pt,color=qqqqff] (3,1)-- (4,1);
\draw [line width=0.5pt,color=qqqqff] (4,1)-- (4,-2);
\draw [line width=0.5pt,color=qqqqff] (4,-2)-- (3,-2);
\draw [line width=0.5pt,color=qqqqff] (3,-2)-- (3,1);
\draw [line width=0.5pt,color=qqqqff] (5,1)-- (6,1);
\draw [line width=0.5pt,color=qqqqff] (6,1)-- (8,-1);
\draw [line width=0.5pt,color=qqqqff] (8,-1)-- (8,-2);
\draw [line width=0.5pt,color=qqqqff] (8,-2)-- (7,-2);
\draw [line width=0.5pt,color=qqqqff] (7,-2)-- (5,0);
\draw [line width=0.5pt,color=qqqqff] (5,0)-- (5,1);
\draw [line width=0.5pt] (4,2.3333)-- (5,2.3333);
\draw [line width=0.5pt] (4,2.5)-- (5,2.5);
\draw [line width=0.5pt] (4,2.666)-- (5,2.666);
\draw [line width=0.5pt] (4,2.8333)-- (5,2.8333);
\draw [line width=0.5pt] (4,3)-- (5,3);
\draw [line width=0.5pt] (4,3.1666)-- (5.1666,3.1666);
\draw [line width=0.5pt] (4,3.333)-- (5.333,3.333);
\draw [line width=0.5pt] (4,3.5)-- (5.5,3.5);
\draw [line width=0.5pt] (4,3.666)-- (5.666,3.666);
\draw [line width=0.5pt] (4,3.8333)-- (5.8333,3.8333);
\draw [line width=0.5pt] (4,4)-- (6,4);
\draw [line width=0.5pt] (4,4.1666)-- (6.1666,4.1666);
\draw [line width=0.5pt] (4,4.333)-- (6.333,4.333);
\draw [line width=0.5pt] (4,4.5)-- (6.5,4.5);
\draw [line width=0.5pt] (4,4.666)-- (6.666,4.666);

\draw [line width=0.5pt] (4,0.333)-- (5,0.333);
\draw [line width=0.5pt] (4,0.5)-- (5,0.5);
\draw [line width=0.5pt] (4,0.666)-- (5,0.666);
\draw [line width=0.5pt] (4,0.1666)-- (5,0.1666);
\draw [line width=0.5pt] (4,0)-- (5,0);
\draw [line width=0.5pt] (4,-0.1666)-- (5+0.1666,-0.1666);
\draw [line width=0.5pt] (4,-0.333)-- (5+0.333,-0.333);
\draw [line width=0.5pt] (4,-0.5)-- (5.5,-0.5);
\draw [line width=0.5pt] (4,-0.666)-- (5+0.666,-0.666);
\draw [line width=0.5pt] (4,-0.8333)-- (5+0.8333,-0.8333);
\draw [line width=0.5pt] (4,-1)-- (6,-1);
\draw [line width=0.5pt] (4,-1.333)-- (5+1.333,-1.333);
\draw [line width=0.5pt] (4,-1.1666)-- (5+1.1666,-1.1666);
\draw [line width=0.5pt] (4,-1.666)-- (5+1.666,-1.666);
\draw [line width=0.5pt] (4,-1.5)-- (6.5,-1.5);

\draw [line width=0.5pt] (-5.333,-1.666)-- (-2.5,-1.666);
\draw [line width=0.5pt] (-5,-1.5)-- (-2.5,-1.5);
\draw [line width=0.5pt] (-4.666,-1.333)-- (-2.5,-1.333);
\draw [line width=0.5pt] (-4.333,-1.1666)-- (-2.5,-1.1666);
\draw [line width=0.5pt] (-4,-1)-- (-2.5,-1);
\draw [line width=0.5pt] (-3.8333,-0.8333)-- (-2.5,-0.8333);
\draw [line width=0.5pt] (-3.666,-0.666)-- (-2.5,-0.666);
\draw [line width=0.5pt] (-3.5,-0.5)-- (-2.5,-0.5);
\draw [line width=0.5pt] (-3.333,-0.333)-- (-2.5,-0.333);
\draw [line width=0.5pt] (-3.1666,-0.1666)-- (-2.5,-0.1666);
\draw [line width=0.5pt] (-3,-0)-- (-2.5,-0);
\draw [line width=0.5pt] (-3,0.1666)-- (-2.5,0.1666);
\draw [line width=0.5pt] (-3,0.333)-- (-2.5,0.333);
\draw [line width=0.5pt] (-3,0.5)-- (-2.5,0.5);
\draw [line width=0.5pt] (-3,0.666)-- (-2.5,0.666);

\draw [line width=0.5pt] (-1.630741248736642,0.6537789797580461)-- (-0.8699469290148681,0.652115974547278);
\draw [line width=0.5pt] (-1.6000473682306313,0.4938443245085742)-- (-0.8999995171744701,0.5006214983433119);
\draw [line width=0.5pt] (-1.632265105322805,0.34261179448016965)-- (-0.8671451706103335,0.3412409886100663);
\draw [line width=0.5pt] (-1.613669030344414,-0.396325596770588)-- (-0.8866469891767438,-0.3975104602383297);
\draw [line width=0.5pt] (-1.6002025443635812,-0.5127277046888195)-- (-0.8998022991636745,-0.5125746404894845);
\draw [line width=0.5pt] (-1.6316960380800472,-0.65605188763378)-- (-0.8680563829436464,-0.6566349225889295);
\draw [line width=0.5pt] (-1.616480466971059,-1.3863656399443238)-- (-0.884105024895409,-1.388360715471223);
\draw [line width=0.5pt] (-1.6001960916524047,-1.512523372947719)-- (-0.8997890308218208,-1.5129896433572771);
\draw [line width=0.5pt] (-1.620560742074975,-1.626593244469468)-- (-0.8786187047054375,-1.629026650143204);

\draw [line width=0.5pt] (-5.333,4.666)-- (-2.5,4.666);
\draw [line width=0.5pt] (-5,4.5)-- (-2.5,4.5);
\draw [line width=0.5pt] (-4.666,4.333)-- (-2.5,4.333);
\draw [line width=0.5pt] (-4.333,4.1666)-- (-2.5,4.1666);
\draw [line width=0.5pt] (-4,4)-- (-2.5,4);
\draw [line width=0.5pt] (-3.8333,3.8333)-- (-2.5,3.8333);
\draw [line width=0.5pt] (-3.666,3.666)-- (-2.5,3.666);
\draw [line width=0.5pt] (-3.5,3.5)-- (-2.5,3.5);
\draw [line width=0.5pt] (-3.333,3.333)-- (-2.5,3.333);
\draw [line width=0.5pt] (-3.1666,3.1666)-- (-2.5,3.1666);
\draw [line width=0.5pt] (-3,3)-- (-2.5,3);
\draw [line width=0.5pt] (-3,2.8333)-- (-2.5,2.8333);
\draw [line width=0.5pt] (-3,2.333)-- (-2.5,2.333);
\draw [line width=0.5pt] (-3,2.5)-- (-2.5,2.5);
\draw [line width=0.5pt] (-3,2.666)-- (-2.5,2.666);

\draw [line width=0.5pt] (-1.6202094277114218,3.6255353384538167)-- (-0.8792899436036454,3.62703990979666);
\draw [line width=0.5pt] (-1.6000254719866844,3.4954859175322825)-- (-0.8998458058280852,3.511105474401807);
\draw [line width=0.5pt] (-1.6222092527453005,3.368568834408517)-- (-0.8773591520952032,3.3673347433199283);
\draw [line width=0.5pt] (-1.6001522330881521,4.488965359289557)-- (-0.8998371115852579,4.488585789595509);
\draw [line width=0.5pt] (-1.6176039883240028,4.617359661955421)-- (-0.8839135286547937,4.61229604853166);
\draw [line width=0.5pt] (-1.6294293617220224,4.34940982088379)-- (-0.8704511523764413,4.349116125106204);
\draw [line width=0.5pt] (-1.617520500273712,2.6170872849165465)-- (-0.8822035135062061,2.6179850594927654);
\draw [line width=0.5pt] (-1.6000439420953714,2.4940711092614736)-- (-0.8999532212277108,2.493882743296395);
\draw [line width=0.5pt] (-1.6333711481055018,2.340052243033789)-- (-0.8670518176124913,2.3410252750045895);
\draw [line width=0.5pt] (-4.510587080615212,3.1001401323913003)-- (-4.502667066678909,-0.10000889165466403);
\draw [line width=0.5pt] (-4.3794438254804575,3.1185996738527697)-- (-4.361287575103965,-0.1248215582160111);
\draw [line width=0.5pt] (-4.6633207448044764,3.134861212254147)-- (-4.6382099995405355,-0.12463618178225389);
\draw [line width=0.5pt] (-3.515321351421906,2.1002935374670475)-- (-3.487440332240429,0.8998027698075255);
\draw [line width=0.5pt] (-3.3535231193999406,2.127784305207752)-- (-3.3483394297495703,0.8701338560998095);
\draw [line width=0.5pt] (-3.665903171141988,2.1360272842574153)-- (-3.660878353005648,0.8662214569549278);
\draw [line width=0.5pt] (-0.1000005110486823,4.50063940494578)-- (0.2075650311500004,4.49450715847653);
\draw [line width=0.5pt] (-0.12039598107499683,4.626098329949234)-- (0.2075650311500004,4.623123585497105);
\draw [line width=0.5pt] (-0.12124806035580105,4.3713649806010935)-- (0.2075650311500004,4.365890731455955);
\draw [line width=0.5pt] (3.1180787124083844,4.61889545863641)-- (2.8840116315305266,4.616998993734221);
\draw [line width=0.5pt] (3.1000602020550185,4.506939597951431)-- (2.8840116315305266,4.500631750239414);
\draw [line width=0.5pt] (3.1163917584044265,4.386665464310511)-- (2.890136223293411,4.384264506744608);
\draw [line width=0.5pt] (-0.11229499867932069,3.598411543788989)-- (0.1585682970469244,3.594192169332503);
\draw [line width=0.5pt] (-0.10000612817171112,3.502214158940641)-- (0.16469288880980892,3.5023232928892347);
\draw [line width=0.5pt] (-0.11455194908213512,3.3930897570687493)-- (0.16469288880980892,3.3920806411573134);
\draw [line width=0.5pt] (-0.11023693466054496,2.5899152539739045)-- (0.1401945217582709,2.589759120219439);
\draw [line width=0.5pt] (-0.1002637468713593,2.4854766417562844)-- (0.1401945217582709,2.4917656520132865);
\draw [line width=0.5pt] (-0.12601322177245183,2.358106061753933)-- (0.1401945217582709,2.363149224992711);
\draw [line width=0.5pt] (3.116880049864953,3.614974361526903)-- (2.8962608150562956,3.618690536384041);
\draw [line width=0.5pt] (3.1001003187279474,3.508957952808017)-- (2.890136223293411,3.5084478846521194);
\draw [line width=0.5pt] (3.113404186001499,3.3973175935282165)-- (2.8962608150562956,3.3982052329201977);
\draw [line width=0.5pt] (3.1000866657619572,2.4916738304958623)-- (2.9085099985820646,2.4917656520132865);
\draw [line width=0.5pt] (3.1126755672281217,2.5998988677510546)-- (2.9085099985820646,2.5958837119823235);
\draw [line width=0.5pt] (3.1190304774563273,2.3780892831091283)-- (2.9085099985820646,2.37539840851848);
\draw [line width=0.5pt] (-0.12356096505709141,0.6352540312569339)-- (0.11608335581900116,0.6350086403674455);
\draw [line width=0.5pt] (-0.10000697183170348,0.5023616555117862)-- (0.107796207790477,0.49412712388253416);
\draw [line width=0.5pt] (-0.1344787662233694,0.33753699603168175)-- (0.107796207790477,0.3366713113405745);
\draw [line width=0.5pt] (-0.11770904444656571,-0.38229857561591646)-- (0.107796207790477,-0.38431056714103046);
\draw [line width=0.5pt] (-0.10002858481435817,-0.5047819488072273)-- (0.10578930870432303,-0.5050895144850224);
\draw [line width=0.5pt] (-0.12398444080068516,-0.6364269007198602)-- (0.12267298127416049,-0.6401588950437227);
\draw [line width=0.5pt] (-0.11211063698201928,-1.4023176471541292)-- (0.12830087213077296,-1.4055520515430244);
\draw [line width=0.5pt] (-0.10090026944597597,-1.5268217276047875)-- (0.12830087213077296,-1.5349935412451121);
\draw [line width=0.5pt] (-0.1260769786286658,-1.6420618671161715)-- (0.12267298127416049,-1.64192346752075);
\draw [line width=0.5pt] (3.1194596390625566,0.6232437978058141)-- (2.897223173584116,0.6261165476940926);
\draw [line width=0.5pt] (3.1000881698310003,0.5083981004328983)-- (2.897223173584116,0.5079308397052299);
\draw [line width=0.5pt] (3.1169655006716086,0.38474128091875803)-- (2.9141068461539534,0.3841172408597546);
\draw [line width=0.5pt] (3.100029781362447,-0.5048810043052798)-- (2.908478955297341,-0.5050895144850224);
\draw [line width=0.5pt] (3.1084879523537445,-0.41803466252228166)-- (2.9028510644407284,-0.41504326077922216);
\draw [line width=0.5pt] (3.114985982567921,-0.6084629262965502)-- (2.9141068461539534,-0.6063915499040476);
\draw [line width=0.5pt] (3.1150328111783328,-1.3913709820962064)-- (2.897223173584116,-1.3886683789731868);
\draw [line width=0.5pt] (3.100022889891602,-1.4957208190813733)-- (2.9141068461539534,-1.4955983052488246);
\draw [line width=0.5pt] (3.11152025269382,-1.5953073236059359)-- (2.9141068461539534,-1.59690034066785);
\draw [line width=0.5pt,dash pattern=on 2pt off 2pt] (5.5,2)-- (5.5,1);
\draw [line width=0.5pt,dash pattern=on 2pt off 2pt] (6,2)-- (6,1);
\draw [line width=0.5pt,dash pattern=on 2pt off 2pt] (6.5,2.5)-- (6.5,0.5);
\draw [line width=0.5pt,dash pattern=on 2pt off 2pt] (7,3)-- (7,0);
\draw [line width=0.5pt,dash pattern=on 2pt off 2pt] (7.5,3.5)-- (7.5,-0.5);

\node at (1.7, 4.5) {$\dots$};
\node at (1.7, 3.5) {$\dots$};
\node at (1.7, 2.5) {$\dots$};
\node at (1.7, 0.5) {$\dots$};
\node at (1.7, -0.5) {$\dots$};
\node at (1.7, -1.5) {$\dots$};

\node at (-6.5, 4.5) {$X_1$};
\node at (-4.5, 3.5) {\tiny{$U_1'$}};
\node at (-3.5, 2.5) {\tiny{$U_2'$}};

\node at (-5.5, 3) {$Z_0$};
\node at (-5.5, 0.2) {$Z_{2\hat N+1}$};

\node at (-2.3, 1.8) {$Z_1$};
\node at (-1.8, 1.22) {$Z_{2\hat N}$};

\node at (-0.8, 1.8) {$Z_2$};
\node at (-0.5, 1.22) {$Z_{2\hat N-1}$};

\node at (3.2, 1.78) {$Z_{\hat N}$};
\node at (3.55, 1.22) {$Z_{\hat N+1}$};

\node at (-2, 4.5) {\tiny{$U_3'$}};
\node at (-2, 3.5) {\tiny{$U_4'$}};
\node at (-2, 2.5) {\tiny{$U_5'$}};

\node at (-0.5, 4.5) {\tiny{$U_6'$}};
\node at (-0.5, 3.5) {\tiny{$U_7'$}};
\node at (-0.5, 2.5) {\tiny{$U_8'$}};

\node at (-6.5, -1.5) {$X_2$};
\node at (-4.5, -0.5) {\tiny{$U_{N-7}'$}};
\node at (-3.5, 0.5) {\tiny{$U_{N-6}'$}};

\node at (-2, -1.5) {\tiny{$U_{\hspace{-1.5pt}N-10}'$}};
\node at (-2, -0.5) {\tiny{$U_{N-9}'$}};
\node at (-2, 0.5) {\tiny{$U_{N-8}'$}};

\node at (-0.5, -1.5) {\tiny{$U_{\hspace{-1.5pt}N-13}'$}};
\node at (-0.5, -0.5) {\tiny{$U_{\hspace{-1.5pt}N-12}'$}};
\node at (-0.5, 0.5) {\tiny{$U_{\hspace{-1.5pt}N-11}'$}};

\node at (-6.7, 5.2) {$V_1\setminus W$};
\node at (0, 5.2) {$V_2\cup W$};
\node at (6.5, 5.2) {$V_3$};

\node at (8.3, 4.5) {$A_1$};
\node at (8.3, -1.5) {$A_2$};

\end{tikzpicture}
    \caption{The structure of the matchings found in Lemma~\ref{lm:paths} as part of the embedding of the paths $P_1,\ldots,P_{\alpha}$. We abbreviate $\hat N=(N-10)/6$ and hide some of the names of the sets for stylistic and spacial reasons.
    The sets $V_1\setminus W$, $V_2\cup W$ and $V_3$ are depicted in red, while the smaller sets $A_1,A_2$ and $Z_0,\ldots,Z_{2\hat N+1}$ appear in blue.
    The solid black segments indicate a perfect matching between two sets, which may be among the larger sets $Z_i,A_i$ or the smaller ones $U_i'$. 
    The dashed segments represent the linking system: note that the paths there have equal length (unlike the dashed lines in the schema).}
    \label{fig:matchings}
\end{figure}

\subsection{\texorpdfstring{Universality for $\cT_2$ and $\cT_3$: many-to-one matchings and rollback}{Universality for T2: many-to-one matchings and rollback}}

The proof of $\cT_3$-universality of $G(n,C\ln n/n)$ is a toy version of the one for $\cT_2$-universality; we present the argument for $\cT_3$ first and then explain the (quite a few) upgrades required for $\cT_2$. Here, our embedding technique shares certain similarities with Case A from the work of Montgomery~\cite{Mon19}. 

Fix a tree $T$ in $\cT_3$ and recall the host graph $G\sim G(n,C\ln n/n)$. We first divide $T$ into three subtrees $T_1',T_3,T_4$ such that each of the following holds:
\begin{itemize}
    \item each of the pairs $T_1',T_3$ and $T_3,T_4$ meet at a single vertex,
    \item each of the trees $T_1',T_3,T_4$ have linear size, and
    \item $T_1'$ contains most of the leaves of $T$ (of which there are $\ge \exp(\Delta^5)\alpha$).
\end{itemize}
Out of these leaves of $T_1'$, one can extract a subset $L$ of $\ell = \Omega(\Delta\alpha)$ leaves in $T'_1$ which are pairwise far, that is, at least a large constant distance away from each other and from the root; we say that these vertices are \emph{well separated}. 
Denote by $P$ the parents of $L$ and note that $|L|=|P|$ due to the separation property.
The first key idea here (already appearing as part of Case A in~\cite{Mon19}) is to prepare two disjoint sets, $U_L$ and $U_P$ inside $[n]$, which we refer to as matchmakers, with the following properties:
\begin{enumerate}
    \item $|U_L|=|U_P|=\ell/10$,
    \item the images of $L,P$ in the constructed embedding cover the sets $U_L,U_P$, respectively, and
    \item\label{prop:match} there is a set $\bar U\subseteq [n]\setminus (U_L\cup U_P)$ of size $o(\alpha)$ such that,
    for all vertex sets $S_L,S_P$ disjoint from $\bar U$ and with sizes $|S_L|=|S_P|=9\ell/10$, $G[U_L\cup S_L,U_P\cup S_P]$ contains a perfect matching. 
\end{enumerate}
In fact, the set $\bar U$ here consists of vertices which have an atypically small neighbourhood (say half the expected size) in $U_L$ or in $U_P$.

Next, divide $V(G)\setminus U_L$ into sets $V_1,V_2,V_3$ with $|V_1\cap V_2| = 1$ and $(V_1\cup V_2)\cap V_3 = \varnothing$ and such that: 
\begin{itemize}
    \item $V_1\supseteq U_P$ has size $|V(T_1'\setminus L)|+\eps n$; this is where $T_1'\setminus L$ will be embedded so that $P$ covers $U_P$,
    \item $V_2\supseteq \bar U$ has size $|V(T_3)|+\eps n$; this is where $T_3$ will be embedded so that $\bar U$ is entirely covered.
\end{itemize}
We note a few important observations here. 
First, as $V_1$ and $V_2$ are slightly larger than the orders of the corresponding trees to be embedded there, remainders of size $\eps n$ will remain uncovered; these vertices are sent to $V_3$ for the embedding of $T_4$ later on. Second, the embeddings of $T_1'\setminus L$ and $T_3$ rely on the rollback method from~\cite{DKN22} as used in~\cite{Mon19}: the key sufficient property to ensure here is expansion of small vertex sets in $V_1$ towards $V_1\setminus U_P$ (for the embedding of $T_1\setminus L$) and of small vertex sets in $V_2$ towards $V_2\setminus \bar U$ (for the embedding of $T_2$).
Third, the choice of the sets $V_1,V_2,$ and $V_3$ only depends on the sizes of $T_1'$ and $T_3$, and thus our considerations are required to hold with probability $1-o(n^{-2})$ to accommodate the corresponding union bound.

After the above program is executed, it remains to embed $T_4$ in the union of $V_3$, the remainders of $V_1,V_2$ and the image of the only vertex in $T_3\cap T_4$ (serving as a root for $T_4$).  As this embedding happens away from a single root (the only vertex in $T_3\cap T_4$), the rollback method requires much less spare room here, roughly $\Delta \alpha\le 9\ell/10$.
Adding these buffer vertices to $U_L$ after the embedding of $T_4$ is completed allows to match this set to the image of $P$ thanks to the matchmaking property~\eqref{prop:match} of $U_L$ and $U_P$, and concludes the embedding.
We note that the dependence of $C$ on $\Delta$ in this regime was spared because $|L|=\Omega(\Delta \alpha)$; as $\cT_1$ contains all trees with less than $\alpha$ leaves (by Lemma~\ref{lem:separate}), our attention turns to trees having between $\alpha$ and $\exp(\Delta^5)\alpha$ leaves covered by $\cT_2$.

\vspace{0.5em}

Showing $\cT_2$-universality is the most technical part of this work. To do this, we aim to construct $\imax+1 \approx \Delta^3$ disjoint layers of vertices $L_1,\ldots,L_{\imax+1}\subseteq V(T_1')$ via a leaf-cutting procedure (where leaves lie in $L_1$) so that:
\begin{itemize}
    \item every layer has size $\Omega(\alpha)$,
    \item for every $i\in [\Gamma]$, every vertex in $L_i$ has a single neighbour in $L_{i+1}\cup\ldots\cup L_{\imax+1}$, 
    \item the vertices in $L_{\imax+1}$ are well separated in $T_1'\setminus (L_1\cup\ldots\cup L_{\imax})$.
\end{itemize}
The purpose of this construction is to select disjoint sets $U_{L_1},\ldots,U_{L_{\imax}}\subseteq [n]$ to be covered by the images of $L_1,\ldots,L_{\imax}$, respectively, such that each set can absorb a comparable number of vertices to $U_L$ from the previous construction, thus ensuring that the remainder after the rollback embedding of $T_4$ can be fully absorbed.

This strategy comes with a number of complications. First, in order to imitate the argument for $\cT_3$, we need that each of the layers $L_1,\ldots,L_{\imax+1}$ has order $\Omega(\alpha)$.
Unfortunately, the leaf cutting procedure does not ensure that this is the case for every tree in $\cT_2$: in fact, the number of leaves may significantly shrink even by one step of leaf-cutting. 
The key observation here is that, if such a shrinking takes place before $\imax$ rounds of leaf-cutting, then we end up with a tree with few leaves and many long bare paths, like the ones in $\cT_1$ (but with fewer than $n$ vertices).
This naturally splits our proof into two cases.

\vspace{0.2em}

\paragraph{Case $1$.} Suppose that the leaf-cutting procedure defined all $\Gamma+1$ layers successfully. 
Then, a natural avenue would be to follow the strategy for $\cT_3$-universality where $L_{\imax+1}$ plays the role of $P$. 
This approach encounters an important problem on the way: without tight control on the sizes of the layers, the union bound (which in the case of $\cT_3$ was only over $n^2$ choices of sizes of $V_1,V_2$) now has to be done over $n^{\imax+O(1)}$ choices of layer sizes. This is too much for our purposes: indeed, when $p=C\ln n/n$ for an absolute constant $C$, a single vertex may be isolated with probability $\approx n^{-C}$.
To resolve this problem, we `shift' some of the trees descending from layer $L_{\imax+1}$ as shown in Figure~\ref{fig:shift}.
By applying shifting to a carefully chosen number of trees, we manage to guarantee that, for each $i\in [\imax]$, the size of the $i$-th layer eventually takes one of a bounded predetermined number of values.

While the above idea resolves the problem with the enormous union bound, it creates a different issue: since we extend the embedding of  $L_1\cup \ldots\cup L_{\imax}$ one layer at a time without taking into account the structure of  layers with smaller indices, 
 it may happen that all neighbours of some vertex $v\in U_{L_i}$ in $U_{L_{i+1}}$ are \emph{blocked}, meaning that $v$ is occupied by leaves of $T$ shifted to layer $L_{i+1}$. The solution to this problem consists of splitting the tree $T_1'$ into two subtrees $T_1,T_2$ with a comparable number of vertices on layer $L_{\imax+1}$, and shifting only few subtrees of $T_1$ and none of $T_2$.
More precisely, we prepare absorbing sets $(U_{1,L_i}, U_{2,L_i})_{i=1}^{\imax+1}$ with good expansion properties and require that the set $V(T_1)\cap L_i$ is embedded outside $U_{2,L_i}$ for all $i$. 
This way, after embedding $L_{i+1}$, the typically few blocked vertices in $U_{L_i}$ can still be used to extend the embedding of $T_2$.

\begin{figure}
\centering
\begin{minipage}{\textwidth}
\centering
\begin{tikzpicture}[scale=0.9,line cap=round,line join=round,x=1cm,y=1cm]
\clip(-13,-1.2) rectangle (13.32,5);
\draw [line width=0.8pt] (-6,4)-- (-7,3);
\draw [line width=0.8pt,color=red] (-6,4)-- (-6,3);
\draw [line width=0.8pt] (-6,4)-- (-5,3);
\draw [line width=0.8pt,color=red] (-6,3)-- (-5,2);
\draw [line width=0.8pt] (-7,3)-- (-7,2);
\draw [line width=0.8pt] (-7,3)-- (-6,2);
\draw [line width=0.8pt,color=red] (-5,2)-- (-5,1);
\draw [line width=0.8pt] (-7,2)-- (-7,1);
\draw [line width=0.8pt,color=red] (-5,1)-- (-6,0);
\draw [line width=0.8pt,color=red] (-6,0)-- (-6,-1);
\draw [line width=0.8pt] (-5,1)-- (-5,0);
\draw [line width=0.8pt] (-3,4)-- (-4,3);
\draw [line width=0.8pt,color=red] (-3,4)-- (-3,3);
\draw [line width=0.8pt,color=red] (-3,3)-- (-3,2);
\draw [line width=0.8pt] (-3,3)-- (-4,2);
\draw [line width=0.8pt] (-4,2)-- (-4,1);
\draw [line width=0.8pt,color=red] (-3,2)-- (-3,1);
\draw [line width=0.8pt] (-3,1)-- (-4,0);
\draw [line width=0.8pt,color=red] (-3,1)-- (-3,0);
\draw [line width=0.8pt] (-1,4)-- (-2,3);
\draw [line width=0.8pt] (-1,4)-- (-1,3);
\draw [line width=0.8pt] (-1,4)-- (0,3);
\draw [line width=0.8pt] (-1,3)-- (-1,2);
\draw [line width=0.8pt] (-1,3)-- (-2,2);
\draw [line width=0.8pt] (-1,2)-- (-1,1);
\draw [line width=0.8pt] (-1,2)-- (0,1);
\draw [line width=0.8pt] (2,4)-- (1,3);
\draw [line width=0.8pt] (2,4)-- (2,3);
\draw [line width=0.8pt] (2,3)-- (2,2);
\draw [line width=0.8pt] (3,4)-- (3,3);
\begin{scriptsize}
\draw [fill=black] (-6,4) circle (2.5pt);
\draw[color=black] (-6,4.3) node {\large{$v_1$}};
\draw [fill=black] (-7,3) circle (2.5pt);
\draw [fill=black] (-6,3) circle (2.5pt);
\draw [fill=black] (-5,3) circle (2.5pt);
\draw [fill=white] (-5,2) circle (2.5pt);
\draw [fill=black] (-7,2) circle (2.5pt);
\draw [fill=black] (-6,2) circle (2.5pt);
\draw [fill=black] (-5,1) circle (2.5pt);
\draw [fill=black] (-7,1) circle (2.5pt);
\draw [fill=black] (-6,0) circle (2.5pt);
\draw [fill=black] (-6,-1) circle (2.5pt);
\draw (-5.72,-0.5) node {\large{$F_1$}};
\draw [fill=black] (-5,0) circle (2.5pt);
\draw [fill=black] (-3,4) circle (2.5pt);
\draw[color=black] (-3,4.3) node {\large{$v_2$}};
\draw [fill=black] (-4,3) circle (2.5pt);
\draw [fill=white] (-3,3) circle (2.5pt);
\draw [fill=black] (-3,2) circle (2.5pt);
\draw [fill=black] (-4,2) circle (2.5pt);
\draw [fill=black] (-4,1) circle (2.5pt);
\draw [fill=black] (-3,1) circle (2.5pt);
\draw [fill=black] (-4,0) circle (2.5pt);
\draw [fill=black] (-3,0) circle (2.5pt);
\draw (-2.68,0.5) node {\large{$F_2$}};
\draw [fill=black] (-1,4) circle (2.5pt);
\draw[color=black] (-1,4.3) node {\large{$v_3$}};
\draw [fill=black] (-2,3) circle (2.5pt);
\draw [fill=black] (-1,3) circle (2.5pt);
\draw [fill=black] (0,3) circle (2.5pt);
\draw [fill=black] (-1,2) circle (2.5pt);
\draw [fill=black] (-2,2) circle (2.5pt);
\draw [fill=black] (-1,1) circle (2.5pt);
\draw[color=black] (-1.28,1.5) node {\large{$F_3$}};
\draw [fill=black] (0,1) circle (2.5pt);
\draw [fill=black] (2,4) circle (2.5pt);
\draw[color=black] (2,4.3) node {\large{$v_4$}};
\draw [fill=black] (1,3) circle (2.5pt);
\draw [fill=black] (2,3) circle (2.5pt);
\draw [fill=black] (2,2) circle (2.5pt);
\draw[color=black] (1.7,2.5) node {\large{$F_4$}};
\draw [fill=black] (3,4) circle (2.5pt);
\draw[color=black] (3,4.3) node {\large{$v_5$}};
\draw [fill=black] (3,3) circle (2.5pt);
\draw[color=black] (3.32,3.5) node {\large{$F_5$}};
\end{scriptsize}
\end{tikzpicture}
\end{minipage}

\begin{minipage}{\textwidth}
\centering
\begin{tikzpicture}[scale=0.9,line cap=round,line join=round,x=1cm,y=1cm]
\clip(-13,0.9) rectangle (10.7,5);
\draw [line width=0.8pt] (-11,4)-- (-12,3);
\draw [line width=0.8pt,color=black] (-11,4)-- (-11,3);
\draw [line width=0.8pt] (-11,4)-- (-10,3);
\draw [line width=0.8pt] (-12,3)-- (-12,2);
\draw [line width=0.8pt] (-12,3)-- (-11,2);
\draw [line width=0.8pt,color=black] (-5,4)-- (-5,3);
\draw [line width=0.8pt] (-12,2)-- (-12,1);
\draw [line width=0.8pt,color=black] (-5,3)-- (-6,2);
\draw [line width=0.8pt,color=black] (-6,2)-- (-6,1);
\draw [line width=0.8pt] (-5,3)-- (-5,2);
\draw [line width=0.8pt,color=black] (-3,4)-- (-3,3);
\draw [line width=0.8pt] (-3,4)-- (-4,3);
\draw [line width=0.8pt] (-4,3)-- (-4,2);
\draw [line width=0.8pt,color=black] (-3,3)-- (-3,2);
\draw [line width=0.8pt] (-3,2)-- (-4,1);
\draw [line width=0.8pt,color=black] (-3,2)-- (-3,1);
\draw [line width=0.8pt] (-1,4)-- (-2,3);
\draw [line width=0.8pt,color=black] (-1,4)-- (-1,3);
\draw [line width=0.8pt] (-1,4)-- (0,3);
\draw [line width=0.8pt,color=black] (-1,3)-- (-1,2);
\draw [line width=0.8pt] (-1,3)-- (-2,2);
\draw [line width=0.8pt] (-1,2)-- (-1,1);
\draw [line width=0.8pt,color=black] (-1,2)-- (0,1);
\draw [line width=0.8pt] (2,4)-- (1,3);
\draw [line width=0.8pt] (2,4)-- (2,3);
\draw [line width=0.8pt] (2,3)-- (2,2);
\draw [line width=0.8pt] (3,4)-- (3,3);
\draw [line width=0.8pt,color=black] (-11,3)-- (-10,2);
\draw [line width=0.8pt,color=black] (-8,4)-- (-8,3);
\draw [line width=0.8pt] (-8,4)-- (-9,3);
\begin{scriptsize}
\draw [fill=black] (-11,4) circle (2.5pt);
\draw [fill=black] (-12,3) circle (2.5pt);
\draw [fill=black] (-11,3) circle (2.5pt);
\draw [fill=black] (-10,3) circle (2.5pt);
\draw [fill=white] (-5,4) circle (2.5pt);
\draw [fill=black] (-12,2) circle (2.5pt);
\draw [fill=black] (-11,2) circle (2.5pt);
\draw [fill=black] (-5,3) circle (2.5pt);
\draw [fill=black] (-12,1) circle (2.5pt);
\draw[color=black] (-11.25,1.45) node {\large{$F_1\setminus F_1'$}};
\draw [fill=black] (-6,2) circle (2.5pt);
\draw [fill=black] (-6,1) circle (2.5pt);
\draw[color=black] (-5.72,1.5) node {\large{$F_1'$}};
\draw [fill=black] (-5,2) circle (2.5pt);
\draw [fill=black] (-8,4) circle (2.5pt);
\draw [fill=white] (-3,4) circle (2.5pt);
\draw [fill=black] (-3,3) circle (2.5pt);
\draw [fill=black] (-4,3) circle (2.5pt);
\draw [fill=black] (-4,2) circle (2.5pt);
\draw [fill=black] (-3,2) circle (2.5pt);
\draw [fill=black] (-4,1) circle (2.5pt);
\draw [fill=black] (-3,1) circle (2.5pt);
\draw[color=black] (-2.68,1.5) node {\large{$F_2'$}};
\draw [fill=black] (-1,4) circle (2.5pt);
\draw [fill=black] (-2,3) circle (2.5pt);
\draw [fill=black] (-1,3) circle (2.5pt);
\draw [fill=black] (0,3) circle (2.5pt);
\draw [fill=black] (-1,2) circle (2.5pt);
\draw [fill=black] (-2,2) circle (2.5pt);
\draw [fill=black] (-1,1) circle (2.5pt);
\draw[color=black] (-1.32,1.5) node {\large{$F_3$}};
\draw [fill=black] (0,1) circle (2.5pt);
\draw [fill=black] (2,4) circle (2.5pt);
\draw [fill=black] (1,3) circle (2.5pt);
\draw [fill=black] (2,3) circle (2.5pt);
\draw [fill=black] (2,2) circle (2.5pt);
\draw[color=black] (1.7,2.5) node {\large{$F_4$}};
\draw [fill=black] (3,4) circle (2.5pt);
\draw [fill=black] (3,3) circle (2.5pt);
\draw[color=black] (3.32,3.5) node {\large{$F_5$}};
\draw [fill=white] (-10,2) circle (2.5pt);
\draw [fill=white] (-8,3) circle (2.5pt);
\draw[color=black] (-8.5,2.5) node {\large{$F_2\setminus F_2'$}};
\draw [fill=black] (-9,3) circle (2.5pt);
\end{scriptsize}
\end{tikzpicture}
\end{minipage}
\caption{Illustration of the shifting procedure in a forest $F_1,\ldots,F_5$: the original roots of the highest two trees, $F_1$ and $F_2$, are shifted down along the red paths, thus creating the new forest $F_1',F_2',F_3,F_4,F_5$.}
\label{fig:shift}
\end{figure}

\vspace{0.2em}

\paragraph{Case $2$.} 
Suppose that the leaf-cutting procedure eventually produces layers $L_1,\ldots,L_r,L_{r+1}$ for some $r < \imax$ and a tree $T^- = T\setminus L$, where $L:=L_1\cup\ldots\cup L_r$.  Our embedding strategy in this case combines the embedding strategies from Section~\ref{sec:1.2.1} and Case 1, and employs a new idea.

In more detail, as a first step, we apply shifting to the layers $L_1,\ldots,L_r,L_{r+1}$, similarly to Case 1, in order to reduce the number of possibilities for the layer sizes.
At the same time, to prepare the random graph for the embedding, we separate sets $V_1,V_2$ and $V_3$ similarly to Section~\ref{sec:1.2.1}, and select subsets $U_1,\ldots,U_r$ within $V_1$ to host (after possibly cleaning few badly-expanding vertices) the layers $L_1,\ldots,L_r$.

Our strategy from Case~1 meets two major obstructions to be applied as such. First, the set of vertices of $G$ which the layering may absorb could be much smaller than the buffer required to embed the trees $T_1\setminus L,T_2\setminus L,T_3,T_4$ using the rollback method. 
  Instead, this buffer is absorbed within matchmaker sets similar to $(U_i')_{i=1}^N$ from Section~\ref{sec:1.2.1} (defined with respect to $T^-\subseteq T$ here).  Another problem is that vertices in $L_{r+1}$ may turn out to be on long bare paths in $T^-$ which are embedded within the linking system following Section~\ref{sec:1.2.1}, and our construction of linking systems does not allow us to identify the role of any of these vertices (as roots of small subtrees in the layering) in advance.
The new idea here is to pick (say) half of the long paths in $T^-$ which contain (at least) half of the parents of layer $L_{r+1}$, 
  and embed these together with the forest $F$ as defined in the first paragraph of Section~\ref{sec:1.2.1} into $V_1$.
In particular, unlike the case of $\cT_1$, the embedding of the augmented forest $F$ is done via the rollback strategy used in Case 1, which allows to plant some subset of roots from $L_{r+1}$ into a suitable matchmaking subset of $V_1$.

\section{Trees with many long bare paths: embedding via linking systems}\label{sec:long bare paths}

In this section we prove Theorem~\ref{thm:main_cycles} and Theorem~\ref{thm:univ_trees} for the family of trees $\cT_1$ with many long bare paths defined in the outline, Section~\ref{sc:overview}. 
Recall that $K$ is a large enough constant. Let 
$\psi=\psi(n)\in[1,\frac{\ln n}{4\ln\ln n}]$ be as in Section~\ref{sc:linking-reduction}. Set $$
\alpha:=\frac{n\psi\ln\ln n}{K^3 \ln n}.
$$
In particular, when $\psi=1$, then $\alpha$ is as defined in~\eqref{eq:alpha}, and it corresponds to the most interesting and challenging regime.   In this section, we use only $\alpha$ out of the $K\alpha$ long bare paths ensured by the definition of $\cT_1$; the fact that the respective family of paths is larger will be used in the next section. To this end, define $\cT_1^*$ to be the family all \emph{graphs} $T$ with $\alpha$ vertex-disjoint bare paths $P_1,\ldots,P_{\alpha}$ of length $n/(K^2\alpha)$ such that $T\setminus E(P_1\cup\ldots\cup P_\alpha)$ is a forest with maximum degree at most $\Delta$.
Note that the graphs in $\cT_1^*$ can contain cycles, and $\cT_1$ is a subfamily of $\cT_1^*$.
Thus, the following theorem clearly implies $\cT_1$-universality and Theorem~\ref{thm:main_cycles}.
\begin{theorem}\label{thm:univ_adapted}
    There is a constant $C>0$ such that, for every constant $\Delta\geq 2$, whp $G(n,C(\ln n)^{\psi}/n)$ is $\cT_1^*$-universal.
\end{theorem}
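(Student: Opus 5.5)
We follow the scheme of Section~\ref{sec:1.2.1}, split $G=G_1\cup G_2\cup G_3$ into three independent copies of $G(n,p')$ with $p'\ge p/3$, and fix a vertex partition $[n]=V_1\sqcup V_2\sqcup V_3$ independent of $T$. The sizes are as follows: $|V_3|=3(2k+1)\alpha$ with $k=\lceil 24\ln n/(\psi\ln\ln n)\rceil$ (suitably rounded as in Lemma~\ref{cor:linking-systems}), and $|V_1|$ depends only on $n-\alpha(n/(K^2\alpha)-1)$, that is, on the number of vertices of $T$ off the interiors of the bare paths, plus $\eps n$. Since this number is the same for every $T\in\cT_1^*$, no union bound over partitions is needed.

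\textbf{Step 1.} Given $T\in\cT_1^*$, we delete the interiors of the chosen paths $P_1,\ldots,P_\alpha$ to obtain a forest $F$ with maximum degree at most $\Delta$ on at most $(1-\eps)|V_1|$ vertices. For a graph that is not a tree this forest may have many components; we join them into one tree of maximum degree $\Delta+2$ by auxiliary edges, or simply embed a bounded-degree tree containing $F$. Theorem~\ref{thm:AKS07} applied to $G_1[V_1]$ then gives an embedding of $F$; note that the constant there depends on $\Delta$, but $|V_1|$ is linear and $p'|V_1|\to\infty$, so this is harmless. This fixes the images $X_1,X_2$ of the path endpoints and a leftover set $W\subseteq V_1$ of size about $\eps n$.

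\textbf{Step 2 (matchmakers via LLL).} Exposing $G_2$, we show that whp there are random partitions of $V_2\cup W$ into sets $U'_1,\ldots,U'_{N-6}$ of size $\alpha$ (up to rounding, with $N\approx n/(K^2\alpha)\cdot$const matched to the path lengths), together with $U'_{N-5},\ldots,U'_N\subseteq V_3$, such that every pair drawn in Figure~\ref{fig:matchings} spans a perfect matching in $G_2$. The pairs involving $X_1$ and $X_2$ are handled through the intermediate triple structure $Z_0,Z_{2\hat N+1}$. Each required bipartite pair of size $\alpha$ satisfies the hypotheses of Corollary~\ref{cor:Hall} and Lemma~\ref{lem:match-easy}: the degrees into a random set of size $\alpha$ have mean $\Theta(K^{-3}\ln\ln n\cdot C(\ln n)^{\psi-1}\psi)$, which is large, and small sets expand. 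The obstacle is that $X_1,X_2,W$ depend on $T$, so these sets cannot simply be fixed in advance. We therefore use the LLL (Lemma~\ref{lem:LLL}) over a random assignment of the vertices of $V_2\cup W$ to the parts, where the bad events are that a vertex has too few neighbours in its neighbouring part. These events have dependencies of degree polynomial in $\ln n$ and probability $(\ln n)^{-\omega(1)}$, while the global properties ($t$-joinedness and expansion of sets of size $\Theta(\alpha)$) hold simultaneously for all sets by a union bound. Vertices of $X_1,X_2$ with poor degree into $Z_0$ are the main danger, and we expect this to be the hardest step. Our approach is to make the sets adjacent to $X_i$ triples, as in the figure, so that $G_2$ degrees from $V_1$ into $V_2$ are $\Theta(\ln n)$ whp for all vertices simultaneously, and to take $K$ large.

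\textbf{Step 3.} We expose $G_3[V_3]$ and apply Lemma~\ref{cor:linking-systems}, with $\delta=|V_3|/n$ and paths of length $2k$, to obtain an $(A_1,A_2;2k)$-linking system. Because it is a linking system, it realises every bijection, in particular the one dictated by the matchings. We then concatenate along the chain displayed in Section~\ref{sec:1.2.1}. This produces vertex-disjoint paths from $X_1$ to $X_2$, each of length equal to a fixed number of matching steps plus three linking segments; choosing $N$ and $k$ appropriately makes this exactly $n/(K^2\alpha)$. These paths cover $V_2\cup W\cup V_3$ exactly, so the combined embedding is spanning. All failure probabilities are $o(1)$ and do not depend on $T$, which yields universality.
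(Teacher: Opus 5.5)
There is a genuine gap where Step 2 meets Step 3. Lemma~\ref{cor:linking-systems} only guarantees, for a \emph{fixed} pair of tuples $\mathbf a,\mathbf b$, that whp $G(\delta n,p)$ contains an $(\mathbf a,\mathbf b;2k)$-linking system. It covers every bijection between $\mathbf a$ and $\mathbf b$, but not every choice of the sets themselves. Its failure probability is only $o(1)$, so a union bound over sets is unavailable.

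In your Step 2, the parts $U'_{N-5},\ldots,U'_N\subseteq V_3$, and hence $A_1,A_2$, come out of an LLL run after $T$, $X_1$, $X_2$, $W$ are known. They therefore depend on $T$, and your closing claim that the failure probabilities ``do not depend on $T$'' is false for the linking step.

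Fixing $U'_{N-5},\ldots,U'_N$ arbitrarily in advance does not rescue this either. Each vertex has $\mathrm{Bin}(\alpha,p)$ neighbours in a fixed $\alpha$-set, with mean $\Theta(C\psi(\ln n)^{\psi-1}\ln\ln n/K^3)$. For $\psi=1$, a $(\ln n)^{-\Theta(C/K^3)}$-fraction of vertices therefore have too few neighbours there. Nothing in your LLL keeps them out of $Z_{\hat N}$ and $Z_{\hat N+1}$: the corresponding bad event has probability $\Theta(1/N)$, which is far too large for the local lemma.

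The missing idea is that your premise ``these sets cannot simply be fixed in advance'' is wrong. The paper builds the whole matchmaker system before seeing $T$. It randomly partitions a fixed $V_2^*\subseteq V_2$ into $U_1,\ldots,U_{N-6}$ and picks random subsets $U_{N-5},\ldots,U_N$ of $V_3$. It then runs the LLL with bad events $\mathcal B_v$ for \emph{every} $v\in[n]$, which yields property~\ref{item:MMS4}: every vertex of the graph has at least $(1-1/K)0.8q_2p_1|V_2^*|$ neighbours in every $U_i$. This automatically covers whatever $X_1,X_2,W$ later turn out to be.

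After $F$ is embedded, $W\cup(V_2\setminus V_2^*)$ is distributed arbitrarily to top each part up to size exactly $\alpha$. Hall's condition for sets of size at most $\alpha/2$ then follows from \ref{item:MMS4} combined with the universal sparsity property~\ref{def:v}, namely $e(U_1,U_2)\le\frac23p_1\alpha x$. This is the concrete form your ``small sets expand'' needs, since the parts are not fixed sets. Because $A_1,A_2$ are then determined before, and independently of, the graph used for linking, Lemma~\ref{cor:linking-systems} applies to them.

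A minor point: for $\psi=1$ your LLL bad-event probabilities are $(\ln n)^{-\Theta(C/K^3)}$, not $(\ln n)^{-\omega(1)}$. This is still enough, but only because $C\gg K^3$.
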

We begin with some notation. Fix $k\coloneqq \lceil 24\ln n/{(\psi\ln\ln n)}\rceil$ and fix $D=D(K)>1$ suitably large.
In addition, let $p_1 \coloneqq D{(\ln n)^{\psi}}/n$ and let $p_2:=\frac{p-p_1}{1-p_1}$ (so that $(1-p_1)(1-p_2)=1-p$). We let $G_1\sim G(n,p_1)$ and $G_2\sim G(n,p_2)$ be sampled independently; in particular, $G_1\cup G_2\sim G(n,p)$.
In what follows, we define a family of \emph{adapted graphs} and show that, for any adapted graph~$G$, whp $G\cup G_2$ is $\cT^*_1$-universal, and that whp $G_1$ is adapted. This will conclude the proof of Theorem~\ref{thm:univ_adapted}.

\begin{definition}[adapted graph]\label{def:adapted}
A graph $G$ on $[n]$ is said to be \emph{adapted} if there exists a partition of $[n]=V_1\sqcup V_2\sqcup V_3$ where
\[|V_2|=(1/(2K^2)-1/K^3)n,\quad |V_3|=3(2k+1)\alpha,\]
 and there exist disjoint sets $A_1,A_2\subseteq V_3$ with $|A_1|=|A_2|=3\alpha$, such that each of the following holds:
\begin{enumerate}[(i)]
    \item\label{def:i} The induced subgraph $G[V_1]$ contains an isomorphic copy of every forest $F$ with at most $|V_1|-n/K^3$ vertices and maximum degree at most $\Delta$.
    \item\label{def:iii} The maximum degree of $G$ is at most $2p_1n$.
    \item\label{def:iv} For every vertex $v\in [n]$, $$
    |N(v)\cap V_2|\geq 0.9 |V_2|p_1\quad\text{ and }\quad |N(v)\cap V_3|\geq 0.9 |V_3|p_1.$$
    \item\label{def:v} 
    For every disjoint sets $U_1,U_2\subset V(G)$ of size $|U_1|=|U_2|=:x\in [\alpha/2]$,  
    we have
    \[
        e(U_1,U_2)\leq (2/3)D \psi(\ln n)^{\psi-1}(\ln\ln n) x/K^3 = (2/3)p_1 \alpha x.
    \]
    \item \label{def:vi} The graph $G$ is $(\alpha/3)$-joined (see Definition~\ref{def:joined}).
\end{enumerate}
\end{definition}

Note that, by choosing $K$ suitably large, we may (and we do) assume that 
\begin{equation}\label{eq:V1}
|V_1| = n-(1/(2K^2)-1/K^3)n-3(2k+1)\alpha\ge (1-2/(3K^2))n.
\end{equation}

In the rest of the section, for clarity of presentation, we assume that $n/(2K^2\alpha)$ is an integer congruent to 4 modulo 6; omitting this condition requires a slight modification of the parameters and has no significant effect on our arguments.

The proof of Theorem~\ref{thm:univ_trees} is based on the following lemma which says that, roughly speaking, every adapted graph contains chains of matchings as depicted in Figure~\ref{fig:matchings}.

\begin{lemma}
\label{lm:paths}
Fix an adapted graph $G$ with vertex partition $V_1$, $V_2$, $V_3$ as in Definition~\ref{def:adapted}.
Then, there are disjoint sets $A_1,A_2\subset V_3$ of size $3\alpha$ such that for every disjoint sets $X_1,X_2,W\subseteq V_1$ with $|W|=n/K^3$ and $|X_1|=|X_2|=\alpha$, there exist vertex-disjoint path systems $F_1,F_2,F_3\subseteq G$ where:
\begin{itemize}
\item $F_1$ has terminal sets $X_1$ and $A_1'\subset A_1$ \emph{(}with $|A_1'|=\alpha$\emph{)} and depth $\frac{n/K^2+4\alpha}{12\alpha}$,
\item $F_2$ has terminal sets $X_2$ and $A_2'\subset A_2$ \emph{(}with $|A_2'|=\alpha$\emph{)} and depth $\frac{n/K^2+4\alpha}{12\alpha}$,
\item $F_3$ has terminal sets $A_1\setminus A'_1$ and $A_2\setminus A'_2$ and depth $\frac{n/K^2+10\alpha}{6\alpha}$,  
\item the union of the sets of internal vertices of $F_1$, $F_2$, and $F_3$ is $W\cup V_2$. 
\end{itemize}
\end{lemma}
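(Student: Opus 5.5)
The proof is a direct adaptation of the ``chain of perfect matchings'' picture in Figure~\ref{fig:matchings}. I would fix $A_1,A_2$ once and for all, partition $V_2\cup W$ at random into small sets, and then join consecutive sets in the chains by perfect matchings, verified through Lemma~\ref{lem:match-easy}. Concatenating these matchings gives $F_1,F_2,F_3$.

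\emph{Step 1: choice of $A_1,A_2$.} Choose disjoint $A_1,A_2\subseteq V_3$ of size $3\alpha$ uniformly at random. By Definition~\ref{def:adapted}\ref{def:iv}, every vertex $v$ has at least $0.9|V_3|p_1$ neighbours in $V_3$. Since $|A_j|/|V_3|=1/(2k+1)$, the hypergeometric Chernoff bound (Lemma~\ref{lem:Chernoff}) gives $|N(v)\cap A_j|\ge 0.8\cdot 3\alpha p_1$ with failure probability $\exp(-\Omega(\alpha p_1))$. Here $\alpha p_1=\Theta(D\psi(\ln n)^{\psi}\ln\ln n/K^3)\cdot(\ln n)^{-1}\cdot n/n$ is of order $D(\ln n)^{\psi-1}\cdot\psi\ln\ln n/K^3$. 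For this to survive a union bound over $n$ vertices I would need the estimate $\exp(-\Omega(\alpha p_1))=o(1/n)$. This is fine only if $\alpha p_1\gg\ln n$, which fails for $\psi=1$. I therefore expect to treat $A_1,A_2$ together with the partition in Step~2 by the local lemma. The degree-into-$A_j$ events only concern vertices that will actually be matched into $A_j$, namely the last sets $Z_{\hat N},Z_{\hat N+1}$ and $A_j$ itself. Importantly, $A_1,A_2$ must not depend on $X_1,X_2,W$. So I would choose them so that every vertex of $V_2\cup V_1$ has many neighbours in each $A_j$, using the LLL over all of $[n]$ with a random labelling of $V_3$. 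The dependency degree is at most $(2p_1n)^2$ by \ref{def:iii}, and the failure probabilities are $(\ln n)^{-\Omega(D/K^3)}$, which is enough once $D=D(K)$ is large.

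\emph{Step 2: partition of $V_2\cup W$.} Given $X_1,X_2,W$, note that $|V_2\cup W|=n/(2K^2)=N\alpha$. I would split $V_2\cup W$ into sets $U'_1,\dots,U'_N$ of size $\alpha$, grouped into triples $Z_i$ of size $3\alpha$ as in the figure. The split is made by independent uniform labels followed by a rebalancing that moves $O(\sqrt{\alpha\ln n})$ vertices with good degrees. The ``good'' event for each vertex $v$ is that $v$ has at least $0.8\alpha p_1$ neighbours in every set adjacent to $v$'s set in the chain, including neighbours of $X_1,X_2$ in $U'_1,U'_{N-7}$. Each such event fails with probability $(\ln n)^{-\Omega(D/K^3)}$, using \ref{def:iv} for degrees into $V_2$ (vertices of $W$ also have degree at least $0.9|V_2|p_1$ into $V_2$). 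Each event depends only on the labels of the at most $2p_1n$ neighbours of $v$. The LLL (Lemma~\ref{lem:LLL}) then yields a labelling where all events hold.

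\emph{Step 3: matchings and concatenation.} For each consecutive pair $(Y,Y')$ in the chains, of sizes $\alpha$ or $3\alpha$, I apply Lemma~\ref{lem:match-easy} with $t=\alpha/3$. Condition~\ref{item:matching2} holds by $(\alpha/3)$-jointedness, Definition~\ref{def:adapted}\ref{def:vi}. For condition~(i), take $U\subseteq Y$ with $|U|=x\le\alpha/3$ and suppose $|N(U)\cap Y'|<x$. Padding $N(U)\cap Y'$ to a set of size $x$ gives $e\ge 0.8p_1\alpha x>(2/3)p_1\alpha x$, contradicting \ref{def:v}. Following the matchings from $X_1$ through $U'_1,\dots$ into $A_1$ (and likewise from $X_2$) gives $F_1,F_2$ with endpoints $A'_j$. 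The remaining $A_j\setminus A'_j$ are joined through the $Z_i$ to give $F_3$, exactly as in Figure~\ref{fig:matchings}. The depths follow from the count of sets on each chain, using $N\equiv 4\pmod 6$.

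\emph{Main obstacle.} The hard step is Step~2. The degree bounds are only of order $\alpha p_1=\Theta(D\ln\ln n/K^3)$ when $\psi=1$, which is too small for a union bound. This forces the use of the LLL, and the rebalancing to exact set sizes must be done without destroying the local degree conditions. I would try moving only vertices that keep at least $0.85\alpha p_1$ neighbours in every relevant set, so that the $0.8$ threshold survives the changes.
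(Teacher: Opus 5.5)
Your architecture matches the paper's: local-lemma labellings, perfect matchings via Lemma~\ref{lem:match-easy} with property~\ref{def:v} for small sets and property~\ref{def:vi} for condition~(ii), and concatenation as in Figure~\ref{fig:matchings}. Building the partition of $V_2\cup W$ after $X_1,X_2,W$ are revealed is also allowed by the quantifiers. The genuine gap is the rebalancing in Step~2, which you flag but do not resolve.

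Moving a vertex $w$ out of an overfull part $U_i'$ lowers $|N(v)\cap U_i'|$ for every neighbour $v$ of $w$ in a chain-adjacent part. These counts are only $\Theta(\alpha p_1)$, i.e.\ $\Theta(D\ln\ln n/K^3)$ when $\psi=1$. Nothing in your plan stops the removed vertices from concentrating in some $N(v)$ and pushing that count below $0.8\alpha p_1$. Your safeguard (move only vertices that themselves keep $0.85\alpha p_1$ neighbours in every relevant set) protects the moved vertex, not the vertices that lose it as a neighbour.

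The amount of rebalancing is also larger than you assume. The local lemma gives the good labelling only with probability about $(1-x)^n=\exp(-n/(\ln n)^{O(1)})$. So the size bounds you can impose at the same time allow deviations of order $\alpha/(\ln n)^{O(1)}$, not $O(\sqrt{\alpha\ln n})$.

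The missing idea is to undershoot and pad, so that no part ever loses a vertex. The paper proceeds as follows.
\begin{enumerate}
\item It discards $5n/K^3$ vertices of $V_2$, obtaining $V_2^*$, into which every vertex still has at least $0.8|V_2^*|p_1$ neighbours. Chernoff and a union bound suffice here, since $|V_2^*|p_1$ is a large constant multiple of $\ln n$.
\item Once, before $X_1,X_2,W$ are seen, it uses the local lemma to split $V_2^*$ into $U_1,\dots,U_{N-6}$ and to pick $U_{N-5},\dots,U_N\subseteq V_3$. All parts have size strictly less than $\alpha$, and \emph{every vertex of $G$} has at least $(1-1/K)0.8q_2p_1|V_2^*|$ neighbours in \emph{every} part.
\item Given $X_1,X_2,W$, the leftover set $W\cup(V_2\setminus V_2^*)$ is distributed arbitrarily to bring each $U_i$ with $i\le N-6$ to size exactly $\alpha$. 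The parts inside $V_3$ are padded within $V_3$.
\end{enumerate}
These degree lower bounds are monotone under adding vertices. They also hold for all vertices of $G$, hence for the padding vertices, for $X_1,X_2$, and for $A_1,A_2$. Your Step~3 then works as planned, uniformly in $X_1,X_2,W$, with $0.8$ replaced by $(1-1/K)(1-12/K)\cdot 0.8$, which is still above $2/3$.
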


\begin{proof}[Proof of Lemma \ref{lm:paths}]
We construct a set $V_2^*\subset V_2$ with 
\begin{equation}\label{eq:V2*}
|V_2^*|=|V_2|-5n/K^3\qquad \text{and}\qquad |N(v)\cap V_2^*|\geq 0.8|V_2^*|p_1\qquad \text{for every $v\in V(G)$.}
\end{equation} 
To this end, take a subset of $V_2$ of size $|V_2|-5n/K^3$ uniformly at random and observe that, by \ref{def:iv} and Chernoff's bound for hypergeometric random variables (Lemma~\ref{lem:Chernoff}), we get that for a fixed vertex $v$, the number of its neighbours in the random subset is less than $0.8|V_2^*|p_1$ with probability $O(\e^{-2\ln n})=o(1/n)$, say. Then the union bound over all vertices $v$ implies the desired assertion.

Define $N=N(n)\coloneq 6+n/(2K^2\alpha)$ and $q_2 \coloneq 1/(N-6)$. 
Call a family of disjoint sets $U_1,\ldots,U_N$ a {\it Match-Maker System} (MMS) if the following properties hold. 
\begin{enumerate}[label={(\bfseries M\arabic*)}]
    \item \label{item:MMS1} $U_1\cup\ldots\cup U_{N-6}=V_2^*$,
    \item \label{item:MMS2} $U_{N-5}\cup \ldots\cup U_N\subseteq V_3$,
    \item \label{item:MMS3} for every $i\in [N]$, we have $\bigl||U_i| - q_2|V_2^*|\bigr| \leq q_2|V_2^*|/(\ln n)^{2}$, and
    \item \label{item:MMS4} for every $i\in [N]$ and every $v\in V(G)$, we have $|N(v)\cap U_i|\geq (1-1/K) 0.8 q_2 p_1|V_2^*|$.
\end{enumerate}

\begin{claim}
$G$ contains an MMS.
\end{claim}
\begin{proof}
Sample independent random variables $(\xi_v)_{v\in V_2^*\cup V_3}$ where
\begin{itemize}
\item for every $v\in V_2^*$,
$\xi_v$ has uniform distribution on $[N-6]$, and
\item for every $v\in V_3$, $\xi_v\in\{0,N-5,N-4,\ldots,N\}$ and
\[
\forall i\in \{N-5,\ldots,N\},\qquad \mathbb{P}(\xi_v = i) = \frac{q_2 |V_2^*|}{|V_3|}\eqqcolon q_3.
\]
\end{itemize} 
For every $i\in [N]$, we define the (random) set $U_i = \{v:\,\xi_v=i\}$ and show that these sets satisfy properties \ref{item:MMS1}--\ref{item:MMS4} with positive probability.
Properties \ref{item:MMS1} and \ref{item:MMS2} hold deterministically and property \ref{item:MMS3} is satisfied with probability $\rho \ge 1-\exp(-\Omega(n/(\ln n)^5))$ by a routine application of Chernoff's bound and a union bound over $i\in [N]$. 

We show that \ref{item:MMS4} holds with probability significantly larger than $1-\rho$, implying that \ref{item:MMS1}--\ref{item:MMS4} hold jointly with positive probability, as required.
Indeed, for every $v\in V(G)$, define 
\[
    \cB_v \coloneqq \left\{\exists i\in [N]\text{ such that }|N(v)\cap U_i|<\left(1-\frac{1}{K}\right)0.8 q_2p_1|V_2^*|\right\}.
\]
Observe that, for every $v\in V(G)$ and every $i\in [N]$, $|N(v)\cap U_i|$ has distribution $\mathrm{Bin}(|N(v)\cap V_2^*|,q_2)$, if $i\le N-6$, and distribution $\mathrm{Bin}(|N(v)\cap V_3|,q_3)$ otherwise.
Then, by~\ref{def:iv} and~\eqref{eq:V2*}, for every $v\in V(G)$,
\begin{align*}
\mathbb{P}(\mathcal{B}_v)\leq (N-6) \mathbb{P}\bigg(\mathrm{Bin}\left(0.8 p_1|V_2^*|,q_2\right)
&<\left(1-\frac{1}{K}\right) 0.8q_2p_1|V_2^*|\bigg)\\
&+6\mathbb{P}\bigg(\mathrm{Bin}\left(0.9p_1|V_3|,q_3\right)<\left(1-\frac{1}{K}\right) 0.8q_3p_1|V_3|\bigg),
\end{align*}
which is of order $o(\exp(-7(\ln n)^{\psi-1}\ln\ln n))$ provided that $D=D(K)$ and $K$ are suitably large.

Moreover, the events $\mathcal{B}_v$ and $\mathcal{B}_u$ are independent when the vertices $u,v$ are at distance at least 3 in $G$. 
Thus, property \ref{def:iii} implies that, for all $v\in V(G)$, the event $\mathcal{B}_v$ depends on at most $(2p_1n)^{2}$ other events $\mathcal{B}_u$. 
By the Lov\'{a}sz Local Lemma (Lemma~\ref{lem:LLL}) applied to $(\cB_v)_v$ with $x=x_v=\exp(-6(\ln n)^{\psi-1}\ln\ln n)$, where we note that, for all $v\in V(G)$,
\[\mathbb P(\cB_v)\le x (1-x)^{(2p_1n)^2},\]
the probability that $U_1,\ldots,U_N$ satisfy property \ref{item:MMS4} is at least $(1-x)^n = \omega(1-\rho)$, completing the proof.
\end{proof}

Next, fix an MMS $U_1,\ldots,U_N$. For each $i\in\{N-5,\ldots,N\}$, we extend $U_i$ arbitrarily to a set $U'_i\subseteq V_3$ such that $U'_{N-5},\ldots ,U_{N}'$ are pairwise disjoint and have size exactly $\alpha$.
Define 
$$
A_1\coloneqq U'_{N-5}\cup U'_{N-4}\cup U'_{N-3}\quad\text{ and }\quad A_2\coloneqq U'_{N-2}\cup U'_{N-1}\cup U'_{N}.
$$ 

Now, fix disjoint sets $X_1, X_2, W\subseteq  V_1$ with $|X_1|=|X_2|=\alpha$ and $|W|=n/K^3$. Partition $W\cup (V_2\setminus V_2^*)$ into $N-6$ sets $Y_1,\ldots,Y_{N-6}$ so that the set $U'_i:=U_i\cup Y_i$ is of size $\alpha$ for every $i\in[N-6]$. Note that such a partition exists since every set in the MMS has size strictly less than $\alpha$ and the set $V_2\cup W$ is of size $n/(2K^2) = (N-6)\alpha$.
Recalling Figure~\ref{fig:matchings}, define $\hat N \coloneq (N-10)/6$ and set
$$
Z_0\coloneqq X_1\cup U'_1\cup U'_2,\;\;  Z_i\coloneqq U'_{3i}\cup U'_{3i+1}\cup U'_{3i+2}\text{ for } 
i\in[2\hat N],\;\; \text{and}\;\;
Z_{2\hat N+1}\coloneqq X_2\cup U'_{N-7}\cup U'_{N-6}.
$$

\begin{claim}\label{cl:matchings}
$G$ contains a perfect matching between sets in each of the following pairs:
\begin{itemize}
    \item $(U_1',U_{N-7}')$ and $(U_2',U_{N-6}')$,
    \item $(A_1,Z_{\hat N})$ and $(A_2,Z_{\hat N+1})$,
    \item $(Z_{i-1},Z_i)$ for all $i\in [2\hat N+1]$.
\end{itemize}  
\end{claim}

\begin{proof}
We show that there is a perfect matching between each pair $(U'_i, U'_j)$ with distinct $i,j\in [N]$: note that this ensures each of the claimed matchings except between $(Z_0,Z_1)$ and $(Z_{2\hat N},Z_{2\hat N+1})$, whose proof we delay. To this end,
we verify Hall's condition for any fixed pair $(U'_i, U'_j)$ with distinct $i,j\in [N]$ and sets of size at most $\alpha/2$ (recall this is enough by Corollary~\ref{cor:Hall}).
Fix such $i,j$ and, for every set $R\subseteq U'_i$ of size at most $\alpha/2$, denote by $N_R$ the neighbourhood of $R$ in $U'_j$.  Assume for contradiction that the condition fails for such a set $R$. Since $U_1,\ldots,U_N$ is an MMS, every vertex in $R$ has at least $(1-1/K) 0.8 q_2 p_1 |V_2^*|$ neighbours in $U_j\subseteq U'_j$. 
In particular, $G[R,N_R]$ has at least $|R|\cdot (1-1/K) 0.8 q_2 p_1 |V_2^*|$ edges. 
However, by property \ref{def:v} and our assumption that $|N_R|<|R|\leq \alpha/2$, this number of edges is dominated by
\[\frac{2}{3} p_1\alpha |R| \ge |R|\cdot \bigg(1-\frac{1}{K}\bigg) 0.8 q_2 p_1 |V_2^*|\implies \frac{2}{3} \ge \bigg(1-\frac{1}{K}\bigg) \bigg(1-\frac{12}{K}\bigg)\cdot \frac{4}{5},\]
which fails provided that $K$ is large enough, and shows the promised statement for every pair $(U_i',U_j')$ with $i\neq j$.

To ensure the perfect matchings between $(Z_0,Z_1)$ and $(Z_{2\hat N},Z_{2\hat N+1})$, we show that there exists a perfect matching between $(Z_0\setminus U_2',Z_1\setminus U'_{5} )$ and $(Z_{2\hat N} \setminus U'_{N-8},Z_{2\hat N+1} \setminus U'_{N-6})$. 
To this end, we use property \ref{def:vi} and Lemma \ref{lem:match-easy} with $t=\alpha/3$.
Without loss of generality, we verify the conditions only for the pair $(Z_0\setminus U_2',Z_1\setminus U'_{5})$, as the other case is identical. 
Assume for contradiction that there is $R\subseteq Z_1\setminus U_5'$ with $|R|\leq \alpha/3$ and $|N(R)|<|R|$ (the case when $R\subseteq Z_0\setminus U_2'$ is similar). 
Then, by \ref{item:MMS4}, every vertex $v\in V(G)$ has at least $(1-1/K)0.8q_2p_1|V_2^*|$ neighbours in $U'_1$. 
In particular, $G[R,N_R]$ has at least $|R|\cdot (1-1/K)0.8q_2p_1|V_2^*|$ edges and, provided that $K$ is sufficiently large, at most 
\[\frac{2}{3} p_1\alpha |R| < |R|\cdot \bigg(1-\frac{1}{K}\bigg)0.8q_2p_1|V_2^*|\]
edges by property \ref{def:v}, again leading to a contradiction.
\end{proof}

We are ready to construct the required path systems. Denote by $F$ the union of all matchings exhibited in Claim~\ref{cl:matchings}. 
Let $F_1$ (resp.\ $F_2$) be the path system consisting of all connected components intersecting $X_1$ (resp.\ $X_2$), and $F_3 = F\setminus (F_1\cup F_2)$.
Recalling Figure~\ref{fig:matchings}, it is easy to check that the constructed path systems satisfy the requirements of the lemma.
\end{proof}

Next we show that whp $G_1$ is adapted for a suitably large constant $D$ in assumption~\ref{def:iii}.

\begin{lemma}\label{lem:random-graphs-adapted}
Fix a partition $(V_1,V_2,V_3)$ of $[n]$ as in Definition \ref{def:adapted}. There is $D=D(K)>0$ such that, for $p_1 = D(\ln n)^{\psi}/n$, whp $G_1\sim G(n,p_1)$ is adapted with respect to the partition $(V_1,V_2,V_3)$. 
\end{lemma}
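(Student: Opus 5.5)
We verify the five properties of Definition~\ref{def:adapted} separately, each whp, and conclude by a union bound. The partition is fixed, and the sets $A_1,A_2$ can be any disjoint $3\alpha$-subsets of $V_3$: Lemma~\ref{lm:paths} constructs its own $A_1,A_2$, so they impose no additional requirement here.

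Property~\ref{def:i} follows from Theorem~\ref{thm:AKS07} applied to $G_1[V_1]$. This is a copy of $G(|V_1|,p_1)$ with $|V_1|p_1\ge (1-2/(3K^2))D(\ln n)^\psi\to\infty$. Any constant $C(\eps,\Delta)$ is eventually dominated, with $\eps$ chosen so that $(1-\eps)|V_1|\ge |V_1|-n/K^3$, e.g.\ $\eps=1/K^3$. Theorem~\ref{thm:AKS07} is stated for $p=C/n$, but containment of a fixed family of subgraphs is monotone, so the larger $p_1$ is also fine.

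Properties~\ref{def:iii} and~\ref{def:iv} follow from Chernoff's bound (Lemma~\ref{lem:Chernoff}) and a union bound over vertices. The degree of $v$ into $V_2$ is binomial with mean $|V_2|p_1=\Theta(D(\ln n)^\psi/K^2)$. The degree into $V_3$ has mean $|V_3|p_1=\Theta(k\alpha p_1)=\Theta(D(\ln n)^{\psi}/K^3)$, using $k\alpha=\Theta(n/K^3)$. Each deviation by a constant factor then has probability $\exp(-\Omega(D\ln n/K^3))=o(n^{-2})$ once $D=D(K)$ is large.

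Property~\ref{def:vi} is a first moment computation. The probability that some two disjoint $\alpha/3$-sets span no edges is at most $\binom{n}{\alpha/3}^2(1-p_1)^{\alpha^2/9}\le\exp\bigl(\tfrac{2\alpha}{3}\ln(3\e n/\alpha)-p_1\alpha^2/9\bigr)$. Here $p_1\alpha=D\psi(\ln n)^{\psi-1}\ln\ln n/K^3$, while $\ln(n/\alpha)\le\ln\ln n$. The exponent is therefore negative and of order $-\Omega(\alpha\ln\ln n)$ for $D\gg K^3$.

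The main obstacle is property~\ref{def:v}, which must hold for all sizes $x\le\alpha/2$. For fixed $x$, the number of pairs $(U_1,U_2)$ is at most $(\e n/x)^{2x}$, and $e(U_1,U_2)\sim\mathrm{Bin}(x^2,p_1)$ with mean $\mu=p_1x^2\le p_1\alpha x/2$. The target $\tau=(2/3)p_1\alpha x$ is at least $\tfrac43\mu$, so a constant factor is lacking for $x$ near $\alpha/2$. In that range Chernoff gives about $\exp(-c\,p_1\alpha x)$ with $c$ an absolute constant, and we compare $c\,p_1\alpha x = c\,x D\psi(\ln n)^{\psi-1}\ln\ln n/K^3$ with $2x\ln(\e n/x)\le 2x(\ln\ln n+O(\ln K))$. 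Choosing $D\ge K^4$ wins. For smaller $x$ the ratio $\tau/\mu=2\alpha/(3x)$ grows, and we use the bound $\binom{x^2}{\tau}p_1^\tau\le(\e p_1x^2/\tau)^\tau=(3\e x/(2\alpha))^\tau$. Against the $(\e n/x)^{2x}$ choices this gives the exponent
\[
x\Bigl(2\ln(\e n/x)-\tfrac23 p_1\alpha\ln\bigl(2\alpha/(3\e x)\bigr)\Bigr).
\]
Because $p_1\alpha\ge D\ln\ln n/K^3$ and $\ln(n/x)\le \ln(n/\alpha)+\ln(\alpha/x)$, the negative term dominates once $\ln(\alpha/x)$ exceeds a constant. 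It also dominates for all $x$ once $D/K^3$ is large; if the logarithm is small, we fall back to the first regime. Summing over $x\le\alpha/2$, the total failure probability is $o(1)$. This is the step where the constants $D$ versus $K$ must be tuned carefully.
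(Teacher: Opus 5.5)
Your proposal is correct and follows essentially the same route as the paper. Properties \ref{def:i}--\ref{def:iv} and \ref{def:vi} are handled identically, and for property \ref{def:v} the paper likewise uses Chernoff for $x\in[\alpha/10,\alpha/2]$ and the bound $\binom{x^2}{y}p_1^y\le (3\e x/(2\alpha))^y$ for smaller $x$.

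The only difference is that the paper treats $x\le \ln n/(3\psi\ln\ln n)$ as a separate case, so as to get $o(1/n)$ for each individual $x$. You instead keep the $\ln(\alpha/x)$ gain and sum a geometric series over $x$. This works provided the switch between the two regimes is made at a fixed constant fraction such as $\alpha/10$. Your remark that the second bound dominates ``for all $x$'' is false for $x>2\alpha/(3\e)$, where the logarithm is negative, but your fallback to Chernoff covers that range.
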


\begin{proof}
    To prove the lemma, it suffices to show that whp $G_1$ satisfies each of the five properties in Definition \ref{def:adapted}.
    First, property \ref{def:i} holds whp due to Theorem \ref{thm:AKS07}.  Second, both properties \ref{def:iii} and \ref{def:iv} hold whp by a simple union bound (and a standard use of Chernoff's bound). Third, the fact that $G_1$ is whp an $(\alpha/3)$-joined, i.e.\ property \ref{def:vi}, also follows from a simple union bound: letting $t:=\alpha/3$, for large enough $D=D(K)$, the converse has probability at most
\begin{equation}\label{eq:UB3.11.1-new}
    \binom{n}{t}\binom{n-t}{t} (1-p_1)^{t^2}\le \bigg(\frac{\e n}{t}\bigg)^{2t} \exp(-p_1t^2) = \exp(t(2\ln(\e n/t) - p_1t)) = o(1).
    \end{equation}

    It remains to verify that property \ref{def:v} holds whp. We consider three cases. In each case, we will show that for every fixed $x$, the failure probability of \ref{def:v} is $o(1/n)$. This and the union bound over the at most $n$ choices for $x$ will then conclude the proof.
    For simplicity of the presentation, given $x\ge 1$, set 
    \[
        y=y(x)\coloneqq \frac{2D(\ln n)^{\psi-1}\psi\ln\ln n}{3K^3}x=\frac{2}{3}p_1\alpha x.
    \]

\vspace{1em}
\noindent
\textbf{Case 1:} Set $x_0=\ln n/(3\psi\ln \ln n)$ and assume $x\in [1,x_0]$. Then, the probability that there exist two disjoint sets of size $x$ with at least $2x+3 = o(p_1 \alpha x)$ edges between them is at most 
\[\binom{n}{x}^2 \binom{x^2}{2x+3} p_1^{2x+3}\le \bigg(\frac{\e n p_1}{x}\bigg)^{2x} x^{4x+6} p_1^{3}\le \bigg(\frac{\e n p_1}{x_0}\bigg)^{2x_0} x_0^{4x_0+6} p_1^{3} = o(1/n).\]

\vspace{1em}
\noindent
\textbf{Case 2:} assume that $x\in [x_0,\alpha/10]$. 
 The probability that there exist two disjoint sets of size $x$ with at least $y$ edges between them is at most
\begin{align*}
 {n\choose x}^2\mathbb{P}(\mathrm{Bin}(x^2,p_1)\geq y)&\leq
 \left(\frac{\e n}{x}\right)^{2x}\cdot  \left(\frac{\e x^{2}}{y}\right)^{y} p_1^y= 
 \left(\frac{\e n}{x}\right)^{2x}\cdot \left(\frac{3ex}{2\alpha}\right)^y \\
 &=
 \left(\left(\frac{\e n}{x}\right)^2\left(\frac{3ex}{2\alpha}\right)^{(2D/3K^3)(\ln n)^{\psi-1}\psi\ln\ln n}\right)^x\\
 &\leq
 \left(\left(\frac{3\e^2 K^3 \ln n}{2\psi\ln\ln n}\right)^2\left(\frac{3\e}{20}\right)^{(2D/3K^3)(\ln n)^{\psi-1}\psi\ln\ln n-2}\right)^{\ln n/(3\psi\ln\ln n)}=o(1/n),
\end{align*}
where the last inequality holds by the assumptions on the range of $x$, the inequality $D\geq  K^4$, and by assuming that $K$ is large. 

\vspace{1em}
\noindent
\textbf{Case 3:} assume that $x\in [\alpha/10,\alpha/2]$.
In this case, by Chernoff's bound (Lemma~\ref{lem:Chernoff}), we obtain
\begin{align*}
 {n\choose x}^2\mathbb{P}(\mathrm{Bin}(x^2,p_1)\geq y)&\leq \left(\frac{\e n}{x}\right)^{2x} \exp\left(-\frac{(y-x^2p_1)^2}{2x^2p_1+(y-x^2p_1)/3}\right)\\
 &\leq
 \left(\frac{\e n}{x}\right)^{2x} \exp\left(-\frac{(1-3/4)^2}{2\cdot 3/4+(1-3/4)/3}\cdot \frac{2D(\ln n)^{\psi-1}\psi\ln\ln n}{3K^3}x\right)\\
 &\leq
 \left(\left(\frac{\e n}{x}\right)^2 \e^{-0.02(D/K^3)(\ln n)^{\psi-1}\psi\ln\ln n}\right)^x\\
 &\leq
 \left(\left(\frac{10\e K^3 \ln n}{\psi\ln\ln n}\right)^2e^{-0.02(D/K^3)(\ln n)^{\psi-1}\psi\ln\ln n}\right)^{\alpha/10}=o(1/n),
\end{align*}
where the second inequality holds as the function under the exponent is equal to $-\frac{(1-t)^{2}}{2t+(1-t)/3}y$, where $t=x^2p_1/y=\frac{3x}{2\alpha}\leq 3/4$, and as $t\mapsto -\frac{(1-t)^{2}}{2t+(1-t)/3}$ is increasing for all $t\in (0,1)$; and the equality holds when $D\ge  K^4$ and $K$ is large. This completes the proof.
\end{proof}

Finally, we are ready to prove Theorem \ref{thm:univ_trees}.

\begin{proof}[Proof of Theorem~\ref{thm:univ_adapted}.] Since $C$ is large, we may assume that $p_2\geq D(\ln n)^{\psi}/n$. Recall that $G_2\sim G(n,p_2)$ and, having the conclusion of Lemma~\ref{lem:random-graphs-adapted}, our aim is to derive that, for any adapted graph $G$, whp $G\cup G_2$ is $\mathcal{T}_1^*$-universal.

 Let $G$ be an adapted graph. It satisfies the conclusion of Lemma~\ref{lm:paths}.
 Fix sets $A_1$ and $A_2$ as given by the lemma, a graph $T\in\cT_1^*$, and pairwise vertex-disjoint paths $P_1,\ldots ,P_{\alpha}$ in $T$, each of length $\ell = \ell(n)$ where
\begin{equation}\label{def:ell}
\alpha(\ell
-1) = |V_2\cup V_3| + \frac{n}{K^3} = \frac{n}{2K^2} + 3(2k+1)\alpha\implies \ell = \frac{n}{2K^2\alpha} + 6k +4 \le K\frac{\ln n}{\psi\ln\ln n} = \frac{n}{K^2\alpha},
\end{equation}
such that $T\setminus(E(P_1\cup\ldots\cup P_{\alpha}))$ is a forest. Denote by $F$ the forest obtained from $T$ by removing the internal vertices (together with their incident edges) of each path in $\{P_1,\ldots ,P_{\alpha}\}$; 
note that $F$ may have isolated vertices and that, for every $i\in [\alpha]$, $|V(P_i)\cap V(F)|=2$.
Since $|V(F)|=|V_1|-n/K^3$, by property~\ref{def:i}, the graph $G_1[V_1]$ contains a copy $\Tilde{F}$ of $F$. 
Denote by $W$ the set of vertices in $V_1$ which do not belong to $\Tilde{F}$ and note that $|W|=n/K^3$. 
In addition, let $X\subseteq V_1\times V_1$ be a collection of pairs $(x,y)\in V(\Tilde{F})\times V(\Tilde{F})$ such that connecting each pair by a (vertex disjoint) path of length $\ell$ yields an isomorphic copy of $T$.
As $P_1,\ldots,P_{\alpha}$ are pairwise vertex-disjoint, the projections $X_1\coloneqq \{x:(x,y)\in X\text{ for some }y\in V(\Tilde{F})\}$ and $X_2\coloneqq \{y:(x,y)\in X\text{ for some }x\in V(\Tilde{F})\}$ are disjoint and are both of size $\alpha$. 
By the assertion of Lemma \ref{lm:paths} applied to $X_1,X_2,$ and $W$, there are vertex-disjoint path systems $F_1,F_2$ and $F_3$ such that:
\begin{itemize}
\item $F_1$ is a path system of depth $\frac{n/K^2+4\alpha}{12\alpha}$ joining $X_1$ with some $A'_1\subseteq A_1$,
\item $F_2$ is a path system of depth $\frac{n/K^2+4\alpha}{12\alpha}$ joining $X_2$ with some $A'_2\subseteq A_2$,
\item $F_3$ is a path system of depth $\frac{n/K^2+10\alpha}{6\alpha}$ between $A_1\setminus A'_1$ and $A_2\setminus A'_2$,
\end{itemize}
and $V(F_1\cup F_2\cup F_3)=X_1\cup X_2\cup V_2\cup W\cup A_1\cup A_2$. 

For every pair of vertices $(x,y)\in X$, consider a triplet $t_{(x,y)}$ of pairs $\{(x_1,y_1),(x_2,y_2),(x_3,y_3)\}$ in $A_1\times A_2$ such that each of the following holds:
\begin{itemize}
    \item there is a path between $x$ and $x_1$ in $F_1$,
    \item there is a path between $y_1$ and $x_2$ and between $y_2$ and $x_3$ in $F_3$,
    \item there is a path between $y_3$ and $y$ in $F_2$,
    \item the pairs of vertices in the triplets $t_{(x,y)}$ for $(x,y)\in X$ partition $A_1\cup A_2$.
\end{itemize}
Finally, by taking $D>K^4$, Lemma~\ref{cor:linking-systems} implies that whp $G_2[V_3]$ contains an $(A_1,A_2;2k)$-linking system.  
Therefore, whp $G_2[V_3]$ contains paths connecting $(x_i,y_i)$ for every $(x,y)\in X$ and $i\in [3]$, thus allowing to extend the partial embedding of $T$ in $G_1$ to the paths $P_1,\ldots,P_{\alpha}$.
This completes the embedding of $T$ and finishes the proof.
\end{proof}

\section{Trees with many leaves: rollback and absorption}\label{sec:manyleaves}

We focus on proving that $G\sim G(n,p=C\ln n/n)$ is typically $\cT_2$-universal when $C$ is suitably large;~a rather simplified version of the argument for $\cT_3$-universality (reminiscent of Case~A in the embedding strategy of Montgomery~\cite{Mon19}) is presented at the end of the section.
Before dealing with the universality statement, we establish a structural decomposition of the trees $T\in \cT_2$. To this end, fix some $T\in \cT_2$.
To begin with, we state a simple tree-partitioning lemma appearing as \cite[Proposition~3.19]{Mon19}.

\begin{lemma}[Proposition 3.19 in~\cite{Mon19}]\label{lem:tree-part}
For every tree $T$ and every set of vertices $Q\subseteq V(T)$, there are trees $T_1,T_2$ intersecting in a single vertex such that $T=T_1\cup T_2$ and $\min\{|V(T_1)\cap Q|,|V(T_2)\cap Q|\} \ge |Q|/3$.
\end{lemma}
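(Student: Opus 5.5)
The plan is the standard centroid-type walk argument. First dispose of the trivial case: if $|Q|\le 1$, take $T_1=T$ and $T_2$ the single vertex (a vertex of $Q$ if $Q\neq\varnothing$); then both trees meet $Q$ in at least $|Q|/3$ vertices, since $\lceil |Q|/3\rceil\le |Q|$ handles rounding. So assume $|Q|\ge 2$. Root $T$ at an arbitrary vertex $r$. For every vertex $v$, let $T(v)$ be the subtree consisting of $v$ and its descendants, and write $q(v)\coloneqq |V(T(v))\cap Q|$. Then $q(r)=|Q|$, $q$ is monotone along root-to-leaf paths, and each leaf has $q\le 1$.

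Next, choose a vertex $v$ with $q(v)\ge 2|Q|/3$ that is deepest among all such vertices; this is possible since $r$ qualifies. Let the children of $v$ be $c_1,\ldots,c_s$. By the choice of $v$, every child satisfies $q(c_j)<2|Q|/3$. There are now two cases.

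If some child has $q(c_j)\ge |Q|/3$, take $T_2\coloneqq T(c_j)+vc_j$, that is, the subtree of $c_j$ together with the edge to $v$. Take $T_1\coloneqq T-V(T(c_j))$. These two trees share only the vertex $v$. Their $Q$-counts are at least $q(c_j)\ge|Q|/3$ and at least $|Q|-q(c_j)>|Q|/3$, respectively.

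Otherwise every child has $q(c_j)<|Q|/3$. In this case, add the branches $T(c_j)+vc_j$ one at a time, in order, into a growing tree $T_2$ rooted at $v$. Stop at the first moment the $Q$-count of the accumulated branches reaches $|Q|/3$. Each added branch contributes less than $|Q|/3$, so at the stopping moment the count is in $[|Q|/3,2|Q|/3)$. The stopping moment is reached because the branches together contain $q(v)-\mathbf 1[v\in Q]\ge 2|Q|/3-1\ge |Q|/3$ vertices of $Q$, using $|Q|\ge 2$ and handling rounding. Let $T_1$ be the rest of $T$, namely the other branches together with $T-V(T(v))$ and the vertex $v$. The two trees share exactly $v$, and $T_1$ has $Q$-count at least $|Q|-(\text{count of }T_2)>|Q|/3$. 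The vertex $v$ itself may be counted in both trees, which only helps.

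The only delicate point is the bookkeeping at the shared vertex $v$ in the last case, together with the small-$|Q|$ edge cases. Both are handled by the observations above: a shared vertex of $Q$ counts for both trees, and the strict inequalities leave room for rounding.
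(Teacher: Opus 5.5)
Your proof is correct and complete. The deepest vertex $v$ with $q(v)\ge 2|Q|/3$ acts as a $Q$-weighted centroid. Both cases then work: a single heavy branch split off at $v$, or greedy grouping of light branches whose running total lands in $[|Q|/3,2|Q|/3)$. In each case both parts meet $Q$ in at least $|Q|/3$ vertices, and the trivial case $|Q|\le 1$ is rightly handled separately, since the main argument does not apply there.

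The paper gives no proof of this lemma; it imports it from Montgomery, and your argument is the standard one for this kind of tree-splitting statement.

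The only sloppy spot is the inequality $2|Q|/3-1\ge |Q|/3$. As a real inequality it is false for $|Q|=2$, but this is harmless. By integrality $q(v)\ge\lceil 2|Q|/3\rceil$, which suffices. Moreover, the light-branch case cannot arise when $|Q|=2$: every child would then have $q(c_j)=0$, forcing $q(v)\le 1<4/3$.
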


\begin{remark}\label{rem:split}
It is a simple (but useful) observation that Lemma~\ref{lem:tree-part} also holds for forests. Indeed, on the one hand, every forest can be turned into a tree $T$ by adding edges. On the other hand, every pair of trees $T_1$ and $T_2$ satisfying the conclusion of Lemma~\ref{lem:tree-part} applied with $T$, satisfies the analogous conclusion for every forest included in $T$.
\end{remark}

By applying \Cref{lem:tree-part} three times with $Q=V(T)$, we divide the tree $T$ into four trees $T_1,T_2,T_3,T_4$ such that, for every $i\in \{1,2,3\}$, each of the following statements holds:
\begin{itemize}
    \item $\min\{|V(T_i)|, |V(T_{i+1}\cup \ldots\cup T_4)|\}\ge |V(T_i\cup \ldots\cup T_4)|/3$, and
    \item $V(T_i)\cap V(T_{i+1})=\{r_{i,i+1}\}$ and, for $j\ge i+2$, the trees $T_i$ and $T_j$ share a vertex if and only if $r_{i,i+1}=\ldots=r_{j-1,j}$.
\end{itemize}
Throughout the section, the trees $T_1,T_3$ and $T_4$ are considered as rooted in the vertices $r_{1,2},r_{2,3},$ and $r_{3,4}$, respectively, while $T_2$ is considered as a tree with two roots: $r_{1,2}$ and $r_{2,3}$.

Next, we state a structural lemma for bounded-degree trees appearing as \cite[Lemma~3.14]{Mon19}. Recall Definition~\ref{def:separated}.

\begin{lemma}[Lemma 3.14 in~\cite{Mon19}]\label{lem:separate}
Fix positive integers $n,h,d$ satisfying $n\ge 60h$ and $h\ge 4d$. 
Suppose that an (unrooted) tree $T$ with $n$ vertices has at most $n/(5d)$ leaves. 
Then, $T$ contains either a $2d$-separated set of at least $n/(40h)$ leaves or a collection of $n/(40h)$ vertex-disjoint bare paths with length~$h$.
\end{lemma}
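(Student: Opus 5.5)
We will argue by a counting/decomposition argument on the tree $T$. Let $\Lambda$ be the set of leaves, $B$ the set of vertices of degree at least $3$, and $I$ the set of vertices of degree exactly $2$. Since $T$ is a tree, $|B|\le |\Lambda|-2 < n/(5d)$, so $|\Lambda\cup B|< 2n/(5d)$ and $|I|\ge n-2n/(5d)\ge 9n/10$ because $d\ge 1$. Deleting $\Lambda\cup B$ from $T$ leaves a linear forest whose components are the maximal bare paths of $T$ (internal vertices all in $I$). The number of such components is at most the number of edges of $T$ incident to $\Lambda\cup B$. This is at most $\sum_{v\in\Lambda\cup B}\deg(v)\le 2(|\Lambda|+|B|)+\sum_{v\in B}(\deg v-2)\le 2(|\Lambda|+|B|)+|\Lambda|$, hence $O(n/d)$, say at most $n/(d)$ with a routine constant check.

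First case: the maximal bare segments give enough long paths. Cut each component (a path of $I$-vertices with $s$ vertices) into $\lfloor s/(h+1)\rfloor$ vertex-disjoint subpaths with $h+1$ vertices, i.e.\ length $h$; these are bare paths in $T$. The number obtained is at least $\sum_j (s_j/(h+1)-1)\ge |I|/(h+1)-\#\text{components}$. If this is at least $n/(40h)$ we are done. Since $|I|/(h+1)\ge 9n/(20h)$, this fails only when the number of components exceeds roughly $n/(3h)$, i.e.\ essentially when $T$ has many leaves/branch vertices relative to $h$. Because the bound on the number of components is $O(|\Lambda|)$, failure forces $|\Lambda|\ge c\,n/h$ for an absolute constant $c$ (something like $1/10$).

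Second case: there are at least $c n/h$ leaves, and we extract a $2d$-separated subset greedily. Pick a leaf, discard all leaves within distance $<2d$ of it, and repeat. The key point, which I expect to be the main obstacle, is that a ball of radius $2d$ can contain many leaves in a general tree, so naive greedy loses a factor of $\Delta^{2d}$, which we cannot afford (no degree bound is assumed). Instead I would charge more cleverly. Root $T$ and, for each leaf $\lambda$, walk up from $\lambda$ to its nearest branch vertex. If this walk has length at least $d$, then the pendant paths of two such leaves are disjoint of length $\ge d$, and any two such leaves are at distance $\ge 2d$. So the leaves with pendant path length $\ge d$ already form a $2d$-separated set. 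It remains to show there are at least $n/(40h)$ of them. Otherwise, most leaves have short pendant paths, and then the vertices on short pendant paths number $<d|\Lambda|\le n/5$. Contracting them yields a tree whose own leaves are branch vertices of $T$. Iterating this argument (the number of leaves drops while long bare segments persist) should push us back to the first case, with the hypothesis $h\ge 4d$ ensuring that segments of length $\ge h$ survive the bookkeeping.

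If this iteration proves messy, the fallback is to cite this as Lemma 3.14 in~\cite{Mon19}, which is precisely what is being stated.
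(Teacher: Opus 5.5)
The dichotomy your second case aims for is false: either $n/(40h)$ leaves whose pendant path has length at least $d$, or else (via the iteration) many bare paths of length $h$. Long-pendant leaves are indeed $2d$-separated, but there can be almost none of them even when $T$ has no bare path of length $h$. Take $d\ge 2$, $h=100d$ and $n$ large. Start with a spine path on about $4n/5$ vertices, and at each of $k=\lfloor n/(5d)\rfloor-2$ spine vertices, spaced evenly and hence about $4d$ apart, attach a pendant path with $d-1$ new vertices. This tree satisfies the hypotheses. Every path of length $h$ has a degree-$3$ spine vertex in its interior, so there is no bare path of length $h$. At most the two spine ends have pendant paths of length at least $d$. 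The lemma holds only because the $k$ pendant leaves are pairwise at distance about $2(d-1)+4d\ge 2d$. That separation comes from the positions of their attachment points, which your criterion never sees. Your iteration does not repair this. Deleting the short pendant paths leaves the spine, which has two leaves and long ``bare'' segments, but these are not bare in $T$, since every attachment vertex has degree $3$ in $T$. In general the deleted pieces create up to $|\Lambda|$ attachment vertices, and $|\Lambda|$ can be as large as $n/(5d)\gg n/h$. So the count $|I|/(h+1)-\#\{\text{segments}\}$ from your first case cannot be recovered from the fact that the number of leaves drops. (Minor: $d\ge 1$ only gives $|I|\ge 3n/5$, not $9n/10$; this is harmless.)

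The missing idea is to test separation among \emph{all} leaves and, when that fails, use that the leaves are clustered. Let $S$ be a maximal $2d$-separated set of leaves with $|S|<n/(40h)$; then every leaf lies in $U=\bigcup_{s\in S}B_T(s,2d-1)$. Contract each component of $T[U]$; the result is a tree whose leaves are all contracted vertices. The identity $\sum_v(\deg v-2)=-2$ then gives fewer than $|S|$ vertices of degree at least $3$ outside $U$, and fewer than $2|S|$ edges leaving $U$. Next, assign each vertex of $U$ to a nearest point of $S$, breaking ties consistently. This splits $U$ into subtrees of radius less than $2d$, whose leaves are leaves of $T$ or endpoints of the fewer than $3|S|$ edges leaving these classes. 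Hence $|U|\le |S|+(2d-1)(|\Lambda|+6|S|)<n/2$, using $|\Lambda|\le n/(5d)$ and $h\ge 4d$; this is exactly where the leaf bound enters. So more than $n/2$ degree-$2$ vertices lie outside $U$, forming at most $3|S|$ maximal bare segments. Cutting these gives at least $n/(2(h+1))-3|S|\ge n/(40h)$ vertex-disjoint bare paths of length $h$. Equivalently, you could run $d$ rounds of leaf-cutting and then seek a $2d$-separated set among the attachment vertices; either way, some clustering step of this kind is unavoidable. The paper itself gives no proof of this lemma and simply imports it from Montgomery. Your fallback citation is therefore exactly what the paper does, but the self-contained sketch before it does not establish the statement.
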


The above lemma holds for unrooted trees, but a version for trees rooted in one or more vertices follows as a simple corollary.

\begin{corollary}\label{cor:separate}
Consider $n,h,d,\Delta,s$ satisfying $n\ge 200h\Delta^{2d+1}s$ and $h\ge 4d$. 
Fix a tree $T$ with $n$ vertices, maximum degree at most $\Delta$, and at most $n/(5d)$ leaves and a vertex set $S\subseteq V(T)$ of size $s$. 
Then, in the subforest of $T$ spanned by the vertices at distance at least $2d+1$ from $S$, there is either a $2d$-separated set of at least $n/(50h)$ leaves or a collection of $n/(50h)$ vertex-disjoint bare paths with length $h$.
\end{corollary}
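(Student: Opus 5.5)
The plan is to reduce Corollary~\ref{cor:separate} to Lemma~\ref{lem:separate}. Let $B$ be the set of vertices at distance at most $2d$ from $S$. Since $T$ has maximum degree at most $\Delta$, the ball of radius $2d$ around a single vertex has at most $1+\Delta+\dots+\Delta^{2d}\le 2\Delta^{2d}$ vertices, so $|B|\le 2s\Delta^{2d}$. By the assumption $n\ge 200h\Delta^{2d+1}s$, this is at most $n/(100h\Delta)$, a negligible fraction of $n$.

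Removing $B$ leaves the forest $T'=T-B$ of vertices at distance at least $2d+1$ from $S$. Since $T$ is a tree, every component of $T'$ is attached to $B$ by exactly one edge, so the number of components is at most the number of edges leaving $B$, which is at most $\Delta|B|$.

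Next I connect the components of $T'$ into a single tree $T''$. I would add edges between leaves of different components, or between a leaf of one component and an arbitrary vertex of another, choosing them so that every added edge is incident to at least one leaf of $T'$. I would also avoid raising any degree above $\Delta$; since only the leaf structure matters, it is enough that degrees stay bounded.

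This operation changes only $O(\Delta|B|)$ vertices. Consequently:
\begin{itemize}
\item $T''$ has $n''\ge n-|B|\ge n/2$ vertices.
\item The number of leaves of $T''$ is at most the number of leaves of $T$ plus $O(\Delta|B|)$ (cut points that became leaves), so it is at most $n/(5d)+O(\Delta|B|)$.
\end{itemize}

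Lemma~\ref{lem:separate} requires at most $n''/(5d)$ leaves, and the slack here is delicate. The cleanest route is to apply Lemma~\ref{lem:separate} with $n''$ in place of $n$ only after checking this hypothesis. If it fails, I instead use a direct counting version of the lemma's proof: a tree with few leaves has few vertices of degree at least $3$, and removing these leaves long bare paths. This gives at least $n''/(40h)$ outcomes, of which $O(\Delta|B|)\le n/(100h)$ may be destroyed, and $n''/(40h)-n/(100h)\ge n/(50h)$ survives as required. I expect this numeric bookkeeping to be the main obstacle.

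Finally, I transfer the structures from $T''$ back to $T'$. There are two cases.
\begin{itemize}
\item \textbf{Separated leaves.} Discard every leaf of $T''$ that is incident to an added edge or lies within distance $2d$ of one. There are at most $O(\Delta|B|\Delta^{2d})$ such leaves, and this is where the factor $\Delta^{2d+1}$ in the hypothesis is used. The remaining leaves of $T''$ are leaves of $T'$. Their distances in $T'$ are at least their distances in $T''$, since $T'\subseteq T''$ and every path in $T'$ is a path in $T''$, so they remain $2d$-separated.
\item \textbf{Bare paths.} Discard the paths that contain an endpoint of an added edge. Each added edge kills at most two paths. A surviving bare path of $T''$ has internal vertices whose degree is the same in $T'$ as in $T''$, and I check this degree against $T$ as well: the only degree changes from $T$ occur at vertices adjacent to $B$, and these I also discard, at a cost of at most $\Delta|B|$ paths. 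So the surviving paths are bare in $T'$ in the sense required.
\end{itemize}
In either case at most $O(\Delta^{2d+1}s)\le n/(200h)$ structures are lost, leaving at least $n/(50h)$, which proves the corollary.
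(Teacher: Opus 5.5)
\textbf{There is a genuine gap, and it comes from removing $B$ and rebuilding a tree $T''$ before invoking Lemma~\ref{lem:separate}.}

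First, $T''$ need not satisfy the lemma's hypothesis. Removing $B$ lowers the number of vertices but need not lower the number of leaves, since cut points become leaves. For example, let $s=1$ with $S$ in the middle of a long bare stretch of a tree $T$ that has exactly $n/(5d)$ leaves. Rejoining the two cut ends gives a tree $T''$ with $n/(5d)>n''/(5d)$ leaves. Your fallback is not an argument. Bounding the number of branch vertices by the number of leaves gives many bare paths of length $h$ only when there are $O(n/h)$ leaves. With up to $n/(5d)$ leaves you must also be able to produce the other alternative, a $2d$-separated set of leaves, so you would be reproving the lemma under a perturbed hypothesis.

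Second, the accounting does not fit the stated constants. The hypothesis $n\ge 200h\Delta^{2d+1}s$ gives a loss budget of exactly $n/(40h)-n/(50h)=n/(200h)\ge s\Delta^{2d+1}$. That is constant $1$, with nothing to spare for starting from $n''<n$ vertices. Indeed, your inequality $n''/(40h)-n/(100h)\ge n/(50h)$ is false, since $1/40-1/100<1/50$. Your losses exceed this budget:
\begin{itemize}
\item In the leaf case you discard everything within distance $2d$ of up to $\Delta|B|\le 2s\Delta^{2d+1}$ added edges. That is up to order $s\Delta^{4d+1}$ leaves, which the factor $\Delta^{2d+1}$ in the hypothesis does not cover.
\item In the path case you lose up to $3\Delta|B|\le 6s\Delta^{2d+1}$ paths.
\end{itemize}

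\textbf{Fix.} None of the detour is needed: $T$ itself satisfies the hypotheses of Lemma~\ref{lem:separate}. Apply it to $T$ first and discard afterwards, as the paper does.
\begin{itemize}
\item Every leaf within distance $2d$ of $S$ lies in $B$, and at most $|B|$ of the vertex-disjoint paths meet $B$.
\item Since $|B|\le s\sum_{i=0}^{2d}\Delta^i\le s\Delta^{2d+1}\le n/(200h)$, at least $n/(40h)-n/(200h)=n/(50h)$ structures survive.
\item Surviving leaves have degree at most $1$ in the subforest. They remain $2d$-separated because distances can only grow in a subgraph.
\item A surviving path lies entirely in the subforest, and so do both neighbours of each internal vertex (they are on the path). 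Hence the path remains bare there.
\end{itemize}
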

\begin{proof}
We consider the two outcomes of Lemma~\ref{lem:separate} separately. Suppose first that the unrooted version of $T$ contains a $2d$-separated set of at least $n/(40h)$ leaves. Out of these, there are at most 
\[s\cdot \sum_{i=0}^{2d} \Delta^i\le s \Delta^{2d+1}\le \frac{n}{200h}\]
leaves within distance $2d$ from $S$, so $T$ contains a $2d$-separated set of $n/(40h)-n/(200h)=n/(50h)$ leaves at distance at least $2d+1$ from $S$, which suffices to conclude.

Second, suppose that $T$ contains at least $n/(40h)$ vertex-disjoint bare paths with length $h$.
Out of these, at least $n/(40h) - n/(200h) = n/(50h)$ are at distance at least $2d+1$ from $S$, which suffices to conclude.
\end{proof}

Before proceeding, we briefly comment on the notation. Throughout this section, we continue to use the notation $\alpha$ from Section \ref{sec:long bare paths} while assuming that $\psi\equiv 1$, so that $\alpha =\frac{n\ln \ln n}{K^{3}\ln n}$.

We continue with the above-mentioned tree decomposition and define a leaf-cutting algorithm aiming to construct a layering of $T_1\cup T_2$, which will be central for our embedding strategy later on.
In the rooted tree $T_1$, the \emph{descending tree} of a vertex $v\in V(T_1)\setminus \{r_{1,2}\}$ is the tree $(T_1)_{v}$ spanned by the vertices whose path to $r_{1,2}$ contains $v$. 
Furthermore, for an independent set $S\subseteq V(T_1)\setminus \{r_{1,2}\}$, we write $(T_1)_{S}$  
 for the union of $(T_1)_{v}$ with $v\in S$.
The tree $(T_1)_{v}$ is seen as rooted at $v$ and, in turn, $(T_1)_{S}$ is seen as a rooted forest with set of roots $S$.   For the rooted tree $T_2$, the \emph{descending tree} of a vertex $v\in V(T_2)\setminus \{r_{1,2},r_{2,3}\}$ is the tree $(T_2)_{v}$ spanned by the vertices whose paths to each of $r_{1,2}$ and $r_{2,3}$ contain $v$; this definition is extended similarly to sets $S\subseteq V(T_2)\setminus \{r_{1,2},r_{2,3}\}$.
We emphasise that the set of leaves of a \emph{rooted} tree is obtained from the set of leaves of its \emph{unrooted} counterpart by discarding the roots.

Fix $\imax = \Delta^3$ and $\delta_1=\delta_1(\Delta)>0$ suitably small; the properties $\delta_1$ needs to satisfy will become clear in the sequel. The next algorithm is used to construct the promised layering of (parts of) $T_1$ and $T_2$.

\begin{algorithm}\label{algo:leaf-cut}
Fix a tree $T$ with one or more roots. Initiate $\mathbf{i}\leftarrow 1$ and $\hat T\leftarrow T$.
Define $D_{\mathbf{i}}$ as the set of leaves of $\hat T$ (according to the definition, $D_{\mathbf{i}}$ never contains roots of $\hat T$).
\begin{itemize}
    \item If $D_{\mathbf{i}}\cap T$ contains no 24-separated set of vertices of size $(10+\delta_1)\alpha$, reject this instance and return $\mathbf{i}, D_{0}:=\varnothing, D_{1},\ldots,D_{\mathbf{i}},\hat T$.
    \item If $D_{\mathbf{i}}\cap T$ contains a 24-separated set of vertices of size $(10+\delta_1)\alpha$ and: 
    \begin{itemize}
        \item if $\mathbf{i}\le \imax$, update $\hat T\leftarrow \hat T\setminus D_{\mathbf{i}}$, $\mathbf{i}\leftarrow \mathbf{i}+1$, define $D_{\mathbf{i}}$ as the set of leaves of $\hat T$ and reiterate;
        \item if $\mathbf{i}=\imax+1$, terminate the algorithm and return $D_{0}:=\varnothing, D_{1},\ldots,D_{\imax+1},\hat T$.
    \end{itemize}
\end{itemize}
\end{algorithm}

For each $j\in \{1,2\}$, apply Algorithm \ref{algo:leaf-cut} to the tree $T_j$ with output
\[\mathbf{i_j},D_{0}^{j}:=\varnothing, D_{1}^{j},\ldots,D_{\mathbf{i_j}}^{j},\hat T_j,\qquad \text{and further define}\qquad D^j_{\mathbf{i_j}+1}=\ldots=D^j_{\imax+1}=\varnothing.\]
For notational convenience, we also set $\hat{T}\coloneqq \hat{T}_1\cup \hat{T}_2$ and $D_i \coloneqq D_i^{1}\cup D_i^{2}$ for every $i\in [\imax+1]$.  We remark that, upon terminations of \Cref{algo:leaf-cut} applied as mentioned above, for every $j\in \{1,2\}$, the set of leaves of $\hat T\cap T_j$ is $D_{\mathbf{i_j}}\cap V(T_j)$. In addition, the descending tree $(T_j)_{v}$ of each vertex $v\in D_{\mathbf{i_j}}$ has height $\mathbf{i_j}-1$: indeed, every step of \Cref{algo:leaf-cut} reduces the height of the descending tree of $v$ by exactly 1, ending up with 0 at step $\mathbf{i_j}-1$.

From this point, our proof splits into two parts depending on the values of $\mathbf{i_1}$ and $\mathbf{i_2}$. The first case is when $\mathbf{i_1},\mathbf{i_2} = \imax+1$, and the subfamily of trees corresponding to this case is denoted by $\cT_2'\subseteq \cT_2$. The complementary case concerning the subfamily $\cT_2'' \coloneqq \cT_2\setminus \cT_2'$ is treated differently. We begin with the former.

\subsection{\texorpdfstring{Case $1$: rollback and absorption}{Case 1: rollback and absorption}}\label{sec:Gamma+1}

In this subsection, we assume that $T\in \cT_2'$ which implies that $\mathbf{i_1}=\mathbf{i_2}=\imax+1$.
Define $S$ to be a 22-separated subset\footnote{We note that the reduction of the constant 24 from Algorithm~\ref{algo:leaf-cut} to 22 here is due to the possibility that the algorithm terminates with $i=\imax+1$ but the last layer containing a 24-separated set is still $D_{\imax}\cap T$: this happens when the algorithm terminates with the first case in the last iteration.} of $D_{\imax+1}$ with $S\cap V(T_1)=\{v_1,\ldots,v_{(10+\delta_1)\alpha}\}$ and $S\cap V(T_2)=\{v_{(10+\delta_1)\alpha+1},\ldots,v_{2(10+\delta_1)\alpha}\}$.  For every $j\in [(10+\delta_1)\alpha]$, define $Z_j\subseteq (T_1)_{v_j}$ to be a path of length $\imax$ starting from $v_j$. 
 Observe that $D_i\cap Z_j$ consists of a single vertex for each $(i,j)\in [\imax+1]\times [(10+\delta_1)\alpha]$. We denote this vertex by $v_{i,j}$; in particular, $v_{\imax+1,j}=v_j$.

The next algorithm uses $D_1,\ldots,D_{\imax+1}$ to construct an alternative layering of $(T_1)_{S}\cup (T_2)_{S}$ with several desirable properties (listed in \Cref{lem:layers}) used in the final stage of our embedding in \Cref{subsec:embedL}.
We give a description of the first step of the algorithm and comment briefly on the next steps.  
Initially, set $S_0\coloneq D_{\imax+1}\cap V(T_1)\subseteq S$ and $S_{i}:= \varnothing$ for every $i\in [\imax+1]$.  
 We start with the forest $(T_1)_{S}\cup (T_2)_{S}$ layered by $D_1,\ldots,D_{\imax+1}$ and a threshold value $\lambda_1$ which is slightly smaller than $|D_1\cap ((T_1)_{S}\cup (T_2)_{S})|$.
One step of the algorithmic procedure is going to shift vertices of the tree with root $v_{\imax+1,j}$ for some $j\in S_0$ to the next layer with a larger index: 
 the set of $j$ for which this happens is going to be the final value of the set $S_1$.
Every such shift is accompanied by pruning of $(T_1)_{v_{\imax+1,j}}$. 
As our goal is to preserve the size of layer $\imax+1$, at one step, the tree $(T_1)_{v_{\imax+1,j}}$ is replaced by the tree $(T_1)_{v_{\imax,j}}$, with its root $v_{\imax,j}$ embedded on layer $\imax+1$ and the remaining vertices spread across layers $2,\ldots,\imax$.
Note that every step of the algorithm decreases the size of the first layer (and also modifies the size of every layer) by a number in $[\Delta^{\imax}]$ and thus, after a small number of steps (bounded from above by $\delta_1\alpha$), the threshold $\lambda_1$ for the size of the first layer is reached.
When the size of the first layer enters the interval $[\lambda_1,\lambda_1+\Delta^{\imax}]$, we define the threshold $\lambda_2$ for the next layer and proceed further.
Note that, during the processing of layer $i$, we choose the roots of the trees to be pruned and shifted up from a set $S_i\subseteq S_{i-1}\subseteq \ldots \subseteq S_0$: this ensures that the roots $S_0\setminus S_{i-1}$   
of the trees intersecting layers $1,\ldots,i-1$ will not be used again, and therefore the sizes of these layers remain fixed (and close to their respective thresholds).

\begin{algorithm}\label{algo:layers}
Fix $T\in \cT_2'$, initiate $\mathbf{i}\leftarrow 1$ and, 
for every integer $i\in [\imax+1]$, initiate $S_i\leftarrow \varnothing$ and $L_i\leftarrow V((T_1)_{S}\cup (T_2)_{S})\cap D_i$.
Further, define $\lambda_1 = \max(20\alpha + (\delta_1\alpha) \mathbb N_0)\cap \{1,\ldots,|L_1|-\delta_1\alpha\}$.
\begin{itemize}
\item If $\mathbf{i} \le \imax+1$ and $|L_{\mathbf{i}}|\le \lambda_{\mathbf{i}}+\Delta^{\imax}$, update $\mathbf{i}\leftarrow \mathbf{i}+1$ and set 
$$
\lambda_{\mathbf{i}} = \max(20\alpha + (\delta_1^{\mathbf{i}}\alpha) \mathbb N_0)\cap \{1,\ldots,|L_{\mathbf{i}}|-\delta_1^{\mathbf{i}}\alpha\}
$$
(where $\delta_1^{\mathbf{i}}$ denotes $\delta_1$ to the power of $\mathbf{i}$).

\item If $\mathbf{i} \le \imax+1$ and $|L_{\mathbf{i}}| > \lambda_{\mathbf{i}}+\Delta^{\imax}$, select an arbitrary $v_j\in S_{\mathbf{i}-1}\setminus S_{\mathbf{i}}$ 
 and, for every $i\in [\mathbf{i},\imax+1]$, update 
\[L_i\leftarrow (L_i\setminus V((T_1)_{v_j}))\cup (V((T_1)_{v_{\imax+1-\mathbf{i},j}})\cap D_{i-\mathbf{i}}),\qquad S_{\mathbf{i}}\leftarrow S_{\mathbf{i}}\cup \{v_j\},\]
and reiterate. Recall that $D_0=\varnothing$ and thus layer $L_{\mathbf{i}}$ loses some vertices but receives no new ones.

\item Finally, if $\mathbf{i}=\imax+2$, 
terminate the algorithm and return the layers $L_1,\ldots,L_{\imax+1}$.
\end{itemize}
\end{algorithm}

We note that the gradual refining of the step size of the set $20\alpha+(\delta_1^i \alpha)\mathbb N_0$ in the definition of $\lambda_i$ is done to guarantee the termination of the algorithm. Indeed, when $\delta_1=\delta_1(\Delta)$ is suitably small, the number of trees shifted when $\mathbf{i}=i$ is substantially larger than $\delta_1^{i+1}\alpha$. Thus, in the next round $\mathbf{i}=i+1$ of the algorithm,  $S_i\setminus S_{i+1}$ contains many elements at every shifting step. This is explained in more detail in the proof of the next lemma.  For every $i\in [2,\imax+1]$, we define $L_i^*$ to be the set of all vertices on layer $L_i$ whose descending tree has all its leaves in $L_2\cup \ldots\cup L_{i-1}$ (i.e., $L_i^*$ consists of all vertices on layer $L_i$ that were shifted at least once), and we set $L_1^*=\varnothing$ for convenience.

\begin{lemma}\label{lem:layers}
Fix $\delta_1=\delta_1(\Delta)$ suitably small and set $\delta_2:= \Delta^{\imax+2} \delta_1$.
Then, \Cref{algo:layers} terminates and each of the following properties holds:
\begin{enumerate}[(i)]
    \item\label{item:leaf-shifting1} for every $i\in [\imax+1]$, there is an integer $j\in [0,21(\Delta/\delta_1)^{\imax+1}]$ with $||L_i|-20\alpha|\in [j\delta_1^i\alpha, j\delta_1^i\alpha+\Delta^{\imax}]$,
    \item \label{item:leaf-shifting2} $|L_2^*\cup L_3^*\cup\ldots\cup L_{\imax+1}^*|\le \delta_2 \alpha$,
    \item \label{item:leaf-shifting3} none of $L_1,\ldots,L_{\imax+1}$ contains any of $r_{1,2}$ and $r_{2,3}$,
    \item \label{item:leaf-shifting4} for every $i\in [\imax+1]$, all vertices in $L_i$ are leaves of the subforest of $T_1\cup T_2$ induced by $L_i\cup \ldots\cup L_{\imax+1}$,
    \item \label{item:leaf-shifting5} the set $L_{\imax+1}$ is $22$-separated in $(T_1\cup T_2)\setminus (L_1\cup\ldots\cup L_{\imax})$.
\end{enumerate}
\end{lemma}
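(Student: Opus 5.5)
We analyse Algorithm~\ref{algo:layers} round by round, tracking three quantities during round $\mathbf{i}=i$: the current size $|L_i|$, the number of roots still available in $S_{i-1}\setminus S_i$, and the total number of vertices moved by shifting so far.

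\textbf{Effect of a single shift.} When $v_j$ is shifted in round $i$, the tree $(T_1)_{v_j}$ is removed from layers $i,\ldots,\imax+1$ and replaced by $(T_1)_{v_{\imax+1-i,j}}$, with its layers shifted up by $i$. This has three consequences.
\begin{itemize}
\item Layers $1,\ldots,i-1$ are untouched, so their sizes stay frozen.
\item Layer $i$ loses exactly $|V((T_1)_{v_j})\cap D_i|$ vertices and gains none. This number is at least $1$, since $Z_j$ meets $D_i$, and at most $\Delta^{\imax}$.
\item Layer $\imax+1$ keeps exactly one vertex from the shifted tree, namely the new root $v_{\imax+1-i,j}$.
\end{itemize}

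\textbf{Termination and property (i).} Initially $|L_1|\ge |S|\cdot 1\ge 20\alpha$, because every root in $S$ contributes at least one leaf to $D_1$. Hence $\lambda_1$ is well defined and $|L_1|-\lambda_1\le 2\delta_1\alpha$. In round $i$, each step decreases $|L_i|$ by between $1$ and $\Delta^{\imax}$. Therefore round $i$ stops with $|L_i|\in[\lambda_i,\lambda_i+\Delta^{\imax}]$ after at most $2\delta_1^i\alpha$ steps, provided enough fresh roots remain.

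We argue inductively that the number of roots used in round $i$ is at least $(|L_i|-\lambda_i-\Delta^{\imax})/\Delta^{\imax}\gtrsim \delta_1^i\alpha/\Delta^{\imax}$. For $\delta_1$ small this is much larger than $2\delta_1^{i+1}\alpha$, the maximal number of steps needed in round $i+1$. Since $S_{i+1}\subseteq S_i$ (reading the algorithm as choosing $v_j$ among roots shifted in the previous round), round $i+1$ never runs out of roots. The base case uses $|S_0|=(10+\delta_1)\alpha\ge 2\delta_1\alpha$.

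We also need $|L_i|\ge 20\alpha+\delta_1^i\alpha$ when round $i$ starts, so that $\lambda_i$ is defined and not below $20\alpha$.
\begin{itemize}
\item The original layer $D_i$ restricted to $(T_1)_S\cup(T_2)_S$ contains at least $|S|=(20+2\delta_1)\alpha$ vertices, one on each path $Z_j$ or its $T_2$ analogue.
\item Earlier shifts remove at most $\sum_{i'<i}2\delta_1^{i'}\alpha\cdot\Delta^{\imax}$ vertices from layer $i$.
\item This loss is below $\delta_1\alpha$ when $\delta_1$ is small relative to $\Delta^{-\imax}$.
\end{itemize}

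Property (i) then follows, because the final $|L_i|$ lies in $[\lambda_i,\lambda_i+\Delta^{\imax}]$ with $\lambda_i\in 20\alpha+\delta_1^i\alpha\mathbb N_0$. The bound on $j$ comes from $|L_i|\le n$; more sharply, $|L_i|\le\Delta^{\imax+1}|S|$, which gives $j\le 21(\Delta/\delta_1)^{\imax+1}$ after adjusting constants.

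\textbf{Property (ii).} Every vertex of $L^*_2\cup\cdots\cup L^*_{\imax+1}$ lies in some shifted tree. The total number of shifts is at most $\sum_i 2\delta_1^i\alpha\le 3\delta_1\alpha$, and each shifted tree has at most $\Delta^{\imax+1}$ vertices. Hence the count is at most $3\Delta^{\imax+1}\delta_1\alpha\le\delta_2\alpha$.

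\textbf{Properties (iii)--(v).} These are structural.
\begin{itemize}
\item (iii) holds because Algorithm~\ref{algo:leaf-cut} never places roots in any $D_i$, and shifting only moves non-root vertices of descending trees.
\item (iv) holds for the original layering because each $D_i$ is the leaf set of $\hat T$ at step $i$. Shifting a whole descending tree up by a uniform offset preserves the relation that every vertex has its parent in a higher layer. Roots moved into layer $\imax+1$ have their parent outside all layers, since the subtree $(T_1)_{v_j}$ is replaced by the whole $(T_1)_{v_{\imax+1-i,j}}$.
\item (v) holds because each new root $v_{\imax+1-i,j}$ lies on $Z_j$ inside $(T_1)_{v_j}$, with $v_j\in S$ and $S$ being $22$-separated. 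Deleting the lower layers only removes vertices on the descending side, so distances between the surviving top-layer vertices within $(T_1\cup T_2)\setminus(L_1\cup\cdots\cup L_{\imax})$ remain large, as paths between them pass through the $v_j$'s.
\end{itemize}

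\textbf{Main obstacle.} The delicate step is the inductive root-supply count, together with the requirement that shifts in later rounds do not push earlier-round quantities below threshold. This requires choosing $\delta_1\ll\Delta^{-2\imax}$ carefully. Checking (v) for shifted roots also needs care.
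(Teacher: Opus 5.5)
Your overall plan is the paper's. Termination comes from comparing the number of roots consumed in round $i$ (between roughly $\delta_1^i\alpha/\Delta^{\Gamma}$ and $2\delta_1^i\alpha$, since each step removes between $1$ and $\Delta^{\Gamma}$ vertices from $L_i$) with what round $i+1$ needs. The range of $j$ in (i) comes from $|L_i|\le\Delta^{\Gamma+1}|S|$, (ii) from counting vertices of shifted trees, and (iii)--(v) from the construction, where you give more detail than the paper does.

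However, the one step you add beyond the paper is argued with a false inequality. That step checks that $|L_i|\ge 20\alpha+\delta_1^i\alpha$ when round $i$ starts, so that $\lambda_i$ is defined; the paper does not check this. Your loss bound $\sum_{i'<i}2\delta_1^{i'}\alpha\Delta^{\Gamma}$ is at least $2\Delta^{\Gamma}\delta_1\alpha$ for every $i\ge 2$. So it is never below $\delta_1\alpha$, however small $\delta_1$ is. Moreover, the only slack you have is $|S|-20\alpha=2\delta_1\alpha$, so no count of losses alone can close this step.

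The repair is to use the paths $Z_j$ instead of counting losses.
\begin{itemize}
\item If $v_j$ has been shifted $s$ times, its tree is currently $(T_1)_{v_{\Gamma+1-s,j}}$, placed on layers $s+1,\ldots,\Gamma+1$. It contains $v_{i-s,j}$ on layer $i$ for every $i\ge s+1$.
\item At the start of round $i$, every root has been shifted at most $i-1$ times.
\item The trees $(T_2)_v$ with $v\in S\cap V(T_2)$ are never shifted and meet every layer.
\end{itemize}
Hence each of the $|S|=(20+2\delta_1)\alpha$ trees still has a vertex on layer $i$. This gives $|L_i|\ge(20+2\delta_1)\alpha$, so $\lambda_i\ge 20\alpha$ is well defined.

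The same observation is what actually justifies ``at least one vertex is removed per step''. In round $i\ge 2$ the chosen root has already been shifted $i-1$ times. So what leaves $L_i$ is the $D_1$-part of $(T_1)_{v_{\Gamma+2-i,j}}$, not $V((T_1)_{v_j})\cap D_i$ as you wrote; the bounds $1$ and $\Delta^{\Gamma}$ are unaffected. With this replacement your argument is complete.
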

\begin{proof}
To show that \Cref{algo:layers} terminates, it suffices to justify that, for every $i\in [\imax+1]$, $|L_i|$ enters the interval $[\lambda_i,\lambda_i+\Delta^{\imax}]$ before $S_i$ coincides with $S_{i-1}$.
To this end, note that, at every step when $\mathbf{i}=i$, $L_i$ loses between $1$ and $\Delta^{\imax}$ vertices and, therefore, the final size of $S_i$ is between $\delta_1^i\alpha/\Delta^\imax$ and $2\delta_1^i\alpha$. 
 Hence, by choosing $\delta_1$ so that
\[\delta_1^{i+1} \alpha < \delta_1^i\alpha/\Delta^{\imax+1} \qquad\text{for all $i\in [\imax]$}\qquad\Longleftrightarrow\qquad \delta_1 < \Delta^{-\imax-1},\]
 we have that $S_{i+1}$ is always a strict subset of $S_i$ for each $i\in [\imax+1]$, as desired.

To justify (i), it suffices to ensure that $j<21(\Delta/\delta_1)^{\imax+1}$, which holds since all layers $L_i$ with $i\in [\imax+1]$ at all stages of the algorithm have size at most 
\[|V((T_1)_{S}\cup (T_2)_{S})|
\le |S|\cdot \Delta^{\imax+1} < 21\Delta^{\imax+1} \alpha \le 21(\Delta/\delta_1)^{\imax+1} \delta_1^i\alpha.\]
 Further, (ii) follows by noticing that the total number of vertices ever shifted is bounded from above by
\[|S_1|\cdot (1+\Delta+\ldots+\Delta^{\imax+1})\le \delta_1\alpha\cdot \Delta^{\imax+2} = \delta_2\alpha.\]
Finally, points (iii)--(v) are satisfied by construction, which concludes the proof.
\end{proof}

Upon termination of \Cref{algo:layers}, we set $L = L_1\cup\ldots\cup L_{\imax}$, $L^* = L_2^*\cup\ldots\cup L_{\imax}^*\subset V(T_1)$ and, for every $i\in [\imax]$, denote by $P_i\subseteq L_{i+1}$ the set of the parents of vertices from $L_i$ in $T_1\cup T_2$. 
 Given $(x_1,\ldots,x_{\imax+1})$ in 
\begin{equation}
\label{eq:Lambda}
    \Lambda_\imax := \prod_{i=1}^{\imax+1} \bigg(\bigcup_{j=0}^{21(\Delta/\delta_1)^{\imax+1}} [(20+j\delta_1^i)\alpha,(20+j\delta_1^i)\alpha+\Delta^\imax]\bigg),
\end{equation}
we are going to show that whp all trees $T\in \cT_2'$ with $(|L_1|,\ldots,|L_{\imax+1}|)=(x_1,\ldots,x_{\imax+1})$ can be simultaneously embedded in $G$; the universality of $\cT_2'$ then follows from a union bound over the constant number of choices of the vector $(x_1,\ldots,x_{\imax+1})$. 

\subsubsection{Preparing the absorbers}\label{sec:absorb}
In our consequent considerations, we fix $p_1$ such that $(1-p_1)^3=1-p$ and independently sample:
\begin{itemize}
    \item a random directed graph $D$ on $[n]=V(G)$ where all directed edges appear independently and with probability $p_1$, and 
    \item a random (non-directed) graph $G_1\sim G(n,p_1)$ on $[n]$.
\end{itemize}
By denoting $D'$ the non-directed graph obtained from $D$ by ignoring the orientations of its edges and identifying double edges, we obtain that $G_1\cup D'\sim G(n,p)$. 
For a set $U\subseteq [n]$, we denote by $N^-_D(U)$ the set of vertices $v\in [n]\setminus U$ such that there exists at least one directed edge in $D$ from $U$ to $v$; we call such vertices \emph{out-neighbours} of $U$. 
We also denote the number of these edges by $e_D(U,[n]\setminus U)$.

Further, fix a vector in $\Lambda_{\imax}$ indicating the layer sizes, and define the vertex set $L':=[|L|-\Delta \alpha]\subset [n]$. 
 Then, arbitrarily partition $L'$ into subsets $(L_i')_{i=1}^{\imax}$ of sizes $(|L_i|-\Delta^{-2}\alpha)_{i=1}^{\imax}$, respectively. 
By default, for subsets of $[n]$ responsible for embedding a set $X\subseteq V(T)$, we often use the same letter decorated by a single or double prime, like $X'$ and $X''$.
Define $\beta=\beta(n):=n/(\ln n)^2$, denote by $L_{1,i}'$ the set of the first $10\alpha$ vertices in $L_i'$, and set $L_{2,i}'=L_i'\setminus L_{1,i}'$.
The next result ensures a key property of the partition $(L_i')_{i=1}^{\imax}$.

\begin{proposition}\label{prop:props L'}
One can find (random) subsets $(L_{1,i}'',L_{2,i}'')_{i=1}^{\imax}$ of $(L_{1,i}',L_{2,i}')_{i=1}^{\imax}$, respectively, such that, by setting $L_i'' \coloneqq L_{1,i}''\cup L_{2,i}''$ and $L'' \coloneqq L_1''\cup\ldots\cup L_{\imax}''$,
the following two statements hold:
\begin{enumerate}[(i)]
    \item\label{item:props L'1} Whp $|L''|\ge |L'|-\beta$ and, for every $r\in \{1,2\}$ and distinct $i,j\in [\imax]$,  
    every vertex $v\in L_i''$ satisfies $e_D(v,L_{r,j}'')\ge |L_{r,j}'| p_1/2$. 
    \item\label{item:props L'2}
    Moreover, for every $r\in\{1,2\}$, every distinct $i,j\in[\Gamma]$, and every vertex $v\in L''_i$,
conditionally on $e_D(v,L_{r,j}'')$, the set of neighbours of $v$ in $L_{r,j}''$ is distributed uniformly among the subsets of $L_{r,j}''$ of size $e_D(v,L_{r,j}'')$ and independently for different choices of $i,j,r,v$.
\end{enumerate}
\end{proposition}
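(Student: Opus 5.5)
We obtain the sets $L''_{r,i}$ by deleting the vertices of low out-degree, and we choose the deletion so that it depends only on out-degree counts. Then the uniformity in (ii) comes from the symmetry of $D$.

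For each vertex $v\in L'$ and each pair $(r,j)$, consider the count $d_{r,j}(v):=e_D(v,L'_{r,j})$. This is the number of directed edges of $D$ from $v$ into $L'_{r,j}$, and it has distribution $\mathrm{Bin}(|L'_{r,j}\setminus\{v\}|,p_1)$. Since $|L'_{r,j}|=\Omega(\alpha)$ and $\alpha p_1=\Theta(\ln\ln n)$, Chernoff's bound (Lemma~\ref{lem:Chernoff}) gives
\[
\mathbb P\bigl(d_{r,j}(v)<0.9|L'_{r,j}|p_1\bigr)\le (\ln n)^{-c}
\]
for some constant $c=c(\Delta)$ that can be made large by taking $C$ large.

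We then run an iterative deletion. Start with $L''_{r,i}=L'_{r,i}$. Repeatedly delete any vertex $v$ with $e_D(v,L''_{r,j})<|L'_{r,j}|p_1/2$ for some $(r,j)$ with $j\neq i$ (where $v\in L''_i$). When the process stops, (i) holds by construction except for the size bound. For the size bound, we argue in the standard way. If more than $\beta$ vertices are deleted, look at the first $\beta$ deleted vertices and call this set $B$. Every vertex of $B$ either was initially bad, meaning $d_{r,j}(v)<0.9|L'_{r,j}|p_1$, or sends at least $0.4|L'_{r,j}|p_1=\Omega(\ln\ln n)$ directed edges into $B$.

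Markov's inequality shows that whp at most $\beta/2$ vertices are initially bad, because the expected number is $n(\ln n)^{-c}\ll\beta$. So on the bad event there is a set $B'$ of size about $\beta/2$ that spans at least $\Omega(|B'|\ln\ln n)$ directed edges. A union bound over such sets gives a probability of at most
\[
\binom{n}{\beta}\bigl(\beta p_1\bigr)^{\Omega(\beta\ln\ln n)}\le \Bigl(e(\ln n)^2\cdot(C/\ln n)^{\Omega(\ln\ln n)}\Bigr)^{\beta}=o(1),
\]
since $\beta p_1=C/\ln n$. This gives whp $|L''|\ge|L'|-\beta$.

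For (ii), the deletion rule has to be made measurable with respect to the counts only. I therefore use a one-round rule instead of the iterative one. Keep $v\in L'_i$ if $d_{r,j}(v)\ge 0.9|L'_{r,j}|p_1$ for all $(r,j)$ with $j\neq i$, and $v$ has at most $0.3|L'_{r,j}|p_1$ out-edges into the set $R$ of vertices removed in this round. The condition on $R$ breaks the independence we want. To handle this, I would use the symmetric statement: conditioned on the whole count vector $(d_{r,j}(u))_{u}$, the out-neighbourhoods $N^-_D(u)\cap L'_{r,j}$ are independent uniform subsets of the given sizes. This is exact because out-edges of different vertices are independent and, within one vertex, each binomial edge set is uniform given its size.

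Define $R$ as the set of vertices failing the degree test, so $R$ is a function of the counts. Then put $L''_{r,j}=L'_{r,j}\setminus R$. Conditioned on the counts, $N^-_D(v)\cap L'_{r,j}$ is uniform of size $d_{r,j}(v)$. Hence $N^-_D(v)\cap L''_{r,j}$, conditioned additionally on its own size $e_D(v,L''_{r,j})$, is uniform among subsets of $L''_{r,j}$ of that size. This follows by restricting a uniform set to a fixed subset and conditioning on the intersection size. Independence across different $(i,j,r,v)$ is inherited from the counts-conditional independence, since the pairs $(v,L''_{r,j})$ use disjoint edge sets.

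The remaining task, and the main obstacle, is to show that the intersection sizes stay at least $|L'_{r,j}|p_1/2$ for every kept vertex. Each intersection is hypergeometric given the counts, and $|R|$ is small: $|R|\le n(\ln n)^{-c}$, but $|R\cap L'_{r,j}|$ must be $o(\alpha)$. Once that holds, the hypergeometric mean is at least $0.89|L'_{r,j}|p_1$, with failure probability $(\ln n)^{-c}$ per vertex. The vertices that fail here cannot simply be deleted, because that would destroy uniformity. So I would take $R$ to be the whole low-count set with a stricter threshold and show, via a union bound over small sets as above, that whp no kept vertex loses a constant fraction of its neighbours to $R$.
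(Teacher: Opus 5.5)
Your size bound in (i) for the iterative deletion is sound. Taking the first $\beta$ deleted vertices and applying a dense-set union bound is a valid alternative to the paper's argument, which union-bounds directly over a final deleted set $R$ of size $\beta$ in which every vertex has fewer than $|L'_{r,j}|p_1/2$ out-neighbours in $L'_{r,j}\setminus R$.

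The gap is in (ii), and it starts from a false premise: the iterative rule does \emph{not} destroy uniformity. Every test in the iterative process only queries a count $|N^-_D(v)\cap(L'_{r,j}\setminus W)|$, where $W$ is determined by the history. For fixed $v$ and $(r,j)$, the queried sets form a decreasing chain. By deferred decisions, condition on the whole transcript of revealed counts. Then the out-neighbourhood of $v$ inside each atom of this chain, in particular inside the final set $L''_{r,j}=L'_{r,j}\setminus W$, is a uniformly random subset of the revealed size. This holds independently over $v$ and $(r,j)$, because the likelihood of a transcript factorises over the independent out-edge sets of the queried vertices. This is exactly the paper's argument, so the process you already analysed for (i) gives (ii) with no extra work.

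The one-round scheme you substitute cannot be completed. The statement you plan to prove at the end --- that whp no kept vertex loses a constant fraction of its out-neighbours in $L'_{r,j}$ to $R$ --- is false for any fixed threshold. Three facts combine here.
A kept vertex has only $\mu=\Theta(|L'_{r,j}|p_1)=\Theta(\ln\ln n)$ out-neighbours in $L'_{r,j}$.
Conditionally on the counts, their positions are uniform, while $R$ is a function of the counts.
The set $R\cap L'_{r,j}$ has density $(\ln n)^{-\Theta(1)}$, since the lower-tail Chernoff bound is tight up to the constant in the exponent.

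Hence a fixed kept vertex ends up with fewer than $|L'_{r,j}|p_1/2$ out-neighbours in $L'_{r,j}\setminus R$ with probability at least $(\ln n)^{-O(\mu)}=\exp(-O((\ln\ln n)^2))=n^{-o(1)}$. There are $|L'|=n^{1-o(1)}$ candidates, so the expected number of violating kept vertices is $n^{1-o(1)}$, and by a routine second-moment argument such vertices exist whp. No union bound over small sets can rule them out. They must be deleted, the deletions cascade, and you are forced back to the iterative procedure together with the adaptive-revelation argument above.
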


\begin{proof}
Our proof is algorithmic. 
Starting with $W=\varnothing$, iteratively and as long as possible, find $i\neq j$, $r\in \{1,2\}$ and $v\in L'_i\setminus W$  with less than $|L_{r,j}'| p_1/2$ out-neighbours in $L_{r,j}'\setminus W$ and update $W\leftarrow W\cup \{v\}$; note that the same same vertex $v$ can be tested several times throughout the algorithm.
 Then, since $\min_{r,j} |L_{r,j}'| \ge 9.5\alpha = \omega(\beta)$ and by Chernoff's inequality, we get that
 the probability that, at some point in the algorithmic construction, $W$ reaches size $\beta$ is at most
\begin{align*}
&\mathbb P(\exists R\subseteq L': |R|=\beta\text{ and }\forall v\in R,\exists r\in \{1,2\},\exists j\in [\imax], |N^-_D(v)\cap (L_{r,j}'\setminus R)| < |L_{r,j}'| p_1/2)\\
&\hspace{2em}\le \binom{n}{\beta} \bigg(
\sum_{j=1}^{\imax} 2\max_{r\in \{1,2\}} \mathbb P(\mathrm{Bin}(|L_{r,j}'|-\beta,p_1)\le |L_{r,j}'| p_1/2)\bigg)^\beta\\
&\hspace{2em}\le \bigg(\frac{\e n}{\beta}\bigg)^\beta \bigg(2\imax\cdot \exp\bigg(-\frac{(\min_{r,j} |L_{r,j}'|-\beta)p_1}{9}\bigg)\bigg)^{\beta}
\le \bigg(\frac{\e n}{\beta}\bigg)^\beta\ \bigg(2\imax\cdot \e^{-\alpha p_1}\bigg)^\beta.
\end{align*}
In particular, the above expression is of order $o(1)$ for any $C\ge 7K^3$.
As a result, whp the construction of $W$ terminates before this set reaches size $\beta$, which ensures the validity of (i) by setting $L''_{r,i}:=L'_{r,i}\setminus W$. 

For part (ii), observe that the event that a vertex $v$ joins the set $W$ is measurable in terms of the sizes of the sets $|N^-_D(v)\cap (L_{r,j}'\setminus W)|$.
Thus, the construction of $W$ requires revealing the out-degrees of each vertex in $L'\setminus W$ towards sets $(L_{r,j}'\setminus W)_{r,j}$ but not the exact out-neighbourhoods, which remain uniformly distributed conditionally on their size. 
(More precisely, for a single vertex we can reveal sizes of several neighbourhoods since the set $W$ is changing dynamically, and the same vertex $v$ can be tested several times. However, in the end, if we remove $W$, then only the size of the desired neighbourhood is revealed.) 
\end{proof}

The sets $(L_i'')_{i=1}^{\imax}$ will be covered by parts of the sets $(L_i)_{i=1}^{\imax}\subseteq V(T_1\cup T_2)$ 
in a way that all vertices in $(L_i^*)_{i=2}^{\imax}$ are embedded in $(L_{1,i}'')_{i=2}^{\imax}$, respectively.
Next, we fix $L''$ and $(L_{r,i}'')_{r,i}$ ensured by \Cref{prop:props L'}: they will be useful a bit later in the argument. 
We direct our attention to the graph $G_1[[n]\setminus L']$. 
 Our next step can be formulated as follows: the vertices in $L'\setminus L''$ turned out to expand insufficiently in the determined layering, so we would like to use them for the embedding of the subtree $T_3$ instead, thus ensuring that they remain far from $T_1\cup T_2$. 
Before we proceed to the described embedding step, we note that some vertices in $[n]\setminus L'$ may also either expand insufficiently towards some of the layers (set later called $Q_L'$) or expand insufficiently towards the image of the parent set $P_{\imax}$ of the layer $L_{\imax}$ 
 (set later called $Q_P'$).
  The suboptimally expanding vertices in $Q_L'\cup Q_P'$ are thus going to be covered by the image of the tree $T_3$ together with $L'\setminus L''$. 
The next lemma shows that $Q_L'$ is typically quite small.

\begin{lemma}\label{lem:QL2}
Whp there are at most $\beta$ vertices $v\in [n]\setminus L'$ such that, for some $r\in \{1,2\}$ and $j\in [\imax]$, the vertex $v$ has less than $|L_{r,j}'| p_1/2$ out-neighbours in $D$ in the set $L_{r,j}''$. 
\end{lemma}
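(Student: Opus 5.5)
The approach mirrors the proof of Proposition~\ref{prop:props L'}(i): a union bound over candidate bad sets of size $\beta$, using the independence of out-edges in $D$. The complication is that $L''$ is itself random and depends on $D$. However, the out-edges of vertices $v\in [n]\setminus L'$ were never examined in constructing $W$, since that construction only inspected out-edges emanating from vertices of $L'$. Hence, conditionally on $W$ (equivalently on $L''$), the out-neighbourhoods of vertices outside $L'$ remain independent with each directed edge present with probability $p_1$. Moreover, $L''\subseteq L'$ is disjoint from $[n]\setminus L'$.

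\textbf{Step 1.} Condition on the outcome of Proposition~\ref{prop:props L'}, namely that $|W|\le \beta$, so that $|L_{r,j}''|\ge |L_{r,j}'|-\beta$ for every $r\in\{1,2\}$ and $j\in[\imax]$. Since $\min_{r,j}|L_{r,j}'|\ge 9.5\alpha=\omega(\beta)$, this gives $|L_{r,j}''|\ge(1-o(1))|L_{r,j}'|$.

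\textbf{Step 2.} Fix $v\notin L'$. For each $r,j$, the number $e_D(v,L_{r,j}'')$ is distributed as $\mathrm{Bin}(|L_{r,j}''|,p_1)$ with mean at least $(1-o(1))|L_{r,j}'|p_1$. By Chernoff's bound (Lemma~\ref{lem:Chernoff}), the probability that it falls below $|L_{r,j}'|p_1/2$ is at most $\exp(-|L_{r,j}'|p_1/9)\le \e^{-\alpha p_1}$. A union bound over the $2\imax$ pairs $(r,j)$ shows that $v$ is bad with probability at most $q:=2\imax\,\e^{-\alpha p_1}$.

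\textbf{Step 3.} Badness events for distinct $v\notin L'$ depend on disjoint sets of directed edges, so they are independent. Hence the probability that at least $\beta$ vertices are bad is at most
\[
\binom{n}{\beta}q^\beta\le\left(\frac{\e n}{\beta}\cdot 2\imax\,\e^{-\alpha p_1}\right)^{\beta}.
\]
Here $\e n/\beta=\e(\ln n)^2$, while
\[
\alpha p_1=\Theta\!\left(\frac{C\ln\ln n}{K^3}\right),
\]
so $\e^{-\alpha p_1}=(\ln n)^{-\Theta(C/K^3)}$. For $C\ge 7K^3$ the base of the power is $o(1)$, and the whole bound is $o(1)$.

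The main point requiring care is the conditioning in Step 1. One must check that revealing $W$ exposes only out-edges from $L'$ (more precisely, only their counts), and never out-edges from vertices of $[n]\setminus L'$. This holds by construction, so $D$ restricted to out-edges from $[n]\setminus L'$ is independent of $L''$. This also explains why the paper uses a directed graph $D$ here rather than an undirected one.
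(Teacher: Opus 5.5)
Your proof is correct and follows the paper's argument: condition on $D[L']$, which determines $L''$ and leaves the out-edges from $[n]\setminus L'$ unexposed, then use Chernoff to show each $v\notin L'$ is bad with probability at most $2\imax\e^{-\alpha p_1}=o((\ln n)^{-2})$ for $C\ge 7K^3$. The only difference is the last step: the paper applies Markov's inequality to the number of bad vertices, whose expectation is $o(n/(\ln n)^2)=o(\beta)$, while your union bound over $\beta$-sets uses independence across vertices to get a much smaller failure probability than is needed.
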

\begin{proof}
Condition on the sets $(L_{r,i}'')_{r,i}$ as well as the conclusion of \Cref{prop:props L'} (note that both are determined by the directed graph $D[L']$). 
Then, the probability that a vertex $v\in [n]\setminus L'$ has the desired property is at most
\[\sum_{j=1}^{\imax} 2\max_{r\in\{1,2\}} \mathbb P(\mathrm{Bin}(|L_{r,j}''|, p_1) < |L_{r,j}'| p_1/2)\le 2\imax\cdot \exp\bigg(-\frac{(\min_{r,j} |L_{r,j}'|-\beta)p_1}{9}\bigg)\le 2\imax\e^{-\alpha p_1},\]
where the first inequality uses Chernoff's bound. Since the last expression is of order $o((\ln n)^{-2})$ for $C\ge 7K^3$, the statement then follows from Markov's inequality.
\end{proof}

We denote the set described in \Cref{lem:QL2} by $Q_L'$. 

\begin{remark}\label{rem:QLprime}
Note that revealing the out-neighbourhood sizes of the vertices in $[n]\setminus L'$ within the sets $(L_{r,i}'')_{r,i}$ but not the exact neighbourhoods themselves is sufficient to determine $Q_L'$.
By combining Proposition~\ref{prop:props L'}~\ref{item:props L'2} and Lemma~\ref{lem:QL2}, we assume in what follows that the out-neighbourhood sizes of all vertices in the sets $(L_{r,i}'')_{r,i}$ are revealed, and only vertices $v$ in $(L'\setminus L'')\cup Q_L'$ are such that, for some $r\in \{1,2\}$ and $j\in [\imax]$, $v$ has less than $|L_{r,j}'| p_1/2$ out-neighbours in $D$ in the set $L_{r,j}''$.
Moreover, for all $j\in [\imax]$ and all remaining vertices $v$ outside $L_j''$,
conditionally on $e_D(v,L_{r,j}'')$, the set of neighbours of $v$ in $L_{r,j}''$ is distributed uniformly among the subsets of $L_{r,j}''$ of size $e_D(v,L_{r,j}'')$ and independently for different choices of $j,r,v$.
\end{remark}

Further observe that currently $G_1[[n]\setminus L']$ is distributed as an Erd\H{o}s-R\'enyi graph with parameters $n-|L'|=n-o(n)$ and $p_1$.
This graph will serve to embed the vertices outside of $L$ (with some room to spare; the remaining vertices to be fed into the sets $L''$ for the embedding of $L$ later in the proof).

The next few lemmas are dedicated to an algorithmic description of sets $P_{1,\imax}',P_{1,\imax}'',P_{2,\imax}',P_{2,\imax}''$ where some of the vertices in $L_{\imax+1}$ are going to be embedded.
Let $P_{1,\imax}',P_{2,\imax}'$ be arbitrary subsets of $[n]\setminus (L'\cup Q_L')$ with size $\alpha$, and further denote by $Q_P'$ the subset of vertices of $[n]\setminus (L'\cup P_{1,\imax}'\cup P_{2,\imax}'\cup Q_L')$ 
 with at most $\alpha p_1/2$ out-neighbours in $P_{1,\imax}'$ or in $P_{2,\imax}'$ in $D$.
Note that $Q_P'$ is measurable in terms of the out-neighbourhood sizes $(|N^-_D(v)\cap P_{i,\imax}'|)_{i\in \{1,2\}}$ of the vertices $v\in [n]\setminus (L'\cup P_{1,\imax}'\cup P_{2,\imax}'\cup Q_L')$ and not the neighbourhoods themselves.

\begin{lemma}\label{lem:Q'}
Whp $|Q_P'|\le \beta$.
\end{lemma}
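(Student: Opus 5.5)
The plan is to follow the proof of Lemma~\ref{lem:QL2} closely: first condition so that the relevant out-edges are still fresh, then bound the failure probability of a single vertex by Chernoff's bound, and finally conclude with Markov's inequality.

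\textbf{Conditioning and independence.} The sets $P_{1,\imax}'$ and $P_{2,\imax}'$ are chosen inside $[n]\setminus(L'\cup Q_L')$. By Remark~\ref{rem:QLprime}, the set $Q_L'$ is determined by $D[L']$ and by the out-neighbourhood sizes of vertices outside $L'$ towards the sets $(L_{r,i}'')_{r,i}\subseteq L'$. All of this information concerns only directed edges whose heads lie in $L'$. Since $P_{i,\imax}'$ is disjoint from $L'$, for each fixed vertex $v\in[n]\setminus(L'\cup P_{1,\imax}'\cup P_{2,\imax}'\cup Q_L')$ the directed edges from $v$ into $P_{i,\imax}'$ have not been exposed. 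We may choose $P_{i,\imax}'$ deterministically given $Q_L'$, for instance as the first $\alpha$ admissible vertices. Conditionally on everything revealed so far, $|N^-_D(v)\cap P_{i,\imax}'|\sim\mathrm{Bin}(\alpha,p_1)$ for each $i\in\{1,2\}$.

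\textbf{Single-vertex bound.} By Chernoff's bound (Lemma~\ref{lem:Chernoff}) with $t=\alpha p_1/2$,
\[
\mathbb P\bigl(\mathrm{Bin}(\alpha,p_1)\le \alpha p_1/2\bigr)\le \exp(-\alpha p_1/8).
\]
Since $p_1\ge p/3=C\ln n/(3n)$, we have
\[
\alpha p_1\ge \frac{C\ln\ln n}{3K^3}.
\]
Taking a union bound over $i\in\{1,2\}$, a fixed vertex lies in $Q_P'$ with probability at most
\[
2\exp\!\left(-\frac{C\ln\ln n}{24K^3}\right)=2(\ln n)^{-C/(24K^3)}.
\]
This is $o((\ln n)^{-2})$ once $C\ge 49K^3$, which is consistent with the earlier requirement $C\ge 7K^3$ after enlarging $C$.

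\textbf{Conclusion.} By linearity of expectation, $\mathbb E|Q_P'|\le n\cdot o((\ln n)^{-2})=o(\beta)$. Markov's inequality then gives $\mathbb P(|Q_P'|>\beta)=o(1)$, which is the claim.

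The only point that needs care is the independence check in the first step. We must make sure that revealing the out-degrees of vertices outside $L'$ towards $L''$, which was used to define $Q_L'$, exposes no directed edge ending in $P_{i,\imax}'$. This holds because every head involved in defining $Q_L'$ lies in $L'$.
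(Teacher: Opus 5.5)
Your proof is correct and follows the paper's argument: the paper also observes that $Q_L'$ and the sets $P_{1}',P_{2}'$ are determined by edges of $D$ outside $D[[n]\setminus L']$, so each vertex's out-degree into $P_i'$ is still a fresh $\mathrm{Bin}(\alpha,p_1)$ variable, and then concludes with Chernoff's bound and Markov's inequality. Your constant bookkeeping is more careful than the paper's: since $\exp(-\alpha p_1/8)\le(\ln n)^{-C/(24K^3)}$, the paper's stated threshold $C\ge 25K^3$ does not give its claimed bound $(\ln n)^{-3}$ (that would need $C>72K^3$), whereas your choice $C\ge 49K^3$ does yield the $o((\ln n)^{-2})$ per-vertex bound that Markov's inequality needs — harmless either way, since $C$ is a large absolute constant.
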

\begin{proof}
The construction of the sets $(L_{r,i}'')_{r,i}$, $P_{1,\imax}'$, $P_{2,\imax}'$, and $Q_L'$ is measurable in terms of edges outside $D[[n]\setminus L']$. 
Thus, conditionally on these sets, Chernoff's inequality shows that the probability that a fixed vertex $v\in [n]\setminus (L'\cup P_{1,\imax}'\cup P_{2,\imax}'\cup Q_L')$  
 has less than $\alpha p_1/2$ out-neighbours in $P_{1,\imax}'$ or in $P_{2,\imax}'$ is at most
\[2\mathbb P(\mathrm{Bin}(\alpha,p_1)\le \alpha p_1/2)\le
2\exp(-{\alpha p_1}/{8})\le (\ln n)^{-3},\]
where the last inequality holds for $C \ge 25 K^3$. 
An application of Markov's inequality ends the proof.
\end{proof}

Next, we construct subsets $P_{1,\imax}'', P_{2,\imax}''$ of size $\beta=o(\alpha)$ algorithmically
as follows. Initially, these sets are empty. 
For every $i\in \{1,2\}$, expose $\{|N_D^-(v)\cap P_{i,\imax}'|: v\in L_{\imax}''\}$  
 and arbitrarily order the vertices in $L_\Gamma''$ with less than $\alpha p_1/2$ out-neighbours in $P_{i,\imax}'$.
For each such vertex $v$, consecutively select $\alpha p_1/2$ out-neighbours of $v$ outside $P_{i,\imax}''$ and in 
\[V' := [n]\setminus (L'\cup Q_L'\cup Q_P'\cup P_{1,\imax}'\cup P_{2,\imax}'),\] 
and add them to $P_{i,\imax}''$ (while keeping $P_{1,\imax}''$ and $P_{2,\imax}''$ disjoint). Finally, complete $P_{i,\imax}''$ to size $\beta$ by arbitrarily adding new vertices from $V'$.
The next lemma shows that the construction of $P_{i,\imax}''$ presented above is feasible.

\begin{lemma}\label{lem:W}
For every $i\in \{1,2\}$, the construction of $P_{i,\imax}''$ succeeds whp.
\end{lemma}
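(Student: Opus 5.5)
We need to show that, for each $i\in\{1,2\}$, whp every vertex $v\in L_\Gamma''$ with fewer than $\alpha p_1/2$ out-neighbours in $P_{i,\imax}'$ can be given $\alpha p_1/2$ fresh out-neighbours in $V'$, and that the total number of vertices so added stays below $\beta$. The plan has two parts: a counting bound on the number of deficient vertices, and a degree bound on their out-neighbourhoods into $V'$, each obtained from independence of the out-edges of $D$.

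\textbf{Step 1: few deficient vertices.} Condition on everything used to define $L''$, $Q_L'$, $P_{1,\imax}'$, $P_{2,\imax}'$ and $Q_P'$. By the measurability remarks (Remark~\ref{rem:QLprime} and the sentence before Lemma~\ref{lem:Q'}), this conditioning reveals only out-degrees of vertices of $L'$ into the sets $L_{r,j}''$, together with edges of $D$ leaving $[n]\setminus L'$. In particular, the out-edges in $D$ from a vertex $v\in L_\Gamma''\subseteq L'$ into $[n]\setminus L'$ are still independent $\mathrm{Bin}$-type variables with parameter $p_1$. The set $P_{i,\imax}'$ was chosen inside $[n]\setminus(L'\cup Q_L')$ independently of these edges. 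Hence each $v\in L_\Gamma''$ is deficient with probability at most $\exp(-\alpha p_1/8)\le(\ln n)^{-3}$ by Chernoff, exactly as in Lemma~\ref{lem:Q'}. By Markov, whp the number $b_i$ of deficient vertices is at most $n(\ln n)^{-2.5}$, which is $o\big(\beta/(\alpha p_1)\big)$ once we use $\alpha p_1=\Theta(\ln\ln n)$ for $C$ a large multiple of $K^3$.

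\textbf{Step 2: enough room in $V'$.} A deficient vertex $v$ still has $\mathrm{Bin}(|V'|,p_1)$ out-neighbours in $V'$. These edges have not been exposed, since they leave $L'$ and avoid the sets $P_{i,\imax}'$ that were tested. Their mean is $(1-o(1))C\ln n$. Greedy selection forbids at most $|P_{1,\imax}''\cup P_{2,\imax}''|\le 2b_i\alpha p_1/2=o(\beta)$ vertices. So it suffices that $v$ has at least $\alpha p_1/2+2b_i\alpha p_1$ out-neighbours in $V'$. This bound is too weak to guarantee directly, because $o(\beta)$ forbidden vertices could in principle swallow a neighbourhood of size $\Theta(\ln n)$.

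I expect this to be the main obstacle, and would handle it in one of two ways:
\begin{itemize}
\item[(a)] \emph{Uniform randomness.} Conditional on its size, the out-neighbourhood of $v$ in $V'$ is uniform, so the number landing in a fixed forbidden set $F$ of size $o(\beta)$ is hypergeometric with mean $o(\ln n/(\ln n)^2)$. A Chernoff/union bound shows this exceeds $\ln n/2$ with probability $n^{-\omega(1)}$; taking the union over all deficient $v$ then suffices.
\item[(b)] \emph{Sprinkled edges.} Process the deficient vertices one at a time and expose only the edges from the current $v$ to $V'\setminus(P_{1,\imax}''\cup P_{2,\imax}'')$. Since $|V'|\ge n/2$, Chernoff gives at least $\alpha p_1/2$ such edges with probability $1-n^{-\Omega(C)}$, and a union bound over at most $n$ vertices finishes.
\end{itemize}
Option (b) is cleaner, since fresh edges are exposed at each step, and it is the one I would use.

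\textbf{Step 3: sizes.} Finally check that the sets produced have size at most $b_i\alpha p_1/2\le\beta$, so they can be padded to size exactly $\beta$ from $V'$ (which has size $n-o(n)$), and that they can be kept disjoint from each other. Both $i=1,2$ hold simultaneously whp by a union bound.
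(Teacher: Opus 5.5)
Your proposal is correct and follows essentially the same route as the paper. The paper also uses Chernoff plus Markov to show that whp at most $n(\ln n)^{-5/2}=\beta/(\ln n)^{1/2}$ vertices of $L_\imax''$ are deficient. It then uses your option (b), namely exposing only the fresh out-edges of each deficient vertex into the still-available part of $V'$ and applying Chernoff with a union bound, and finally uses $\alpha p_1\cdot|U'|=o(\beta)$ to pad each $P_{i,\imax}''$ to size $\beta$.

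There are two harmless slips, neither of which affects the argument. First, the relevant mean is $|V'|p_1\approx C\ln n/3$, not $C\ln n$. Second, a vertex that is deficient for both $i=1$ and $i=2$ should be handed $\alpha p_1$ fresh out-neighbours in a single step, which is why the paper uses the bound $np_1/2\ge\alpha p_1$.
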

\begin{proof} 
An application of Chernoff's inequality shows that the probability that a fixed vertex $v\in L_{\Gamma}''\subseteq L_{\Gamma}'$ 
  has less than $\alpha p_1/2$ out-neighbours in $P_{i,\imax}'$ is bounded from above by 
\[\mathbb P(\mathrm{Bin}(\alpha,p_1)\le \alpha p_1/2)\le
\exp(-{\alpha p_1}/{8})\le (\ln n)^{-3},\]
where the last inequality holds for $C \ge 25K^3$. 
In particular, by Markov's inequality, whp there are at most $\beta/(\ln n)^{1/2}$ vertices in $L_{\imax}''$ with less than $\alpha p_1/2$ out-neighbours in $P_{i,\imax}'$; we call this set $U'$ and assume the said upper bound on its size. 
We order these vertices arbitrarily and process them one by one.
Then, using that $|V'| = n-o(n)$, consecutive applications of Chernoff's inequality and a union bound imply that whp every vertex $v\in U'$ to be processed at the next step sends at least $np_1/2\ge \alpha p_1$ out-edges in $D$ towards $V'\setminus (P_{1,\imax}''\cup P_{2,\imax}'')$. 
Since $\alpha p_1\cdot |U'| = o(\beta)$, the construction of each of $P_{1,\imax}''$ and $P_{2,\imax}''$ is successful whp, as desired.
\end{proof}

\subsubsection{\texorpdfstring{Embedding $T\setminus L$ in $G_1[[n]\setminus L']$: rolling back}{Embedding T1 and T2 by rolling back}}\label{sec:rollback}

Recalling~\eqref{eq:Lambda}, fix $(x_1,\ldots,x_{\Gamma+1})\in\Lambda_{\Gamma}$ and assume that $T\in\mathcal{T}'_2$ satisfies $(|L_1|,\ldots,|L_{\Gamma+1}|)=(x_1,\ldots,x_{\Gamma+1})$.
In this section, we obtain several typical properties of $G_1[[n]\setminus L']$. Their importance is two-fold: 
\begin{itemize}
    \item first, any graph $G_1$ 
     with these properties contains $T\setminus L$ as a subgraph, and
    \item second, these properties ensure that the vertices of $G_1$ where no vertex of $T\setminus L$ is embedded expand sufficiently towards the layers $(L_i'')_{i=1}^{\imax+1}$, thus allowing us to extend our embedding of $T\setminus L$ to an embedding of $T$.
\end{itemize}

In this section, we condition on the sets $(L_{r,i}'')_{r,i}$, $(P_{r,\imax}',P_{r,\imax}'')_r$, $Q'_L$ and $Q'_P$ constructed in the previous section. 
Recall $T_1,T_2,T_3,T_4$ from the beginning of \Cref{sec:manyleaves}, the set $L$ defined after the proof of Lemma~\ref{lem:layers}, and define $\hat T_1\coloneqq T_1\setminus L$, $\hat T_2\coloneqq T_2\setminus L$, and
\[F' := L'\cup Q_L'\cup Q_P'\cup P_{1,\imax}'\cup P_{1,\imax}''\cup P_{2,\imax}'\cup P_{2,\imax}''.\]
Next, denote by $V_1, V_2, V_3$ subsets of $[n]\setminus L''$ such that:
\begin{enumerate}[(i)]
\item\label{pt:i} $V_1$ and $V_2$ intersect in the first vertex $w_{1,2}$ outside $F'$, and $(V_1\cup V_2)\cap V_3 = \varnothing$, 
\item $P_{1,\imax}'\cup P_{1,\imax}''\subseteq V_1$, $P_{2,\imax}'\cup P_{2,\imax}''\subseteq V_2$, and $(L'\setminus L'')\cup Q_L'\cup Q_P'\subseteq V_3$,
\item $|V_1|=|V(\hat T_1)|+n/300\ge n/3$, $|V_2|=|V(\hat T_2)|+n/300\ge n/9$ and $|V_3|=|V(T_3)|+n/300\ge n/27$,
\item\label{pt:iv} $V_1\setminus (F'\cup \{w_{1,2}\})$ is the set of the first $|V_1|-|P_{1,\imax}'\cup P_{1,\imax}''|-1$ available vertices after $w_{1,2}$, $V_2\setminus (F'\cup \{w_{1,2}\})$ is the set of the next $|V_2|-|P_{2,\imax}'\cup P_{2,\imax}''|-1$ available vertices, and finally $V_3\setminus (F'\cup \{w_{1,2}\})$ is the set of the following $|V_3|-|(L'\setminus L'')\cup Q_L'\cup Q_P'|$ available vertices.
\end{enumerate}
We note that the choice of the vertex $w_{1,2}$ is fully determined by the choice of sizes $(x_1,\ldots,x_{\imax+1})$ of the layers and the algorithmic procedure from Section~\ref{sec:absorb}, which is independent of the choice of a tree.
On the other hand, the sizes of $\hat T_1$, $\hat T_2$ and $T_3$ are sufficient to determine the above construction but may vary. 
The following statements are therefore shown to hold with probability $1-o(n^{-3})$ to accommodate the union bound coming from the choice of sizes described above.

Follows our first application of the rollback technique as stated in \Cref{embedparentsfinal}. 
 In this section, we set $d=d(n):=\ln n/\ln\ln n$ and $t=t(n):=4(\ln\ln n)/p_1$.

\begin{lemma}\label{cor:embedparentsfinal)3}
Whp the graph $G_1$ is $t$-joined. Moreover, for every choice of integers $n_1,n_2,n_3\in [n]$, with probability $1-o(n^{-3})$, for every choice of trees $\hat T_1,\hat T_2,T_3$ satisfying \textup{\ref{pt:i}--\ref{pt:iv}} and such that \[(|V(\hat T_1)|,|V(\hat T_2)|,|V(T_3)|)=(n_1,n_2,n_3),\]
each of the following exists: 
\begin{itemize}
    \item A $(d,t)$-extendable copy $W_1$ of $\hat T_1$ into $G_1[V_1]$ such that the set $P_{1,\imax}'\cup P_{1,\imax}''$ is covered by the image of the set $L_{\imax+1}\cap V(T_1)$ and the vertex $r_{1,2} = T_1\cap T_2$ is mapped to the vertex $w_{1,2}$.
    \item A $(d,t)$-extendable copy $W_2$ of $\hat T_2$ into $G_1[V_2]$ such that the set $P_{2,\imax}'\cup P_{2,\imax}''$ is covered by the image of the set $L_{\imax+1}\cap V(T_2)$ and the vertex $r_{1,2}$ is mapped to the vertex $w_{1,2}$. Denote by $w_{2,3}$ the copy of $r_{2,3}$ in $W_2$.
    \item A $(d,t)$-extendable copy $W_3$ of $T_3$ into $G_1[V_3\cup \{w_{2,3}\}]$ covering the set $(L'\setminus L'')\cup Q_L'\cup Q_P'$ and where the vertex $r_{2,3} = T_2\cap T_3$ is mapped to the vertex $w_{2,3}$.
\end{itemize}
\end{lemma}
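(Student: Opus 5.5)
The plan is to derive all three embeddings from Proposition~\ref{embedparentsfinal} applied three times, with $H$ the induced subgraphs $G_1[V_1]$, $G_1[V_2]$ and $G_1[V_3\cup\{w_{2,3}\}]$. The work is to verify its hypotheses with probability $1-o(n^{-3})$ for each fixed triple $(n_1,n_2,n_3)$.

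\textbf{Joinedness.} The statement that $G_1$ is $t$-joined, with $t=4(\ln\ln n)/p_1$, follows from the same first-moment computation as in~\eqref{eq:UB3.11.1-new}. The expected number of disjoint pairs of $t$-sets with no edges between them is at most $\exp\bigl(t(2\ln(\e n/t)-p_1t)\bigr)$. Since $\ln(n/t)\le \ln\ln n+O(1)$ and $p_1t=4\ln\ln n$, this is $\exp(-\Omega(t\ln\ln n))=o(n^{-3})$. Joinedness passes to every induced subgraph on the sets in question, because they have linear size.

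\textbf{Extendability.} The sets $V_1,V_2,V_3$ are deterministic once $(n_1,n_2,n_3)$ and the layer-size vector are fixed. Moreover, $G_1[[n]\setminus L']$ is independent of everything used to build $F'$. So for each $j$ I would show that whp every $Y\subseteq V_j$ with $|Y|\le 2t$ satisfies $|N_{G_1[V_j]}(Y)\setminus Y|\ge 2d|Y|$. The argument splits by size, as in Definition~\ref{def:adapted}~\ref{def:v}.
\begin{itemize}
\item For small $Y$, use minimum degree $\Omega(\ln n)$ inside $V_j$ (Chernoff and a union bound; the failure probability is $n^{-\Omega(C)}$, fine for large $C$), together with the fact that small sets span few edges.
\item For $|Y|$ up to $2t=\Theta(n\ln\ln n/\ln n)$, use a union bound. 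The probability that $|N(Y)|<2d|Y|$ is at most $\binom{n}{y}\binom{n}{2dy}(1-p_1)^{y(|V_j|-2dy)}$. Since $2dy=O(n/C)\ll |V_j|$, this is $\exp(-\Omega(C y\ln n))$.
\end{itemize}
Taking the vertex $v$ and the set $X$ with $I(X\cup\{v\})$ extendable, the bound $|N(Y)|\ge d|Y|$ gives the stronger form of~\eqref{eq:X}, and the $n/300$ spare room exceeds $10dt+\ln n=O(n/C)$.

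\textbf{Applying Proposition~\ref{embedparentsfinal}.} For $W_1$, take the root $v=w_{1,2}$ and $X=P_{1,\imax}'\cup P_{1,\imax}''$, which has size $\alpha+\beta\ge n/(\ln n)^2$. Take $U=L_{\imax+1}\cap V(T_1)$, which is $16$-separated in $\hat T_1$ by Lemma~\ref{lem:layers}~\ref{item:leaf-shifting5} and has size at least $9|X|$, since $|L_{\imax+1}\cap V(T_1)|\ge 10\alpha$ by construction of $S$. The construction of $W_2$ is symmetric. The only difference is that $T_2$ has a second root $r_{2,3}$; I would place it arbitrarily and record its image as $w_{2,3}$.

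For $W_3$, take $X=(L'\setminus L'')\cup Q_L'\cup Q_P'$, which has size $O(\beta)$. If it is too small, pad it with dummy vertices of $V_3$ so that $|X|\ge n/(\ln n)^2$. Since $T_3$ has linear size and maximum degree $\Delta$, it contains a $16$-separated set $U$ of at least $9|X|=o(n)$ vertices. The root is $v=w_{2,3}$, which requires $I(X\cup\{w_{2,3}\})$ to be extendable in $G_1[V_3\cup\{w_{2,3}\}]$.

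\textbf{Main obstacle.} The hard part is this last requirement and its analogues: extendability must hold in the graph actually used after the preceding embeddings. The vertex $w_{2,3}$ depends on $W_2$, so its neighbourhood in $V_3$ must be controlled uniformly. I would handle this by proving the expansion property for all vertices of $[n]$ towards $V_3$ at once, with the union bound over $n$ vertices absorbed by $n^{-\Omega(C)}$. The same must be done for vertices of $Q_L'\cup Q_P'\cup (L'\setminus L'')$, whose membership was revealed only through $D$. Their $G_1$-neighbourhoods are therefore still independent, so the same expansion bound applies.
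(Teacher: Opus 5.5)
Your route is the paper's: you apply Proposition~\ref{embedparentsfinal} three times with the same $X$, $U$ and roots, including padding $Z$ up to size $\beta$ for $T_3$. You get joinedness from the first-moment bound, and extendability of the edgeless graphs $I(X\cup\{v\})$ from a union bound over small sets. Your explicit handling of the dependence of $w_{2,3}$ on $W_2$, via a union over all candidate vertices, is if anything more careful than the paper's text. Two points in the extendability step need repair, though.

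\textbf{Neighbours must avoid $X\cup\{v\}$.} With $S=I(X\cup\{v\})$, condition~\eqref{eq:X}, and also its stronger form, asks for $d|Y|$ neighbours \emph{outside} $X\cup\{v\}$. Since $|X|\ge\alpha\gg 2d|Y|$ for small $Y$, the bound $|N(Y)\setminus Y|\ge 2d|Y|$ does not give this. You must run your estimates with target set $V_j\setminus(X\cup\{v\}\cup Y)$. This works because $X$ is determined by $D$ alone and the target still has linear size, and it is exactly what the paper does.

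\textbf{Entropy cost for small $Y$.} Your union bound over candidate neighbourhoods pays a factor $\binom{n}{2dy}=\exp\bigl(2dy\ln(\e n/(2dy))\bigr)$. Since $d\ln n=(\ln n)^2/\ln\ln n\gg C\ln n$, the claimed bound $\exp(-\Omega(Cy\ln n))$ holds only for $y\ge n(\ln n)^{-\Omega(C)}$. Your min-degree plus sparse-small-sets argument must therefore reach that size. It can: whp every $Z$ with $|Z|\le n/(\ln n)^2$ spans fewer than $c\,C|Z|\ln\ln n$ edges, for any fixed $c>0$. But the boundary between the two regimes has to be stated.

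The paper avoids both issues more economically. For a fixed $Y$ of size $i$, the number of its neighbours in the target set is exactly $\mathrm{Bin}(\,\cdot\,,1-(1-p_1)^i)$. The success probability is at least $p_1i/2$ when $2p_1i\le 1$ and at least $3/10$ otherwise. So one Chernoff bound handles every $i\le 2t$, with no entropy cost for guessing the neighbourhood and no separate small-set argument.
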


\begin{proof}
The proof of the first two points is exactly the same up to the choice of an absolute constant $C$; we show the statement for $\hat T_1$ only.
The fact that $G_1$ is a $t$-joined graph follows from a simple union bound: indeed, the converse has probability at most 
\begin{equation}
\binom{n}{t}\binom{n-t}{t} (1-p_1)^{t^2}\le \bigg(\frac{\e n}{t}\bigg)^{2t} \exp(-p_1t^2) = \exp(t(2\ln(\e n/t) - p_1t)) = o(1).
\end{equation}
We note also that $|P_{1,\imax}'\cup P_{1,\imax}''|\ge \beta$ and next show that, with probability $1-o(n^{-3})$, the `empty' graph $I(P_{1,\imax}'\cup P_{1,\imax}''\cup \{w_{1
,2}\})$ is a $(d,t)$-extendable subgraph of $G_1[V_1]$ provided that $C\ge 4000$. 
 For this purpose, fix a set $X\subseteq V_1$ of size $i\in [2t]$ and note that the probability that a vertex outside $P_{1,\imax}'\cup P_{1,\imax}''\cup X\cup \{w_{1,2}\}$ is adjacent to $X$ is $1-(1-p_1)^i$.
Thus, the probability that there exists a set $X\subseteq V_1$ of size $i\in [2t]$ and at most $di$ neighbours outside $P_{1,\imax}'\cup P_{1,\imax}''\cup X\cup \{w_{1,2}\}$ is at most

\begin{equation}\label{eq:UB3.11.1+}
\sum_{i=1}^{2t} \binom{|V_1|}{i} \mathbb P(\mathrm{Bin}(|V_1|-i-2\alpha-1,1-(1-p_1)^i)\le di).    
\end{equation}
We consider two different regimes with respect to the latter sum. When $2p_1i\leq 1$, we have 
\begin{equation}\label{eq:binomial-estimate}
\begin{split}
1-(1-p_1)^i=1-\sum_{j=0}^i (-1)^j \binom{i}{j} p_1^j
&\ge p_1i - \sum_{j=2}^i \frac{(p_1i)^j}{j!}\\ 
&\ge p_1i\bigg(1-2\sum_{j=2}^{\infty} \frac{(1/2)^{j}}{j!}\bigg)=p_1i(1-2(\sqrt{\e}-3/2))\ge \frac{p_1i}{2}.
\end{split}
\end{equation}
By combining the latter inequality, the relations $i+2\alpha+1=o(|V_1|)$ and $|V_1|p_1 = \Theta(np_1) = \omega(d)$, and
 Chernoff's inequality, the $i$-th term in~\eqref{eq:UB3.11.1+} is at most
\begin{equation}\label{eq:3.11.1small}
\binom{|V_1|}{i} \mathbb P\bigg(\mathrm{Bin}\bigg(\frac{|V_1|}{2},\frac{p_1i}{2}\bigg)\le di\bigg)\le \bigg(\frac{\e |V_1|}{i}\bigg)^i \exp\bigg(-\frac{|V_1|p_1i}{9}\bigg)\le \exp\bigg(-\frac{|V_1|p_1i}{10.5}\bigg) = o(n^{-3}),    
\end{equation}
where we used twice that $|V_1|p_1 \ge 66\ln n$ since $C$ is large enough.
We turn our attention to the terms in~\eqref{eq:UB3.11.1+} corresponding to $2p_1i>1$.
In this case, by the inequality $1-\e^{-1/2}>3/10$, we have that $1-(1-p_1)^{i}\geq 3/10$. Noting that $i+2\alpha+1=o(|V_1|)$ and choosing $C$ large enough so that $2dt\le (3n/10)/40\le 3|V_1|/40$, each term in \eqref{eq:UB3.11.1+} with $2p_1i>1$ is bounded from above~by
\begin{equation}\label{eq:3.11.1large}
\bigg(\frac{\e |V_1|}{i}\bigg)^i \mathbb P\bigg(\mathrm{Bin}\bigg(\frac{|V_1|}{2},\frac{3}{10}\bigg)\le di\bigg)\le \bigg(\frac{\e |V_1|}{i}\bigg)^i \exp\bigg(-\frac{3|V_1|}{160}\bigg)\le \exp\bigg(-\frac{|V_1|}{60}\bigg) = o(n^{-6}),
\end{equation}
where the second inequality uses that $|V_1| = O(i \ln n)$ and $|V_1| = \omega(t \ln\ln n)$. Plugging \eqref{eq:3.11.1small} and \eqref{eq:3.11.1large} into \eqref{eq:UB3.11.1+} shows that \eqref{eq:UB3.11.1+} is of order $o(n^{-3})$, ensuring whp the $(d,t)$-extendability of $I(P_{1,\imax}'\cup P_{1,\imax}''\cup \{v\})$.

Next, the condition $|V(\hat{T_1})|\le |V_1|-|P_{1,\imax}'\cup P_{1,\imax}''|-10dt-\ln n$
holds when $C$ is large enough (implying $10dt\le n/400$), and  
the inequality $|L_{\imax+1}\cap V(\hat T_1)|\ge 10\alpha\geq 9|P'_{1,\imax}\cup P''_{1,\imax}|$ ensured by Lemma~\ref{lem:layers}~\ref{item:leaf-shifting1} and the separability condition of $L_{\imax+1}$
hold by definition ensured by Lemma~\ref{lem:layers}~\ref{item:leaf-shifting5}, thus guaranteeing that \Cref{embedparentsfinal} applies and gives the desired conclusion.

It remains to show the last point. 
To apply \Cref{embedparentsfinal} for the embedding of $T_3$, we note that: 
\begin{itemize}
    \item $T_3$ contains a $22$-separated set of vertices $U$ of size $30\beta$ not containing $w_{2,3}$, which may be constructed by a straightforward greedy algorithm,
    \item by Proposition~\ref{prop:props L'}, Lemma~\ref{lem:QL2} and Lemma~\ref{lem:Q'}, $|(L'\setminus L'')\cup Q_L'\cup Q_P'|\le 3\beta$,  
    \item the set $Z=(L'\setminus L'')\cup Q_L'\cup Q_P'$ may have size less than $\beta$. To match the assumption on the size of the set from \Cref{embedparentsfinal}, if $|Z| < \beta$, add vertices from $V_3$ to $Z$ until it reaches size $\beta$.
\end{itemize}

Moreover, $|V(T_3)|\le |V_3\cup \{w_{2,3}\}|-10dt-\ln n$ since $C$ is large enough so that $10dt+\ln n\le n/300$.
 It remains to show that, with probability $1-o(n^{-3})$, the `empty' graph $I(Z\cup \{w_{2,3}\})$ on $Z\cup \{w_{2,3}\}$ is a $(d,t)$-extendable subgraph of $G_1[V_3 \cup \{w_{2,3}\}]$. 
 Fix any set $Y\subseteq (V_3 \cup \{w_{2,3}\})$ of size $|Y|\le 2t$. 
By noting that $|V_3\setminus (Y\cup Z\cup \{w_{2,3}\})|\ge |V(T_3)|-3\beta -2t-1\ge n/40$
 and repeating the analysis of the union bound in \eqref{eq:UB3.11.1+}--\eqref{eq:3.11.1large} for suitably large $C$ (with $V_1$ replaced by $V_3$), it follows that, for all $Y$ with $|Y|\le 2t$, $Y$ vertex-expands by a factor of $d$ towards $V_3\setminus (Z\cup \{w_{2,3}\})$ in $G_1$, and hence another application of \Cref{embedparentsfinal} completes the proof.
\end{proof}

\begin{remark}\label{rem:T3} 
Similarly to Lemma~\ref{cor:embedparentsfinal)3}, with probability $1-o(n^{-3})$, 
the image $w_{3,4}$ of the root $r_{3,4}$ of $T_4$ is a $(d,t)$-extendable subgraph of 
$G_1'\coloneq G_1[[n]\setminus ((L''\cup W_1\cup W_2\cup W_3)\setminus \{w_{3,4}\})]$.
\end{remark}

Finally, to embed $T_4$ in the subset of still unoccupied vertices in $[n]\setminus L''$, we use the previous remark together with the following result appearing as \cite[Corollary~3.7]{Mon19} (where we substitute $R=\{w_{3,4}\}$, $T=T_4$, $\{r_{3,4}\}=T_3\cap T_4$ and $H=G_1'$). Recall $t=4(\ln\ln n)/p_1$.

\begin{lemma}\label{lem:edgeinsert3} 
Fix integers $\Delta\ge 3$, $t\ge 1$ and a tree $T$ with maximum degree at most $\Delta$ containing a vertex $r'$. 
Fix also a $t$-joined graph $G$ and suppose $R$ is a $(2\Delta,t)$-extendable subgraph of $G$ with maximum degree at most $\Delta$. 
Suppose that $w\in V(R)$ and $|V(R)| + |V(T)|\le |V(G)| - 4\Delta t - 3t$. Then, there is a copy $W$ of $T$ in $G\setminus (V(R)\setminus \{w\})$ where $r'$ is mapped to $w$ and $R\cup W$ is $(2\Delta,t)$-extendable in $G$.
\end{lemma}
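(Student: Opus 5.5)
The plan is the standard Friedman--Pippenger / Glebov--Johannsen--Krivelevich argument: build the copy $W$ of $T$ one leaf at a time, keeping the current graph $(2\Delta,t)$-extendable throughout. Fix an ordering $r'=x_0,x_1,\dots,x_{|T|-1}$ of $V(T)$ in which every $x_j$ with $j\ge1$ has exactly one neighbour $x_{p(j)}$ among the earlier vertices (for instance, a BFS order from $r'$). Start with $S_0:=R$ and $x_0\mapsto w$. At step $j$, add a new vertex $y_j\notin V(S_{j-1})$, adjacent in $G$ to the image of $x_{p(j)}$, and set $S_j:=S_{j-1}+\{\text{image of }x_{p(j)},y_j\}$. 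We need $S_j$ to remain $(2\Delta,t)$-extendable with $d=2\Delta$. The degree bound is automatic. The vertex $w$ has degree at most $\Delta$ in $R$ plus at most $\Delta$ in $T$, so at most $2\Delta=d$. Every other image vertex has degree at most $\Delta<d$, and we only extend from a vertex whose current degree is $<d$. At the end, $W=S_{|T|-1}\setminus(E(R)\cup(V(R)\setminus\{w\}))$ is the required copy, and $R\cup W=S_{|T|-1}$ is extendable.

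Everything therefore reduces to a one-step extension lemma. Suppose $S$ is $(d,t)$-extendable in the $t$-joined graph $G$, $|V(S)|\le |V(G)|-(2d+3)t$ (this is where $|V(R)|+|V(T)|\le|V(G)|-4\Delta t-3t$ is used), and $s\in V(S)$ has $\deg_S(s)<d$. Then some $y\in N_G(s)\setminus V(S)$ makes $S+sy$ $(d,t)$-extendable. Write $\mathrm{ex}(X):=|N_G(X)\setminus V(S)|-(d-1)|X|+\sum_{x\in X\cap V(S)}(\deg_S(x)-1)\ge 0$ for $|X|\le2t$, and call $X$ \emph{tight} if $\mathrm{ex}(X)=0$.

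The lemma is proved in three steps.

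\textbf{(a) Submodularity.} The function $\mathrm{ex}$ is submodular: $X\mapsto|N(X)\setminus V(S)|$ is submodular, and the other terms are modular. Hence the union and intersection of two tight sets are tight, provided the union has size at most $2t$.

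\textbf{(b) Tight sets are small.} No set $X$ with $t<|X|\le 2t$ is tight. By $t$-joinedness, $N(X)$ misses fewer than $t$ vertices outside $X$. So $|N(X)\setminus V(S)|\ge |V(G)|-|V(S)|-2t-t\ge 2dt>(d-1)|X|$, using the size hypothesis. Combining (a) and (b), the union $U$ of all tight sets is itself tight with $|U|\le t$; if $S$ has no tight set, set $U=\varnothing$.

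\textbf{(c) Choice of $y$.} Take $y\in N(s)\setminus(V(S)\cup U\cup N(U))$, and check that every $X$ with $|X|\le2t$ still satisfies the inequality for $S'=S+sy$. Passing from $S$ to $S'$ lowers $\mathrm{ex}(X)$ only if $y\in N(X)$ (a loss of $1$), and changes nothing else except via terms for $s$ and $y$ themselves. The term for $s$ only helps, since $\deg(s)$ goes up by one. If $y\in X$, its new contribution $-(d-1)+0$ is offset by $y$ leaving the neighbourhood count and by $y$'s own roughly $d$ fresh neighbours. I will handle that case by comparing with $X\setminus\{y\}$, using extendability of $X\setminus\{y\}\cup\{s\}$. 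A violation can therefore only occur when $X$ was tight and $y\in N(X)$. But then $X\subseteq U$, so $y\in N(U)\cup U$, contradicting the choice of $y$.

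\textbf{Main obstacle.} The step I expect to be the hardest is showing that such a $y$ exists, i.e.\ that $N(s)\setminus(V(S)\cup U\cup N(U))\neq\varnothing$. If $s\in U$, this is trivial by definition of $N(U)$: every neighbour of $s$ outside $U\cup V(S)$ lies in $N(U)$, so I must instead argue that $s\notin U$ or handle $U\ni s$ separately. The approach I would try first is to apply extendability to $U\cup\{s\}$, which has size at most $t+1\le2t$. If every neighbour of $s$ outside $V(S)$ lay in $U\cup N(U)$, then $N(U\cup\{s\})\setminus V(S)\subseteq N(U)\setminus V(S)$ (up to $s$ itself). Adding $s$ raises the right-hand side by $(d-1)-(\deg_S(s)-1)=d-\deg_S(s)\ge1$ while the left-hand side does not grow, contradicting tightness of $U$, i.e.\ $\mathrm{ex}(U\cup\{s\})\ge0=\mathrm{ex}(U)$. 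If $s\in U$ already, the same bookkeeping rules out $s$ being saturated relative to $U$. Getting these $\pm1$ accounts exactly right, especially the case $y\in X$, is where I expect the care to be needed.
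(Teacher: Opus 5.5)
Your overall scheme is the standard route behind the cited \cite[Corollary~3.7]{Mon19}; the paper gives no proof of its own. You grow $T$ from $w$ one edge at a time with $d=2\Delta$, which reduces everything to a one-edge extension lemma. That lemma is proved via tight sets, submodularity, and the absence of tight sets of size in $(t,2t]$. The degree bookkeeping and the size hypothesis are handled correctly, and (a) is fine.

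The genuine gap is in (c), exactly at the point you flag. With $U$ the union of \emph{all} tight sets, no admissible $y$ exists once $s\in U$, because every neighbour of $s$ outside $U$ lies in $N_G(U)$. This case really occurs: $\{s\}$ is tight whenever $|N_G(s)\setminus V(S)|=d-d_S(s)$, which extendability permits. It can already happen at the first step with $s=w$. So you cannot argue that $s\notin U$, and the sentence about ruling out saturation when $s\in U$ has no content.

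The fix uses your own observation that the term for $s$ only helps. For $S'=S+sy$ one has, for every $X$,
\[\mathrm{ex}_{S'}(X)=\mathrm{ex}_S(X)-\mathbf{1}[y\in N_G(X)]+\mathbf{1}[s\in X].\]
When $y\in X$, we have $y\notin N_G(X)$ and $y$ contributes $d_{S'}(y)-1=0$. So that case is immediate, and no comparison with $X\setminus\{y\}$ is needed; your ``$-(d-1)+0$'' accounting there is off.

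Hence only tight sets \emph{avoiding} $s$ are dangerous. Let $U$ be the union of the tight sets not containing $s$. By (a) and (b) this family is closed under unions, so $U$ is tight (or empty), $|U|\le t$, and $s\notin U$. Then $\mathrm{ex}_S(U\cup\{s\})\ge 0=\mathrm{ex}_S(U)$ gives
\[|N_G(U\cup\{s\})\setminus V(S)|\ge |N_G(U)\setminus V(S)|+d-d_S(s)>|N_G(U)\setminus V(S)|.\]
This produces $y\in N_G(s)\setminus(V(S)\cup U\cup N_G(U))$, which lies in $N_G(X)$ for no tight $X$ with $s\notin X$.

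A smaller slip is in (b). The right-hand side of the extendability inequality is not bounded by $(d-1)|X|$: each vertex of $X\cap V(S)$ that is isolated in $S$ (such as $w$ if it is isolated in $R$) adds $1$. So the right-hand side can reach $d|X|=2dt$, and your bound ``$\ge 2dt$'' is not strict. Counting $|X\setminus V(S)|$ instead of $2t$, and using at most $t-1$ non-neighbours, gives
\[\mathrm{ex}_S(X)\ge |V(G)|-|V(S)|-d|X|-t+1\ge 2t+1>0.\]
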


This lemma is indeed applicable since exactly $|L|-|L''|\geq|L|-|L'|\ge \Delta \alpha\ge 4\Delta t+3t$ vertices in $[n]$ 
 remain unused in the embeddings of $\hat T_1$, $\hat T_2$, $T_3$ and $T_4$.

\subsubsection{\texorpdfstring{Embedding $L$ in the remainder of $D$: a generalised matching construction}{Embedding the remainder by constructing flexible matchings}}\label{subsec:embedL}

Finally, we need to redistribute the vertices in $[n]\setminus L''$ not used in the embedding of $T$ up to now to complement the layers $(L_i'')_{i=1}^{\imax}$ (thus ensuring that layer $L_i''$ obtains the right number of vertices to match the size of $L_i$) and finish the embedding.
Denote this set of vertices by $R'$ and partition it arbitrarily into sets $(R_i')_{i=1}^{\imax}$ so that, for every $i\in [\imax]$, $|R_i'|+|L_i''|=|L_i|$. We also denote $R_i'' = R_i'\cup L_i''$, let $R''$ be their union, and define $R_{\imax+1}''$ to be the image of $P_{\imax}$ under the constructed embedding.

Recall that, for every $i\in [\imax]$, $P_i$ is the set of parents of $L_i$ in $T$.
Note that $P_i\supseteq (L_{i+1}\setminus L_{i+1}^*)$ for all $i\in [\imax]$. The need to take $L_{i+1}^*$ into account in the previous inclusion is an unfortunate inconvenience; 
        these vertices will be prioritised and embedded before everyone else into $L_{1,i+1}''$ at each embedding step.
Recalling that $L_1^*=\varnothing$, for every $i\in [\imax]$, we call a surjective embedding of the layer $L_i$ into $R''_i$ \emph{good} if $L_i^*$ embeds into $L_{1,i}''$; we also assume that the embedding of $L_{\imax+1}$ (which is done as part of the rollback procedure) is good by default.
Next, by relying on \Cref{lem:match}, we show that a good embedding of the layers $(L_i)_{i=1}^{\Gamma}$ into $(R''_i)_{i=1}^{\Gamma}$, respectively, typically exists.

\begin{proposition}\label{prop:expand}
Fix $t=4(\ln\ln n)/p_1$, a tuple $(x_1,\ldots,x_{\imax+1})$ from $\Lambda_{\imax}$, and $i\in [\imax]$.
The following holds whp: for every tree $T\in \cT_2$ with layer sizes $(x_1,\ldots,x_{\imax+1})$ and a good embedding of each of the layers $L_{i+1},\ldots,L_{\imax+1}$, there exists a good embedding of the layer $L_i$. 
\end{proposition}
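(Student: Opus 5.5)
The plan is to reduce the existence of a good embedding of $L_i$ to an $(f,A_1,B_1)$-matching and then invoke Lemma~\ref{lem:match}. Fix the good embeddings of $L_{i+1},\ldots,L_{\imax+1}$ and let $A$ be the image of $P_i$ inside $R''_{i+1}$. For $v\in A$, let $f(v)$ be the number of children in $L_i$ of the preimage of $v$. Then $f(v)\in[1,\Delta-1]$ and $f(A)=|L_i|=|R''_i|=:|B|$. Let $A_1$ be the image of the parents of $L_i^*$. Since the embedding of $L_{i+1}$ is good and $L_i^*$ hangs only below shifted vertices, $A_1$ lies inside $L''_{1,i+1}$. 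Set $B_1=L''_{1,i}$. An $(f,A_1,B_1)$-matching in $D'$ from $A$ to $B$ gives exactly a surjective good embedding of $L_i$. By Lemma~\ref{lem:layers}\ref{item:leaf-shifting2}, $|A_1|\le\delta_2\alpha$, so we take $\eta=\delta_2\alpha$ and $t=4(\ln\ln n)/p_1$. It then remains to verify properties \ref{item:extended-matching1}--\ref{item:extended-matching5} of Definition~\ref{def:f-match} whp, simultaneously for all trees with the given layer sizes. Whenever a union bound over configurations is needed, we use Remark~\ref{rem:QLprime}: conditionally on out-degree sizes, out-neighbourhoods into the sets $L''_{r,j}$ are uniform.

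Properties \ref{item:extended-matching4} and \ref{item:extended-matching5} come from the $t$-joinedness of $D'$ (the same union bound as for $G_1$) together with small-set expansion.

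For small sets $U$ of size $O(\eta)$, we use that every vertex outside $(L'\setminus L'')\cup Q'_L\cup Q'_P$ has at least $|L'_{r,j}|p_1/2=\Omega(\alpha p_1)=\Omega(K^{-3}C\ln\ln n)$ out-neighbours in each $L''_{r,j}$. The bad vertices were placed into $V_3$, so they do not lie in $A$. Since these out-neighbourhoods are uniformly random subsets of size $\ge \alpha p_1/2$, a first-moment computation should show that no set $U\subseteq L''_{i+1}\cup R'_{i+1}$ of size $\le t$ has its joint out-neighbourhood inside $L''_{r,i}$ of size less than $\Delta|U|$. The failure probability for a fixed $U$ is at most $\binom{n}{\Delta|U|}(\Delta|U|/\alpha)^{|U|\alpha p_1/2}$, which is tiny once $C$ is large compared with $K^3\Delta$. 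This gives \ref{item:extended-matching2} via $L''_{2,i}\subseteq B\setminus B_1$ and \ref{item:extended-matching1} via $L''_{1,i}=B_1$. For \ref{item:extended-matching1}, the sum $f(U)\le\Delta|U|$ uses that $|A_1|\le\delta_2\alpha\ll|L''_{1,i}|=10\alpha$. For $U\subseteq B$, the analogous statement is needed with edges directed from $B$ into $A$: vertices of $R''_i$ send many out-edges into $L''_{r,i+1}$, which lies in $A$ up to the $O(\beta)$ vertices not in the image of $P_i$.

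The main obstacle is \ref{item:extended-matching3}: we need $|N(U)|\ge f(U)+f(A_1)$ for $|U|\in[\eta+1,t]$, that is, expansion by at least $\Delta$ plus an additive $\Delta\delta_2\alpha$. Because $|U|>\delta_2\alpha$, it suffices to have expansion factor $2\Delta$ for sets of size up to $t$. We would first obtain this from the same uniform-neighbourhood first-moment bound, now with target $2\Delta|U|$. The worry is that for $|U|$ close to $t$, $2\Delta t$ may not be small relative to $|L''_{r,i}|\approx 10\alpha$. We therefore check $t\Delta\ll\alpha$: since $t=\Theta(n\ln\ln n/(C\ln n))$ and $\alpha=n\ln\ln n/(K^3\ln n)$, this holds once $C\gg\Delta K^3$. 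The dependence on $\Delta$ is then absorbed by instead using all $2\imax$ sets $L''_{r,j}$ (recall $\imax=\Delta^3$), or else accepted as a constant for fixed $\Delta$.

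Finally, we take a union bound over $i$ and over the constantly many layer-size vectors, and apply Lemma~\ref{lem:match}.
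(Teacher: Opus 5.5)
Your reduction to Lemma~\ref{lem:match} is the same as the paper's: the $f$-matching from $A$ into $B=R_i''$ with $B_1=L_{1,i}''$ and $\eta=\delta_2\alpha$. Your handling of \ref{item:extended-matching1} and \ref{item:extended-matching2} for $|U|\le\eta$ also agrees with the paper. The genuine gap is in \ref{item:extended-matching3}. You ask for expansion by a factor $2\Delta$ for all $U\subseteq A$ with $|U|\le t$, and you note this needs $C\gg \Delta K^3$. But $C$ must be independent of $\Delta$; that is the point of Theorem~\ref{thm:univ_trees}. Once $\Delta$ is large compared with $C/K^3$, the required expansion is not just unlikely but impossible. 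Indeed $t\approx 12K^3\alpha/C$, so $2\Delta t$ exceeds $|B|=|L_i|$, which can be as small as about $20\alpha$. Neither proposed remedy works. Out-neighbours in $L''_{r,j}$ with $j\neq i$ lie outside $B$ and do not count in the bipartite graph $(A,B)$. Letting $C$ depend on $\Delta$ just gives back Montgomery's theorem. The same objection applies to your claim of $\Delta$-fold expansion for all $|U|\le t$ in the small-set step. That expansion is only needed, and only true with absolute $C$, for $|U|\le\eta$.

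The missing idea is to use the global constraint on $f$: since $f\ge 1$ on $A$ and $f(A)=|B|$, one has $f(U)\le\min\{\Delta|U|,\,|B|-|A|+|U|\}$. Writing $b=a+k(\Delta-1)$, the paper therefore only requires two things. First, $|N^-_D(U)\cap L_i''|\ge \Delta u+\eta$ for $u\in[\eta,\min\{k,t\}]$. Second, $|N^-_D(U)\cap L_i''|\ge\Delta k+(u-k)+\eta$ for larger $u\le t$. In both ranges the target is at most roughly $b-a/2$, because $a\ge 19\alpha\gg t+\eta$. So a bad $U$ must have all its uniformly random out-neighbourhoods, each of size $\Omega(\alpha p_1)$, inside a subset of $L_i''$ of relative size at most $1-a/(2b)$. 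This gives a bound of the form $\bigl(\mathrm{e}^{\Delta+2}(n/u)\,\mathrm{e}^{-ap_1/8}\bigr)^u=n^{-\omega(1)}$. The decay rate $ap_1\ge 19\alpha p_1$ does not involve $\Delta$, and the $\Delta$-dependent prefactor is harmless for large $n$. A smaller point concerns \ref{item:extended-matching4}: you should expand into $L''_{2,i+1}$, which lies in $A\setminus A_1$. You also need to treat $i=\Gamma$ separately via $P'_{2,\Gamma}\cup P''_{2,\Gamma}$, since there is no set $L''_{\Gamma+1}$.
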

\begin{proof}
Fix a tree $T\in \cT_2$ with layer sizes $(x_1,\ldots,x_{\imax+1})$ and good embeddings of the layers $L_{i+1},\ldots,L_{\imax+1}$.
We denote by $A_1=A_1(i)\subseteq (L''_{1,i+1}\cup \ldots\cup L_{1,\imax+1}'')$  the image of the parents of the vertices in $L_i^*$, $A=A(i) \supseteq A_1$ the set of images of the vertices in $P_i$, 
$a=|A|$, $B=B(i)=R_i''$, and $b=|B|$.
 Recall Definitions~\ref{def:f-matching} and~\ref{def:f-match}. 
By Lemma~\ref{lem:match} and since whp $G_1\subseteq G$ is a $t$-joined graph (due to Lemma~\ref{cor:embedparentsfinal)3}), it remains to show that points \ref{item:extended-matching1}--\ref{item:extended-matching4} from Definition \ref{def:f-match} 
 typically hold for $B_1=L_{1,i}''$ 
  and for all valid choices of pairs of sets $A_1\subseteq A$ (the precise conditions that these two sets have to satisfy are disclosed below) and $f$ (corresponding to the degree sequence in the bipartite graph $(L_i,P_i)$), with $t$ defined in the statement and $\eta$ to be defined shortly. 
 Define $k\ge 1$ such that $b = k\Delta + (j-1) + (a-k)$ 
for some $j\in [\Delta-1]$. 
 For simplicity and clarity of the argument, we assume that $j=1$ so that $b = a+k(\Delta-1)$; 
the general version follows along the same lines. In order to show the expansion conditions \ref{item:extended-matching1}--\ref{item:extended-matching3} from Definition \ref{def:f-match} hold whp, we instead prove whp the following stronger conditions hold 
 for any admissible function $f$, any set $A_1\subseteq (L''_{1,i+1}\cup \ldots\cup L_{1,\imax+1}'')$ with $|A_1| \le f(A_1)\le \delta_2 \alpha\eqqcolon \eta$ (where the second inequality is implied by \ref{item:leaf-shifting2} in Lemma~\ref{lem:layers} for some small $\delta_2 = \delta_2(\Delta)$), any $A\supseteq A_1$, and $ B_1:=L_{1,i}''$: 
\begin{enumerate}[start=0,label=(i\arabic*)]
    \item \label{item:prop5.15a} for all $U\subseteq A$ with $u = |U| \in [1,\eta]$, we have\footnote{Recall $N^-_D(U)$ denotes the external neighbourhood of $U$ in the directed graph $D$.} $\min\{|N^-_D(U)\cap L_{1,i}''|,|N^-_D(U)\cap L_{2,i}''|\}\ge \Delta u$, 
    \item \label{item:prop5.15b} for all $U\subseteq A$ with $u = |U|\in [\eta, \min\{k,t\}]$, we have $|N^-_D(U)\cap L_i''|\ge \Delta u + \eta$,
    \item \label{item:prop5.15c} for all $U\subseteq A$ with $u = |U|\in [\max\{\eta, \min\{k,t\}\},t]$, we have $|N
    ^-_D(U)\cap L_i''|\ge \Delta k + (u-k) + \eta$.
\end{enumerate}
To understand why \ref{item:prop5.15a}--\ref{item:prop5.15c} imply points \ref{item:extended-matching1}--\ref{item:extended-matching3} from Definition \ref{def:f-match}, note that
\begin{itemize}
    \item for every $U\subseteq A_1$, by \ref{item:prop5.15a}, we have $|U|\le |A_1|\le \eta$ and thus $|N(U)\cap B_1|\ge \Delta u\ge f(U)$,
    \item  for every $U\subseteq A\setminus A_1$ with $|U|\le \eta$, by \ref{item:prop5.15a}, we have $|N(U)\setminus B_1| \ge |N(U)\cap L_{2,i}''|\ge \Delta u\ge f(U)$,
    \item for every $U\subseteq A$ with $|U|\in [\eta+1,\min\{k,t\}]$, by \ref{item:prop5.15b}, we have 
    \[|N(U)|\ge \Delta u+\eta\ge f(U)+f(A_1)\ge f(U\cup A_1),\]
    \item for every $U\subseteq A$ with $|U|\in [\max\{\eta,\min\{k,t\}\},t]$, using \ref{item:prop5.15c} and that $b=\Delta k+(a-k)$, we have 
    \[|N(U)|\ge \Delta k+(u-k)+\eta\ge b - (a-u) + \eta\ge f(A)-|A\setminus U| + f(A_1)\ge f(U)+f(A_1)\ge f(U\cup A_1).\] 
\end{itemize}

With an eye towards the proof of \ref{item:prop5.15a}--\ref{item:prop5.15c}, define $\cE$ to be the event that every vertex $v\in R''\setminus R_i''$ satisfies $e_D(v,L_{r,i}'')\geq |L_{r,i}''|p_1/2$ for every $r\in\{1,2\}$.
We recall that by Proposition \ref{prop:props L'} this event occurs with probability $1-o(1)$.
In the remainder of the proof, we condition on $\cE$ and continue the proof with three claims asserting that the above properties hold whp.

\begin{claim}
Property \ref{item:prop5.15a} is satisfied whp.
\end{claim}
\begin{proof}
Fix $u\in [1,\eta]$, set $b_1\coloneqq |L_{1,i}''|$ and $b_2\coloneqq |L_{2,i}''|$, and let $S_1\subseteq L_{1,i}''$ and $S_2\subseteq L_{2,i}''$ be arbitrary sets with $|S_1|=|S_2|=\Delta u-1$.
 Then, by the union bound, the probability that \ref{item:prop5.15a} fails for some set $U\subseteq A$ 
of size $u$ is bounded from above by
\begin{equation}\label{eq:union_bound0}
\begin{split}
\binom{n}{u} 
&\binom{b_1}{\Delta u-1} \mathbb P(N^-_D(U)\cap L_{1,i}''\subseteq S_1\mid \cE)
+\binom{n}{u} \binom{b_2}{\Delta u-1} \mathbb P(N^-_D(U)\cap L_{2,i}''\subseteq S_2\mid \cE).
\end{split}
\end{equation}
Recall Remark~\ref{rem:QLprime}  
which assets the following: for each $r\in \{1,2\}$, conditionally on the fixed out-degree $e_D(v,L_{r,i}'')\ge b_rp_1/2$ of each vertex $v\in [n]\setminus (Q_L\cup L'\setminus L'')$
towards $L_{r,i}''$, the out-neighbours of $v$ in~$L_{r,i}''$ are distributed uniformly at random over all subsets of $L_{r,i}''$ of size $e_D(v,L_{r,i}'')$ and independently for different vertices.
Thus, for $r\in\{1,2\}$, the corresponding probabilities in \eqref{eq:union_bound0} are at most 
\begin{equation}\label{eq:bounds-for-uniform}
    \bigg(\prod_{i=1}^{b_r p_1/2} \bigg(\frac{|S_r|-i+1}{b_r-i+1}\bigg)\bigg)^u\le \bigg(\frac{|S_r|}{b_r}\bigg)^{ub_rp_1/2}= \bigg(\frac{\Delta u-1}{b_r}\bigg)^{ub_rp_1/2}.
\end{equation}
Inserting the latter expression in~\eqref{eq:union_bound0} and using standard approximations imply an upper bound of 
\begin{equation}\label{eq:u0}
\begin{split}
\bigg(\frac{\e n}{u}\bigg)^u \sum_{r\in\{1,2\}} \bigg(\frac{\e b_r}{\Delta u-1}\bigg)^{\Delta u} \bigg(\frac{\Delta u-1}{b_r}\bigg)^{ub_rp_1/2}
&\le \bigg(\frac{\e^{\Delta+1} n}{u}\bigg)^u \sum_{r\in\{1,2\}} \bigg(\frac{\Delta u}{b_r}\bigg)^{ub_rp_1/3}\\
&= \sum_{r\in\{1,2\}}  \bigg(\frac{\e^{\Delta+1} n}{u}\cdot \bigg(\frac{\Delta u}{b_r}\bigg)^{b_rp_1/3}\bigg)^u.
\end{split}
\end{equation}
In the first inequality we use the fact that $b_1,b_2$ are of order $\alpha$ so that $b_rp_1=\omega(1)$. This follows from the choices of sizes of $L_{1,i}'$ before Proposition~\ref{prop:props L'}, and the consequence for the sizes of $L_{1,i}''$ in part (i) of the proposition. Each of the summands of the latter expression can be rewritten as 
\begin{equation}\label{eq:exponential}
\exp\bigg(\bigg(\frac{b_rp_1}{3}-1\bigg)u\ln(u) - \frac{b_rp_1}{3}u\ln\bigg(\frac{b_r}{\Delta}\bigg) + u\ln(\e^{\Delta+1} n)\bigg).
\end{equation}
A computation of the second derivative shows that the function in \eqref{eq:exponential} is $\log$-convex (in $u$). Consequently, its maximum over the interval $u\in [1,\eta]$ is attained at $u=1$ or at $u=\eta$.
In the former case,~\eqref{eq:exponential} is of order $n^{-\omega(1)}$. 
To analyse the latter case, note that $b_r\ge 9\alpha \ge 9\Delta \eta $ for both $r\in\{1,2\}$. Hence, when $u=\eta$, each of the summands in \eqref{eq:u0} is at most
\[\bigg(\frac{\e^{\Delta+1} n}{u}\cdot \bigg(\frac{1}{9}\bigg)^{3\alpha p_1}\bigg)^u = n^{-\omega(1)},\]
where we used that $3\alpha p_1\ge \ln\ln n$ for $C\ge K^4$ and $u=\eta=\Theta(\alpha)$. 
\end{proof}
\begin{claim}
Property \ref{item:prop5.15b} is satisfied whp.
\end{claim}

\begin{proof}
Fix $u \in [ \eta,\min\{k,t\}]$, set $b_1\coloneqq |L_{1,i}''|$ and $b_2\coloneqq |L_{2,i}''|$.
The probability that \ref{item:prop5.15b} fails for some set $U\subseteq A$ of size $u$ is at most
\begin{equation}\label{eq:union_bound}
\begin{split}
\binom{n}{u} \sum_{s_1,s_2} \prod_{r\in \{1,2\}} \bigg(\binom{b_r}{s_r} \mathbb P(N^-_D(U)\cap L_{r,i}''\subseteq {S}_r\mid \cE)\bigg),
\end{split}
\end{equation}
where the sum is over $s_1\in [0,b_1]$ and $s_2\in [0,b_2]$ such that $s_1+s_2=\Delta u + \eta -1$,
 and $S_1\subseteq L_{1,i}''$ and $S_2\subseteq L_{2,i}''$ are arbitrary fixed sets of size $s_1,s_2$, respectively. Also, for each $r\in \{1,2\}$, similarly to \eqref{eq:bounds-for-uniform}, the corresponding probability in \eqref{eq:union_bound} is at most
$({s_r}/{b_r})^{ub_rp_1/2}.$
Inserting this bound in~\eqref{eq:union_bound} and using that $b_rp_1 = \omega(1)$ and $s_1+s_2\leq(\Delta+1)u$, as well as standard approximations, we get the following upper bound: 
\begin{align}
\bigg(\frac{\e n}{u}\bigg)^u \sum_{s_1,s_2} \prod_{r\in\{1,2\}} 
&\bigg(\frac{\e b_r}{s_r}\bigg)^{s_r} \bigg(\frac{s_r}{b_r}\bigg)^{ub_rp_1/2}\le \bigg(\frac{\e^{\Delta+2} n}{u}\bigg)^u \sum_{s_1,s_2} \prod_{r} \bigg(\frac{s_r}{b_r}\bigg)^{ub_rp_1/3}\nonumber\\
&= \sum_{s_1,s_2}  \bigg(\frac{\e^{\Delta+2} n}{u} \bigg(\frac{s_1}{b_1}\bigg)^{b_1p_1/3} \bigg(\frac{\Delta u+\eta-1-s_1}{b_2}\bigg)^{b_2p_1/3}\bigg)^u.\label{eq:u}
\end{align}

To estimate~\eqref{eq:u}, a simple derivative computation shows that the largest term in~\eqref{eq:u} corresponds to $s_r = (\Delta u +\eta-1) b_r/(b_1+b_2)$ for $r\in \{1,2\}$,
and thus the maximal term is at most \begin{equation}\label{eq:maxu}
\bigg(\frac{\e^{\Delta+2} n}{u}\bigg(\frac{\Delta u +\eta}{b_1+b_2}\bigg)^{(b_1+b_2)p_1/3}\bigg)^u.
\end{equation}
Next, we show that~\eqref{eq:maxu} is of order $n^{-\omega(1)}$, which combined with a union bound over the $O(n)$ values for $(s_1,s_2)$ in~\eqref{eq:u} concludes the proof of the claim.
To do so, we distinguish between two cases depending on whether or not $k\ge t$. 
Before dealing with these cases, we note the following inequality used in both of them:
\begin{equation}\label{eq:b1b2}
    b_1+b_2= b-|R_i'| \ge b-2\Delta^{-2}\alpha \ge \max\{3b/4,\Delta u+\eta\}.
\end{equation}
The last inequality can be justified as follows. For the first term in the maximum, note that $b\ge a\ge 19\alpha$ by definition, and thus $b\ge 3b/4+2\Delta^{-2}\alpha$.
For the second term in the maximum, we have
\[\Delta u+\eta+2\Delta^{-2}\alpha\le \Delta \min\{k,t\}+\eta+2\Delta^{-2}\alpha \le (\Delta-1) k + t+\eta+2\Delta^{-2}\alpha\le (\Delta-1) k+a=b,\]
where the last inequality holds provided that $C\geq K^4$ and $K$ sufficiently large.

\vspace{1em}
\noindent
\textbf{Case 1.} Assume that $k\ge t$ or equivalently that $b\ge a+t(\Delta-1)$. Using \eqref{eq:b1b2}, we can bound \eqref{eq:maxu} by
\begin{equation}\label{eq:maxunew}
    \bigg(\frac{\e^{\Delta+2} n}{u}\bigg(\frac{\Delta u +\eta}{b-2\Delta^{-2}\alpha}\bigg)^{bp_1/4}\bigg)^u.
\end{equation}
In turn, using the assumption of this case, and provided that $C\geq K^4$ and $K$ is large enough, we have
\begin{equation}\label{eq:inequalities}
a\ge 19\alpha\ge 2(t+\eta+2\Delta^{-2}\alpha)\quad\text{and}\quad \frac{\Delta u+\eta}{b-2\Delta^{-2}\alpha}\le \frac{\Delta t+\eta}{b-2\Delta^{-2}\alpha} \le \frac{b+\eta-(a-t)}{b-2\Delta^{-2}\alpha}\le 1-\frac{a/2}{b}. 
\end{equation}
Finally, by combining the inequalities \eqref{eq:maxunew} and \eqref{eq:inequalities}, using the standard bound $1-x\le \e^{-x}$ for $x\ge 0$ and letting $C\geq K^{3}$, we can dominate \eqref{eq:maxu} by
\[\bigg(\frac{\e^{\Delta+2} n}{u}\cdot \bigg(1-\frac{a}{2b}\bigg)^{bp_1/4}\bigg)^u \le \bigg(\frac{\e^{\Delta+2} n}{\eta} \cdot \exp\bigg(-\frac{a}{2b}\cdot\frac{bp_1}{4}\bigg)\bigg)^u = \bigg(\frac{\e^{\Delta+2} n}{\eta}\cdot  \e^{-ap_1/8}\bigg)^u = n^{-\omega(1)}.\]

\vspace{1em}
\noindent
\textbf{Case 2.}  Assume that $k\le t$ or equivalently that $b\le a+t(\Delta-1)$. Using \eqref{eq:b1b2}, we can bound the expression in \eqref{eq:maxu} by
\begin{equation}\label{eq:maxu2}
\bigg(\frac{\e^{\Delta+2} n}{u}\bigg(\frac{\Delta u +\eta}{b-2\Delta^{-2}\alpha}\bigg)^{bp_1/4}\bigg)^u.
\end{equation}
Since the above expression is a $\log$-convex function of $u$, and thus its maximum is attained at $u=\eta$ or at $u=k$. In both cases, using the inequalities $a\ge 2(k+\eta)$ and $\Delta k + \eta\le b-a/2$ (which holds provided that $C\geq K^{3}$), a similar analysis to the previous case shows that \eqref{eq:maxu2} is of order $n^{-\omega(1)}$, concluding the proof of this claim.
\end{proof}

\begin{claim}
Property \ref{item:prop5.15c} is satisfied whp.
\end{claim}

\begin{proof}
    Note that if $k\geq t$, there is nothing to prove, and thus we may assume that $k\leq t$, or equivalently that $b\le a+t(\Delta-1)$.
    Fix $u\in [\max\{\eta, k\},t]$ and, for every pair of non-negative integers $s_1,s_2$ such that $s_1+s_2 = \Delta k+(u-k)-1+\eta\le u+(\Delta-1)k+\eta$, fix a pairs of sets $S_1\subseteq L_{1,i}''$ and $S_2\subseteq L_{2,i}''$ with sizes $s_1$ and $s_2$, respectively.
    Then, the probability that \ref{item:prop5.15c} fails for some set $U\subseteq A$ of size $u$ is at most
\begin{equation}\label{eq:union_bound:i2}
\binom{n}{u} \sum_{s_1,s_2}\prod_{r\in\{1,2\}} \bigg(\binom{b_r}{s_r} \mathbb P(N^-_D(U)\cap L_{r,i}''\subseteq S_r\mid \cE)\bigg)
\end{equation}
where the sums here and below are over $s_1\in [0,b_1]$ and $s_2\in [0,b_2]$ such that $s_1+s_2=\Delta k+(u-k) + \eta -1$.
Similar computations to the previous claims show that the probabilities in~\eqref{eq:union_bound:i2} are at most $({s_r}/{b_r})^{ub_rp_1/2}$. Thus, similarly to \eqref{eq:u}, the above and standard approximations imply that~\eqref{eq:union_bound:i2} is at most
\[\sum_{s_1,s_2} \bigg(\frac{\e^{\Delta+2} n}{u}\bigg(\frac{s_1}{b_1}\bigg)^{b_1p_1/3} \bigg(\frac{\Delta k + (u-k)-1+\eta-s_1}{b_2}\bigg)^{b_2p_1/3}\bigg)^u.\]
Moreover, using an analogous derivative computation to the one leading to~\eqref{eq:maxu} and the inequalities~\eqref{eq:b1b2}, the above can be further bounded by
\[\bigg(\frac{\e^{\Delta+2} n}{u}\bigg(\frac{\Delta k + (u-k)+\eta}{b_1+b_2}\bigg)^{(b_1+b_2)p_1/3}\bigg)^u\le \bigg(\frac{\e^{\Delta+2} n}{u}\bigg(\frac{\Delta k + (u-k)+\eta}{b-2\Delta^{-2}\alpha}\bigg)^{bp_1/4}\bigg)^u.\] 
 Assuming that $C\geq K^4$, we may replace the second inequality in~\eqref{eq:inequalities} with 
\[\frac{u+(\Delta-1)k+\eta}{b-2\Delta^{-2}\alpha} = \frac{(b-2\Delta^{-2}\alpha)-(a-u-\eta-2\Delta^{-2}\alpha)}{b-2\Delta^{-2}\alpha}\le 1-\frac{a/2}{b}.\]
Finally, repeating the analysis as in the previous claims, we find that~\eqref{eq:union_bound:i2} is of order $n^{-\omega(1)}$ for every $u\in [\max\{\eta, k\},t]$. 
A union bound over the $O(n^2)$ choices of $(u,s_1,s_2)$ concludes the proof of the claim. 
\end{proof}

Finally, we derive \ref{item:extended-matching4} thus conclude the proof. It suffices to show that $B$ expands in $G[L_{2,i+1}'',B]$ with probability $1-n^{-\omega(1)}$: this is enough since $A_1$ is disjoint from $L_{2,i+1}''\subseteq A$ (the latter inclusion holds since $L_{i+1}\setminus L^*_{i+1}\subset P_i$ embeds into $A$ whereas $L^*_{i+1}$ embeds into $L''_{1,i+1}$).
Fix $u\in [t]$ and assume that $i\in [\imax-1]$; the case $i=\imax$ will be treated slightly differently in the end of the proof. 
We bound from above the probability that there exists a set $U\subseteq B$ of size $u$ which does not expands by a factor of 1 towards $L_{2,i+1}''$.
Namely, setting $a_2\coloneqq |L_{2,i+1}''|$ and fixing a set $S\subseteq L_{2,i+1}''$ of size $u-1$, by the union bound, the expansion property \ref{item:extended-matching4} is violated with probability at most
\begin{equation}\label{eq:union_bound_B0}
\binom{a_2}{u-1} \binom{n}{u} \mathbb P(N^-_D(U)\cap L_{2,i+1}''\subseteq S\mid \cE).
\end{equation}
The probability of the latter event is at most $((u-1)/a_2)^{ua_2p_1/2}$. Assuming $C\geq K^4$, we get that the expression in \eqref{eq:union_bound_B0} is bounded from above by 
\begin{align*}
\bigg(\frac{\e a_2}{u-1}\bigg)^{u-1} \bigg(\frac{\e n}{u}\bigg)^u \bigg(\frac{u-1}{a_2}\bigg)^{ua_2p_1/2}
&= \bigg(\frac{\e a_2}{u-1}\frac{u-1}{a_2}\bigg)^{u-1} \bigg(\frac{\e n}{u}\frac{u-1}{a_2}\bigg)^{u}\bigg(\frac{u-1}{a_2}\bigg)^{ua_2p_1/2-2u+1}\\
&\le \bigg(\e\cdot \frac{\e n}{a_2}\cdot \bigg(\frac{u}{a_2}\bigg)^{a_2p_1/3}\bigg)^u.
\end{align*}
Note that, as $C\geq K^4$, we have $a_2 \geq 9\alpha\geq 10t$ and thus $n/a_2 = O(\ln n)$ and $a_2p_1\ge 9\alpha p_1\ge 4\ln\ln n$.
We conclude that \eqref{eq:union_bound_B0} is of order $n^{-\omega(1)}$ for every $u\in [t]$. The union bound over $u\in [t]$ and $i\in [\imax-1]$ concludes the argument for all layers but the last one.

Finally, when $i=\imax$, we recall that every vertex $v\in R_{\imax}''$ either sends $\alpha p_1/3$ edges in $D$ towards $P_{2,\imax}'$ (and the neighbourhood of $v$ is distributed uniformly at random conditionally on its size), or has $\alpha p_1/2$ out-neighbours associated exclusively to it in $P_{2,\imax}''$.
Denote the set of all $v$ that do not have an exclusive set of $\alpha p_1/2$ out-neighbours in $P''_{2,\Gamma}$ by $R'''_{\imax}$ and note that it suffices to show that all subsets of $R'''_{\imax}$ of size at most $t$ expand by a factor of 1 towards $L''_{2,\Gamma}$, which is done analogously to the previous case by applying the union bound similarly to~\eqref{eq:union_bound_B0}.
\end{proof}

The typical $\cT_2'$-universality of $G(n,C\ln n/n)$ for a suitably large absolute constant $C$ now follows from $\imax$ iterations of \Cref{prop:expand}.

\subsection{\texorpdfstring{Case 2: rollback, absorption, and linking systems}{First case: matchings rooted at a linking system}}

In this section, we are concerned with the universality of $\cT_2''$. For the convenience of the reader, we recall some notation and definitions. The family $\cT_2$ contains all trees with at most $\exp(\Delta^5)\alpha$ leaves and no $K\alpha$ vertex-disjoint bare paths~of~length~$n/(K^2\alpha)$.
The family $\cT_2''$ is then defined as the collection of trees $T$ for which Algorithm \ref{algo:leaf-cut} applied to $T_1$ and $T_2$ yields $\min\{\mathbf{i_1},\mathbf{i_2}\} < \imax+1$. 
The goal of this section is to prove the following theorem.

\begin{theorem}\label{thm:univ_super-adapted}
There is a constant $C>1$ such that, for every $\Delta\ge 2$ and $p=p(n)\coloneq C\ln n/n$, whp $G(n,p)$ is $\cT_2''$-universal.  
\end{theorem}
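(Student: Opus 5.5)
The plan follows Case~2 of the roadmap in Section~\ref{sc:overview}, combining the linking-system machinery of Section~\ref{sec:long bare paths} with the layered absorption of Section~\ref{sec:Gamma+1}.

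\textbf{Structural step.} Fix $T\in\cT_2''$. Say Algorithm~\ref{algo:leaf-cut} stops at round $r=\min\{\mathbf{i_1},\mathbf{i_2}\}<\imax+1$ on $T_1$ (the case of $T_2$ is symmetric). Then the leaves of the pruned forest $\hat T$ contain no $24$-separated set of size $(10+\delta_1)\alpha$. Applying Corollary~\ref{cor:separate} with $h=n/(K^2\alpha)$, $d=12$ and $S=\{r_{1,2},r_{2,3}\}$, and using that $\hat T$ has linear order, I get $\Omega(n/h)=\Omega(K^2\alpha)$ vertex-disjoint bare paths of length $h$ in $T^-:=T\setminus(L_1\cup\dots\cup L_r)$. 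Since the few leaves of $\hat T$ force its structure, I take $\alpha$ of these paths to form a $\cT_1^*$-type configuration inside $T^-$. I then split the long paths into two groups: $\alpha/2$ of them go through the linking system, and the remaining ones are chosen to carry at least half of the parents of layer $L_{r+1}$. Next I run Algorithm~\ref{algo:layers} on the $r+1$ layers. As in Lemma~\ref{lem:layers}, this lets me assume the layer sizes lie in a bounded set $\Lambda_r$, so the final union bound is only over $O_\Delta(1)$ size vectors, together with $n^{O(1)}$ choices of sizes for the remaining pieces.

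\textbf{Random graph preparation.} I split $G=G_1\cup D'\cup G_2$ into independent pieces.
\begin{itemize}
\item Reveal $D$ on preselected sets $L_1'',\dots,L_r''$ (Proposition~\ref{prop:props L'}, Lemma~\ref{lem:QL2}), and set aside matchmaker sets $P_{1,r}',P_{1,r}''$ for the parents of $L_r$, as in Lemmas~\ref{lem:Q'} and~\ref{lem:W}.
\item In $G_1$, fix the partition $V_1\sqcup V_2\sqcup V_3$ of Definition~\ref{def:adapted}, and use Lemma~\ref{lem:random-graphs-adapted} and Lemma~\ref{lm:paths} to obtain the MMS chain and the sets $A_1,A_2\subseteq V_3$.
\item Use $G_2[V_3]$ for the $(A_1,A_2;2k)$-linking system from Lemma~\ref{cor:linking-systems}.
\end{itemize}

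\textbf{Embedding order.}
\begin{enumerate}
\item Replace the rolled-back trees $\hat T_1,\hat T_2,T_3$ of Lemma~\ref{cor:embedparentsfinal)3} by the forest $F^+$. This is $T^-$ minus the interiors of the linking-system paths, and it still contains the second group of long paths. I embed $F^+$ into $G_1[V_1]$ by Proposition~\ref{embedparentsfinal}, so that the images of the $16$-separated $L_{r+1}$-vertices on the second group cover $P_{1,r}'\cup P_{1,r}''$. I also cover the bad sets $Q_L',Q_P'$ and $L'\setminus L''$ there, and finish the leftover subtree with Lemma~\ref{lem:edgeinsert3}.
\item The unused vertices $W$ are absorbed into $V_2\cup W$ via Lemma~\ref{lm:paths}. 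This closes the linking-system paths between the endpoint sets $X_1,X_2$ exactly as in the proof of Theorem~\ref{thm:univ_adapted}.
\item Any surplus that must go into the layers is redistributed into the sets $R_i''$. The layers $L_r,\dots,L_1$ are then embedded top-down by $r$ applications of Proposition~\ref{prop:expand}, using Lemma~\ref{lem:match}.
\end{enumerate}

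\textbf{Main obstacle.} The hard step is the bookkeeping that interlaces the two absorbers. Three things must hold at once:
\begin{itemize}
\item the parents of $L_{r+1}$ that lie on linking-system paths must not be needed for covering $P_{1,r}'$;
\item the vertex counts must match exactly, with the buffer absorbed partly by the MMS chain and partly by the layers;
\item every event must hold with probability $1-o(n^{-3})$ to survive the union bound over size vectors.
\end{itemize}
To settle the second point, I would first fix the linking-system part to have exactly the size dictated by Definition~\ref{def:adapted}, and send the remaining flexible $\Theta(\Delta\alpha)$ vertices only into the layers, as in Section~\ref{sec:Gamma+1}.
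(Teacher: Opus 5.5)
\textbf{There is a genuine gap in how you close the vertex count.} Your architecture is the paper's: rollback into $V_1$ covering the top-layer matchmakers and the bad sets, then the MMS chain plus linking system for the long paths, then Proposition~\ref{prop:expand} layer by layer. The trouble is your last step. You finish $F^+$ with Lemma~\ref{lem:edgeinsert3} and then push the resulting flexible $\Theta(\Delta\alpha)$ vertices into the layer absorbers ``as in Section~\ref{sec:Gamma+1}''. Lemma~\ref{lem:edgeinsert3} leaves at least $4\Delta t+3t$ vertices unused, with $t=4\ln\ln n/p_1=\Theta(K^3\alpha/C)$. So this surplus has order $\Delta K^3\alpha/C$.

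In Case~1 that surplus is affordable only because there are $\imax=\Delta^3$ layers, each taking $\Delta^{-2}\alpha$ foreign vertices. For $T\in\cT_2''$ the number $r$ of layers can be $1$, and a layer may have size only $\Theta(\alpha)$ with an absolute constant. Proposition~\ref{prop:expand} only certifies neighbours inside $L_i''$, and the parents must see $\Delta k+(u-k)+\eta$ vertices of $L_i''$ for every $u\le t$. This caps the foreign part at roughly $|R_i'|\le |P_i|-t-\eta=O(\alpha)$. Your scheme therefore needs $C\gtrsim \Delta K^3/r$, a $\Delta$-dependent constant, which is exactly what the theorem must avoid. The roadmap flags this obstruction: the layering may absorb far fewer vertices than the rollback buffer.

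\textbf{How the paper closes the count.} In Case~2 there is no Lemma~\ref{lem:edgeinsert3} step and no surplus at all.
\begin{itemize}
\item The absorbers $R_i''$ are pre-sized to exactly $x_i$, containing only the predetermined (about $\Delta^{-2}\alpha$) foreign vertices each.
\item $V_1$ has size $|V(F_1\cup F_2\cup F_3)|+n/K^3$ and is split into $V_1^1,V_1^2,V_1^3$ for rolling back $F_1,F_2,F_3$.
\item The whole leftover $W$, of size exactly $n/K^3$, is absorbed by the MMS chain of Lemma~\ref{lm:paths}. The count closes because $\alpha(\ell-1)=|V_2|+|V_3|+n/K^3$.
\end{itemize}

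\textbf{A second, structural point.} You invoke Algorithm~\ref{algo:layers} and Lemma~\ref{lem:layers} ``as in Case~1''. That presupposes the Case~1 shape: all roots on one level, each carrying a full-height tree. The forest hanging off the long paths does not have this shape. The paths are bare only in $\hat T_j$, and trees of uneven heights hang off them, so the restricted layers need not have size $\Omega(\alpha)$ with a $\Delta$-independent constant. The paper obtains the right shape through Lemma~\ref{lem:22-separated roots}, using three ingredients:
\begin{itemize}
\item height equalisation (Definition~\ref{def:layers-small});
\item choosing $T^-_{\rm paths}$ disjoint from the roots of the $K\alpha$ highest trees;
\item two root-carrying pieces $F_1,F_2$, with shifting confined to $F_1$.
\end{itemize}
The last ingredient ensures $P_{2,l}'\cup P_{2,l}''$ is covered by unshifted roots, and this second pair is what the top-layer step of Proposition~\ref{prop:expand} actually uses. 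You prepare only $P_{1,r}'\cup P_{1,r}''$.
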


\paragraph{First step: a structural decomposition}
We begin by exhibiting a convenient structural decomposition for every $T\in \cT_2''$.
To this end, fix a tree $T\in \cT_2''$, recall the tree $\hat T$ obtained as output of Algorithm~\ref{algo:leaf-cut}, and let $j\in \{1,2\}$ be such that the tree $\hat T_j = \hat T\cap T_j$ 
has no $22$-separated set of $(10+\delta_1)\alpha$ leaves. In particular, the number of leaves of $\hat T_j$ is at most
\[(10+\delta_1)\alpha (1+\Delta+\ldots+\Delta^{21})\le 20 \Delta^{22} \alpha.\]
 As a consequence, for $\mathbf{i}=\max\{\mathbf{i_1},\mathbf{i_2}\}$, we have
\[|(D_{0}\cup D_{1}\cup \ldots\cup D_{\mathbf{i}})\cap \hat T_j|\le (1+\Delta+\ldots+\Delta^{\mathbf{i_j}})20 \Delta^{22} \alpha\le 40 \Delta^{4\imax} \alpha,\]
 and therefore, for every suitably large $n$,
\[|V(\hat T_j)|\ge \min\{|V(T_1)|,|V(T_2)|\}-40 \Delta^{4\imax} \alpha\ge |V(T\setminus T_1)|/3-40 \Delta^{4\imax} \alpha\ge n/10.\]

Throughout this subsection, we require the following notation from Section~\ref{sec:long bare paths} (see, in particular,~\eqref{def:ell}): 
\begin{equation}\label{eq:k-ell-from-previous-sections}
    k\coloneqq \bigg\lceil \frac{24\ln n}{\ln\ln n}\bigg\rceil\quad\text{and}\quad \ell\coloneqq  \frac{n}{2K^2\alpha}+6k+4\leq \frac{n}{K^2\alpha}.
\end{equation}
Since $T$ (and hence $\hat T_j$) has at most $\exp(\Delta^5) \alpha$ leaves, applying Lemma~\ref{lem:separate} for $\hat T_j$, $d=11$, and $h=2\ell$ shows that $\hat T_j$ has at least $|V(\hat T_j)|/80\ell\ge K^2\alpha/800$
vertex-disjoint bare paths of length~$2\ell$. 
We decompose the tree $T$ into three parts:
\begin{itemize}
\item $T_{\rm paths}$ is the union of $K^2\alpha/800$ vertex-disjoint bare paths among the ones exhibited above,
\item $T_{\rm small}$ is the forest obtained as union of all trees contained in the layering $D_{1}\cup \ldots\cup D_{\mathbf{i}}$ and rooted at a vertex in $T_{\rm paths}$, and 
\item $T_{\rm rem} \coloneqq T\setminus (T_{\rm small}\cup T_{\rm paths})$.
\end{itemize}

Our strategy to embed $T$ combines elements from the approaches used for the embedding of the trees in $\cT_1$ and $\cT_2'$ with several important adjustments.
First of all, if $T_{\rm small}$ is disjoint from at least $\alpha$ of the paths in $T_{\rm paths}$,
there exist $\alpha$ vertex-disjoint bare paths of length $n/(K^2\alpha)$ in $T$. 
Then, Theorem~\ref{thm:univ_adapted} (see also 
the discussion before the theorem) implies that all such trees $T$ can whp be simultaneously embedded in $G\sim G(n,p)$ as long as $p\geq C\ln n/n$ for a sufficiently large constant $C>0$. 
For the rest of this section, we assume that $T_{\rm small}$ is disjoint from less than $\alpha$ of the paths in $T_{\rm paths}$ and, in particular, contains at least $K^2\alpha/800 - \alpha\ge K\alpha$ trees.

\paragraph{Absorbing $T_{\rm small}$ using many-to-one matchings: the first obstruction}
A natural first step towards proving Theorem~\ref{thm:univ_super-adapted} consists of reserving a set of vertices to embed $T_{\rm small}$; however, this approach is too na\"ive, as we now explain.
One natural way to perform this task is to iterate a leaf-cutting procedure to divide $T_{\rm small}$ into several layers in a way ensuring that:
\begin{itemize}
    \item leaves of $T_{\rm small}$ belong to the first layer and roots of $T_{\rm small}$ belong to the last layer,
    \item every layer has size $\Omega(\alpha)$ (with the implicit constant independent of $\Delta$),
    \item every non-root vertex sends exactly one edge to (the vertices in) the next layers.
\end{itemize}
Note that such a layering is used for embedding parts of the trees in $\cT_2'$ via many-to-one matchings (constructed in Section~\ref{subsec:embedL}).
Sadly, such a layering does not always exist in this case: for example, suppose that the forest $T_{\rm small}$ consists of $\alpha/((i+1)\ln(i+1))$ paths of length $i$ rooted at one of their endpoints, for every $i\in [\Delta]$, and that it admits the required layering. Note that, assuming $\Delta$ suitably large, this construction satisfies the requirements imposed up to now: it contains about $\alpha\ln\ln \Delta\ge K^2\alpha/800$ rooted paths (thus ensuring that $T_{\rm small}$ contains sufficiently many trees, as assumed in the previous paragraph).
On the one hand, the paths in $T_{\rm small}$ contain a total~of
\[\sum_{i=1}^{\Delta} (i+1)\cdot \frac{\alpha}{(i+1)\ln (i+1)} = O\bigg(\frac{\Delta \alpha}{\ln \Delta}\bigg)\]
vertices, thus implying that the number of layers is $O(\Delta/\ln \Delta)$.
On the other hand, the number of layers is necessarily greater than or equal to the size of the longest path: indeed, different vertices on the same path in $T_{\rm small}$ need to belong to different layers, thus leading to a contradiction when $\Delta$ is large.
While one could try to slightly reduce the number of paths in $T_{\rm paths}$ or slightly shorten some of these, this is unfortunately insufficient to conclude in the latter example: indeed, the number of roots there is of order $\Theta(\alpha \ln\ln \Delta)$, thus implying that each path in our family could contain $\Omega(\ln\ln \Delta)$ roots of long paths in $T_{\rm small}$ close to each other.

\paragraph{Height equalisation to address the first obstruction.}

The problem in the previous example is the few very long paths responsible for a `peak' in the distribution of the path lengths.
Our solution to this problem is to reroot these paths in a lower vertex (or, for general trees, shift the root down a descending path similarly to Algorithm~\ref{algo:layers}) and embed the part between the old and the new root inside $G[V_1]$: this would then `regularise' the distribution of the lengths and allow us to construct a valid layering for the remaining paths (where the roots of the paths which were partially constructed are shifted to their last vertex in $V_1$).
We start with a definition which uses the notion of a descending tree of a vertex presented after Corollary \ref{cor:separate}.

\begin{definition}[Equalising heights via shifts]\label{def:layers-small}
Fix a rooted forest $F$, write $(F_1,v_1),\ldots,(F_t,v_t)$ for the rooted trees of $F$ ordered from highest to lowest, and assume that $s\leq t$.
Define $h=h(F,s)$ to be the height of $F_{s}$ 
and consider vertices $u_1\in F_1,\ldots,u_{s}\in F_{s}$ such that for every $i\in [s]$, the trees $H_i\coloneqq (F_i)_{u_i}$ have height $h$, and $u_i$ is an arbitrary
 vertex with the above property.
Then, define the \emph{$s$-equalised forest} $F^s$ by replacing $(F_i)_{i=1}^{s}$ by $(H_i)_{i=1}^{s}$ in $F$.
\end{definition}

\paragraph{Absorbing $T_{\rm small}$ using many-to-one matchings: the second obstruction} Equalising the heights of the highest $s = \Omega(\alpha)$ trees in the forest $T_{\rm small}$ as described above resolves the problem of not being able to split $T_{\rm small}^{s}$ into layers of size $\Omega(\alpha)$, as each set among $(D_i\cap V(T_{\rm small}^s))_{i\in [h+1]}$ now has size $\Omega(\alpha)$.
Unfortunately, embedding $T_{\rm small}^s$ layer-by-layer is still unfeasible: another problem is that parts of the trees in $T_{\rm small}$ intersect the long bare paths in $T_{\rm paths}$ and thus their roots may potentially have to be embedded within the linking system.
However, our construction of the linking system does not allow to identify these roots until all vertices in $T_{\rm paths}$ have been embedded, while the vertices of $G$ hosting the forest $T_{\rm small}^s$ need to be decided in advance.
Then, for a vertex $v\in V(G)$, if all neighbours of $v$ in $G$ are occupied by images of non-root vertices in $T_{\rm paths}$, $v$ cannot be the image of a child of such a root.

A solution to the above obstruction is to embed some of the paths in $T_{\rm paths}$ and parts of the trees hanging from them into $G[V_1]$ using the rollback technique from Section~\ref{sec:rollback}.
This way, the images of some roots of trees in $T_{\rm small}^s$ will cover a conveniently chosen subset of vertices of $G$ where every vertex in the layering expands. 

\paragraph{Towards the proof of Theorem~\ref{thm:univ_super-adapted}}
We turn to the details. The next lemma provides a convenient structural decomposition of $T$.

\begin{lemma}\label{lem:22-separated roots}
The tree $T$ contains a family of $\alpha$ vertex disjoint paths $T_{\rm paths}^-$, each of length $\ell$, a rooted forest $T_{\rm small}^-$, and forests $F_1,F_2,$ and $F_3$ such that each of the following properties holds:
    \begin{enumerate}[label=\normalfont{(\arabic*)}]
        \item $F_1\cap F_2 = \{r_{1,2}\}$ and $F_2\cap F_3 = \{r_{2,3}\}$ while $F_1$ intersects $F_3$ only if $r_{1,2}=r_{2,3}$.
        \item $V(T)\setminus V(F_1\cup F_2\cup F_3)$ is the union of all internal vertices of paths in $T_{\rm paths}^-$ and all vertices in trees in $T_{\rm small}^-$ rooted on $T_{\rm paths}^-$.
        \item The forest $T_{\rm small}^-$ can be divided into layers $L_1,\ldots,L_{l+1}$ for some $l\in [\mathbf{i}]$ such that:
        \begin{itemize}
            \item The layer $L_{l+1}$ contains all roots of trees in $T_{\rm small}^-$, while every other layer is obtained via the leaf-cutting procedure (Algorithm~\ref{algo:leaf-cut}) applied to the descendants of $L_{l+1}$, with the output layering denoted by $L_i$ instead of $D_i$.
            \item For every $i\in [l]$, let $L_i^*\subseteq L_i$ be the set of vertices in $L_i$ whose descending tree has all its leaves in $L_{2}\cup \ldots \cup L_{l}$.
            Then, every pair $(L_i,L_i^*)_{i=1}^l$ satisfies the assertions of Lemma~\ref{lem:layers} with $T_1\cup T_2$ being replaced by $T_{\rm small}^{-}$; in particular, the number of possible layer sizes is bounded as a function of $\Delta$.
        \end{itemize}
        \item Each of $F_1,F_2,$ and $F_3$ contains a $22$-separated set of vertices in $L_{l+1}$ of size $(10+\delta_1)\alpha$ contained in the roots of the $(K-1)\alpha$ highest trees in $T_{\rm small}^-$.
    \end{enumerate}
\end{lemma}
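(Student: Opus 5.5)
Starting from the decomposition $T=T_{\rm paths}\cup T_{\rm small}\cup T_{\rm rem}$ constructed above, the plan is to select the paths, prune and equalise $T_{\rm small}$, and then re-split the remaining skeleton with Lemma~\ref{lem:tree-part}.

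\textbf{Step 1: the forest $T_{\rm small}^-$.} Recall that $T_{\rm small}$ has at least $K\alpha$ rooted trees, that they hang from the $K^2\alpha/800$ bare paths of length $2\ell$, and that each has height at most $\mathbf{i}\le\imax+1$. Order these trees by height and apply Definition~\ref{def:layers-small} with $s$ of order $(K-1)\alpha$, obtaining an equalised forest in which the $s$ highest trees all have height $h$. The vertices lying between an old root and the corresponding new root $u_i$ are declared part of the skeleton and will later go into $F_1\cup F_2\cup F_3$. After equalisation, the leaf-cutting layers $D_i$ of the equalised forest all contain at least $s$ vertices for $i\le h+1$. Set $l=h$, so $l\le\mathbf{i}$.

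To bound the number of possible layer sizes, run Algorithm~\ref{algo:layers} verbatim on this layering. Its proof (Lemma~\ref{lem:layers}) only used that every layer has size $\Omega(\alpha)$, that the number of trees is at least $|S|\approx 20\alpha$, and the $\Delta^{\imax}$ bound on tree sizes; all of these still hold here. This yields $L_1,\dots,L_{l+1}$ and the sets $L_i^*$ with properties (i)--(v). One adjustment is needed: the shifted roots are kept inside the layering, not sent to $V_1$, so that $L_{l+1}$ still consists of roots.

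\textbf{Step 2: choosing $T^-_{\rm paths}$.} From each bare path of length $2\ell$, take a subpath of length $\ell$. Keep $\alpha$ of these subpaths, chosen so that the roots of the trees retained in $T_{\rm small}^-$ lie on the kept subpaths. The trees rooted on paths that are not kept are moved into the skeleton. This gives properties (2) and (3).

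\textbf{Step 3: the skeleton and the forests $F_1,F_2,F_3$.} Let $T'$ be $T$ with the internal vertices of $T^-_{\rm paths}$ and the vertices of $T^-_{\rm small}$ removed; this is a forest. To get (4), the planned steps are as follows.
\begin{enumerate}
\item Set aside three disjoint groups of $(10+\delta_1)\alpha$ roots of $L_{l+1}$, taken among the roots of the $(K-1)\alpha$ highest trees. Each group must be $22$-separated in $T'$ (the $22$-separation is needed only there).
\item Obtain the $22$-separation greedily. Each root excludes at most $\Delta^{22}$ nearby roots, and the paths have length $\ell\to\infty$, so roots on different paths are automatically far apart.
\item The count is the main obstacle. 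Each kept path may carry many roots, so to have $3(10+\delta_1)\alpha$ separated roots among $(K-1)\alpha$ candidates, $K$ must be large compared with $\Delta^{22}$, which is false for a constant $K$. The fallback is to use that consecutive roots on one bare path can be chosen at distance at least $22$. This needs a pigeonhole argument: among the $K^2\alpha/800$ paths, pick the $\alpha$ paths whose roots are sparse enough to admit a $22$-separated subset of proportional size.
\item Extend these vertices, in the role of $Q$, to the whole of $T'$ by adding the attaching points on the paths (Remark~\ref{rem:split}). Apply Lemma~\ref{lem:tree-part} twice with $Q$ the union of the three groups, which produces $F_1,F_2,F_3$ meeting in single vertices $r_{1,2},r_{2,3}$. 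Each part then contains at least a third of $Q$, and the sizes are adjusted by moving roots between groups.
\end{enumerate}

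I expect the separation count in item 3, that is, guaranteeing $(10+\delta_1)\alpha$ separated roots in each of $F_1,F_2,F_3$, to be the delicate point.
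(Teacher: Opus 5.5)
There is a genuine gap. It is structural, and it goes beyond the counting step you flag.

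\textbf{Where the roots must live.} In Step~2 you put the roots of the retained (equalised, highest) trees on the kept subpaths, i.e.\ on $T_{\rm paths}^-$. In Step~3 you then look for the separated roots among those carried by the kept paths. But by~(2), internal vertices of $T_{\rm paths}^-$ are not in $F_1\cup F_2\cup F_3$; your $T'$ deletes them, and even deletes the roots of $T_{\rm small}^-$. So these roots can never witness~(4). Likewise, the upper parts of equalised trees that you send to the skeleton would hang off paths embedded only later through the linking system, whose internal vertices cannot be prescribed. Avoiding this is the whole point of the lemma: the roots in~(4) must be placed by the rollback embedding inside $V_1$ onto $P_{r,l}'\cup P_{r,l}''$.

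The paper does the opposite. It takes $T_{\rm paths}^-$ among paths carrying \emph{none} of the $K\alpha$ highest trees. This is possible since there are at least $K^2\alpha/800\ge (K+1)\alpha$ paths and at most $K\alpha$ of them carry such roots. The highest roots then sit on paths that go into $F_1\cup F_2\cup F_3$, and every tree hanging off $T_{\rm paths}^-$ is automatically no higher than the equalisation height. Only after $F_1,F_2,F_3$ are fixed does the paper equalise, and then only the trees rooted in $F_1\cup F_2$, with $s=2(10+\delta_1)\alpha$.

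\textbf{The separation count.} The correct local bound is $44$, not $\Delta^{22}$. Roots lie on bare paths of $\hat T_j$ and pendant trees contain no other roots. Hence a root at distance at least $22$ from the terminals of its path has all roots within distance $22$ on the same path.

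Your claim that roots on different paths are automatically far apart because $\ell\to\infty$ fails near path ends. Distinct paths may end close together. Each path has $44$ vertices near its ends, so with $K^2\alpha/800$ paths all $K\alpha$ highest trees may be rooted there, and passing to a length-$\ell$ subpath may discard all of them.

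The missing idea is the paper's trimming iteration:
\begin{itemize}
\item While fewer than $K\alpha/2$ of the $K\alpha$ highest trees rooted on the current path forest $S$ have roots at distance at least $22$ from its terminals, delete $22$ vertices at each end of every path.
\item Each round evacuates more than $K\alpha/2$ roots. Since $T$ has at most $\exp(\Delta^5)\alpha$ leaves, there are at most $\exp(\Delta^5)$ rounds, costing only a bounded length per path.
\item If fewer than $K\alpha$ roots ever remain on $S$, at least $\alpha$ of the $\ge(K+1)\alpha$ paths are root-free. They are then bare in $T$ and have length at least $n/(K^2\alpha)$, contradicting the standing assumption (such trees are handled by Theorem~\ref{thm:univ_adapted}).
\end{itemize}

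Greedy extraction then yields a $22$-separated set $R$ with $|R|\ge (K\alpha/2)/45\ge 9(10+\delta_1)\alpha$. The factor $9$ is needed: two applications of Lemma~\ref{lem:tree-part} (via Remark~\ref{rem:split}) guarantee each part only $|Q|/9$, not $|Q|/3$. Roots also cannot be moved between the $F_i$ after the split, so your $|Q|=3(10+\delta_1)\alpha$ is too small.
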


\begin{proof}[Proof of Lemma~\ref{lem:22-separated roots}]
Our proof is algorithmic. Consider a dynamic path forest $S\leftarrow T_{\rm paths}$ where all paths have length $\ell_S\leftarrow 2\ell$.
We say that a path forest $S$ is \emph{good} if $\ell_S\ge \ell$, it contains at least $K\alpha$ trees, and at least $K\alpha/2$ of its $K\alpha$ highest trees in $T_{\rm small}$ rooted in $S$ (ties broken arbitrarily) have their roots at distance at least $22$ from the set of terminal vertices of $S$.
 Now, at each step, check if $S$ is a good path forest or if $S$ contains strictly less than $K\alpha$ roots of trees in $T_{\rm small}$. If not, update $S$ by replacing every path $P$ therein with a subpath of $P$ where the first 22 and the last 22 vertices have been deleted, and set $\ell_S\leftarrow \ell_S-44$.

We claim that the described algorithm produces a good output $S$ after at most $\exp(\Delta^5)$ steps with at least $K\alpha$ roots of trees in $T_{\rm small}$. 
 First, note that every step shortens the paths in $S$ by removing 44 vertices from each of them, so any number of $o(\ln n/\ln\ln n)=o(\ell)$ steps would preserve the requirement that $\ell_S\ge \ell$. 
 Moreover, at every step, at least $K\alpha/2$ roots  
 of trees in $T_{\rm small}$ are evacuated from $S$, and since $T$ contains at most $\exp(\Delta^5)\alpha$ leaves (and thus at most that many roots in $T_{\rm small}$), the number of steps is at most $\exp(\Delta^5)$. 
Finally, if the set obtained by the algorithm is not good, there must be strictly fewer than $K\alpha$ trees in $T_{\rm small}$ with roots in $S$. 
On the other hand, since throughout the process only at most $44\exp(\Delta^5)$ vertices were removed from each path in $S$ and $T_{\rm paths}$ contains $K^2\alpha/800\ge (K+1)\alpha$ paths, the resulting path forest has at least $\alpha$ disjoint paths, each of depth at least $n/(K^2\alpha)$, which are bare in $T$. 
 Recalling our assumption that $T$ contains no such paths
yields a contradiction, and so $S$ is indeed good. 

We conclude that, in addition to the set output by the above process being good, it contains at least $K\alpha$ roots of trees in $T_{\rm small}$.  Note that every root at distance at least $22$ from the terminal sets in $S$ can have up to $44$ different roots at distance at most $22$ from itself.
As a result, one can greedily extract a 22-separated set $R$ of $(K\alpha/2)/45\ge 9(10+\delta_1)\alpha$ roots of $K\alpha$ highest trees in $T_{\rm small}$. 
We define $T_{\rm paths}^-$ to be any family of $\alpha$ paths of length $\ell<n/(K^2\alpha)$ contained in $S$ and vertex-disjoint from the $K\alpha$ highest trees in $T_{\rm small}$ rooted in a vertex of $S$. 
 Also, denote by $V^*$ the set of vertices which are either internal for some path in $T_{\rm paths}^-$ or belong to a tree in $T_{\rm small}$ rooted in an internal vertex for some path in $T_{\rm paths}^-$.

Finally, we construct the forests $F_1,F_2$ and $F_3$, as well as the rooted forest $T_{\rm small}^-$.
First, by Remark~\ref{rem:split} applied twice with $Q=R$, there exist three forests $F_1,F_2$ and $F_3$ satisfying the first point of the lemma such that $V(F_1\cup F_2\cup F_3) = V(T)\setminus V^*$ and also each of $F_1,F_2$ and $F_3$ contains a $22$-separated set of $|R|/9\ge(10+\delta_1)\alpha$ roots in $R$.
 For every $i\in [3]$, denote by $(T_{\rm small})_{F_i}$ the forest consisting of the trees in $T_{\rm small}$ which have roots in $F_i$. 
 To finish the construction, first equalise $(T_{\rm small})_{F_1}\cup (T_{\rm small})_{F_2}$ with parameter $s=2(10+\delta_1)\alpha$ thus obtaining a rooted forest denoted by $T_{\rm small}^{*}$.
Then, apply the root shifting Algorithm~\ref{algo:layers} to $T_{\rm small}[V^*]\cup T_{\rm small}^*$, where $F_1$ replaces the role of $T_1$; in particular, shifts are allowed only for trees rooted in $F_1$. 
We denote the resulting forest by $T_{\rm small}^-$. Then, by Lemma~\ref{lem:layers}, $F_1,F_2,F_3,T_{\rm small}^-$ satisfy points (2) and (3) of the current lemma, thus concluding the proof.
\end{proof}

We are ready to show Theorem~\ref{thm:univ_super-adapted} by combining the previous lemma with tools already developed in previous sections.

\paragraph{Proof of Theorem~\ref{thm:univ_super-adapted}.}
Fix a vector $(x_1,\ldots,x_{l+1})$ which is one of the bounded number of possible sizes of layers in $T_{\rm small}^-$. We begin by constructing the sets $(L_i')_{i=1}^{l+1},(L_i'')_{i=1}^{l+1}$, $(P_{r,l}',P_{r,l}'')_{r=1}^2$ and $Q_L',Q_P'$ similarly to Section~\ref{sec:absorb}. To avoid repetitions, we only briefly remind the reader their definitions, and clarify the necessary changes.

Let $p_1$ be such that $(1-p_1)^{3}=1-p$, and independently sample:
\begin{itemize}
    \item A random directed graph $D$ where all directed edges appear independently with probability $p_1$,
    \item and a random graph $G_1\sim G(n,p_1)$.
\end{itemize}
Write $D'$ to denote the underlying undirected graph of $D$, and recall that $G=G_1\cup D'\sim G(n,p)$.

Choose $(L_i')_{i=1}^{l}$ to be an arbitrary sequence of disjoint vertex sets with sizes $(x_i-\Delta^{-2}\alpha)_{i=1}^{l}$, respectively.
For every $i\in [l]$, let $L_{1,i}'$ be the first $10\alpha$ vertices in $L_i'$, and set $L_{2,i}'\coloneqq L_{i}'\setminus L_{1,i}'$.
Furthermore, the sets $(L''_{1,i})_{i=1}^{l}$ and $(L''_{2,i})_{i=1}^l$ are subsets of $(L_{1,i}')_{i=1}^{l}$ and $(L_{2,i}')_{i=1}^{l}$, respectively, which expand well into each other in the directed graph $D$, and, respectively, each occupies most of the vertices of $L_{1,i}'$ or $L_{2,i}'$. 
The exact definitions of $L_{1,i}''$ and $L_{2,i}''$ are analogous to the ones in Proposition \ref{prop:props L'} (see also Remark~\ref{rem:QLprime}), where we recall that $\beta = n/(\ln n)^2$ and that $L'$ denotes the union of $(L_i')_{i=1}^{l}$.
We note that, for every $i\in \{2,\ldots,l\}$, when embedding a tree with the above layer sizes, its $L_i^*$-sets (consisting of the vertices of the shifted trees ending up on layer $L_i$) will be embedded into $L_{1,i}''$.
The set $Q_L'$ is defined to be the collection of vertices in $[n]\setminus L'$ which turned out to expand insufficiently towards  
some of the layers $(L_{1,i}'')_{i=1}^{l},(L_{2,i}'')_{i=1}^{l}$, that is, all vertices in $[n]\setminus L'$ with at most $|L_i'| p_1/2$ out-neighbours in $L_{1,i}''$ or $L_{2,i}''$ for some $i$. 
Then, Lemma \ref{lem:QL2} asserts that whp $|Q_{L}'|\leq \beta$. Set $L'':=(L''_{1,1}\cup L''_{2,1})\cup\ldots\cup (L''_{1,l}\cup L''_{2,l})$.

Next, we prepare the ground for the embedding of the parent set of $L_l$.
Following the discussion before Lemma \ref{lem:Q'}, the sets $P_{1,l}',P_{2,l}'$ are arbitrary disjoint subsets of $[n]\setminus (L'\cup Q_{L}')$, each of size $\alpha$.
In addition, $Q_P'$ is the collection of all $v\in [n]\setminus (L'\cup P_{1,l}'\cup P_{2,l}')$ with at most $\alpha p_1/2$ out-neighbours in $P_{1,l}'$ or in $P_{2,l}'$. 
By Lemma~\ref{lem:Q'}, whp $|Q'_P|\leq\beta$.
Then, the sets $P_{1,l}''$ and $P_{2,l}''$ are constructed similarly to the discussion before Lemma~\ref{lem:W}, which asserts that the construction is whp successful. 
More precisely, for all $i\in \{1,2\}$, expose the out-neighbourhood sizes $\{|N_D^-(v)\cap P_{i,l}'|: v\in L''\}$, arbitrarily order the vertices in $L''$ with less than $\alpha p_1/2$ out-neighbours in $P_{i,l}'$ and call this set $W'$.
For each vertex $v\in W'$, consecutively select $\alpha p_1/2$ out-neighbours of $v$ in 
$V' := [n]\setminus (L'\cup Q_L'\cup Q_P'\cup P_{1,l}'\cup P_{2,l}')$  and add them to $P_{i,l}''$ (while keeping $P_{1,l}''$ and $P_{2,l}''$ disjoint). Then, complete $P_{i,l}''$ to be of size $\beta$ by arbitrarily adding new vertices from $V'$. Finally, let $V'' \coloneqq V' \setminus (P_{1,l}''\cup  P_{2,l}'')$ and complete $(L_{1,i}''\cup L_{2,i}'')_{i=1}^{l}$ to sets $(R_i'')_{i=1}^{l}$ with sizes $(x_i)_{i=1}^{l}$, respectively, as defined in the beginning of Section~\ref{subsec:embedL}, by adding arbitrary vertices in $V''$.

We continue by designating sets of vertices where different parts of the trees will be embedded.
 First, set $R''\coloneqq R''_1\cup \ldots\cup R_{l}''$ and let $V_1\subseteq [n]\setminus R''$ be an arbitrary set of size $|V(F_1\cup F_2\cup F_3)|+n/K^{3}$ that contains $(P_{r,l}',P_{r,l}'')^{2}_{r=1}$ 
 as well as $Q_L'$ and $Q_P'$. 
Fix $V_1^{1},V_{1}^{2},$ and $V_1^{3}$ with $V_{1}^{1}\cap V_1^{2}=\{w_{1,2}\}$ and $V_1^{1}\cap V_1^{3}=V_1^{2}\cap V_{1}^{3} = \varnothing$ covering $V_1$, that is, $V_1 = V_1^{1}\cup V_1^{2}\cup V_1^{3}$. 
In addition, require that $P_{r,l}'\cup P_{r,l}''\subseteq V_1^r$ for each $r\in [2]$, that $|V_1^r|\geq |V(F_r)|+n/K^4$ for each $r\in [3]$, and that $Q_L'\cup Q_P'\subseteq V_1^3$. 
Then, by the results in Section~\ref{sec:Gamma+1}, each of the following statements holds with probability $1-o(n^{-3})$ (for fixed orders of $F_1,F_2,F_3$ and for all choices of the forests with these orders):
\begin{itemize}
    \item There is an embedding of $F_1$ in $G[V_1^1]$ such that $P_{1,l}'\cup P_{1,l}''$ is covered by a subset of $L_{l+1}\cap V(F_1)$ of size $(10+\delta_1)\alpha$ which is $22$-separated in $F_1$, and the vertex $r_{1,2} = F_1\cap F_2$ is mapped to~$w_{1,2}$.
    \item There is an embedding of $F_2$ in $G[V_1^2]$ such that $P_{2,l}'\cup P_{2,l}''$ is covered by a subset of $L_{l+1}\cap V(F_2)$ of size $(10+\delta_1)\alpha$ which is $22$-separated in $F_2$, the vertex $r_{1,2} = F_1\cap F_2$ is mapped to $w_{1,2}$, and the image of $r_{2,3}=F_2\cap F_3$ is some vertex $w_{2,3}\in V_1^2$.
    \item There is an embedding of $F_3$ in $G[V_1^3]$ where the vertex $r_{2,3}$ is mapped to $w_{2,3}$ and the set $Q_{L}'\cup Q_{P}'$ is covered.
\end{itemize}
Crucially, note that the error probability $o(n^{-3})$ suffices to ensure that the above point holds uniformly for the $O(n^2)$ choices of sizes of $F_1$ and $F_2$.
In particular, with probability $1-o(n^{-1})$, for every choice of $F_1,F_2,F_3$, the forest $F_i$ embeds into $V_1^i$ for every $i\in [3]$.
Furthermore, note that each embedding as above leaves uncovered a `buffer' set $W$, consisting of all vertices outside the image of the embedding. While the set $W$ clearly depends on the embedding, its size is always $n/K^{3}$.

Our next step is to embed the paths in $T_{\rm paths}^{-}$ using the results from Section~\ref{sec:long bare paths}.
First, note that
\[
    |[n]\setminus (V_1\cup R'')|=|\{\text{internal vertices in }T_{\rm paths}^{-}\}|-n/K^{3} = (1/2K^{2}-1/K^{3})n+3(2k+1)\alpha,
\]
where the latter equality holds as each maximal path in $T_{\rm paths}^{-}$ has length $\ell$ (see \eqref{eq:k-ell-from-previous-sections}) and there are $\alpha$ many maximal paths in $T_{\rm paths}^{-}$. 
In particular, we may choose a partition of $[n]\setminus (V_1\cup R'')$ into sets $V_2$ and $V_3$ where
\[
    |V_2| = (1/2K^{2}-1/K^{3})n \quad \text{and}\quad |V_3|=3(2k+1)\alpha.
\]
By applying Lemma~\ref{lem:random-graphs-adapted} with $\psi=1$, we conclude that whp $G_1$ is adapted with respect to the above partition.
Moreover, note that the edges of the graph $D'[V_3]$ are independent of the choice of $V_3$ and remain unrevealed over the above algorithmic construction; in particular, $D'[V_3]\sim G(3(2k+1)\alpha,2p_1-p_1^2)$.
Then, repeating verbatim the proof of Theorem~\ref{thm:univ_adapted}, presented at the end of Section \ref{sec:long bare paths}, whp we may use $D'[V_3]$ to extend every embedding of $F_1\cup F_2\cup F_3$ in $V_1$ as described above, to an embedding of $F_1\cup F_2\cup F_3\cup T_{\rm paths}^{-}$ in $[n]\setminus R''$.

It now remains to embed the layers $(L_i)_{i=1}^{l}$ into $(R_i'')_{i=1}^{l}$. 
Given an embedding of $F_1\cup F_2\cup F_3\cup T_{\rm paths}^{-}$ into $[n]\setminus R''$, the set of roots of $T_{\rm small}^{-}$ is already fixed and contains $(P_{r,l}',P_{r,l}'')_{r=1}^{2}$. Therefore, by applying Proposition~\ref{prop:expand} iteratively, for every $s\in [l]$, 
whp we may extend the embedding so that $(L_i)^{l}_{i=s}$ are embedded into $(R_i'')_{i=s}^{l}$, respectively. This concludes the proof of Theorem~\ref{thm:univ_super-adapted}.

\subsection{\texorpdfstring{Universality for $\cT_3$}{Universality for trees with many many leaves}}\label{subsec:last_family}
It remains to ensure that typically all trees in $\cT_3$ can be simultaneously embedded in $G\sim G(n,C\ln n/n)$ for suitably large absolute constant $C$.
The proof for this family already essentially follows from considerations in~\cite{Mon19}: the argument of Case A presented there actually hides no dependence on $\Delta$ as long as the trees have $\Omega(\Delta \alpha)$ leaves.
For convenience and completeness, we present a sketch following closely Section~\ref{sec:Gamma+1} with significant simplifications of the approach presented there, 
which slightly differs from Case A in~\cite{Mon19}.

Set $\nu\coloneqq \exp(\Delta^5)\alpha$.
As a first step, using Lemma~\ref{lem:tree-part}, we divide the tree $T$ into subtrees $T_1,T_2,T_3$ such that:
\begin{itemize}
    \item $V(T_1)\cap V(T_2)=\{r_{1,2}\}$ and $V(T_2)\cap V(T_3)=\{r_{2,3}\}$ while $V(T_1)$ intersects $V(T_3)$ only if $r_{1,2}=r_{2,3}$,
    \item $\min\{|V(T_1)|,|V(T_2)|,|V(T_3)|\}\ge n/9$.
\end{itemize}
 Also, by choosing appropriately the tree to which Lemma~\ref{lem:tree-part} is applied for the second time, we can (and do) assume that  $T_1$ contains at least $\nu/3$ leaves.
The first main difference with Section~\ref{sec:Gamma+1} is that we consider $\Gamma=1$ and only two layers $L_1,L_2\subseteq V(T_1)$ of equal size such that: 
\begin{itemize}
    \item $L_2$ consists of $10\Delta\alpha$ vertices forming a $22$-separated set of parents of leaves in $T_1$,
    \item for every vertex $v$ in $L_2$, the set $L_1$ contains a single leaf of $T_1$ attached to $v$.
\end{itemize}
We note that the set $L_2$ can be constructed by a straightforward greedy procedure.

Now, let $L_1'$ be the set of the first $5\Delta\alpha$ vertices in $[n]$. Define the set $Q_L'$ of vertices which send less than $|L_1'|p_1/2$ edges in $D$ towards $L_1'$, where $D$ is the random directed graph defined in Section~\ref{sec:Gamma+1} and in the beginning of the proof of Theorem \ref{thm:univ_super-adapted}.
By Chernoff's bound applied similarly to Lemma~\ref{lem:QL2}, whp $|Q_L'|\le \beta$.
Next, define $P_1'$ to be an arbitrary set of size $\alpha$ which is disjoint from $L_1'\cup Q_L'$, and define a set $P_1''$ by adding $\alpha p_1/2$ out-neighbours in $D\setminus (L_1'\cup P_1'\cup Q_L')$ for each vertex $v\in L_1'$ which has less than $\alpha p_1/2$ out-neighbours in $P_1'$ (which is whp possible, cf. Lemma~\ref{lem:W}).
Furthermore, denote by $Q_P'$ the set of vertices which send less than $|P_1'|p_1/2$ edges towards $P_1'$ in $D$. 
By Chernoff's bound applied similarly to Lemma~\ref{lem:Q'}, we have that whp $|Q_P'|\le \beta$. 
Finally, consider vertex subsets $V_1,V_2$ intersecting in one vertex with 
\begin{itemize}
    \item $V_1$ containing $P_1'\cup P_1''$ and the first $|V(T_1)|+n/100-|P_1'\cup P_1''|$ vertices in 
    \[[n]\setminus (L'_1\cup Q_L'\cup Q_P'\cup P_1'\cup P_1''),\]
    \item $V_2$ containing the first vertex $w_{1,2}$ in $V_1\setminus (P_1'\cup P_1'')$, the set $Q_L'\cup Q_P'$ as well as the following (in the fixed default ordering) $|V(T_2)|+n/100-1-|Q_L'\cup Q_P'|$ vertices in $[n]\setminus (L_1'\cup V_1\cup Q_L'\cup Q_P')$.
\end{itemize}

Then, by Lemma~\ref{cor:embedparentsfinal)3}, for any valid choice of sizes of $T_1$, $T_2$ and $T_3$, 
each of the following holds with probability $1-o(n^{-3})$: for all choices of trees $T_1,T_2,T_3$ with the required sizes,
\begin{itemize}
    \item there is an embedding of $T_1$ in $G[V_1]$ such that $P_1'\cup P_1''$ is covered by the image of $L_2$ and the vertex $T_1\cap T_2$ is mapped to $w_{1,2}$,
    \item there is an embedding of $T_2$ in $G[V_2]$ covering $Q_L'\cup Q_P'$ where the vertex $T_1\cap T_2$ is mapped to $w_{1,2}$ and $T_2\cap T_3$ is mapped to some vertex $w_{2,3}$,
    \item there exists an embedding of $T_3$ in the subgraph of $G$ induced by the complement to the union of $L_1'$ with the images of $T_1$ and $T_2\setminus \{r_{2,3}\}$ where the vertex $r_{2,3}$ is mapped to $w_{2,3}$.
\end{itemize}

Note that the error probability $o(n^{-3})$ suffices to ensure that whp the above points hold uniformly for the $O(n^2)$ choices of sizes of $T_1,T_2,T_3$.
 Finally, by adding all vertices outside the images of $T_1,T_2,T_3$ to $L_1'$, thus forming a set $R_1''$ of size $10\Delta \alpha$, one can verify (similarly to Proposition~\ref{prop:expand}) that each of the following holds whp:
\begin{itemize}
    \item $G$ is a $t$-joined graph (where $t=4(\ln\ln n)/p_1$),
    \item every subset $U\subseteq R_1''$ with $|U|\leq t$ has at least $|U|$ neighbours in $P_1'\cup P_1''$,
    \item every set $U$ in the image of $L_2$ with $|U|\leq t$ has at least $|U|$ neighbours in $L_1'$.
\end{itemize}
By Lemma~\ref{lem:match-easy}, this implies that whp, for every tree $T$, the embedding of $T_1\cup T_2\cup T_3$ can always be extended to $L_1$ (and thus completed to a full embedding of $T$), thus implying that whp $G(n,p)$ is $\cT_3$-universal and concluding the proof of Theorem~\ref{thm:univ_trees}.

\section*{Acknowledgements.} The authors would like to thank Sahar Diskin for many useful discussions in the initial stages of the project, and Jeff Kahn for several comments and suggestions. 
Part of this work was carried out during research visits to the University of Sheffield and TU Wien. The authors are grateful to these institutions for their hospitality.

Cohen Antonir was supported in part by the Israel Science Foundation grant 2110/22 and 1028/16, by the ERC Consolidator Grant 101044123 (RandomHypGra), and by the ERC Starting Grant 633509. Lichev was supported by the Austrian Science Fund (FWF) grant No. 10.55776/ESP624. 

For open access purposes, the authors have applied a CC BY public copyright license to any author-accepted manuscript version arising from this submission.

\bibliographystyle{abbrv}
\bibliography{Bib}

@article{Kri10,
  title={Embedding spanning trees in random graphs},
  author={Krivelevich, M.},
  journal={SIAM Journal on Discrete Mathematics},
  volume={24},
  number={4},
  pages={1495--1500},
  year={2010},
  publisher={SIAM}
}

@book{Bol98,
  title={Modern graph theory},
  author={Bollob{\'a}s, B{\'e}la},
  volume={184},
  year={1998},
  publisher={Springer Science \& Business Media}
}

@article{Hax01,
  title={Tree embeddings},
  author={Haxell, P.~E.},
  journal={Journal of Graph Theory},
  volume={36},
  number={3},
  pages={121--130},
  year={2001},
  publisher={Wiley Online Library}
}

@article{DMMPS24,
  title={Hamiltonicity of expanders: optimal bounds and applications},
  author={Dragani{\'c}, N. and Montgomery, R. and Munh{\'a} Correia, D. and Pokrovskiy, A. and Sudakov, B.},
  journal={arXiv preprint arXiv:2402.06603},
  year={2024}
}

@article{Mon19,
  title={Spanning trees in random graphs},
  author={Montgomery, R.},
  journal={Advances in Mathematics},
  volume={356},
  pages={106793},
  year={2019},
  publisher={Elsevier}
}

@article{KLW17,
  title={Cycle factors and renewal theory},
  author={Kahn, J. and Lubetzky, E. and Wormald, N.},
  journal={Communications on Pure and Applied Mathematics},
  volume={70},
  number={2},
  pages={289--339},
  year={2017},
  publisher={Wiley Online Library}
}

@article{AKS07,
  title={Embedding nearly-spanning bounded degree trees},
  author={Alon, N. and Krivelevich, M. and Sudakov, B.},
  journal={Combinatorica},
  volume={27},
  number={6},
  pages={629--644},
  year={2007},
  publisher={Springer}
}

@article{HKS12,
  title={Sharp threshold for the appearance of certain spanning trees in random graphs},
  author={Hefetz, D. and Krivelevich, M. and Szab{\'o}, T.},
  journal={Random Structures \& Algorithms},
  volume={41},
  number={4},
  pages={391--412},
  year={2012},
  publisher={Wiley Online Library}
}

@article{JKS13,
  title={Expanders are universal for bounded degree spanning trees},
  author={Johannsen, Dan and Krivelevich, Michael and Samotij, Wojciech},
  journal={Combinatorics, Probability and Computing},
  volume={22},
  number={2},
  pages={253--281},
  year={2013},
  publisher={Cambridge University Press}
}

@book {JLR00,
    AUTHOR = {Janson, S. and {\L}uczak, T. and Rucinski, A.},
     TITLE = {Random graphs},
    SERIES = {Wiley-Interscience Series in Discrete Mathematics and Optimization},
 PUBLISHER = {Wiley-Interscience, New York},
      YEAR = {2000},
     PAGES = {xii+333},
      ISBN = {0-471-17541-2},
   MRCLASS = {05C80 (60C05 82B41)},
  MRNUMBER = {1782847},
MRREVIEWER = {Mark\ R.\ Jerrum},
       DOI = {10.1002/9781118032718},
       URL = {https://doi.org/10.1002/9781118032718},
}

@phdthesis{GJK13,
  title={On Hamilton cycles and other spanning structures},
  author={Glebov, R.},
  year={2013}
}

@article{HKMP24,
  title={The hitting time of clique factors},
  author={Heckel, Annika and Kaufmann, Marc and M{\"u}ller, Noela and Pasch, Matija},
  journal={Random Structures \& Algorithms},
  volume={65},
  number={2},
  pages={275--312},
  year={2024},
  publisher={Wiley Online Library}
}

@article{BCPS10,
  title={Large bounded degree trees in expanding graphs},
  author={Balogh, J. and Csaba, B. and Pei, M. and Samotij, W.},
  journal={The Electronic Journal of Combinatorics},
  pages={R6--R6},
  year={2010}
}

@article{K22,
  title={Hitting times for Shamir’s problem},
  author={Kahn, Jeff},
  journal={Transactions of the American Mathematical Society},
  volume={375},
  number={1},
  pages={627--668},
  year={2022}
}

@article{FKL19,
  title={Optimal threshold for a random graph to be 2-universal},
  author={Ferber, Asaf and Kronenberg, Gal and Luh, Kyle},
  journal={Transactions of the American mathematical Society},
  volume={372},
  number={6},
  pages={4239--4262},
  year={2019}
}

@article{PP24,
  title={A proof of the Kahn--Kalai conjecture},
  author={Park, Jinyoung and Pham, Huy},
  journal={Journal of the American Mathematical Society},
  volume={37},
  number={1},
  pages={235--243},
  year={2024}
}

@article {FraKahNarPar2021,
    AUTHOR = {Frankston, Keith and Kahn, Jeff and Narayanan, Bhargav and
              Park, Jinyoung},
     TITLE = {Thresholds versus fractional expectation-thresholds},
   JOURNAL = {Ann. of Math. (2)},
  FJOURNAL = {Annals of Mathematics. Second Series},
    VOLUME = {194},
      YEAR = {2021},
    NUMBER = {2},
     PAGES = {475--495},
      ISSN = {0003-486X,1939-8980},
   MRCLASS = {05C80 (60C05 82B26)},
  MRNUMBER = {4298747},
MRREVIEWER = {Tatyana\ S.\ Turova},
       DOI = {10.4007/annals.2021.194.2.2},
       URL = {https://doi.org/10.4007/annals.2021.194.2.2},
}

@article {Per2025,
    AUTHOR = {Perkins, Will},
     TITLE = {Searching for (sharp) thresholds in random structures: where
              are we now?},
   JOURNAL = {Bull. Amer. Math. Soc. (N.S.)},
  FJOURNAL = {American Mathematical Society. Bulletin. New Series},
    VOLUME = {62},
      YEAR = {2025},
    NUMBER = {1},
     PAGES = {113--143},
      ISSN = {0273-0979,1088-9485},
   MRCLASS = {05C80 (68Q87 82D30)},
  MRNUMBER = {4845928},
MRREVIEWER = {David\ B.\ Penman},
       DOI = {10.1090/bull/1857},
       URL = {https://doi.org/10.1090/bull/1857},
}

@article {Fri99,
    AUTHOR = {Friedgut, Ehud},
     TITLE = {Sharp thresholds of graph properties, and the {$k$}-sat
              problem},
      NOTE = {With an appendix by Jean Bourgain},
   JOURNAL = {J. Amer. Math. Soc.},
  FJOURNAL = {Journal of the American Mathematical Society},
    VOLUME = {12},
      YEAR = {1999},
    NUMBER = {4},
     PAGES = {1017--1054},
      ISSN = {0894-0347,1088-6834},
   MRCLASS = {05C80 (42C10)},
  MRNUMBER = {1678031},
MRREVIEWER = {Mark\ R.\ Jerrum},
       DOI = {10.1090/S0894-0347-99-00305-7},
       URL = {https://doi.org/10.1090/S0894-0347-99-00305-7},
}

@book {AS2000,
    AUTHOR = {Alon, Noga and Spencer, Joel H.},
     TITLE = {The probabilistic method},
    SERIES = {Wiley-Interscience Series in Discrete Mathematics and
              Optimization},
   EDITION = {Second},
      NOTE = {With an appendix on the life and work of Paul Erd\H os},
 PUBLISHER = {Wiley-Interscience [John Wiley \& Sons], New York},
      YEAR = {2000},
     PAGES = {xviii+301},
      ISBN = {0-471-37046-0},
   MRCLASS = {60-02 (05C80 60C05 60F99 60G42)},
  MRNUMBER = {1885388},
MRREVIEWER = {Bert\ Fristedt},
       DOI = {10.1002/0471722154},
       URL = {https://doi.org/10.1002/0471722154},
}

@article{FP87,
  title={Expanding graphs contain all small trees},
  author={Friedman, J. and Pippenger, N.},
  journal={Combinatorica},
  volume={7},
  number={1},
  pages={71--76},
  year={1987},
  publisher={Springer}
}

@article{KLW16,
  title={The threshold for combs in random graphs},
  author={Kahn, J. and Lubetzky, E. and Wormald, N.},
  journal={Random Structures \& Algorithms},
  volume={48},
  number={4},
  pages={794--802},
  year={2016},
  publisher={Wiley Online Library}
}

@article{BHKMPP19,
  title={Universality for bounded degree spanning trees in randomly perturbed graphs},
  author={B{\"o}ttcher, J. and Han, J. and Kohayakawa, Y. and Montgomery, R. and Parczyk, O. and Person, Y.},
  journal={Random Structures \& Algorithms},
  volume={55},
  number={4},
  pages={854--864},
  year={2019},
  publisher={Wiley Online Library}
}

@article{HMMP-S25,
  title={Spanning trees in pseudorandom graphs via sorting networks},
  author={Hyde, J. and Morrison, N. and M{\"u}yesser, A. and Pavez-Sign{\'e}, M.},
  journal={Proceedings of the American Mathematical Society},
  volume={153},
  number={06},
  pages={2353--2367},
  year={2025}
}

@article{ADILS25,
  title={Spanning trees of bounded degree in random geometric graphs},
  author={Anastos, M. and Diskin, S. and Ignasiak, D. and Lichev, L. and Sha, Y.},
  journal={arXiv preprint arXiv:2505.16818},
  year={2025}
}

@article{DK08,
  title={An algorithmic {F}riedman--{P}ippenger theorem on tree embeddings and applications},
  author={D. Dellamonica{, Jr.} and Kohayakawa, Y.},
  journal={The Electronic Journal of Combinatorics},
  pages={R127--R127},
  year={2008}
}

@article{DKN22,
  title={Rolling backwards can move you forward: on embedding problems in sparse expanders},
  author={Dragani{\'c}, N. and Krivelevich, M. and Nenadov, R.},
  journal={Transactions of the American Mathematical Society},
  volume={375},
  number={7},
  pages={5195--5216},
  year={2022}
}

@article{Robbins,
  title={A Remark on Stirling's Formula},
  author={Robbins, H.},
  journal={The American Mathematical Monthly},
  volume={62},
  number={1},
  pages={26--29},
  year={1955}
}

@article{Bel22,
  title={The {P}ark-{P}ham theorem with optimal convergence rate},
  author={Bell, T.},
  journal={arXiv preprint arXiv:2210.03691},
  year={2022}
}

@inbook{chips,
  author    = {S. N. Bhatt and C. E. Leiserson},
  title     = {How to assemble tree machines},
  booktitle = "Advances in Computing
Research",
  year      = "1984",
}

@article{circuits,
  title={Perfect storage representations for families of
data structures},
  author={F. R. K. Chung and A. L. Rosenberg and L. Snyder},
  journal={SIAM J. Alg. Disc. Methods},
  volume={4},
  pages={548--565},
  year={1983},
}

@article{ABS,
  author={N. Alon and B. Bukh and B. Sudakov},
  title={Discrete {K}akeya-type problems and small bases},
  journal={Isr. J. Math.},
  volume={174},
  pages={285--301},
  year={2009},
}

@article{ABZZ,
  title={Sums along the edges of bounded degree graphs},
  author={N. Alon and I. Benjamini and G. Zakharov and M. Zhukovskii},
  journal={arXiv preprint \emph{arXiv:2507.01138}},
  year={2025}
}

@inproceedings{KNR,
    author = {Kannan, Sampath and Naor, Moni and Rudich, Steven},
    title = {Implicit representation of graphs},
    year = {1988},
    publisher = {Association for Computing Machinery},
    address = {New York, NY, USA},
    booktitle = {Proceedings of the Twentieth Annual ACM Symposium on Theory of Computing},
    pages = {334–343},
    location = {Chicago, Illinois, USA},
    series = {STOC '88}
}

@article{BDSZZ,
  author={\'{E}. Bonnet and J. Duron and J. Sylvester and V. Zamaraev and M. Zhukovskii},
  title={Small but Unwieldy: A Lower Bound on Adjacency Labels for Small Classes},
  journal={SIAM Journal on Computing},
  volume={53},
  number={3},
  pages={1578--1601},
  year={2024},
}

@article{ADK,
  author={Alstrup, Stephen and Dahlgaard, S\o{}ren and Knudsen, Mathias B\ae{}k Tejs},
  title={Optimal induced
universal graphs and adjacency labeling for trees},
  journal={Journal of the ACM},
  volume={64},
  number={4},
  pages={1--22},
  year={2017},
}

@article{Alon-implicit,
    author = {Alon, Noga},
    title = {Implicit Representation of Sparse Hereditary Families},
    year = {2023},
    publisher = {Springer-Verlag},
    address = {Berlin, Heidelberg},
    volume = {72},
    number = {2},
    journal = {Discrete Comput. Geom.},
    pages = {476–482},
}

@article{BCLR,
    title = {Universal Graphs for Bounded-Degree Trees and Planar Graphs},
    journal = {SIAM Journal on Discrete Mathematics},
    volume = {2},
    number = {2},
    pages = {145-155},
    year = {1989},
    author = {Sandeep N. Bhatt and F. R. K. Chung and F. T. Leighton and Arnold L. Rosenberg},
}

@inproceedings{Capalbo,
    author = {M. R. Capalbo},
    title = {A small universal graph for
bounded-degree planar graphs},
    year = {1999},
    booktitle = {Proceedings of the
10th Annual Symposium On Disc. Algorithms},
    pages = {156–160},
    series = {SODA '99}
}

@inproceedings{CapalboR,
    author = {M. Capalbo and S. R. Kosaraju},
    title = {Small universal graphs},
    year = {1999},
    booktitle = {Proceedings of the thirty-first annual ACM symposium on Theory of Computing},
    pages = {741–749},
    series = {STOC '99}
}

@article{ChungG,
    title = {On graphs which contain all small trees},
    journal = {J. Combin. Theory
Ser. B},
    volume = {24},
    pages = {14-23},
    year = {1978},
    author = {F. R. K. Chung and R. L. Graham},
}

@article{ChungG2,
    title = {On universal graphs},
    journal = {Ann. New York Acad. Sci.},
    volume = {319},
    pages = {136–140},
    year = {1979},
    author = {F. R. K. Chung and R. L. Graham},
}

@article{ChungG3,
    title = {On universal graphs for spanning trees},
    journal = {Proc. London Math.
Soc.},
    volume = {27},
    pages = {203–211},
    year = {1983},
    author = {F. R. K. Chung and R. L. Graham},
}

@article{KimKim,
  title={On the size of universal graphs for spanning trees },
  author={Jaehoon Kim and Minseo Kim},
  journal={arXiv preprint \emph{arXiv:2511.22358}},
  year={2025}
}

@article{AlonCapalbo,
    title = {Sparse universal graphs for bounded-degree graphs},
    journal = {Random Struct Algorithms},
    volume = {31},
    pages = {123–133},
    year = {2007},
    author = {N. Alon and M. Capalbo},
}

@incollection{Alon-survey,
  author    = {N. Alon},
  editor    = {I. B\'{a}r\'{a}ny and J. Solymosi},
  title     = {Universality, tolerance, chaos and order},
  booktitle = {An irregular mind. Szemerédi is 70. Dedicated to Endre Szemerédi on the occasion of his seventieth birthday},
  year      = {2010},
  publisher = {Springer},
  address   = {Berlin},
  pages     = {21--37}
}

@article{DKRR,
    title = {An improved upper bound on the density of universal random graphs},
    journal = {Random Structures \& Algorithm},
    volume = {46},
    number = {2},
    pages = {274–299},
    year = {2015},
    author = {D. Dellamonica{, Jr.} and Y. Kohayakawa and V. R\"{o}dl and A. Ruci\'{n}ski},
}

@article{KimLee,
    title = {Universality of Random Graphs for Graphs of Maximum Degree Two},
    journal = {SIAM Journal on Discrete Mathematics},
    volume = {28},
    number = {3},
    pages = {1467-1478},
    year = {2014},
    author = {J. H. Kim and S. J. Lee},
}

@article{FerberNenadov,
    title = {Spanning universality in random graphs},
    journal = {Random Struct Alg},
    volume = {53},
    pages = {604-637},
    year = {2018},
    author = {A. Ferber and R. Nenadov},
}

@article{FNP,
    title = {Universality of random graphs and rainbow embedding},
    journal = {Random Struct Alg},
    volume = {48},
    number = {3},
    pages = {546-564},
    year = {2016},
    author = {A. Ferber and R. Nenadov and U. Peter},
}

@article{ER,
    title = {On the evolution of random graphs},
    journal = {Math. Inst. Hung. Acad. Sci},
    volume = {5},
    number = {1},
    pages = {17-60},
    year = {1960},
    author = {P. Erd\H{o}s and A. R\'{e}nyi},
}

@article{Ham1,
    title = {The first occurrence of Hamilton cycles in random graphs},
    journal = {Annals of
Discrete Mathematics},
    volume = {27},
    pages = {173–178},
    year = {1985},
    author = {M. Ajtai and J. Koml\'{o}s and E. Szemer\'{e}di},
}

@incollection{Ham2,
  author    = {B. Bollob\'{a}s},
  title     = {The evolution of sparse graphs},
  booktitle = {Graph theory and combinatorics},
  year      = {1984},
  publisher = {Academic
Press},
  address   = {London},
  pages     = {35–57}
}

@article{Ham3,
    title = {Limit distributions for the existence of Hamilton circuits in a random graph},
    journal = {Discrete Mathematics},
    volume = {43},
    pages = {55–63},
    year = {1983},
    author = {J. Koml\'{o}s and E. Szemer\'{e}di},
}

@inproceedings{DKRRSODA,
    author = {D. Dellamonica{, Jr.} and Y. Kohayakawa and V. R\"{o}dl and A. Ruci\'{n}ski},
    title = {Universality of random
graphs},
    year = {2008},
    booktitle = {Proceedings of the 19th Annual ACM-SIAM Symposium on Discrete
Algorithms},
    pages = {782–788},
    series = {SODA '08},
}

@article{Korshunov,
    title = {Solution of a problem of Erd{\H{o}}s and R{\'{e}}nyin on hamiltonian cycles in nonoriented graphs},
    journal = {Soviet Math. Doklady},
    volume = {17},
    pages = {760-764},
    year = {1976},
    author = {A. D. Korshunov},
}

@article{Posa,
    title = {Hamiltonian circuits in random graphs},
    journal = {Discrete Math.},
    volume = {14},
    pages = {359-364},
    year = {1976},
    author = {L. P\'{o}sa},
}

@book{hitting-question,
  title={Erd{\H{o}}s on graphs: His legacy of unsolved problems},
  author={F. Chung and R. Graham},
  year={1998},
  publisher={AK Pe-
ters/CRC Press},
}

@article{Alon-Yuster,
    title = {Threshold functions for $H$-factors},
    journal = {Combin. Probab. Comput.},
    volume = {2},
    number = {2},
    pages = {137-144},
    year = {1993},
    author = {N. Alon and R. Yuster},
}

@article{Rucinski-factors,
    title = {Matching and covering the vertices of a random graph by copies of a given
graph},
    journal = {Discrete Math.},
    volume = {105},
    number = {1-3},
    pages = {185-197},
    year = {1992},
    author = {A. Ruci\'nski},
}

@article{Krivelevich-factors,
    title = {Triangle factors in random graphs},
    journal = {Combinatorics, Probability and Com-
puting},
    volume = {6},
    pages = {337-347},
    year = {1997},
    author = {M. Krivelevich},
}

@article{JKV,
    title = {Factors in random graphs},
    journal = {Random Structures
Algorithms},
    volume = {33},
    number = {1},
    pages = {1-28},
    year = {2008},
    author = {A. Johansson and J. Kahn and V. Vu},
}

@article{Kahn-Shamir,
title = {Asymptotics for Shamir's problem},
journal = {Advances in Mathematics},
volume = {422},
pages = {109019},
year = {2023},
author = {Jeff Kahn},
}

@article{Heckel,
title = {Random triangles in random graphs},
journal = {Random Structures Algorithms},
volume = {59},
number = {4},
pages = {616-621},
year = {2021},
author = {A. Heckel},
}

@article{BHKMP,
  title={Sharp Thresholds for Factors in Random Graphs},
  author={F. Burghart and A. Heckel amd M. Kaufmann and Noela M\"{u}ller and Matija Pasch},
  journal={arXiv preprint \emph{arXiv:2411.14138}},
  year={2024}
}

@article{Montgomery-old,
  title={Embedding bounded degree spanning trees in random
graphs},
  author={R. Montgomery},
  journal={arXiv preprint \emph{arXiv:1405.6559}},
  year={2014}
}

@inproceedings{AKS,
    author = {M. Ajtai and J. Koml{\'o}s and E. Szemer{\'e}di},
    title = {An $O(n \log n)$ sorting network},
    booktitle = {Proceedings of the fifteenth annual
ACM symposium on Theory of computing},
    year = {1983},
    pages = {1-9},
}

@book{Knuth,
  title={The art of computer programming, volume 3: sorting and searching},
  author={D. E. Knuth},
edition ={second},
  year={1998},
  publisher={Addison Wesley Longman Publishing Co., Inc.}
}

@article{KriNen,
title = {Minors in small-set expanders},
journal = {Proceedings of the American Mathematical Society},
volume = {154},
pages = {1407-1420},
year = {2026},
author = {M. Krivelevich and R. Nenadov},
}

@article{GJK-unpublished,
title={Hitting time appearance of certain spanning trees in
the random graph process},
author={R. Glebov and D. Johannsen and M. Krivelevich},
journal={Unpublished manuscript},
}

@article{DKP,
title = {Thresholds for $(n,q,2)$-Steiner systems via refined absorption},
journal = {Mathematical Proceedings of the Cambridge Philosophical Society},
pages = {1-20},
year = {2026},
author = {M. Delcourt and T. Kelly and L. Postle},
}

@article{DiazPerson,
title = {Spanning $F$-cycles in random graphs},
journal = {Combinatorics, Probability \& Computing},
volume = {32},
number = {5},
pages = {833-850},
year = {2023},
author = {A. Espuny D\'{i}az and Y. Person},
}

@article{Keevash,
title={The optimal edge-colouring threshold},
author={P. Keevash},
journal={preprint arXiv:2212.04397},
year={2022},
}

@article{KNP,
title = {The threshold for the square of a Hamilton cycle},
journal = {Proceedings of the American Mathematical Society},
volume = {149},
number = {8},
pages = {3201-3208},
year = {2021},
author = {J. Kahn and B. Narayanan and J. Park},
}

@inproceedings{Pham,
    author = {V. Jain and H. T. Pham},
    title = {Optimal thresholds for Latin squares, Steiner triple systems, and edge colorings},
    year = {2024},
    publisher = {Society for Industrial and Applied Mathematics},
    booktitle = {Proceedings of the 2024 Annual ACM-SIAM Symposium on Discrete Algorithms},
    pages = {1425-1436},
    series = {SODA '24},
}

@article{VZ,
title={Spanning triangulations in random graphs},
author={S. Vakhrushev and M. Zhukovskii},
journal={preprint arXiv:2605.20361v1},
year={2026},
}

@article{Z,
title={Sharp thresholds for spanning regular subgraphs},
author={M. Zhukovskii},
journal={preprint arXiv:2502.14794v3},
year={2025},
}

\appendix

\section{Proof of Theorem~\ref{thm:linking_system} and Lemma~\ref{cor:linking-systems}}
 \label{sec:proofthm2}

We start with a corollary which follows by a simple union bound from Theorem \ref{prop:path-systems-new}. For this, we recall that for a function $\psi$ with $\psi(n)\in \left[1,\frac{\ln n}{4\ln\ln n}\right]$ we write $k^*=k^*(n,\psi)=\lceil C^*\ln n/(\psi\ln \ln n)\rceil$.

\begin{corollary}\label{cor:many-functions}
     For every $\xi>0$ there is $n_0$ such that for every $n\geq n_0$ the following holds. For every $i\in [\ln\ln n]$, fix integers  $1\leq \psi_{i}\le \ln n/(4\ln\ln n)$, set $p_i \coloneqq C_0 (\ln n)^{\psi_i}/n$, and sample $G_i\sim G(n,p_i)$.
     In addition, let $k_i\geq k^*(n,\psi_i)$ be some integers and assume that $m_i\coloneqq n/(k_i+1)$ are integers. Then for every disjoint $m_i$-tuples $\mathbf{a}_i,\mathbf{c}_i$ of different vertices from $[n]$, 
        \[
            \bP\left(\text{$G_i$ contains an $(\mathbf{a}_i,\mathbf{c}_i;k_i)$-path system for every $i\in [\ln\ln n]$}\right) \ge 1-\xi.
        \]
\end{corollary}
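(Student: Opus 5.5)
The plan is a union bound over the $\lfloor\ln\ln n\rfloor$ indices, using the quantitative failure probability in Theorem~\ref{prop:path-systems-new}. For each fixed $i\in[\ln\ln n]$, the parameters $\psi_i$, $p_i=C_0(\ln n)^{\psi_i}/n$, $k_i\ge k^*(n,\psi_i)$ and $m_i=n/(k_i+1)$ are exactly in the setting of Theorem~\ref{prop:path-systems-new}, with the constant function $\psi\equiv\psi_i$ at this particular $n$. Hence $G_i$ fails to contain an $(\mathbf{a}_i,\mathbf{c}_i;k_i)$-path system with probability $o(1/(\ln\ln n)^2)$. Summing over at most $\ln\ln n$ indices then gives total failure probability $o(1/\ln\ln n)$, which is below $\xi$ for $n\ge n_0(\xi)$. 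Note that no independence between the $G_i$ is needed.

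The only real obstacle is uniformity. The $o(\cdot)$ in Theorem~\ref{prop:path-systems-new} is asserted for a fixed function $\psi(n)$, whereas here the $\psi_i$, $k_i$ and the tuples $\mathbf{a}_i,\mathbf{c}_i$ may vary with $n$ arbitrarily. I would handle this by inspecting the proof and checking that every error term is bounded by a function of $n$ alone, uniformly over admissible $\psi\in[1,\ln n/(4\ln\ln n)]$, $k\ge k^*$, and tuples.

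For $\psi\le 2$ there are three sources of error. Lemma~\ref{lem:algo not rejecting} gives $o(n^{-1/18})$, coming from the Chernoff bound $n\,e^{-9(\ln n)^{\psi}/8}\le n^{-1/8}$ uniformly in $\psi\ge1$. Lemma~\ref{lem:few paths} gives $o(n^{-1/9})$, where the exponent bound in Claim~\ref{claim:typically no bad tuple} depends only on the constants in~\eqref{eq:constants} and on $t\ge(3/\eps)\ln\ln n$. The application of Theorem~\ref{thm:fracKK} fails with probability at most $1/\ell(\cF)$; here $\ell(\cF)\ge n/2$ because path systems in $\cF$ have about $n-(d+2)m$ edges, so this term is $O(1/n)$. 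For $\psi>2$ only the last term is present. All three bounds are uniform.

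Alternatively, to avoid reopening the proof, one can argue by contradiction. If the uniform claim failed, there would be $\xi>0$ and infinitely many $n$ with bad choices $(\psi_i^{(n)},k_i^{(n)},\mathbf{a}_i^{(n)},\mathbf{c}_i^{(n)})$ whose total failure probability is at least $\xi$. Each such $n$ then has an index $i_n$ with failure probability at least $\xi/\ln\ln n$. Defining the function $\psi(n):=\psi_{i_n}^{(n)}$ along this subsequence, and extending it arbitrarily elsewhere, gives a single admissible $\psi$ for which Theorem~\ref{prop:path-systems-new} yields failure probability $o(1/(\ln\ln n)^2)$. This contradicts the lower bound $\xi/\ln\ln n$. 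The same works for $k$ and for the tuples, since the statement is for arbitrary sequences; the tuples can in any case be fixed by symmetry of $G(n,p)$. I would present this compactness argument as the proof, as it is the cleanest.
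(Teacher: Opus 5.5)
Your proposal is correct and matches the paper, which derives this corollary by exactly this union bound over the at most $\ln\ln n$ indices, using the $o(1/(\ln\ln n)^2)$ failure probability in Theorem~\ref{prop:path-systems-new}. Your subsequence argument assembles the worst index $i_n$ at each bad $n$ into a single admissible sequence $\psi(n),k(n),\mathbf{a}(n),\mathbf{c}(n)$, and this makes explicit the uniformity over the choices of $\psi_i$, $k_i$ and the tuples that the paper's one-line justification leaves implicit.
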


We now show Lemma~\ref{cor:linking-systems}, which readily implies Theorem~\ref{thm:linking_system}.
We restate the lemma for the convenience of the reader.

\Lemmathm*

\begin{proof}
Consider the sequence of functions $\psi_i \colon [3,\infty)\to \mathbb{R}$ defined for every non-negative integer $i$:
\[
        \psi_i (x)\coloneqq \max\left\{1,\frac{\ln x}{4\e^{i}\ln\ln x}\right\}.
    \]
    For every integer $n\geq 3$, let $s=s(n)$ be the first index $i$ such that $\psi_{i}(n)=1$. Note that for every~$n$ we have $s(n)\leq\lceil \ln\ln n-\ln\ln\ln n - \ln 4\rceil$. 
    From here onwards, fix an arbitrarily small $\xi>0$ and a large integer $n=n(\delta,\xi)$, with the exact dependencies to be specified throughout the proof.
    Further, define the function $\psi_*\equiv \psi_i$ 
    where $i$ is the smallest index $i\in[s]$ such that $\psi(n)\geq\psi_i(n)$. Then we get $\psi(n)\in[\psi_i(n),\e\psi_{i}(n))$. We note that, for any $n'\neq n$, $\psi_*(n')=\psi_i(n')$, where $i$ is fixed and does not depend on $n'$. 
    Lastly, set $n_1\coloneqq (k+1) m \in [\delta n/2,2\delta n/3]$, 
    $S_1 \coloneqq [n_1]$, and $S_2\coloneqq [\delta n]\setminus [n_1-m]$.
    Let $\mathbf{c}$ be an ordered collection of all vertices of $S_1\cap S_2$, and without loss of generality assume $\mathbf{a}\subseteq S_1\setminus S_2$ and $\mathbf{b}\subseteq S_2\setminus S_1$.

    Let $n_0(\xi)$ be as given by Corollary \ref{cor:many-functions}, assume that $n\geq 2n_0/\delta$ and that $n$ is large enough such that for every $x>0$ we have
    \begin{align} \label{eq:ln-lnln-monotone1}
	2\le \frac{\ln (\delta n/2)}{\ln\ln (\delta n/2)} &\le \frac{\ln n_1}{\ln\ln n_1}< \frac{\ln(n_1+x)}{\ln\ln (n_1+x)},
    \end{align}
    and
    \begin{align}\label{eq:ln-lnln-monotone2}
    s+1\le \ln\ln n_1 \quad \text{and}\quad \frac{\ln (n)}{\ln\ln (n)}&\leq \frac{2\ln (\delta n/2)}{\ln\ln (\delta n/2)}.
\end{align}
We then apply Corollary \ref{cor:many-functions} to $\psi_0,\ldots,\psi_s$, with $n$ replaced by $n_1$, which is allowed due to the former inequality in \eqref{eq:ln-lnln-monotone2} as well as $1\le \psi_i(n_1) \le \frac{\ln n_1}{4\ln\ln n_1}$ for every $i$.
Within the notation of the corollary, we get that for every $k_i\geq k^*(n_1,\psi_i)$, $p_i=p_i(n_1)$, and every disjoint $m_i$-tuples $\mathbf{a}_i,\mathbf{c}_i$ of different vertices from $[n_1]$, we have
\begin{equation}
\label{eq:path-systems-s}
\bP\left(\text{$G(n_1,p_i)$ contains an $(\mathbf{a}_i,\mathbf{c}_i;k_i)$-path system for every $i\leq s$}\right) \ge 1-\xi.
\end{equation}
Since $k\geq 4\e k^*(n,\psi)$ and since $\psi(n)\le \e \psi_*(n)$, we have
\[
        k\geq   \frac{4\e C^*\ln n}{\psi(n) \ln\ln n}  \stackrel{\eqref{eq:ln-lnln-monotone1}}\geq  \frac{4 C^* \ln n_1}{\psi_*(n) \ln\ln n_1} .
\]
Moreover, as $\psi_*(x) = \max\left\{1, \frac{\ln x}{4\e^{i}\ln\ln x}\right\}$ for some $i$, by the assumptions on $n$ we have
$$
\psi_*(n)\stackrel{\eqref{eq:ln-lnln-monotone2}}\leq 2\psi_*\left({\delta n}/{2}\right) \stackrel{\eqref{eq:ln-lnln-monotone1}}\leq 2\psi_*(n_1),
$$ 
which in turn implies that $k\geq  \frac{2C^*\ln n_1}{ \psi_*(n_1) \ln\ln n_1}$. Therefore, 
\[
	  k\geq\left\lceil \frac{C^*\ln n_1}{\psi_* (n_1) \ln\ln n_1}\right \rceil = k^*(n_1,\psi_*).
   \]

For every $i\in [s]$, write $\lambda_i\coloneqq 4^{-1}\e^{-i}\in (0,1]$ and let $\varphi_i(x)\coloneqq   \frac{(\ln x)^{\psi_i(x)}}{x}$. In addition, for the unique $i$ for which $\psi_i = \psi_*$, let $\varphi_* \coloneqq \varphi_i$, $\lambda_*\coloneqq \lambda_i$, and $p_*=p_i$ from Corollary \ref{cor:many-functions}. We now claim that $p\geq p_*(n_1)$.\
First, if $\psi_*(n_1) > 1$ then as $n\geq n_1$ we also have $\psi_*(n)>1$ by assumption \eqref{eq:ln-lnln-monotone1}. Therefore, $\varphi_*(n)= n^{-1+\lambda_*}$ and $\varphi_*(n_1)= n_1^{-1+\lambda_*}$. As $p\geq 2C_0\varphi_*(n)/\delta$ we have
\[
\frac{p}{p_*(n_1)} \geq \frac{2C_0 \varphi_*(n)/\delta}{C_0 \varphi_*(n_1)} = \frac{2}{\delta}\cdot \left(\frac{n_1}{n}\right)^{1-\lambda_*} \geq  \left(\frac{\delta}{2}\right)^{-\lambda_*} >1,
\]
where the penultimate inequality holds as $n_1/ n \ge \delta/2$ and as $\lambda_i\le 1$.
On the other hand, if $\psi_*(n_1) = 1$, then, as $p=2C_0(\ln n)^{\psi(n)}/\delta n>2C_0(\ln n)/\delta n$, we have
\[
\frac{p}{p_*(n_1)} =\frac{pn_1}{C_0\ln n_1}> \frac{2 n_1}{\delta  n} \cdot \frac{\ln n}{\ln n_1} \geq 1,
\]
where the last inequality holds as $n_1\in [\delta n/2, n]$. 
Therefore, for every disjoint $m$-tuples $\mathbf{x},\mathbf{y}$ of different vertices from $[n_1]$, we have
     \[
        \bP\left(\text{$G(n_1,p)$ contains an $(\mathbf{x},\mathbf{y};k)$-path system}\right) \ge 1-\xi,
    \]    
    where we applied~\eqref{eq:path-systems-s} with $\psi_i=\psi_*$.
    In particular, letting $G\sim G(\delta n,p)$ and noting that $G[S_1],G[S_2]\sim G(n_1,p)$, we have
    \[
         \bP\left(\text{$G[S_1]$ contains an $(\mathbf{a},\mathbf{c};k)$-path system}\right) \ge 1-\xi,
    \]
    and 
    \[
        \bP\left(\text{$G[S_2]$ contains an $(\mathbf{c},\mathbf{b};k)$-path system}\right)\geq 1-\xi.
    \]
	Finally, as $\xi$ was chosen arbitrarily, by applying Lemma \ref{lem:combaining path systems}, the proof is concluded.
\end{proof}

\end{document}